\pgfplotsset{compat=1.18}
\newcommand{\details}[1]{\ifthenelse{\boolean{details}}{\textcolor{blue}{#1}}{}}
\numberwithin{equation}{section}
\newtheorem{theorem}{Theorem}[section]
\newtheorem{lemma}[theorem]{Lemma}
\newtheorem{proposition}[theorem]{Proposition}
\newtheorem{corollary}[theorem]{Corollary}
\newtheorem{Assumption}{Assumption}
\theoremstyle{definition}
\newtheorem{definition}[theorem]{Definition}
\newtheorem{Remark}{Remark}
\newcommand{\myParagraph}[1]{\medskip\noindent \textbf{#1.}}
\newcommand{\lla}{\left\langle}
\newcommand{\rra}{\right\rangle}
\newcommand{\lbr}{\left\lbrace}
\newcommand{\rbr}{\right\rbrace}
\newcommand{\ldb}{[\![}
\newcommand{\rdb}{]\!]}
\newcommand{\argmin}{\mathrm{argmin}\,}
\newcommand{\overbar}[1]{\mkern 1.5mu\overline{\mkern-1.5mu#1\mkern-1.5mu}\mkern 1.5mu}
\newcommand{\dt}{\,\partial_t\, }
\newcommand{\Dt}{{\Delta t}}
\newcommand{\dx}{\,\partial_x\, }
\newcommand{\Dx}{{\Delta x}}
\newcommand{\Dv}{{\Delta v}}
\newcommand{\dd}{\,\mathrm{d}}
\newcommand{\ep}{\varepsilon}
\newcommand{\e}{\,\mathrm{e}}
\newcommand{\N}{\mathbb{N}}
\newcommand{\Z}{\mathbb{Z}}
\newcommand{\R}{\mathbb{R}}
\newcommand{\TT}{\mathbb{T}}
\newcommand{\M}{\mathcal{M}}
\newcommand{\wt}{\widetilde{w}}
\newcommand{\wb}{\overbar{w}}
\newcommand{\gamb}{\overbar{\gamma}}
\newcommand{\gamt}{\widetilde{\gamma}}
\newcommand{\Ab}{\overbar{A}}
\newcommand{\At}{\widetilde{A}}
\newcommand{\tin}{{\mathrm{in}}}
\title[Numerical analysis of a kinetic equation with a nonlocal H-J limit]{Numerical analysis of the large deviation regime of a kinetic equation with a nonlocal Hamilton-Jacobi limit}
\author{Hélène Hivert}
\address[Hélène Hivert]{Univ. Rennes, Inria, Géosciences Rennes - UMR 6118, IRMAR - UMR 6625, F-35000 Rennes, France}
\author{Tino Laidin}
\address[Tino Laidin]{Univ Brest, CNRS UMR 6205, Laboratoire de Mathématiques de Bretagne Atlantique, F-29200 Brest, France}
\email{tino.laidin@univ-brest.fr}
\email{helene.hivert@inria.fr}
\begin{document}

% \setboolean{details}{true}
\setboolean{details}{false}

\begin{abstract}
    We develop and study an asymptotic-preserving (AP) numerical scheme for a linear kinetic equation in a large deviation regime. After applying a Hopf-Cole transform to the distribution function, the system exhibits the behavior of rare events, which in the limit is governed by a non-standard, nonlocal Hamilton-Jacobi equation, as identified in [E. Bouin et al., J. Lond. Math. Soc., II. Ser., 2023].

    The proposed scheme efficiently handles the stiffness introduced by scaling, with a computational cost that remains uniform with respect to the small parameter. It takes advantage of the conservation properties of the original kinetic model to overcome the numerical challenges posed by stiffness. The scheme satisfies a discrete maximum principle, preserves equilibrium states, and correctly captures the asymptotic limit, recovering the viscosity solution of the limit nonlocal Hamilton-Jacobi equation.

    As the limit problem is non-standard, convergence results from the literature are not directly applicable. We introduce new analytical tools based on a discrete representation formula that links the numerical scheme with the continuous setting. This allows us to prove the convergence and establish key structural properties of the method. Numerical tests support the analysis and illustrate the robustness of the scheme and the original behavior of the limit system. \\[1em]
    \textsc{Keywords:} Asymptotic preserving scheme, Kinetic equation, BGK equation, nonlocal Hamil-ton-Jacobi equation.\\[.5em]
    \textsc{2020 Mathematics Subject Classification:} 
    65M12, % Stability and convergence of numerical methods for initial value and initial-boundary value problems involving PDEs
    35B40, % Asymptotic behavior of solutions to PDEs
    82C40, % Kinetic theory of gases in time-dependent statistical mechanics
    35F21, % Hamilton-Jacobi equations
    35D40, % Viscosity solutions to PDEs
    (Primary)
    35Q92, % PDEs in connection with biology, chemistry and other natural sciences
    (secondary)

\end{abstract}

\maketitle

\section{Introduction}
%%%%%%%%%%%%%%%%%%%%%%%%%%%%%%%%%%%%%%%%%%%%%%%%%%%%%%%%%%%%%%%%%%%%%%%%%%%%%%
In this paper, we introduce and analyze a new Asymptotic Preserving (AP) scheme for a linear kinetic equation under a scaling tailored to capture rare events in the dynamics. Specifically, we consider the linear BGK equation for the unknown distribution function $f(t,x,v)$:
\begin{equation}\label{eq:KinRelax}
    \dt f(t,x,v) + v\cdot\nabla_x f(t,x,v) = \rho(t,x)\M(v) - f(t,x,v),\quad (t,x,v)\in\R^+\times\R^{d_x}\times\R^{d_v},
\end{equation}
where $d_x, d_v \in \N$ denote the spatial and velocity dimensions. The macroscopic density is given by $\rho(t,x) = \langle f(t,x,v)\rangle \coloneq \int_{\R^{d_v}} f(t,x,v) \, \dd v$, and the equilibrium distribution $\M$ is defined as
\begin{equation*}
    \M(v) = \frac{1}{(2\pi)^{d_v/2}} \exp\left(-\frac{|v|^2}{2}\right).
\end{equation*}
The equation is supplemented with an initial data $f_\tin$, which will be specified later. The function $f(t,x,v)$ represents the probability density of finding an individual at time $t$, position $x$, and moving at velocity $v$. The relaxation operator $\rho\M - f$ models interactions that cause individuals to change velocity: they are removed according to the distribution $f$ and reintroduced with a Maxwellian profile $\rho\M$ while preserving mass. This corresponds to a stochastic process in which individuals randomly change velocity following a velocity jump process given by $\rho\M - f$.

In this work, we consider the rescaling, introduced in \cite{BouinCalvezGrenierNadin2023}: 
\begin{equation}\label{eq:scaling}
    \left(\frac{t}{\ep}, \frac{x}{\ep^{3/2}},\frac{v}{\ep^{1/2}}\right).
\end{equation}
Under this scaling, equation \eqref{eq:KinRelax} then reads
\begin{equation}\label{eq:kineticScaled}\tag{$\mathcal{P}^\ep$}
    \dt f^\ep(t,x,v) + v\cdot\nabla_x f^\ep(t,x,v) = \frac{1}{\ep}\left(\rho^\ep(t,x)\M^\ep(v) - f^\ep(t,x,v)\right),\quad (t,x,v)\in\R^+\times\R^{d_x}\times\R^{d_v},
\end{equation}
where $\M^\ep$ is the centered Gaussian of variance $\ep$: 
\begin{equation*}
    \M^\ep(v)=\frac{1}{(2\ep \pi)^{d_v/2}}\exp\left(-\frac{|v|^2}{2\ep }\right).
\end{equation*}
Equation \eqref{eq:KinRelax} has been extensively studied in both the hydrodynamic \cite{BardosGolseLevermore1991, SaintRaymond2009} and diffusive \cite{BensoussanLionsPapanicolaou1979, GoudonPoupaudDegond2000} regimes, but its analysis under the scaling \eqref{eq:scaling} is a recent research subject. One motivation for considering this scaling comes from the following kinetic reaction-transport equation, where individuals also undergo a reproduction process:
\begin{equation}\label{eq:KineticKPP}
    \dt f + v\cdot\nabla_x f = \rho\M - f + r\rho(\M-f),
\end{equation}
where $r>0$ is a reproduction rate. Equation \eqref{eq:KineticKPP} can be viewed as a kinetic analogue of the well-known Fisher-KPP equation. In \cite{BouinCalvezNadin2015} it was shown that \eqref{eq:KineticKPP} exhibits accelerated front propagation, with a velocity that grows like $t^{3/2}$. Due to this acceleration, the classical diffusive scaling is not appropriate to capture the motion of the fronts, which motivates the use of scaling \eqref{eq:scaling}.

The main challenge in analyzing the limit $\ep \to 0$ in \eqref{eq:kineticScaled} lies in the interplay between the stiffness of the right-hand side and the concentration in velocity induced by the resampling through the distribution $\rho^\ep \M^\ep$. To accurately capture the asymptotic behavior as $\ep \to 0$, a typical strategy is to study the limit of the Hopf-Cole transform of the unknown. It is defined as 
\begin{equation}\label{eq:HopfCole}
    \varphi^\ep(t,x,v)=-\ep\ln(f^\ep(t,x,v)).
\end{equation}
This logarithmic transform has, for example, been used in \cite{DiekmannJabinMischlerPerthame2005, BarlesPerthame2007, BarlesMirrahimiPerthame2009, LorzMirrahimiPerthame2011,BouinCalvez2012,CalvezHendersonMirrahimiTuranovaDumont2022} for the study of adaptive dynamics where concentration phenomena typically occur. Another way to interpret \eqref{eq:HopfCole} is that it captures rare events occurring in the tails of the distribution $f^\ep$, which becomes increasingly concentrated as $\ep \to 0$. These rare events are precisely what drive the accelerating propagation fronts in \eqref{eq:KineticKPP} \cite{BouinCalvezNadin2015}. 

The analysis of \eqref{eq:kineticScaled} follows a line of reasoning similar to that used for the classical Fisher-KPP equation \cite{EvansSouganidis1989, BarlesEvansSouganidis1990, CrandallIshiiLions1992}, based on the so-called geometric optics approximation \cite{Freidlin1986, EvansSouganidis1989}. In \cite{BouinCalvezGrenierNadin2023}, a parallel is drawn with the heat equation under hyperbolic scaling, where the Hopf-Cole transform is shown to converge locally uniformly to the viscosity solution of a Hamilton-Jacobi equation (see, e.g., \cite{CrandallIshiiLions1992, Barles2013}). In the case of \eqref{eq:kineticScaled} the authors showed that $\varphi^\ep$ converges, as $\ep\to0$, towards a function $\varphi(t,x,v)$ solution to:
\begin{equation}\label{eq:Vari}\tag{$\mathcal{P}^0$}
    \left\lbrace\begin{aligned}
        &\max\left(\dt \varphi(t,x,v)+v\cdot\nabla_x \varphi(t,x,v) -1,\, \varphi(t,x,v)-\underset{v\in\R^{d_v}}{\min}\lbr \varphi(t,x,v)\rbr - \frac{|v|^2}{2}\right)=0,\\
        &\dt\left( \underset{v\in\R^{d_v}}{\min}\lbr \varphi\rbr(t,x)\right)\leq 0,\\
        &\dt\left( \underset{v\in\R^{d_v}}{\min}\lbr \varphi\rbr(t,x)\right)= 0\quad \text{if}\quad \underset{v\in\R^{d_v}}{\argmin}\lbr\varphi(t,x,v)\rbr = \lbr0\rbr,\quad (t,x,v)\in\R^+\times\R^{d_x}\times\R^{d_v}.
    \end{aligned}\right.
\end{equation}
For simplicity, the initial data of \eqref{eq:kineticScaled} is now assumed to be of the form
\begin{equation*}
    f_\tin^\ep(x,v)=\exp\left(-\frac{\varphi_\tin(x,v)}{\ep}\right)
\end{equation*}
so that \eqref{eq:Vari} is supplemented with an initial data $\varphi_\tin$ such that $\varphi_\tin-|v|^2/2$ is bounded and satisfies Lipschitz regularity. The precise assumptions are stated later. In addition, we assume that $\varphi_\tin$ does not depend on $\ep$, but this assumption can be relaxed by assuming local uniform convergence in $\ep$.

They are several key features to \eqref{eq:Vari}. First, contrary to standard small-scale asymptotics of kinetic equations, the limit still depends on the velocity variable $v$. Secondly, the limit remains nonlocal in velocity because of the term $\min_{v \in \R^{d_v}} \lbr \varphi \rbr(t,x)$. Thirdly, although the system at $\ep > 0$ consists of a single equation for the distribution, the limiting system becomes coupled through a condition involving the minimum in velocity of the unknown. Lastly, a remarkable feature of \eqref{eq:kineticScaled} is that the limits $\ep \to 0$ and $t \to \infty$ do not commute: for fixed $\ep > 0$, the system relaxes to a constant equilibrium as $t \to \infty$ \cite{DolbeaultMouhotSchmeiser2015}, whereas in the limit $\ep = 0$, it retains a memory of the initial data even at large times. A schematic representation of this behavior is shown in Figure~\ref{fig:LTBSchematic}, where the spatial amplitude of the solution is plotted as a function of time. This quantity effectively illustrates the phenomenon: for $\ep$ large, the amplitude converges to $0$, whereas in our setting, as $\ep \to 0$, it converges toward a no-zero constant.
\begin{figure}
    \begin{tikzpicture}[>=Latex,line cap=round,line join=round, scale=0.8]

        \begin{semilogyaxis}[
            width=10cm,
            height=7cm,
            axis lines=left,          % only two axes, no box
            xlabel={Time},
            ylabel={$(\max_x- \min_x)\lbr\varphi^\ep(t,x,\cdot)\rbr$},
            ymin=1e-10, ymax=1,
            xmin=0, xmax=500,
            ytick={1e0,1e-2,1e-4,1e-6,1e-8,1e-10},
            every axis plot/.append style={very thick,no markers},
            enlargelimits=false,
        ]

        \addplot+[smooth,very thick,teal] coordinates {
            (0,1) (100,0.3) (200,0.2) (500,0.2)
        };

        \addplot+[smooth,very thick,orange,dashed] coordinates {
            (0,1) (200,0.1) (500,0.01)
        };

        \addplot+[smooth,very thick,violet,dashdotted] coordinates {
            (0,1) (100,0.01) (300,1e-5) (500,1e-8)
        };

        \addplot+[smooth,very thick,red,dotted] coordinates {
            (0,1) (50,1e-3) (150,1e-7) (250,1e-10)
        };

        \end{semilogyaxis}

        \draw[->,very thick,dashed]
        (1.5,1.5) .. controls (3.4,2.) and (3.5,2.9) .. (4,5)
        node[midway,right]{\Large $\ep \to 0$};

    \end{tikzpicture}
    \caption{Schematic depiction of the long-time spatial amplitude at a fixed velocity of the solution to \eqref{eq:kineticScaled}  as $\ep\to0$.}
    \label{fig:LTBSchematic}
\end{figure}
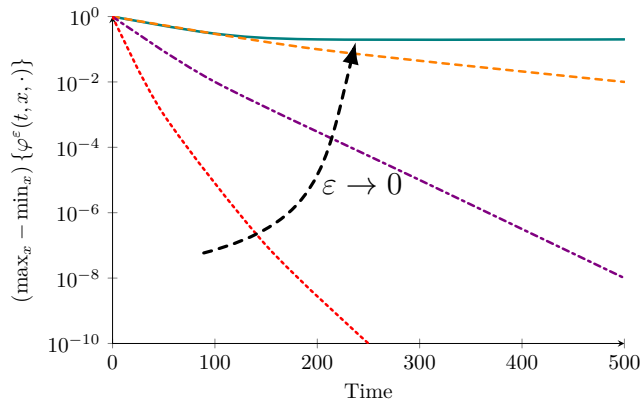

The limit system \eqref{eq:Vari} is referred to as a nonlocal Hamilton–Jacobi equation, both by analogy with the classical use of the Hopf–Cole transform for parabolic equations and because it admits a unique viscosity solution, characterized via the method of sub- and super-solutions. It is well known that solutions to Hamilton-Jacobi equations are, at best, expected to have Lipschitz regularity. This low regularity presents additional challenges both for the analysis and from a numerical perspective.

It was shown in \cite{BouinCalvezGrenierNadin2023} that \eqref{eq:Vari} admits a representation formula that characterizes the unique viscosity solution. The rigorous notion of viscosity solutions for \eqref{eq:Vari} is also developed in that work, based on appropriate definitions of sub- and super-solutions. We refer the reader to \cite{BouinCalvezGrenierNadin2023} for a detailed discussion; the explicit form of the representation formula will be recalled later. For standard Hamilton-Jacobi equations, there is a duality between the Eulerian viewpoint \cite{EvansSouganidis1984, EvansSouganidis1989, Fathi2012}, where the dynamics are described by a PDE as in \eqref{eq:Vari}, and the Lagrangian perspective, which focuses on trajectories, as in a representation formula. In the latter case, minimizing trajectories are typically smooth. In contrast, the trajectories associated with the system \eqref{eq:Vari} are piecewise linear in position and piecewise constant in velocity \cite{BouinCalvezGrenierNadin2023}. The construction of AP schemes offers a promising perspective for extending the analysis of \cite{BouinCalvezGrenierNadin2023} to other systems, such as neutron transport \cite{DuderstadtMartin1979}, where the accurate treatment of rare events is equally critical.

The design of AP schemes for kinetic equations dates back to \cite{Jin1999, Klar1999}. The key idea is to build a scheme for the $\ep > 0$ problem that, in the limit $\ep \to 0$, converges towards a consistent discretization of the asymptotic model. Another fundamental feature of AP schemes is their ability to take time steps independent of the scaling parameter, avoiding a restrictive condition of the type $\Dt \leq \ep \Dx$, commonly encountered in classical schemes. For an overview of such methods in the kinetic context, we refer to \cite{DimarcoPareschi2014ActaNumerica, JinReview2022}. However, these approaches typically work directly on the distribution function and do not involve the Hopf-Cole transform \eqref{eq:HopfCole}. 

In the context of biological systems, AP schemes based on logarithmic transformations have been proposed in \cite{CalvezHivertYoldas2023, GaudeulHivert2024, AlmeidaPertRuan2022}. To the best of our knowledge, the only instance of such a strategy applied to kinetic equations is found in \cite{Hivert2018}, where \eqref{eq:KineticKPP} is treated in the case of bounded velocities to study front propagation. In this work the author considered a multiplicative micro-macro decomposition that allows to treat a macroscopic part, that does not depend on velocity, and a perturbative part separately. Micro-macro schemes are typically additive \cite{BennouneLemouMieussens2008,LemouMieussens2008, LiuMieusens2010}, but the logarithmic transform prompted the multiplicative one allowing the construction of an AP scheme for \eqref{eq:KineticKPP}. 

A major difference between the study in \cite{Hivert2018} and ours is the fact that they considered bounded velocities. Under this modeling assumption, it is sufficient to rescale \eqref{eq:KineticKPP} under a hyperbolic scaling $\left(\frac{t}{\ep},\frac{x}{\ep},v\right)$. The asymptotic limit $\ep\to0$ of this problem was studied in \cite{BouinCalvez2012,Bouin2015,Caillerie2017} and its front propagation property showed in \cite{BouinCalvezNadin2015}. Because of the bounded velocities, the asymptotic limit does not depend on the velocity variable which is the typical behavior that prompted the use of micro-macro decomposition. For unbounded velocities, which is the novelty of our study compared to \cite{Hivert2018}, it is not the case and such decomposition is not relevant anymore.

The standard approach to discretize Hamilton-Jacobi equations is based on finite differences, for which robust analytical tools have been developped to study convergence towards viscosity solutions \cite{CrandallLions1984, Souganidis1985, BarlesSouganidis1991}. These tools were adapted in \cite{Hivert2018}, where the limiting system admits a classical Hamilton-Jacobi formulation, albeit with a Hamiltonian defined implicitly. However, in our setting, recasting \eqref{eq:Vari} in Hamiltonian form remains an open problem, necessitating an alternative approach.

The goal of this paper is therefore twofold. First, we construct a numerical scheme capable of capturing the asymptotic regime $\ep \to 0$ in \eqref{eq:kineticScaled} as well as the key features mentioned above. Unlike in \cite{Hivert2018}, the nature of the limit system leads us to develop a new class of AP schemes, based on the Duhamel representation of \eqref{eq:kineticScaled} and a kind of ``meso-macro'' framework. In this approach, a macroscopic component evolves along the function $\varphi^\ep$. Second, beyond the AP property, we aim to ensure that the limit scheme converges, when the discretization parameters tend to $0$, towards the viscosity solution of \eqref{eq:Vari}. To this end, we make use of the representation formula of \eqref{eq:Vari}, along with a discrete analogue, to establish convergence to the correct solution.

At a deeper level, AP schemes are not just computational tools. They can also guide analytical understanding. While they were initially introduced to efficiently handle stiffness and capture asymptotic limits, their structure can serve as a bridge to the limiting problem itself. In our case, directly discretizing the variational formulation \eqref{eq:Vari} is far from straightforward. By instead designing a scheme that preserves the key features of the scaled kinetic equation \eqref{eq:kineticScaled}, we obtain, in the limit $\ep \to 0$, a discretization that naturally approximates the viscosity solution of \eqref{eq:Vari}. In addition, the derivation of a discrete representation formula for \eqref{eq:Vari} not only reinforces the consistency of our scheme but also opens a path for extending this approach to other models with similar variational structures.

\myParagraph{Outline of the paper}

In Section~\ref{sec:ConstructionAndMR}, we begin by introducing a discretization \eqref{eq:schemeMuPhiFull} of \eqref{eq:kineticScaled} and presenting the main results of the paper. These are the AP property of \eqref{eq:schemeMuPhiFull} (Theorem~\ref{thm:AP}) and the fact that the asymptotic limit \eqref{eq:VariDiscFull} captures the viscosity solution of \eqref{eq:Vari} (Theorem~\ref{thm:ConvVisc}). The theorems are proven in the following sections. Theorem~\ref{thm:AP} is proven in Section~\ref{sec:AP}. It relies on several stability properties of \eqref{eq:schemeMuPhiFull}. Regarding Theorem~\ref{thm:ConvVisc}, the main difficulty lies in the fact that classical techniques are not applicable to establish the convergence of \eqref{eq:VariDiscFull}. As in \cite{BouinCalvezGrenierNadin2023}, where non-standard features of \eqref{eq:Vari} prompted the introduction of new notions of viscosity solutions, we reformulate \eqref{eq:VariDiscFull} in Section~\ref{sec:AsymptProp} to connect the discrete setting with its continuous counterpart. Specifically, we analyze stability properties of \eqref{eq:VariDiscFull} in Section~\ref{subsec:Variprop}, and derive a discrete representation formula in Section~\ref{subsec:DerivRep}. The full convergence result of Theorem~\ref{thm:ConvVisc} is proven in Section~\ref{sec:ConvVisc}. Finally, Section~\ref{sec:NumRes} presents numerical experiments that both confirm the theoretical results and investigate the long time behavior of \eqref{eq:Vari}.

\section{Construction of the numerical scheme and main results}\label{sec:ConstructionAndMR}

\subsection{Construction of the numerical scheme}
We now propose the derivation of an asymptotic preserving scheme for \eqref{eq:kineticScaled}. The main ideas of the discretization are as follows. First, the Duhamel formulation of \eqref{eq:kineticScaled} is considered. Then, we adopt a meso-macro approach by introducing the Hopf-Cole transform of the distribution $f^\ep$. This method allows for an explicit treatment of the stiff term. We proceed in three steps, namely the time, velocity, and space discretizations, which we detail below.

\myParagraph{Time discretization} %-----------------------------------------
Let $N_t\in\N^*$. We set $(0,T]=\cup^{N_t}_{k= 0} (t^k,t^{k+1}]$ with $t^n=n\Dt$ and $t^{N_t}=T$. Our goal is to find a reformulation of \eqref{eq:kineticScaled} that allows for the derivation of a numerical scheme that is stable in the limit $\ep\to0$. To this achieve this, we start by considering its Duhamel formulation at time $t^{n+1}=t^n+\Dt$:
\begin{equation}\label{eq:Duhamel}
    \begin{aligned}
        f^\ep(t^n+\Dt,x,v) &= f^\ep(t^n,x-\Dt\,v,v)\e^{-\Dt/\ep}\\
        &\qquad - \frac{1}{\ep c^\ep}\e^{-\Dt/\ep}\int_0^{\Dt}\e^{s/\ep}\e^{-|v|^2/2\ep}\rho^\ep(t^n+s, x+(s-\Dt)v)\dd s,
    \end{aligned}
\end{equation}
with
\begin{equation}\label{eq:cepsCont}
    c^\ep=\lla\exp\left(-\frac{|v|^2}{2\ep }\right)\rra.
\end{equation}
In the spirit of \cite{Hivert2018}, we introduce $\varphi^\ep$ and $\mu^\ep$ defined respectively as the Hopf-Cole transforms of $f^\ep$ and $\rho^\ep$. Specifically,
\begin{equation}\label{eq:rhomu}
    f^\ep(t,x,v)=\e^{-\frac{\varphi^\ep(t,x,v)}{\ep}},\quad\rho^\ep(t,x)=\e^{-\frac{\mu(t,x)^\ep}{\ep}}\quad\text{and}\quad \lla \e^{-\frac{\varphi^\ep(t,x,v)}{\ep}} \rra =\e^{-\frac{\mu^\ep(t,x)}{\ep}}.
\end{equation}
The semi-discrete scheme in time arises from selecting a quadrature rule to approximate the integral in \eqref{eq:Duhamel}. Due to the stiffness in $\mu^\ep$, we adopt a right-point quadrature approximation. Let $\varphi^{\ep,n}(x,v)$ denote the approximation of $\varphi^\ep(t^n,x,v)$, and similarly, $\mu^{\ep,n}(x)$ approximates $\mu^{\ep}(t^n,x)$. The term $\e^{-\mu(t^n+s, x+ (s-\Dt v))/\ep}$ is approximated by $\e^{-\mu^{n+1}(x)/\ep}$ allowing what remains in the integral to be computed explicitly. One then obtains
\begin{equation}\label{eq:KineticDuhamelPhi}
    \e^{-\varphi^{\ep,n+1}(x,v)/\ep} = \e^{-\varphi^{\ep,n}(x-\Dt v,v)/\ep}\e^{-\Dt/\ep} + (1-\e^{-\Dt/\ep})\frac{1}{c^\ep}\e^{-\frac{|v|^2}{2\ep}}\e^{-\mu^{\ep,n+1}(x)/\ep}.
\end{equation}
Remark that \eqref{eq:KineticDuhamelPhi} is well-posed, despite its implicit expression. Indeed, because of the conservation of mass of the original kinetic equation \eqref{eq:kineticScaled}, $\mu^{\ep,n+1}$ can be obtained explicitly by integrating \eqref{eq:KineticDuhamelPhi} in velocity and using \eqref{eq:rhomu}, along with the definition \eqref{eq:cepsCont} of $c_\ep$. The update of $\mu^{\ep,n+1}$ then reads:
\begin{equation}\label{eq:updateMu}
    \e^{-\mu^{\ep,n+1}(x)/\ep} = \lla  \e^{-\varphi^{\ep,n}(x-\Dt v,v)/\ep}\rra .
\end{equation}

Recalling that our aim is to build an AP scheme for \eqref{eq:kineticScaled}, the asymptotic behavior of \eqref{eq:updateMu} can be understood thanks to the following reformulation:
\begin{equation}\label{eq:updateMuStable}
    \mu^{\ep,n+1}(x) = m^{\ep,n}_\varphi(x) - \ep\ln\left(\lla  \e^{-[\varphi^{\ep,n}(x-\Dt v,v)-m_\varphi(x)]/\ep}\rra \right),
\end{equation}
where
\begin{equation}\label{eq:petitmSemiT}
    m_\varphi^{\ep,n}(x)=\underset{v\in\R^{d_v}}{\min} \lbr\varphi^{\ep,n}(x-\Dt v, v)\rbr.
\end{equation}
Indeed, assuming that one can formally pass to the limit $\ep\to0$ in \eqref{eq:updateMuStable}, the log term vanishes and the limit of $\mu^{\ep,n+1}$ is obtained as the minimum \eqref{eq:petitmSemiT} of the limit of $\varphi^{\ep,n}$. Similarly, we introduce the quantity
\begin{equation}\label{eq:grandMSemiT}
    M^{\ep,n}(x,v)=\min\lbr \varphi^{\ep,n}(x-\Dt v,v)+\Dt,\, \frac{|v|^2}{2}+\mu^{\ep,n+1}(x)\rbr ,
\end{equation}
allowing us to rewrite \eqref{eq:KineticDuhamelPhi} as a semi-discrete in time numerical scheme. For all $x\in\R$, $v\in\R$ and $n\in\ldb1,N_t\rdb$, it reads:
\begin{equation}\label{eq:schemeMuPhiSemiT}\tag{$\mathcal{S}^\ep_{\Dt}$}
    \lbr\begin{aligned}
        \mu^{\ep,n+1}(x) &= m_\varphi^{\ep,n}(x) - \ep \ln\left(
            \lla  
                \e^{-\left[\varphi^{\ep,n}(x-\Dt v,v) - m_\varphi^{\ep,n}(x)\right]/\ep} 
            \rra  \right), \\
        \varphi^{\ep,n+1}(x, v) &= M^{\ep,n}(x,v) 
        - \ep \ln\bigg(
            c^\ep \e^{-\left[\varphi^{\ep,n}(x-\Dt v,v) + \Dt - M^{\ep,n}(x,v)\right]/\ep}\\
        &\quad + \left(1 - \e^{-\Dt/\ep}\right)
            \e^{-\left[\frac{|v|^2}{2} + \mu^{\ep,n+1}(x) - M^{\ep,n}(x,v)\right]/\ep}
        \bigg) + \ep \ln(c^\ep),
    \end{aligned}\right.
\end{equation}
initialized for all $x\in\R$ and all $v\in\R$, by
\begin{equation*}%\label{eq:initSemiT}
    \mu^{\ep,0}(x) = \underset{v\in\R^{d_v}}{\min}\lbr\varphi_\tin(x-\Dt-v,v)\rbr \quad\text{and}\quad \varphi^{\ep,0}(x,v) = \varphi_\tin(x,v).
\end{equation*} 

\myParagraph{The Mesh} %-----------------------------------------
From now on, we restrict the presentation to the $d_x=d_v=1$ setting. In order to fit with the continuous setting, we consider unbounded discrete velocities in $\Dv\Z$ for some $\Dv>0$ and set $v_j=j\Dv$ for $j\in\Z$. We also consider unbounded positions in $\Dx\Z$ for some $\Dx>0$ and set $x_i=i\Dx$ for $i\in\Z$. The low dimensional setting is chosen to simplify the notations and the extension to higher dimensions will be discussed at the end of Section~\ref{sec:ConvVisc}. 

\myParagraph{Velocity discretization} %-----------------------------------------
In what follows, we denote by $\varphi^{\ep,n}_j(x)$ an approximation of $\varphi^{\ep,n}(x,v_j)$. The semi-discretization of the first line of \eqref{eq:schemeMuPhiSemiT} is based on the following velocity quadrature. For a sequence $u = \left(u_j\right)_{j\in\Z}$, it is defined by
\begin{equation*}%\label{eq:quadV}
\langle u \rangle_{\Dv} = \sum_{j\in\Z} u_j\Dv,
\end{equation*}
and is used to approximate integrals in velocity. It also relies on an approximation of \eqref{eq:petitmSemiT}. For each position $x\in\R$, this is given by
\begin{equation}\label{eq:petitmSemiTV}
m^{\ep,n}(x) = \underset{j\in\Z}{\min}\lbr \varphi_{j}^{\ep,n}(x - \Dt v_j) \rbr,
\end{equation}
where the minimum is taken over the discrete velocity set $\left(v_j\right)_{j\in\Z}$. Similarly, to discretize the second line of \eqref{eq:schemeMuPhiSemiT}, we introduce $M_{j}^{\ep,n}(x)$ as an approximation of \eqref{eq:grandMSemiT}:
\begin{equation}\label{eq:grandMSemiTV}
    M_{j}^{\ep,n}(x) = \min\lbr \varphi_{j}^{\ep,n}(x - \Dt v_j) + \Dt,\, \frac{v_j^2}{2} + \mu^{\ep,n+1}(x) \rbr.
\end{equation}
The numerical scheme, semi-discrete in time and velocity for a fixed $\ep > 0$, is then given, for all $x \in \R$, $j \in \Z$, and $n \in \ldb1,N_t\rdb$, by:
\begin{equation}\label{eq:schemeMuPhiSemiTV}\tag{$\mathcal{S}^\ep_{\Dt,\Dv}$}
    \lbr\begin{aligned}
        \mu^{\ep,n+1}(x) &= m^{\ep,n}(x) - \ep \ln\left(
        \lla  
            \e^{-\left[\varphi_{j}^{\ep,n}(x-\Dt v_j) - m^{\ep,n}(x)\right]/\ep} 
        \rra_\Dv  \right),\\
        \varphi_{j}^{\ep,n+1}(x) &= M_{j}^{\ep,n}(x)
        - \ep \ln\bigg(
        c_\Dv^\ep \e^{-\left[\varphi_{j}^{\ep,n}(x-\Dt v_j) + \Dt - M_{j}^{\ep,n}(x)\right]/\ep} \\
        &\qquad+ \left(1 - \e^{-\Dt/\ep}\right) \e^{-\left[\frac{v_j^2}{2} + \mu^{\ep,n+1}(x) - M_{j}^{\ep,n}(x)\right]/\ep} \bigg) + \ep \ln(c_\Dv^\ep).
    \end{aligned}\right.
\end{equation}
For all $x\in\R$ and all $j\in\Z$, it is initialized with,
\begin{equation}\label{eq:initSemiTV}
    \mu^{\ep,0}(x) \coloneq \mu_{\tin}(x) = \underset{j\in\Z}{\min}\lbr\varphi_{\tin}(x-\Dt v_j,v_j)\rbr \quad\text{and}\quad \varphi_j^{\ep,0}(x) = \varphi_\tin(x,v_j),
\end{equation} 
and the constant $c_\Dv^\ep$ in the second line of \eqref{eq:schemeMuPhiSemiTV} is defined by
\begin{equation*}
    c_\Dv^\ep=\sum_j \e^{-\frac{v_j^2}{2\ep}} \Dv.
\end{equation*}
Note that, since it is the approximation of the integral of a Gaussian, the use of a first-order discrete integration introduces an error of order $\Dv$, even though the integral is not on a bounded domain:
\begin{equation}\label{eq:quadCeps}
    c^\ep_\Dv \underset{\Dv\to0,\, \ep\text{ fixed}}{=} \sqrt{2\pi\ep} + \mathcal{O}(\Dv).
\end{equation}

\myParagraph{Position discretization} %-----------------------------------------
Let $i\in\Z$. Denote $\varphi^{\ep,n}_{ij}$ and approximation of $\varphi^{\ep,n}_j(x_i)$. Remark that in \eqref{eq:schemeMuPhiSemiTV}, all terms are evaluated at grid points, except for $\varphi_{j}^{\ep,n}(x-\Dt v_j)$. This term is approximated using a linear interpolation between the two grid points surrounding its argument. Denoting by $\overbar{\varphi}_{i_j,j}^{\ep,n}$ this approximation, it is defined as:
\begin{equation}\label{eq:interpolation}
    \overbar{\varphi}_{i_j,j}^{\ep,n} = \alpha_j\varphi_{i_j,j}^{\ep,n} + (1-\alpha_j)\varphi_{i_j-1,j}^{\ep,n},
\end{equation}
where
\begin{equation*}
    i_j = i-\beta_j,\quad \beta_j = \left\lfloor v_j\frac{\Dt}{\Dx}\right\rfloor,\quad \alpha_j = v_j\frac{\Dt}{\Dx} - \beta_j,
\end{equation*}
and $\lfloor\cdot\rfloor$ denotes the floor function. In particular, this interpolation method guarantees that the transport preserves the $ L^\infty $-norm and inequalities, just as it does in the continuous setting. Indeed, $\overbar{\varphi}_{i_j,j}^{\ep,n}$ is a convex combination of the two pre-transport values, ensuring that it is bounded by these initial quantities. 

With the above notations, we define $\mu_i^{\ep,n}$ the approximation of $\mu^{\ep,n}(x_i)$, and $m_i^{\ep,n}$ the approximation of \eqref{eq:petitmSemiTV} given by
\begin{equation}\label{eq:petitmFull}
    m_i^{\ep,n} = \underset{j\in\Z}{\min}\lbr  \overbar{\varphi}_{i_j,j}^{\ep,n}\rbr.
\end{equation}
The quantity \eqref{eq:grandMSemiTV} is approximated by $M_{ij}^{\ep,n}$ defined as
\begin{equation}\label{eq:grandMFull}
    M_{ij}^{\ep,n}=\min\lbr \overbar{\varphi}_{i_j,j}^{\ep,n}+\Dt,\, \frac{v_j^2}{2}+\mu_i^{\ep,n+1}\rbr.
\end{equation}
Finally, for a fixed $\ep>0$, the fully discrete numerical scheme reads for all $(i,j)\in\Z^2$, and for all $n\in\ldb1,N_t\rdb$:
\begin{equation}\label{eq:schemeMuPhiFull}\tag{$\mathcal{S}^\ep$}
    \lbr\begin{aligned}
        \mu_i^{\ep,n+1} &= m_i^{\ep,n} - \ep \ln\left( \lla \e^{-\left[\overbar{\varphi}_{i_j,j}^{\ep,n} - m_i^{\ep,n}\right]/\ep} \rra_\Dv \right),\\
        \varphi_{ij}^{\ep,n+1} &= M_{ij}^{\ep,n} - \ep \ln\left( c_\Dv^\ep \e^{-\left[\overbar{\varphi}_{i_j,j}^{\ep,n} + \Dt - M_{ij}^{\ep,n}\right]/\ep} + \left( 1 - \e^{-\Dt/\ep} \right) \e^{-\left[\frac{v_j^2}{2} + \mu_i^{\ep,n+1} - M_{ij}^{\ep,n}\right]/\ep} \right)\\
        &\quad + \ep \ln(c_\Dv^\ep).
    \end{aligned}\right.
\end{equation}
This fully discrete scheme is initialized with
\begin{equation}\label{eq:initFull}
    \mu_i^{\ep,0} \coloneq \mu_{\tin,i} =\underset{j\in\Z}{\min}\lbr\overbar{\varphi}_{\tin,i_j,j}\rbr \quad\text{and}\quad \varphi_{ij}^{\ep,0}(x) = \varphi_\tin(x_i,v_j),
\end{equation}
and the notation $\overbar{\varphi}_{\tin,i_j,j}$ stand for the linear interpolation \eqref{eq:interpolation} of $\left(\varphi_\tin(x_i,v_j)\right)_{i\in\Z}$ at points $x_i-\Dt v_j$. Note that the introduction of \eqref{eq:petitmFull} and \eqref{eq:grandMFull} in the construction of the scheme not only provides intuition for the asymptotic behavior of the method, but also plays a crucial role in ensuring numerical stability with respect to $\ep$. Indeed, in this formulation, the exponential always has a non-positive argument, which prevents blow-up in the limit $\ep \to 0$.

\subsection{Main results}
The scheme \eqref{eq:schemeMuPhiFull} presented above is a structure-preserving discretization of \eqref{eq:kineticScaled}, for which the asymptotic limit can be rigorously justified. Moreover, its asymptotics naturally lead to a consistent and effective numerical scheme for the limit model \eqref{eq:Vari}. We emphasize that directly constructing a reliable numerical method for \eqref{eq:Vari} is not straightforward, due to the non-classical nature of the problem. In addition, the convergence of the asymptotic scheme toward the viscosity solution of \eqref{eq:Vari} as the discretization parameters vanish can also be established. These two results constitute the main contributions of this paper and are detailed below. Before stating them, we make the following assumption on the regularity of the initial data :

\begin{Assumption}\label{Ass:LinfBounds}
    First, there exists $M>0$ such that for all $(x,v)\in\R^2$ and all $\ep\in(0,1]$
    \begin{subequations}
        \makeatletter
        \def\@currentlabel{H1}
        \makeatother
        %\label{eq:boundInit}
        \renewcommand{\theequation}{H1.\alph{equation}}
        \begin{align}
            \left|\varphi_{\tin}(x,v)-\frac{v^2}{2}\right|&\leq M\label{eq:boundInitPhi}\\
            \left|\mu_{\tin}+\ep\ln(c_\Dv^\ep)\right| &\leq M.\label{eq:boundInitMu}
        \end{align}
    \end{subequations}
    Second, for all $v\in\R$, the function $x\mapsto\varphi_\tin(x,v)-\frac{v^2}{2}$ is $L$-Lipschitz.
\end{Assumption}

\begin{theorem}[Asymptotic Preserving property]\label{thm:AP}
    Let $\Dt > 0$, $\Dx > 0$, and $\Dv > 0$ be fixed. For all $(i,j) \in \Z^2$, we define $\mu_i^{\ep,0}$ and $\varphi_{ij}^{\ep,0}$ according to \eqref{eq:initFull}. Assume moreover that $(\varphi_{\tin},\mu_{\tin})$ satisfies Assumption~\ref{Ass:LinfBounds}. Then, in the limit $\ep\to0$, the solution $(\mu_i^{\ep,n},\varphi_{ij}^{\ep,n})$ to \eqref{eq:schemeMuPhiFull} converges locally uniformly in $i\in\Z$ towards $(\mu_i^{n},\varphi_{ij}^{n})$ solution to
    \begin{equation}\label{eq:VariDiscFull}\tag{$\mathcal{S}^0$}
        \lbr \begin{aligned}
            &\mu_i^{n+1}=\underset{j\in\Z}{\min} \lbr \overbar{\varphi}_{i_j,j}^{n}\rbr ,\\
            &\varphi_{ij}^{n+1} = \min\lbr \overbar{\varphi}_{i_j,j}^{n} + \Dt ,\, \frac{v_j^2}{2}+\mu_i^{n+1}\rbr,
        \end{aligned}\right.
    \end{equation}
    with $\overbar{\varphi}_{i_j,j}^{n}$ defined as in \eqref{eq:interpolation}, and the initial data is defined for all $j\in\Z$ and $i\in\Z$ by
    \begin{equation}\label{eq:initFullLimit}
        \mu_i^{0} = \underset{j\in\Z}{\min}\lbr\overbar{\varphi}_{\tin,i_j,j}\rbr \quad\text{and}\quad \varphi_{ij}^{0}=\varphi_\tin(x_i,v_j).
    \end{equation}
    In addition, the numerical cost of \eqref{eq:schemeMuPhiFull} is independent of $\ep$.
\end{theorem}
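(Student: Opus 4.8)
The plan is to argue by induction on the time index $n$, showing that at each step the two logarithmic corrections $-\ep\ln(\cdot)$ appearing in the updates of $\mu_i^{\ep,n+1}$ and $\varphi_{ij}^{\ep,n+1}$ in \eqref{eq:schemeMuPhiFull} tend to $0$ as $\ep\to0$, so that \eqref{eq:schemeMuPhiFull} degenerates exactly into \eqref{eq:VariDiscFull}. The base case $n=0$ is immediate since, by \eqref{eq:initFull} and \eqref{eq:initFullLimit}, the initial data of the two schemes coincide and do not depend on $\ep$. For the inductive step I assume $\mu_i^{\ep,n}\to\mu_i^{n}$ and $\varphi_{ij}^{\ep,n}\to\varphi_{ij}^{n}$ locally uniformly in $i$; since the interpolation \eqref{eq:interpolation} is an $\ep$-independent convex combination, this transfers at once to $\overbar{\varphi}_{i_j,j}^{\ep,n}\to\overbar{\varphi}_{i_j,j}^{n}$. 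The whole difficulty is then to pass to the limit in the two nonlinear corrections.

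\textbf{The Laplace engine.} The mechanism is the elementary Laplace principle $-\ep\ln\left(\sum_k c_k^\ep\,\e^{-a_k^\ep/\ep}\right)\to\min_k a_k^0$, valid when the $a_k^\ep\geq0$ converge, the minimum is attained, and the weights $c_k^\ep$ stay bounded above and below by positive constants up to a factor that is only polynomial in $\ep$ (so that $\ep\ln c_k^\ep\to0$). In the first line of \eqref{eq:schemeMuPhiFull} the shift by $m_i^{\ep,n}$ from \eqref{eq:petitmFull} makes every exponent $\overbar{\varphi}_{i_j,j}^{\ep,n}-m_i^{\ep,n}$ nonnegative with a vanishing minimum, so the velocity sum is bounded below by $\Dv$; it remains only to bound it above uniformly in $\ep$, which is the crux. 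In the second line, subtracting $M_{ij}^{\ep,n}$ from \eqref{eq:grandMFull} makes the two exponents $b_1\coloneq\overbar{\varphi}_{i_j,j}^{\ep,n}+\Dt-M_{ij}^{\ep,n}$ and $b_2\coloneq\tfrac{v_j^2}{2}+\mu_i^{\ep,n+1}-M_{ij}^{\ep,n}$ nonnegative with $\min(b_1,b_2)=0$; the prefactor of the second term is $K^\ep\coloneq(1-\e^{-\Dt/\ep})/c_\Dv^\ep$, which by \eqref{eq:quadCeps} behaves like $(2\pi\ep)^{-1/2}$, hence diverges only polynomially and satisfies $\ep\ln K^\ep\to0$. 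Combined with the trailing $\ep\ln(c_\Dv^\ep)\to0$, the correction of the second line reduces to $-\ep\ln(\e^{-b_1/\ep}+K^\ep\e^{-b_2/\ep})\to-\min(b_1,b_2)=0$, while $M_{ij}^{\ep,n}\to\min\lbr\overbar{\varphi}_{i_j,j}^{n}+\Dt,\tfrac{v_j^2}{2}+\mu_i^{n+1}\rbr=\varphi_{ij}^{n+1}$, which handles the second line for each fixed $(i,j)$.

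\textbf{Uniform stability (the main obstacle).} The technical heart, which I expect to be the main difficulty, is a pair of uniform-in-$\ep$ \emph{coercivity} bounds established beforehand by induction on $n$ (the stability properties referred to in the statement): there exist constants $A_n,B_n$, independent of $\ep$, $i$ and $j$, with $\tfrac{v_j^2}{2}-A_n\leq\varphi_{ij}^{\ep,n}\leq\tfrac{v_j^2}{2}+B_n$, and consequently $m_i^{\ep,n},\mu_i^{\ep,n}\leq B_n$ (the upper bound using that $v_0=0$ gives $\overbar{\varphi}_{i_0,0}^{\ep,n}=\varphi_{i0}^{\ep,n}$). Propagating these through \eqref{eq:schemeMuPhiFull} relies on $M_{ij}^{\ep,n}\leq\tfrac{v_j^2}{2}+\mu_i^{\ep,n+1}$, on the nonnegativity of the exponents, and on the $\ep$-uniform boundedness of $\ep\ln(c_\Dv^\ep)$ and $\ep\ln K^\ep$. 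The coercivity forces the minima in \eqref{eq:petitmFull} and in the limit to be attained over a finite, $\ep$-independent range of velocities (those with $v_j^2/2\leq A_n+B_n$), which legitimises passing to the limit in $m_i^{\ep,n}\to\min_j\overbar{\varphi}_{i_j,j}^{n}$. Moreover, the Lipschitz-in-$x$ part of Assumption~\ref{Ass:LinfBounds}, preserved by the $L^\infty$-contractive interpolation \eqref{eq:interpolation}, yields a uniform-in-$\ep$ discrete Lipschitz bound in $i$, upgrading pointwise convergence to local uniform convergence on bounded index ranges.

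\textbf{Tail control and conclusion.} With these bounds the upper bound on the velocity sum follows from a truncation balancing two competing exponential rates. Writing $a_j\coloneq\overbar{\varphi}_{i_j,j}^{\ep,n}-m_i^{\ep,n}\geq\tfrac{v_j^2}{2}-C_n$ with $C_n\coloneq A_n+B_n$, I split the sum at an index $J_n$ chosen so that $v_{J_n}^2/2>C_n$. The head $\sum_{|j|\leq J_n}\e^{-a_j/\ep}\Dv$ is trapped in $[\Dv,(2J_n+1)\Dv]$, while the tail obeys the Gaussian estimate $\sum_{|j|>J_n}\e^{-a_j/\ep}\Dv\leq\e^{C_n/\ep}\sum_{|j|>J_n}\e^{-v_j^2/(2\ep)}\Dv\leq\e^{(C_n-v_{J_n}^2/2)/\ep}\,\mathcal{O}(\ep)\to0$, precisely because the truncation was chosen to make the Gaussian decay rate exceed the parasitic rate $C_n/\ep$. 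Hence the sum is squeezed between two positive constants, its $-\ep\ln$ tends to $0$, and $\mu_i^{\ep,n+1}\to\mu_i^{n+1}$; combined with the second-line limit of the previous paragraph, the induction closes. Finally, the $\ep$-independence of the cost is immediate: the stiff relaxation is integrated exactly through the Duhamel/Hopf--Cole reformulation, so \eqref{eq:schemeMuPhiFull} carries no restriction of the type $\Dt\leq\ep\Dx$, and the per-step work (the minima and the discrete velocity sums) is a fixed function of $N_t$, $\Dx$, $\Dv$ alone.
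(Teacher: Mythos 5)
Your proposal is correct, and its core mechanism is the same as the paper's: two-sided bounds on the exponential sums (lower bound from the vanishing exponent guaranteed by subtracting $m_i^{\ep,n}$ resp.\ $M_{ij}^{\ep,n}$, upper bound from a head/tail splitting where the quadratic coercivity $\varphi^{\ep,n}\geq v_j^2/2-C_n$ makes the Gaussian tail beat the parasitic factor $\e^{C_n/\ep}$), so that the $-\ep\ln(\cdot)$ corrections vanish, together with a localization of the argmin in a finite, $\ep$-independent velocity set. This matches the paper's Step 1 (its set $A$ and complement $\bar A$ play exactly the role of your truncation at $J_n$) and its use of the maximum principle and of Lemma~\ref{lem:LocMinimaEps}.

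Where you genuinely diverge is in the packaging of the limit passage. The paper proves the theorem at the semi-discrete level \eqref{eq:schemeMuPhiSemiTV}, where $x$ is continuous; there, for fixed $n$, nothing guarantees a priori that $\varphi_j^{\ep,n}(x)$ converges as $\ep\to0$, so the paper invokes the propagated Lipschitz bounds (Corollary~\ref{cor:LipBoundEps}) and Ascoli's theorem to extract locally uniformly convergent subsequences, identifies the limit through the explicit recursion, and only then removes the subsequence restriction using the $\ep$-independence of the initial data; the fully discrete case is handled by Remark~\ref{rem:convEpsFull}. You instead work directly on \eqref{eq:schemeMuPhiFull} and induct on $n$: since the spatial index $i$ is discrete, local uniform convergence is just convergence on finite index sets, the base case holds because the initial data is $\ep$-independent, and the inductive hypothesis feeds directly into the Laplace argument at step $n+1$ through the ($\ep$-independent, finite) interpolation \eqref{eq:interpolation}. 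This dispenses entirely with compactness and the subsequence-uniqueness step, and is arguably the cleaner route for the fully discrete statement; what it gives up is the semi-discrete convergence statement in continuous $x$, which the paper reuses later (it is \eqref{eq:VariDiscSemiTV} that carries the representation-formula analysis of Sections~\ref{sec:AsymptProp} and~\ref{sec:ConvVisc}). Two harmless inaccuracies: your coercivity constants $A_n,B_n$ grow with $n$, whereas the paper's maximum principle propagates the bound $M$ exactly — irrelevant here since $N_t$ is fixed; and your claim that $K^\ep=(1-\e^{-\Dt/\ep})/c_\Dv^\ep$ behaves like $(2\pi\ep)^{-1/2}$ misreads \eqref{eq:quadCeps}, which is an estimate as $\Dv\to0$ at fixed $\ep$ — in the present regime ($\ep\to0$, $\Dv$ fixed) one has $c_\Dv^\ep\to\Dv$ and $K^\ep\to1/\Dv$. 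Since all you need is $\ep\ln K^\ep\to0$ and $\ep\ln c_\Dv^\ep\to0$, which follow from $\Dv\leq c_\Dv^\ep\leq\Dv+\sqrt{2\pi\ep}$, the conclusion stands (the paper's own invocation of \eqref{eq:quadCeps} at this point is similarly loose).
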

As stated in the following theorem, the asymptotic limit \eqref{eq:VariDiscFull} of \eqref{eq:schemeMuPhiFull} is actually a good discretization of \eqref{eq:Vari} in the sense that it captures the viscosity solution of \eqref{eq:Vari} when the discretization parameters tend to $0$.
\begin{theorem}[Convergence towards the viscosity solution]\label{thm:ConvVisc}
    Let $(n,i,j)\in\ldb1,N_t\rdb\times\Z^2$, and suppose that
    \begin{equation}\label{eq:stabJumpCond}
        \Dt \leq \frac{\Dv^2}{2}.
    \end{equation}
    Assume moreover that the initial data $\varphi_\tin$ satisfies Assumption~\ref{Ass:LinfBounds} as well as $L-$Lipschitz regularity in velocity. Then, the scheme \eqref{eq:VariDiscFull}-\eqref{eq:initFullLimit} captures the viscosity solution of \eqref{eq:Vari} when $\Dt$, $\Dx$, and $\Dv$ tend to $0$ under condition \eqref{eq:stabJumpCond} and $\frac{\Dx}{\Dt}\to0$. More precisely, there exists a constant $C$ such that
    \begin{equation}\label{eq:errorFull}
        \Big|\varphi_{ij}^{N_t}-\varphi(T, x_i, v_j)\Big|\leq C\left(\Dt+\frac{\Dx}{\Dt}+\Dv\right).
    \end{equation}
    The constant $C$ depends on the final time $T$ and on the velocity $v_j$.
\end{theorem}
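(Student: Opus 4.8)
The plan is to bypass the classical Barles--Souganidis route (monotonicity, stability, consistency towards the unique viscosity solution), which is unavailable here because \eqref{eq:Vari} cannot be recast in Hamiltonian form, and instead to compare two representation formulas: the Lagrangian formula characterising the viscosity solution $\varphi$ of \eqref{eq:Vari} established in \cite{BouinCalvezGrenierNadin2023}, and a discrete analogue for the limit scheme \eqref{eq:VariDiscFull} derived in Section~\ref{subsec:DerivRep}. Both express the value at a point as an infimum, over admissible trajectories that are piecewise linear in position and piecewise constant in velocity, of an accumulated cost made of a transport rate and a resampling cost $v^2/2$ charged at each velocity jump. The structural match between the two classes of trajectories is exactly what makes the comparison natural, and the estimate \eqref{eq:errorFull} will follow from a two-sided trajectory bound.

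First I would unroll the recursion \eqref{eq:VariDiscFull} backward from the final index. At each step $\varphi_{ij}^{n+1}$ is the minimum of a transport branch $\overbar{\varphi}_{i_j,j}^n + \Dt$, which keeps the velocity $v_j$, moves the position back by $\Dt v_j$ through the interpolation \eqref{eq:interpolation} and adds $\Dt$, and a jump branch $v_j^2/2 + \mu_i^{n+1}$, in which $\mu_i^{n+1}=\min_{j'}\overbar{\varphi}_{i_{j'},j'}^n$ selects the optimal incoming velocity and the cost $v_j^2/2$ is charged. Iterating this dichotomy yields the discrete representation formula: $\varphi_{ij}^{N_t}$ equals the minimum, over all discrete velocity histories and jump times, of the interpolated initial datum $\varphi_\tin$ at the trajectory's starting point plus the total transport and jump costs. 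Finiteness of this infimum and boundedness of the relevant velocities rest on the $L^\infty$ and Lipschitz stability obtained in Section~\ref{subsec:Variprop} together with the bounds of Assumption~\ref{Ass:LinfBounds}.

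I would then prove the two inequalities separately. For the upper bound on $\varphi_{ij}^{N_t}$, I start from a near-optimal continuous trajectory for $\varphi(T,x_i,v_j)$ and build a discrete trajectory by snapping its velocities to $\Dv\Z$ and its jump times to $\Dt\Z$; for the reverse inequality I start from an optimal discrete trajectory and read it as a piecewise constant continuous one. In each direction I track three independent error sources: the quadrature/Euler error in time, of size $\mathcal{O}(\Dt)$ per unit time; the interpolation error of \eqref{eq:interpolation}, which for Lipschitz data is $\mathcal{O}(\Dx)$ per transport step and hence $\mathcal{O}(T\,\Dx/\Dt)$ after $N_t=T/\Dt$ steps — this is precisely the term forcing $\Dx/\Dt\to0$; and the velocity-grid error $\mathcal{O}(\Dv)$ coming from restricting velocities to $\Dv\Z$, from the velocity-Lipschitz regularity of $\varphi_\tin$, and from the quadrature constant $c_\Dv^\ep$. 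The Lipschitz bounds propagated by the scheme convert spatial and velocity mismatches into value mismatches linearly, and the condition \eqref{eq:stabJumpCond}, $\Dt\le\Dv^2/2$, guarantees that one transport step never costs more than the smallest admissible velocity jump, so that the discrete dichotomy faithfully reproduces the continuous switching between transport and jump and keeps the number and cost of jumps along optimal trajectories under control.

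The main obstacle is the faithful matching of the jump/switching structure across the two formulations, complicated by the nonlocal coupling through $\mu=\min_v\varphi$: a small perturbation of a trajectory near a velocity that is close to being a minimiser can change which velocity is selected at a jump, so the comparison must be stable under such near-ties rather than rely on uniqueness of minimisers. Controlling the accumulation of the per-step errors over $N_t$ steps without a Gronwall-type blow-up — which is what makes the constant $C$ in \eqref{eq:errorFull} linear in $T$ and dependent on $v_j$ — is the technical heart of the argument; the $v_j$-dependence enters through the kinetic cost $v_j^2/2$ and the transported distance $\Dt v_j$, and keeping the total error additive rather than multiplicative is where the order-preservation of the interpolation \eqref{eq:interpolation} and of the min-operations in \eqref{eq:VariDiscFull} become essential.
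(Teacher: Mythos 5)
Your overall strategy coincides with the paper's: unroll \eqref{eq:VariDiscFull} into a discrete representation formula, compare it two-sidedly with the kinetic Hopf--Lax formula of \cite{BouinCalvezGrenierNadin2023} by snapping trajectories to the time and velocity grids, and treat the spatial interpolation error separately, which is where the $\Dx/\Dt$ term comes from. (The paper organizes this as two lemmas --- semi-discrete convergence in Lemma~\ref{lem:ConvViscDtDv}, then fully-discrete-to-semi-discrete in Lemma~\ref{lem:ConvViscDx} --- but that is the same decomposition you describe.)

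The genuine gap is the control of the number of velocity jumps along minimal discrete trajectories. The discrete action charges a jump over a whole time step while the continuous action charges it instantaneously, so the discrepancy between $\At_0^T$ and $\Ab_0^T$ along a path with $N_{\ast}$ jumps is of order $N_{\ast}\Dt$ (Lemma~\ref{lem:discrepancyDt}). You attribute the control of $N_{\ast}$ to condition \eqref{eq:stabJumpCond}, but this is not where it comes from: that condition only ensures that the scheme is equivalent to the representation formula (a fictitious ``jump onto the same velocity'' never beats the transport branch, Lemma~\ref{lem:repFormDisc0}); it does not limit how many genuine jumps an optimal path takes. A cost-counting argument --- each jump costs at least $\Dv^2/2$ and the total action is bounded --- only yields $N_{\ast}=\mathcal{O}(\Dv^{-2})$, hence an action discrepancy $N_{\ast}\Dt=\mathcal{O}(1)$ under \eqref{eq:stabJumpCond}: the error would not vanish. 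The missing key lemma is the reduction of the action (Lemma~\ref{lem:ActionReducDisc}, the discrete analogue of Proposition 5.1 in \cite{BouinCalvezGrenierNadin2023}): by merging zero-velocity segments, merging equal-velocity segments, and replacing two consecutive segments of distinct velocities by a single segment at the averaged velocity (a convexity argument), any minimizing path can be reduced, up to an $\mathcal{O}(\Dt)$ change in cost, to at most one intermediate non-trivial velocity state plus the final jump, so that $N_{\ast}\le 2$ and every per-jump error is paid a bounded number of times. Note that this averaging forces one to work with velocities in $\R$ rather than on the grid $\Dv\Z$ (the average of two grid velocities leaves the grid), which is precisely why the paper introduces the intermediate class $\Sigma^{0,T}_{\Dt}$ and runs Step 1 of Lemma~\ref{lem:ConvViscDtDv} through the chain of inclusions \eqref{eq:infInclusions}. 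Two smaller points: the ``near-tie'' instability you single out as the main obstacle is actually harmless in this framework, since the two-sided comparison never uses uniqueness of minimizers, only their existence (coercivity from Assumption~\ref{Ass:LinfBounds}); and $c_\Dv^\ep$ plays no role here, as the limit scheme \eqref{eq:VariDiscFull} does not contain it.
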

The proof of these results both rely on stability properties of \eqref{eq:schemeMuPhiFull} and \eqref{eq:VariDiscFull}, namely an equilibrium-preserving property and a maximum principle. They are established in Sections~\ref{sec:AP} and~\ref{subsec:Variprop} respectively.

Regarding the proof of Theorem~\ref{thm:ConvVisc}, it unfolds as follows. The convergence of a semi-discrete scheme in time and velocity \eqref{eq:VariDiscSemiTV} towards \eqref{eq:Vari} is first established. In particular, we show that the viscosity solution is indeed captured. This is obtained by introducing a discrete representation formula. This reformulation is equivalent to \eqref{eq:VariDiscSemiTV} under the condition \eqref{eq:stabJumpCond} whose nature is unique to this problem and will be detailed in Section~\ref{subsec:DerivRep}. The full convergence of the solution to \eqref{eq:VariDiscFull} towards that of \eqref{eq:Vari} is then established by proving the convergence of \eqref{eq:VariDiscFull} towards \eqref{eq:VariDiscSemiTV} through the propagation of interpolation errors. It is worth mentioning that, proving this result directly, without the intermediate step, would require a deep understanding of a fully discrete representation formula to connect with the continuous notion of viscosity solutions—a significantly more complex problem.

The various convergence results in this work are summarized in Figure~\ref{fig:recapConv}. Although we do not emphasize this aspect, the convergence of \eqref{eq:schemeMuPhiFull} to \eqref{eq:kineticScaled} is expected for smooth solutions. However, a detailed analysis of this convergence lies outside the scope of this article and will instead be illustrated numerically in Section~\ref{sec:NumRes}. 

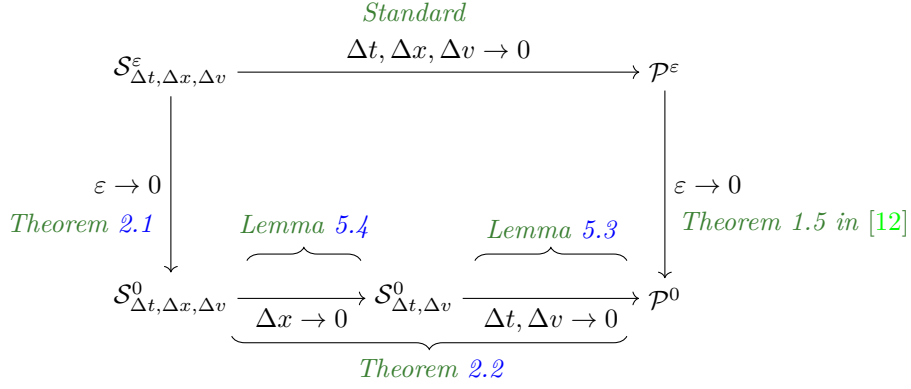
\begin{figure}
    \hspace{.8cm}
    \begin{tikzpicture}
        \node (P_e) at (0,2) {$\mathcal{S}^{\ep}_{\Delta t, \Delta x, \Delta v}$};
        \node (P_0) at (6.5,2) {$\mathcal{P}^{\ep}$};
        \node (P_disc) at (0,-1) {$\mathcal{S}^{0}_{\Delta t, \Delta x, \Delta v}$};
        \node (P_cont) at (6.5,-1) {$\mathcal{P}^{0}$};
        \node (P_mixed) at (3.2,-1) {$\mathcal{S}^{0}_{\Delta t, \Delta v}$};
        
        \draw[->] (P_e) -- (P_0) node[midway, above] {$\Delta t, \Delta x, \Delta v \to 0$};
        \draw[->] (P_e) -- (P_disc) node[midway, left] {$\ep \to 0$};
        \draw[->] (P_disc) -- (P_mixed) node[midway, below] {$\Delta x \to 0$};
        \draw[->] (P_mixed) -- (P_cont) node[midway, below] {$\Delta t, \Delta v \to 0$};
        \draw[->] (P_0) -- (P_cont) node[midway, right] {$\ep \to 0$};

        \draw [decorate, decoration={brace, amplitude=5pt}] (1.,-.5) -- (2.5,-.5) node[midway,above=5pt] {\textit{\textcolor{OliveGreen}{Lemma}~\ref{lem:ConvViscDx}}};
        \draw [decorate, decoration={brace, amplitude=5pt}] (4.,-.5) -- (6,-.5) node[midway,above=5pt] {\textit{\textcolor{OliveGreen}{Lemma}~\ref{lem:ConvViscDtDv}}};

        \draw [decorate, decoration={brace, amplitude=5pt, mirror}] (0.8,-1.5) -- (6,-1.5) node[midway,below=5pt] {\textit{\textcolor{OliveGreen}{Theorem}~\ref{thm:ConvVisc}}};

        \node[OliveGreen] at (8.2,0) {\textit{Theorem 1.5 in \cite{BouinCalvezGrenierNadin2023}}};
        \node[OliveGreen] at (-1.2,0) {\textit{Theorem~\ref{thm:AP}}};
        \node[OliveGreen] at (3.2,2.8) {\textit{Standard}};
    \end{tikzpicture}
    \caption{Structure of the convergence proofs}\label{fig:recapConv}
\end{figure}

\section{AP property}\label{sec:AP}
%%%%%%%%%%%%%%%%%%%%%%%%%%%%%%%%%%%%%%%%%%%%%%%%%%%%%%%%%%%%%%%%%%%%%%%%%%%%%%

We begin by focusing on the semi-discrete scheme \eqref{eq:schemeMuPhiSemiTV}, where the position variable remains continuous. This choice allows to highlight the main analytical tools involved in the study of the numerical scheme. Both in this section and in Section~\ref{sec:ConvVisc}, the velocity and position variables play fundamentally different roles in the system’s dynamics. This distinction is already present in the original kinetic equation \eqref{eq:KinRelax}, where transport and relaxation affect the distribution $f^\ep$ in qualitatively different ways.

The extension of the analysis to the fully discrete case is conceptually straightforward, thanks to the interpolation procedure defined in \eqref{eq:interpolation}. However, the notations involved are considerably heavier. For this reason, we do not detail the fully discrete setting here, but some hints are provided at the end of the section.

\subsection{Stability properties of \texorpdfstring{\eqref{eq:schemeMuPhiFull}}{Seps}}
The proof of Theorem~\ref{thm:AP} follows the proof of the asymptotic behavior of \eqref{eq:kineticScaled} in the continuous setting. It involves several preliminary results, beginning with the following equilibrium-preserving property.

\begin{lemma}[Equilibrium Preserving]\label{lem:WBeps}
    The semi-discrete scheme \eqref{eq:schemeMuPhiSemiTV} preserves the equilibrium~$\left(v_j^2/2\right)_{j\in\Z}$: if $\varphi_\tin(x,v) = v^2/2$ for all $x\in\R$ and $v\in\R$, then 
    \begin{equation*}
        \forall n\in\ldb1,N_t\rdb,\,\forall j\in\Z,\,\forall x\in\R,\quad \varphi_j^{\ep,n}(x) = \frac{v_j^2}{2}.
    \end{equation*}
\end{lemma}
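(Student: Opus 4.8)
The plan is to verify directly that the constant-in-$x$ profile $\varphi_j^{\ep,n}(x) = v_j^2/2$ is a fixed point of the scheme \eqref{eq:schemeMuPhiSemiTV}, by induction on $n$. The base case is immediate from the initialization: if $\varphi_\tin(x,v) = v^2/2$, then $\varphi_j^{\ep,0}(x) = \varphi_\tin(x,v_j) = v_j^2/2$. For the inductive step, I would assume $\varphi_j^{\ep,n}(x) = v_j^2/2$ for all $x$ and $j$, and show that both lines of \eqref{eq:schemeMuPhiSemiTV} reproduce this profile at time $n+1$.

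The key computational observation is that, because the assumed profile does not depend on $x$, the transported quantity satisfies $\varphi_j^{\ep,n}(x-\Dt v_j) = v_j^2/2$ with no interpolation effect. First I would handle the $\mu$-update: since $\min_{j\in\Z}\lbr v_j^2/2\rbr = 0$ is attained at $j=0$, the quantity $m^{\ep,n}(x)$ defined in \eqref{eq:petitmSemiTV} equals $0$, and the first line of \eqref{eq:schemeMuPhiSemiTV} becomes
\begin{equation*}
    \mu^{\ep,n+1}(x) = -\ep\ln\left(\lla \e^{-v_j^2/(2\ep)}\rra_\Dv\right) = -\ep\ln(c_\Dv^\ep),
\end{equation*}
recognizing the definition of $c_\Dv^\ep$. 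Next I would compute $M_j^{\ep,n}(x)$ from \eqref{eq:grandMSemiTV}: the first argument is $v_j^2/2 + \Dt$ and the second is $v_j^2/2 + \mu^{\ep,n+1}(x) = v_j^2/2 - \ep\ln(c_\Dv^\ep)$. The minimum is $v_j^2/2 + \min\lbr \Dt,\, -\ep\ln(c_\Dv^\ep)\rbr$, so $M_j^{\ep,n}(x) - v_j^2/2$ is a constant $\kappa$ independent of $j$ and $x$.

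Substituting into the second line, the two exponents become $[\varphi_j^{\ep,n}(x-\Dt v_j) + \Dt - M_j^{\ep,n}(x)]/\ep = (\Dt - \kappa)/\ep$ and $[v_j^2/2 + \mu^{\ep,n+1}(x) - M_j^{\ep,n}(x)]/\ep = (-\ep\ln(c_\Dv^\ep) - \kappa)/\ep$, both independent of $j$ and $x$. Thus the argument of the logarithm is a constant, and the full update reduces to
\begin{equation*}
    \varphi_j^{\ep,n+1}(x) = \frac{v_j^2}{2} + \kappa - \ep\ln\left(c_\Dv^\ep\,\e^{-(\Dt-\kappa)/\ep} + (1-\e^{-\Dt/\ep})\,\e^{-(-\ep\ln(c_\Dv^\ep)-\kappa)/\ep}\right) + \ep\ln(c_\Dv^\ep).
\end{equation*}
The remaining step is purely algebraic: I would simplify the logarithm using $\e^{-(-\ep\ln(c_\Dv^\ep))/\ep} = c_\Dv^\ep$ and factor out $\e^{\kappa/\ep}$, after which the bracket becomes $c_\Dv^\ep\,\e^{\kappa/\ep}(\e^{-\Dt/\ep} + 1 - \e^{-\Dt/\ep}) = c_\Dv^\ep\,\e^{\kappa/\ep}$, so that $-\ep\ln(\cdots) = -\kappa - \ep\ln(c_\Dv^\ep)$, and all correction terms cancel to leave exactly $v_j^2/2$. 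I do not expect any serious obstacle here; the only point requiring care is the bookkeeping of the additive constants $\kappa$ and the $\ep\ln(c_\Dv^\ep)$ terms, which are precisely engineered so that the right-point quadrature reproduces the equilibrium exactly. The fully discrete case is identical since the constant profile is unaffected by the convex-combination interpolation \eqref{eq:interpolation}.
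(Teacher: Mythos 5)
Your proposal is correct and follows essentially the same route as the paper: induction on $n$, computing $m^{\ep,n}(x)=0$ and $\mu^{\ep,n+1}(x)=-\ep\ln(c_\Dv^\ep)$, then $M_j^{\ep,n}(x)=\frac{v_j^2}{2}+\min\lbr\Dt,\,-\ep\ln(c_\Dv^\ep)\rbr$, and finally substituting into the second line of \eqref{eq:schemeMuPhiSemiTV}. The only (welcome) difference is that you keep the minimum abstract as $\kappa$ and observe that the exponentials cancel identically — $c_\Dv^\ep\e^{\kappa/\ep}(\e^{-\Dt/\ep}+1-\e^{-\Dt/\ep})=c_\Dv^\ep\e^{\kappa/\ep}$ — which unifies the two cases that the paper treats separately; your algebra checks out, so this is a slightly cleaner write-up of the same argument.
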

\begin{proof}
    Let $n\in\ldb1,N_t\rdb$ and assume that $\forall j\in\Z$ and $\forall x\in\R$, $\varphi_j^{\ep,n}(x) = \frac{v_j^2}{2}$. Then, $\forall x\in\R$, $\varphi_j^{\ep,n}(x-\Dt v_j)=\frac{v_j^2}{2}$. Moreover, since $v_0=0$, definition \eqref{eq:petitmSemiTV} yields $m^{\ep,n}(x)=0$, and
    \begin{equation}\label{eq:muEqui}
        \mu^{\ep,n+1}(x) \details{= m^{\ep,n}(x)-\ep\ln\left(\lla  \e^{-[\varphi_j^{\ep,n}(x-\Dt v_j)-m^{\ep,n}(x)]/\ep}\rra_\Dv \right)}
        =-\ep\ln\left(\lla  \e^{-\frac{v_j^2}{2}/\ep}\rra_\Dv \right)=-\ep\ln\left(c_\Dv^\ep\right).
    \end{equation}
    Equation \eqref{eq:muEqui} is then used to express $M_j^{\ep,n}(x)$, defined in \eqref{eq:grandMSemiTV}, as
    \begin{equation}\label{eq:mEqui}
        M_j^{\ep,n}(x) \details{=\min\lbr \frac{v_j^2}{2}+\Dt,\, \frac{v_j^2}{2}-\ep\ln\left(c_\Dv^\ep\right)\rbr } =\frac{v_j^2}{2} + \min\lbr \Dt,\,-\ep\ln\left(c_\Dv^\ep\right)\rbr .
    \end{equation}
    To determine $\varphi_j^{\ep,n+1}(x)$ and conclude, we distinguish two cases depending on the value of the minimum in \eqref{eq:mEqui}. In both cases, inserting \eqref{eq:muEqui} and \eqref{eq:mEqui} into the scheme on $\varphi_j^{\ep,n+1}$ in \eqref{eq:schemeMuPhiSemiTV} and simplifying the expressions yields
    \begin{equation*}
        \varphi_j^{\ep,n+1}(x) = \frac{v_j^2}{2},
    \end{equation*} 

    \details{We therefore distinguish between two cases, depending on the value of the minimum in \eqref{eq:mEqui}. If $\Dt\leq-\ep\ln(c_\Dv^\ep)$, pluging \eqref{eq:mEqui} into the scheme \eqref{eq:schemePhiSemiTV} on $\varphi_j^{\ep,n}(x)$:
    \begin{align*}
        \varphi_j^{\ep,n+1}(x) &= \frac{v_j^2}{2}+\Dt - \ep \ln\bigg(
        c_\Dv^\ep \e^{-\left[\frac{v_j^2}{2} + \Dt - (\frac{v_j^2}{2} + \Dt)\right]/\ep} \\
    &\quad + \left(1 - \e^{-\Dt/\ep}\right)
        \e^{-\left[\frac{v_j^2}{2} -\ep\ln\left(c_\Dv^\ep\right) - (\frac{v_j^2}{2}+\Dt)\right]/\ep}
    \bigg) + \ep \ln(c_\Dv^\ep)\\
    &=\frac{v_j^2}{2}+\Dt - \ep \ln\bigg( c_\Dv^\ep + c_\Dv^\ep \left(1 - \e^{-\Dt/\ep}\right)\e^{\Dt/\ep}\bigg) + \ep \ln(c_\Dv^\ep)\\
    &=\frac{v_j^2}{2}.
    \end{align*}
    Similarly, the same computations in the case $-\ep\ln(c_\Dv^\ep)\leq\Dt$ lead to
    \begin{align*}
        \varphi_j^{\ep,n+1}(x) &= \frac{v_j^2}{2}-\ep\ln(c_\Dv^\ep) - \ep \ln\bigg(c_\Dv^\ep \e^{-\left[\frac{v_j^2}{2} + \Dt - (\frac{v_j^2}{2} -\ep\ln(c_\Dv^\ep))\right]/\ep} \\
    &\quad + \left(1 - \e^{-\Dt/\ep}\right)
        \e^{-\left[\frac{v_j^2}{2} -\ep\ln\left(c_\Dv^\ep\right) - (\frac{v_j^2}{2}-\ep\ln(c_\Dv^\ep))\right]/\ep}
    \bigg) + \ep \ln(c_\Dv^\ep)\\
    &=\frac{v_j^2}{2} - \ep \ln\bigg(\e^{-\Dt/\ep} + \left(1 - \e^{-\Dt/\ep}\right)\bigg)\\
    &=\frac{v_j^2}{2},
    \end{align*}
    }
    which concludes the proof.
\end{proof}

\details{
The second stability property enjoyed by the scheme \eqref{eq:schemeMuPhiSemiTV} is a maximum principle. This property is closely related to the continuous $L^\infty$-bounds on $\varphi^\ep$ and is a crucial tool in the proof of Theorem~\ref{thm:AP}. We refer to \cite{BouinCalvezGrenierNadin2023} for more details on the continuous setting.

\begin{lemma}[Maximum Principle]\label{lem:MaxPrincEps}
    Let $\ep>0$, $\Dt>0,\Dx>0$ and $\Dv>0$. Let $\left(\varphi_{j}^{\ep,n}(x)\right)_{n\in\ldb1,N_t\rdb, j\in\Z}$ and $\left(\mu^{\ep,n}(x)\right)_{n\in\ldb1,N_t\rdb}$ be solution to \eqref{eq:schemeMuPhiSemiTV}. Assume that the initial condition $\varphi_{\tin,j}(x)$ satisfies for all $j\in\Z$ and $x\in\R$ the inequalities \eqref{eq:boundInitPhi}-\eqref{eq:boundInitMu}. Therefore, for all $(n,j)\in\ldb1,N_t\rdb\times\Z$ and $x\in\R$, $\varphi_{j}^{\ep,n}(x)$ and $\mu_i^{\ep,n}$ satisfy the same bounds:
    \begin{align}
        \left|\varphi_{j}^{\ep,n}(x)-\frac{v_j^2}{2}\right| &\leq M\label{eq:boundPhi}\\
        \left|\mu^{\ep,n}(x)+\ep\ln(c_\Dv^\ep)\right| &\leq M\label{eq:boundMu}
    \end{align}
\end{lemma}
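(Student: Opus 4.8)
The plan is to argue by induction on the time level $n$, the base case $n=0$ being precisely the assumptions \eqref{eq:boundInitPhi}--\eqref{eq:boundInitMu} on the initial data. Suppose \eqref{eq:boundPhi} holds at step $n$. I would first deduce \eqref{eq:boundMu} at step $n+1$ and then, feeding this into the $\varphi$-update, obtain \eqref{eq:boundPhi} at step $n+1$. The order is forced: the second line of \eqref{eq:schemeMuPhiSemiTV} depends on $\mu^{\ep,n+1}$, so that bound must be in hand first.

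The key simplification is to abandon the numerically stabilized form \eqref{eq:schemeMuPhiSemiTV} for the estimates and work instead with the equivalent exponential identities from which it was built. One checks that the $m^{\ep,n}$- and $M_j^{\ep,n}$-shifts cancel, so that \eqref{eq:schemeMuPhiSemiTV} is algebraically the same as the update \eqref{eq:updateMu} together with the discrete-velocity analogue of \eqref{eq:KineticDuhamelPhi},
\begin{equation*}
    \e^{-\varphi_j^{\ep,n+1}(x)/\ep} = \e^{-\Dt/\ep}\,\e^{-\varphi_j^{\ep,n}(x-\Dt v_j)/\ep} + \left(1-\e^{-\Dt/\ep}\right)\frac{\e^{-v_j^2/2\ep}}{c_\Dv^\ep}\,\e^{-\mu^{\ep,n+1}(x)/\ep}.
\end{equation*}
At this level the $L^\infty$ bounds become \emph{multiplicative}, and the monotone decreasing map $z\mapsto -\ep\ln z$ turns them back into the additive estimates \eqref{eq:boundPhi}--\eqref{eq:boundMu}.

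For the $\mu$-step, \eqref{eq:updateMu} reads $\e^{-\mu^{\ep,n+1}(x)/\ep}=\lla \e^{-\varphi_j^{\ep,n}(x-\Dt v_j)/\ep}\rra_\Dv$. The hypothesis \eqref{eq:boundPhi} gives, velocity by velocity, $\e^{-(\frac{v_j^2}{2}+M)/\ep}\leq \e^{-\varphi_j^{\ep,n}(x-\Dt v_j)/\ep}\leq \e^{-(\frac{v_j^2}{2}-M)/\ep}$; summing these term by term against the weights $\Dv$ and recalling $c_\Dv^\ep=\lla \e^{-v_j^2/2\ep}\rra_\Dv$ yields $c_\Dv^\ep\e^{-M/\ep}\leq \e^{-\mu^{\ep,n+1}(x)/\ep}\leq c_\Dv^\ep\e^{M/\ep}$, which is exactly \eqref{eq:boundMu} after applying $-\ep\ln$. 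For the $\varphi$-step, I insert into the displayed identity the bound \eqref{eq:boundPhi} on the first summand and the freshly obtained bound on $\e^{-\mu^{\ep,n+1}(x)/\ep}$ on the second: both summands are then squeezed between $\e^{-\frac{v_j^2}{2\ep}}\e^{\mp M/\ep}$ multiplied by $\e^{-\Dt/\ep}$, respectively $1-\e^{-\Dt/\ep}$. The decisive point is that these two weights sum to $1$, so the convex combination collapses and gives $\e^{-(\frac{v_j^2}{2}+M)/\ep}\leq \e^{-\varphi_j^{\ep,n+1}(x)/\ep}\leq \e^{-(\frac{v_j^2}{2}-M)/\ep}$, i.e. \eqref{eq:boundPhi} at step $n+1$.

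I do not anticipate a real obstacle: the statement is a structural consequence of the convex-combination (Markov) nature of the scheme, and the two bounds close on one another through the weights $\e^{-\Dt/\ep}$ and $1-\e^{-\Dt/\ep}$ summing to one. The only points deserving care are tracking that $z\mapsto-\ep\ln z$ reverses inequalities, and checking that the term-by-term velocity estimates survive the sum over the unbounded grid $j\in\Z$ — immediate, since every summand is nonnegative and the normalizing constant $c_\Dv^\ep$ is finite by the Gaussian tail, cf. \eqref{eq:quadCeps}. Finally, the same computation carries over verbatim to the fully discrete scheme \eqref{eq:schemeMuPhiFull}, because the interpolation \eqref{eq:interpolation} is itself a convex combination of grid values and thus preserves both \eqref{eq:boundPhi} and \eqref{eq:boundMu}.
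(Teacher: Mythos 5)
Your proposal is correct and follows essentially the same route as the paper's proof: induction in $n$, reduction of the stabilized scheme to the exponential (Duhamel-type) identities, a term-by-term Gaussian factorization to bound $\mu^{\ep,n+1}$, and the convex-combination structure (weights $\e^{-\Dt/\ep}$ and $1-\e^{-\Dt/\ep}$ summing to one) followed by the decreasing map $z\mapsto-\ep\ln z$ to close the bound on $\varphi_j^{\ep,n+1}$. The only cosmetic difference is that the paper phrases the last step as squeezing between the $\min$ and $\max$ of the two exponential terms, whereas you observe that both terms lie in the same interval, which is the same argument.
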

\begin{proof}
    We proceed by induction, assuming that for some $n$ and $\forall j\in\Z$, $x\in\R$:
    \begin{equation}\label{eq:inducHypoMaxPrinc}
        -M \leq \varphi_j^{\ep,n}(x)-\frac{v_j^2}{2} \leq M.
    \end{equation}
    Remarking that the free transport preserves the $L^\infty$-norm, this implies the bounds:
    \begin{equation}\label{eq:boundTransportMinusPara}
        -M\leq\varphi_j^{\ep,n}(x-\Dt v_j)-\frac{v_j^2}{2}\leq M,
    \end{equation}
    and it yields 
    \begin{equation*}
        \e^{-M/\ep} \leq \e^{-\left(\varphi_j^{\ep,n}(x-\Dt v_j)-\frac{v_j^2}{2}\right)/\ep} \leq \e^{M/\ep}.
    \end{equation*}
    The proof of \eqref{eq:boundMu} is then a consequence definition \eqref{eq:updateMu}. Indeed,
    \begin{equation*}
        -\mu^{\ep,n+1}(x) \details{= \ep\ln\left(\lla  \e^{\varphi_j^{\ep,n}(x-\Dt v_j)/\ep} \rra_\Dv \right)} =\ep\ln\left(\lla  \e^{-\left(\varphi_j^{\ep,n}(x-\Dt v_j)-\frac{v_j^2}{2}\right)/\ep} \e^{-\frac{v_j^2}{2\ep}} \rra_\Dv \right),
    \end{equation*}
    and \eqref{eq:boundMu} is satisfied at time $t^{n+1}$.
    \details{
        \begin{equation*}
       M-\ep\ln(c_\Dv^\ep)\leq \mu^{\ep,n+1}(x) \leq M-\ep\ln(c_\Dv^\ep).
    \end{equation*}
    }
    Next we turn our attention to the proof of \eqref{eq:boundPhi}. We start from the following formulation of the scheme \eqref{eq:schemePhiSemiTV}, where we simplified the terms $M_j^{\ep,n}(x)$:
    \begin{equation}\label{eq:schemePhiSemiTVBis}
        \e^{-\varphi_j^{\ep,n+1}(x)/\ep} = \e^{-\varphi_j^{\ep,n}(x-\Dt v_j)/\ep}\e^{-\Dt/\ep} + (1-\e^{-\Dt/\ep})\frac{1}{c_\Dv^\ep}\e^{-\frac{v_j^2}{2\ep}}\e^{-\mu^{\ep,n+1}(x)/\ep}.
    \end{equation}
    It can further be reformulated as a convex combination of exponentials of bounded terms thanks to \eqref{eq:boundTransportMinusPara} at time $t^n$ and \eqref{eq:boundMu}, that is proven to be true at time $t^{n+1}$. It writes,
    \begin{equation*}
        \e^{-\varphi_j^{\ep,n+1}(x)/\ep}\e^{\frac{v_j^2}{2}} = \e^{-\Dt/\ep}\e^{-\varphi_j^{\ep,n}(x-\Dt v_j)/\ep}\e^{\frac{v_j^2}{2\ep}} + (1-\e^{-\Dt/\ep})\frac{1}{c_\Dv^\ep}\e^{-\mu^{\ep,n+1}(x)/\ep},
    \end{equation*}
    so that the following bounds hold:
    \begin{equation}\label{eq:boundConvCombPhi}
        \begin{aligned}
        \min\Bigg\{\e^{\left(\frac{v_j^2}{2}-\varphi_j^{\ep,n}(x-\Dt v_j)\right)/\ep}&,\, \frac{1}{c_\Dv^\ep}\e^{-\mu^{\ep,n+1}(x)/\ep}\Bigg\} 
        \leq \e^{\left(\frac{v_j^2}{2}-\varphi_j^{\ep,n+1}(x)\right)/\ep}\\
        & \leq \max\Bigg\{\e^{\left(\frac{v_j^2}{2}-\varphi_j^{\ep,n}(x-\Dt v_j)\right)/\ep},\, \frac{1}{c_\Dv^\ep}\e^{-\mu^{\ep,n+1}(x)/\ep}\Bigg\}.
    \end{aligned}
    \end{equation}
    Since the function $s \mapsto -\ep\ln(s)$ is decreasing, and in view of \eqref{eq:boundConvCombPhi}, we obtain—by applying the identities $-\ep \ln(\min\lbr a,b\rbr) = \max\lbr-\ep\ln(a), -\ep\ln(b)\rbr$ and $-\ep \ln(\max\lbr a,b\rbr) = \min\lbr-\ep\ln(a), -\ep\ln(b)\rbr$, that \details{(original: $-\min\lbr-a,-b\rbr=\max\lbr a,b\rbr$ and $-\max\lbr-a,-b\rbr=\min\lbr a,b\rbr$)}:
    \begin{equation}\label{eq:boundConcCombPhiBis}
        \begin{aligned}
        \min\Bigg\{\varphi_j^{\ep,n}(x-\Dt v_j)-\frac{v_j^2}{2}&,\, \mu^{\ep,n+1}(x)+ \ep\ln(c_\Dv^\ep)\Bigg\} \leq \varphi_j^{\ep,n+1}(x)-\frac{v_j^2}{2}\\
        & \leq \max\Bigg\{\varphi_j^{\ep,n}(x-\Dt v_j)-\frac{v_j^2}{2},\, \mu^{\ep,n+1}(x)+ \ep\ln(c_\Dv^\ep)\Bigg\}.
    \end{aligned}
    \end{equation}
    Finally, combining \eqref{eq:boundConcCombPhiBis} with \eqref{eq:boundTransportMinusPara} and \eqref{eq:boundMu} we obtain \eqref{eq:boundPhi}, which concludes the proof.
    \details{
        \begin{equation*}
                -M \leq \varphi_j^{\ep,n+1}(x)-\frac{v_j^2}{2} \leq M
        \end{equation*}
    }
\end{proof}
}

The next Lemma states the monotonicity of \eqref{eq:schemeMuPhiSemiTV} as well as invariance and commutative properties.
\begin{lemma}\label{lem:MonoTransConstEps}
    Let $(n,j)\in\ldb1,N_t\rdb\times\Z$ and $x\in\R$. Let $\left(\mu^{\ep,n}(x)\right)_{n\in \ldb1,N_t\rdb,j\in\Z}$ and $\left(\varphi_j^{\ep,n}(x)\right)_{n\in \ldb1,N_t\rdb,j\in\Z}$ be a solution to \eqref{eq:schemeMuPhiSemiTV} with initial data \eqref{eq:initSemiTV}. Denote $\left(\psi^{\ep,n}(x)\right)_{n\in \ldb1,N_t\rdb,j\in\Z}$ another solution to \eqref{eq:schemeMuPhiSemiTV} and
    \begin{equation}\label{eq:updateNu}
        \nu^{\ep,n}(x)=-\ep\ln\left(\lla\psi^{\ep,n}(x-\Dt v_j)\rra_\Dv\right).
    \end{equation}
    Then, the following properties hold:
    \begin{enumerate}[label=\roman*)]
        \item \textbf{Monotonicity.} Suppose that $\varphi_{\tin}\leq\psi_{\tin}$. Then, for all $j\in\Z$, $x\in\R$ and $n\in \ldb1,N_t\rdb$,
        \begin{align}
            &\mu^{\ep,n}(x)\leq\nu^{\ep,n}(x), \label{eq:monoEpsMu}\\
            &\varphi_j^{\ep,n}(x)\leq\psi_j^{\ep,n}(x).\label{eq:monoEpsPhi}
        \end{align}
        \item \textbf{Invariance by translation.} Let $y\in\R$ and suppose that $\psi_{\tin}=\varphi_{\tin}(\cdot+y,\cdot)$. Then, for all $j\in\Z$, $x\in\R$ and $n\in \ldb1,N_t\rdb$,
        \begin{align}
            &\nu^{\ep,n}(x)=\mu^{\ep,n}(x+y),\label{eq:TransEpsMu}\\
            &\psi_j^{\ep,n}(x)=\varphi_j^{\ep,n}(x+y).\label{eq:TransEpsPhi}
        \end{align}
        \item \textbf{Commutation with constants.} Let $K\in\R$ and suppose that $\psi_{\tin}~=~\varphi_{\tin}+K$. Then, for all $j\in\Z$, $x\in\R$ and $n\in \ldb1,N_t\rdb$,
        \begin{align}
            &\nu^{\ep,n}(x)=\mu^{\ep,n}(x)+K, \label{eq:ConstEpsMu}\\
            &\psi_j^{\ep,n}(x)=\varphi_j^{\ep,n}(x)+K.\label{eq:ConstEpsPhi}
        \end{align}
    \end{enumerate}
\end{lemma}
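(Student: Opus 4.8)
The plan is to establish all three properties by induction on $n$, and the key simplification is to discard the stabilized log-sum-exp form of \eqref{eq:schemeMuPhiSemiTV} in favour of an equivalent ``un-shifted'' formulation. The quantities $m^{\ep,n}(x)$ and $M_j^{\ep,n}(x)$ serve only to keep the exponential arguments non-positive, and they cancel once the scheme is exponentiated: the first line of \eqref{eq:schemeMuPhiSemiTV} is equivalent to
\begin{equation*}
    \e^{-\mu^{\ep,n+1}(x)/\ep}=\lla\e^{-\varphi_j^{\ep,n}(x-\Dt v_j)/\ep}\rra_\Dv,
\end{equation*}
while the second line is equivalent to the velocity-discrete analogue of \eqref{eq:KineticDuhamelPhi},
\begin{equation*}
    \e^{-\varphi_j^{\ep,n+1}(x)/\ep} = \e^{-\Dt/\ep}\e^{-\varphi_j^{\ep,n}(x-\Dt v_j)/\ep} + \left(1-\e^{-\Dt/\ep}\right)\frac{1}{c_\Dv^\ep}\e^{-v_j^2/(2\ep)}\e^{-\mu^{\ep,n+1}(x)/\ep}.
\end{equation*}
In these variables the $\mu$-update is a monotone composition---a decreasing exponential, a positively weighted quadrature $\lla\cdot\rra_\Dv$, and a decreasing logarithm---whereas the $\varphi$-update is an affine combination, with positive coefficients depending neither on $x$ nor on the data, of $\e^{-\varphi_j^{\ep,n}(x-\Dt v_j)/\ep}$ and $\e^{-\mu^{\ep,n+1}(x)/\ep}$. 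All three claims are read off these two structural features.

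For monotonicity, I would take as induction hypothesis $\varphi_j^{\ep,n}\leq\psi_j^{\ep,n}$ pointwise, the base case being $\varphi_\tin\leq\psi_\tin$. As the transport $x\mapsto x-\Dt v_j$ is an exact shift in the semi-discrete setting, it preserves the ordering, so $\varphi_j^{\ep,n}(x-\Dt v_j)\leq\psi_j^{\ep,n}(x-\Dt v_j)$. The monotone composition defining $\mu$ then gives $\mu^{\ep,n+1}\leq\nu^{\ep,n+1}$, which is \eqref{eq:monoEpsMu}. Inserting both inequalities into the Duhamel form, whose two coefficients are positive, yields $\e^{-\varphi_j^{\ep,n+1}/\ep}\geq\e^{-\psi_j^{\ep,n+1}/\ep}$, hence \eqref{eq:monoEpsPhi} after applying $-\ep\ln$.

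For the invariance by translation I would propagate $\psi_j^{\ep,n}(x)=\varphi_j^{\ep,n}(x+y)$: since $x\mapsto x-\Dt v_j$ commutes with $x\mapsto x+y$ and the quadrature weights are independent of $x$, substituting the hypothesis into the $\nu$-update returns $\mu^{\ep,n+1}(x+y)$, and substituting into the Duhamel form returns $\e^{-\varphi_j^{\ep,n+1}(x+y)/\ep}$, giving \eqref{eq:TransEpsMu}--\eqref{eq:TransEpsPhi}. Commutation with constants is the same computation with $\psi_j^{\ep,n}=\varphi_j^{\ep,n}+K$: the factor $\e^{-K/\ep}$ factors out of the quadrature, so $\nu^{\ep,n+1}=\mu^{\ep,n+1}+K$, and the same factor, common to both Duhamel terms, factors out of the $\varphi$-update to give $\psi_j^{\ep,n+1}=\varphi_j^{\ep,n+1}+K$; these are \eqref{eq:ConstEpsMu}--\eqref{eq:ConstEpsPhi}.

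There is no genuine obstacle once the scheme is in Duhamel form; the only step needing care is verifying that equivalence at the outset---namely that the $m^{\ep,n}$ and $M_j^{\ep,n}$ shifts truly cancel. I would also note that the argument transfers verbatim to the fully discrete scheme \eqref{eq:schemeMuPhiFull}, since the interpolation \eqref{eq:interpolation} is a convex combination of grid values and is therefore itself order-preserving, covariant under the translation $x\mapsto x+y$, and unaffected by adding a constant---so the transport step never disturbs any of the three properties.
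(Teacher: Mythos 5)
Your proposal is correct and follows essentially the same route as the paper: induction on $n$, after exponentiating the scheme so that the stabilizing shifts $m^{\ep,n}$ and $M_j^{\ep,n}$ cancel and the update appears as the quadrature identity \eqref{eq:updateMu} together with the Duhamel form (a sum of positive terms), from which monotonicity, translation invariance, and commutation with constants follow by elementary order/algebraic properties of the exponential. Your closing remark on the fully discrete case likewise matches the paper's Remark~\ref{rem:convEpsFull}.
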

\begin{proof}
    For each property, we proceed by induction. 
    \begin{enumerate}[label=\roman*)]
        \item Let $n\in\ldb 0,N_t-1\rdb$ and suppose that $\varphi_j^{\ep,n}(x)\leq\psi_j^{\ep,n}(x)$ for all $j\in\Z$ and $x\in\R$. Then, one has in particular that $\varphi_j^{\ep,n}(x-\Dt v_j)\leq\psi_j^{\ep,n}(x-\Dt v_j)$. It implies
        \begin{equation*}
            \lla \e^{-\left(\varphi_j^{\ep,n}(x-\Dt v_j)\right)/\ep} \rra_\Dv \geq \lla \e^{-\left(\psi_j^{\ep,n}(x-\Dt v_j)\right)/\ep }\rra_\Dv.
        \end{equation*}
        Using definitions \eqref{eq:updateMu} and \eqref{eq:updateNu}, it yields \eqref{eq:monoEpsMu}.
        Similar computations lead to \eqref{eq:monoEpsPhi} by using that the second line of \eqref{eq:schemeMuPhiSemiTV} can be reformulated as a sum of positive terms as follows:
        \begin{equation*}
            \e^{-\varphi_j^{\ep,n+1}(x)/\ep} = \e^{-\varphi_j^{\ep,n}(x-\Dt v_j)/\ep}\e^{-\Dt/\ep} + (1-\e^{-\Dt/\ep})\frac{1}{c_\Dv^\ep}\e^{-\frac{v_j^2}{2\ep}}\e^{-\mu^{\ep,n+1}(x)/\ep}.
        \end{equation*}
        \item Let $y\in\R$ and define $\psi_j^{\ep,n}(x)=\varphi_j^{\ep,n}(x+y)$, for all $j\in\Z$ and $x\in\R$. Starting from definition \eqref{eq:updateNu}, direct computations yield:
        \begin{equation*}
            \e^{\nu^{\ep,n+1}(x)/\ep} =\lla \e^{-\left(\varphi_j^{\ep,n}(x+y-\Dt v_j)\right)/\ep} \rra_\Dv=\e^{\mu^{\ep,n+1}(x+y)/\ep},
        \end{equation*}
        from which \eqref{eq:TransEpsMu} is deduced. Using \eqref{eq:TransEpsMu}, similar computations then lead to \eqref{eq:TransEpsPhi}.
        \item The proof of \eqref{eq:ConstEpsMu}–\eqref{eq:ConstEpsPhi} is analogous to the previous one and uses the algebraic properties of the exponentials.
    \end{enumerate}
\end{proof}
From Lemma~\ref{lem:MonoTransConstEps} one can then deduce the exact propagation of Lipschitz bounds in position by the semi-discrete numerical scheme \eqref{eq:schemeMuPhiSemiTV} (see \cite{CrandallLions1984}), and:
\begin{corollary}\label{cor:LipBoundEps}
    For all $n\in\ldb1,N_t\rdb$ and $\ep>0$, the solution to \eqref{eq:schemeMuPhiSemiTV} satisfies Assumption~\ref{Ass:LinfBounds}. \details{Suppose that $\varphi_\tin-v^2/2$ is $L$-Lipschitz in both variables. Then,  $x\mapsto\varphi_j^{\ep,n}(x)-\frac{v_j^2}{2}$ is also $L$-Lipschitz.}
\end{corollary}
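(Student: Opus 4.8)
The plan is to obtain the two parts of Assumption~\ref{Ass:LinfBounds} from two distinct mechanisms already available for \eqref{eq:schemeMuPhiSemiTV}. The $L^\infty$ bounds are propagated in $n$ by a discrete maximum principle: writing the $\varphi$-update in the form of a convex combination of exponentials of bounded arguments (using the update \eqref{eq:updateMu} for $\mu^{\ep,n+1}$) shows by induction that $(\varphi_j^{\ep,n},\mu^{\ep,n})$ satisfies, for all $n$, the same bounds as the data, \eqref{eq:boundInitPhi}--\eqref{eq:boundInitMu}. The position-Lipschitz regularity, which is the substance of the corollary, is then obtained from the three structural properties gathered in Lemma~\ref{lem:MonoTransConstEps}, following the comparison argument of \cite{CrandallLions1984}. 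The key point is to avoid estimating the explicit $\min$/log-sum-exp expression defining the scheme directly, which would be delicate, and instead to exploit monotonicity, invariance by translation, and commutation with constants to transfer the Lipschitz constant of the data to the solution at no cost.

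Concretely, since the shift $v_j^2/2$ does not depend on $x$, it suffices to prove that $x\mapsto\varphi_j^{\ep,n}(x)$ is $L$-Lipschitz. I would fix $y\in\R$ and introduce the shifted initial datum $\psi_\tin=\varphi_\tin(\cdot+y,\cdot)$ together with the associated solution $(\nu^{\ep,n},\psi_j^{\ep,n})$ of \eqref{eq:schemeMuPhiSemiTV}. The $L$-Lipschitz regularity of $\varphi_\tin$ in position yields the pointwise sandwich $\varphi_\tin-L|y|\le\psi_\tin\le\varphi_\tin+L|y|$. Chaining the three properties then gives the result: commutation with constants \eqref{eq:ConstEpsPhi} identifies the solutions issued from $\varphi_\tin\pm L|y|$ as $\varphi_j^{\ep,n}(x)\pm L|y|$; monotonicity \eqref{eq:monoEpsPhi} sandwiches the solution issued from $\psi_\tin$, namely $\varphi_j^{\ep,n}(x)-L|y|\le\psi_j^{\ep,n}(x)\le\varphi_j^{\ep,n}(x)+L|y|$; and invariance by translation \eqref{eq:TransEpsPhi} rewrites $\psi_j^{\ep,n}(x)=\varphi_j^{\ep,n}(x+y)$. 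Together these yield $|\varphi_j^{\ep,n}(x+y)-\varphi_j^{\ep,n}(x)|\le L|y|$ for all $x,y\in\R$, which is exactly the claimed bound.

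I do not expect a genuine obstacle here, since the argument is purely structural: the hard work was already carried out in Lemma~\ref{lem:MonoTransConstEps}, and what remains is bookkeeping. The only points requiring attention are to anchor the induction at $n=0$ via Assumption~\ref{Ass:LinfBounds} on the data, and to note that the transport in \eqref{eq:schemeMuPhiSemiTV} acts through the exact translation $x\mapsto x-\Dt v_j$, so that no Lipschitz constant is lost along the transport step; this is precisely why the semi-discrete (position-continuous) setting is transparent, the genuinely delicate interpolation losses appearing only in the fully discrete analysis. Finally, the same three-step chain applied through \eqref{eq:ConstEpsMu}, \eqref{eq:monoEpsMu} and \eqref{eq:TransEpsMu} propagates the Lipschitz regularity of the macroscopic unknown $\mu^{\ep,n}$ as well, which is convenient for the convergence analysis that follows.
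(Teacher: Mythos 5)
Your proposal is correct and follows essentially the same route as the paper: the $L^\infty$ bounds are propagated by the same convex-combination (maximum-principle) argument on the exponential form of the scheme together with \eqref{eq:updateMu}, and the Lipschitz propagation is exactly the Crandall--Lions comparison argument chaining monotonicity, translation invariance and commutation with constants from Lemma~\ref{lem:MonoTransConstEps}. The only cosmetic difference is that you run the comparison globally from the initial data, whereas the paper organizes it as a one-step induction via the scheme's pushforward operator; the content is identical.
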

\details{
    \begin{proof}
        Let $y\in\R$ be fixed. We proceed by induction. Assuming that $\varphi_j^{\ep,n}(x)$ is $L$-Lipschitz for some $n\in\ldb1,N_t\rdb$, one has in particular
        \begin{equation}\label{eq:ineqLip}
            \varphi_j^{\ep,n}(x) \leq \varphi_j^{\ep,n}(x+y) + |y|L.
        \end{equation}
        Denoting $\psi_j^{\ep,n}(x)=\varphi_j^{\ep,n}(x+y) + |y|L$ it gives $\varphi_j^{\ep,n}(x) \leq \psi_j^{\ep,n}(x)$. Then, we denote by $S^\ep$ the pushforward operator corresponding to the numerical scheme \eqref{eq:schemePhiSemiTV} so that 
        \begin{equation*}
            S^\ep[\varphi_j^{\ep,n}](x) = \varphi_j^{\ep,n+1}(x).
        \end{equation*}
        We have shown in Lemma~\ref{lem:MonoTransConstEps} the scheme commute with constants and translations. Therefore, for $K\in\R$,
        \begin{equation}\label{eq:MonoTransConstEpsPF}
            S^\ep[\varphi_j^{\ep,n}(\cdot+y)+K](x) = S^\ep[\varphi_j^{\ep,n}](x+y)+K = \varphi_j^{\ep,n+1}(x+y)+K.
        \end{equation}
        Applying the relation \eqref{eq:MonoTransConstEpsPF} to $\psi_j^{\ep,n+1}(x)$ gives in particular
        \begin{equation*}
            \psi_j^{\ep,n+1}(x) = \varphi_j^{\ep,n+1}(x+y)+K.
        \end{equation*}
        Consequently, combining this identity with \eqref{eq:ineqLip}, one obtains
        \begin{equation*}
            \varphi_j^{\ep,n+1}(x) \leq \varphi_j^{\ep,n+1}(x+y) + |y|L,
        \end{equation*}
        and by following the same steps while exchanging the role of $\varphi_j^{\ep,n+1}(\cdot)$ and $\varphi_j^{\ep,n+1}(\cdot+y)$, the opposite inequality is obtained.
        Finally, 
        \begin{equation*}
            |\varphi_j^{\ep,n+1}(x) -\varphi_j^{\ep,n+1}(x+y)| \leq |y|L,
        \end{equation*}
        which concludes the proof.
    \end{proof}
}
Another key aspect of the analysis is the identification of the minimizer $\argmin_{j\in\Z} \{ \varphi_j^{\ep,n}(x - \Dt v_j) \}$. Since $\varphi_{\tin}(x,v)$ grows like $v^2/2$ we expect that, its minima belong to a bounded interval. In other term, the minima cannot be achieved for a too large velocity. This is the content of the following lemma.

\begin{lemma}\label{lem:LocMinimaEps}
    Let $\Dt,\Dx,\Dv>0$ be fixed. Assume that the initial data $\varphi_{\tin}$ satisfies Assumption~\ref{Ass:LinfBounds}. Then, there exists a bounded set $\overbar{V}\subset\Z$ such that for all $n\in\ldb1,N_t\rdb$, $\ep>0$ and $x\in\R$,
    \begin{equation}\label{eq:LocArgminEps}
        \underset{j\in\Z}{\argmin}\lbr\varphi_j^{\ep,n}(x-\Dt v_j)\rbr\subset \Dv\overbar{V}.
    \end{equation}
\end{lemma}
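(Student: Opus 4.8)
The plan is to exploit the two-sided $L^\infty$-control on $\varphi_j^{\ep,n}$ coming from the maximum principle (Lemma~\ref{lem:MaxPrincEps}), together with the fact that the transport step preserves these bounds, in order to show that no minimizing velocity can be too large. First I would use that, by Lemma~\ref{lem:MaxPrincEps} and Assumption~\ref{Ass:LinfBounds}, the inequalities \eqref{eq:boundPhi} propagate, so that for every $n$, $j$ and $x$ one has the precise two-sided estimate
\begin{equation*}
    \frac{v_j^2}{2}-M \leq \varphi_j^{\ep,n}(x-\Dt v_j) \leq \frac{v_j^2}{2}+M.
\end{equation*}
These bounds are the whole engine of the argument: the left inequality says that $\varphi_j^{\ep,n}(x-\Dt v_j)$ is at least $v_j^2/2 - M$, which grows quadratically in $j$, while evaluating the upper bound at $j=0$ (recall $v_0=0$) gives a uniform ceiling $\varphi_0^{\ep,n}(x) \leq M$.

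Next I would combine these two facts. Since $v_0 = 0$ is an admissible velocity, the candidate $j=0$ always contributes the value $\varphi_0^{\ep,n}(x)\leq M$ to the minimum in \eqref{eq:petitmSemiTV}. Hence any $j$ realizing the minimum must satisfy $\varphi_j^{\ep,n}(x-\Dt v_j)\leq M$. Feeding this into the lower bound $v_j^2/2 - M \leq \varphi_j^{\ep,n}(x-\Dt v_j)$ yields $v_j^2/2 \leq 2M$, i.e. $|v_j|\leq 2\sqrt{M}$. Translating back to indices through $v_j=j\Dv$, this means any minimizing index $j$ obeys $|j|\leq 2\sqrt{M}/\Dv$, which defines the finite set $\overbar V = \{\, j\in\Z : |j|\leq 2\sqrt{M}/\Dv \,\}$; the containment \eqref{eq:LocArgminEps} then follows. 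Crucially the bound $2\sqrt M$ is independent of $n$, $\ep$ and $x$, exactly as required, because $M$ in Lemma~\ref{lem:MaxPrincEps} is itself uniform in those variables.

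The only delicate point — and the step I expect to need the most care — is ensuring that the upper bound is genuinely attained at the grid, that is, that the value $\varphi_0^{\ep,n}(x)$ entering the minimum is really controlled by $M$ after the transport shift. Since $v_0=0$ the shift $x-\Dt v_0 = x$ is trivial, so the ceiling is simply $\varphi_0^{\ep,n}(x)\leq M$ and no interpolation or transport subtlety intervenes for the comparison velocity; this makes $j=0$ the natural reference. One should also note that the minimum over the unbounded set $\Z$ is well-defined precisely because of the coercive lower bound $v_j^2/2 - M \to +\infty$, so the infimum is attained and the $\argmin$ is nonempty and finite. With these observations the statement reduces to the elementary inequality manipulation above, and the constant governing $\overbar V$ can be made explicit as $\lfloor 2\sqrt{M}/\Dv\rfloor$.
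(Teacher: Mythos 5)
Your proof is correct, and it rests on the same two pillars as the paper's argument: the propagated two-sided bound $|\varphi_j^{\ep,n}(\cdot)-v_j^2/2|\leq M$ (which gives coercivity in $j$) and the competitor $j=0$, which caps the minimum by $M$ uniformly in $n$, $\ep$ and $x$. The difference lies in how the transport shift is handled. The paper proceeds in two steps: it first localizes the minimizers of the \emph{untransported} values $\varphi_j^{\ep,n}(x)$, and then transfers the bounds to the shifted points $x-\Dt v_j$ via the $L$-Lipschitz regularity in position, which yields the weaker estimate $-M+\tfrac{v_j^2}{2}-L\Dt|v_j|\leq\varphi_j^{\ep,n}(x-\Dt v_j)\leq M+\tfrac{v_j^2}{2}+L\Dt|v_j|$; the quadratic term still dominates the linear correction, so the argument closes with a (non-explicit) threshold $J_2$. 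You instead observe that the $L^\infty$ bound, being uniform in the position variable, can simply be evaluated at $x-\Dt v_j$ directly, so no Lipschitz argument is needed, and you obtain the explicit localization $|v_j|\leq 2\sqrt{M}$, i.e. $\overbar{V}=\lbr j\in\Z : |j|\leq 2\sqrt{M}/\Dv\rbr$. Both routes are valid; yours is shorter, uses only the $L^\infty$ part \eqref{eq:boundInitPhi} of Assumption~\ref{Ass:LinfBounds} rather than also its Lipschitz part, and produces an explicit constant, while the paper's two-step presentation isolates the effect of the transport (a structure it reuses in the subsequent convergence analysis). Two small presentational points: the maximum principle you cite as a standalone lemma does not appear as such in the compiled paper, so the result you should invoke is Corollary~\ref{cor:LipBoundEps}, which states precisely that the solution satisfies Assumption~\ref{Ass:LinfBounds} at every time step; and your remark that the coercive lower bound makes the $\argmin$ nonempty and finite is a welcome addition that the paper leaves implicit.
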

\begin{proof}
    We proceed in two steps. We first localize $\argmin_{j\in\Z}\lbr\varphi_j^{\ep,n}(x)\rbr$ and then consider the effect of the transport. Let $(n,j)\in\ldb1,N_t\rdb\times\Z$, $\ep>0$ and $x\in\R$. Thanks to Corollary~\ref{cor:LipBoundEps},
    \begin{equation}\label{eq:quadGrowthEps}
        -M + \frac{v_j^2}{2} \leq\,\varphi_{j}^{\ep,n}(x) \leq\, M + \frac{v_j^2}{2},
    \end{equation}
    which, for $j=0$, gives in particular $|\varphi_{0}^{\ep,n}(x)|\leq M$ and thus
    \begin{equation}\label{eq:quadGrowthEpsBis}
        \underset{j\in\Z}{\min}\lbr\varphi_{j}^{\ep,n}(x)\rbr \leq M,
    \end{equation}
    where we recall that $M$ depends neither on $n$, $\ep$ nor $x$. However, for $\Dv>0$ fixed there exists $J_1\in\N$, such that for all $|k|>J_1$ 
    \begin{equation}\label{eq:quadGrowthEpsBisBis}
        -M + \frac{v_k^2}{2} \geq M+1.
    \end{equation}
    Combining \eqref{eq:quadGrowthEpsBis} and \eqref{eq:quadGrowthEpsBisBis} then gives that for all $|k|>J_1$
    \details{
        \begin{equation*}
        \underset{j\in\Z}{\min}\lbr\varphi_{j}^{\ep,n}(x)\rbr+1 \leq M+1 \leq -M + \frac{v_k^2}{2} \leq \varphi_{k}^{\ep,n}(x).
    \end{equation*}
    This relation in particular states that for all $|k|>J_1$
    }
    \begin{equation*}
        \underset{j\in\Z}{\min}\lbr\varphi_{j}^{\ep,n}(x)\rbr+1 < \varphi_{k}^{\ep,n}(x),
    \end{equation*}
    which means that the minimum is achieved for some $|j|<J_1$. The next step consists in dealing with the effect of the transport on the localization of the minima. Thanks to Corollary~\ref{cor:LipBoundEps}, $x\mapsto\varphi_{j}^{\ep,n}(x)-v^2/2$ is $L$-Lipschitz. Combined with \eqref{eq:quadGrowthEps}, it yields
    \details{
        \begin{equation*}
        -L\Dt|v_j| \leq \varphi_{j}^{\ep,n}(x-\Dt v_j)-\varphi_{j}^{\ep,n}(x) \leq L\Dt|v_j|,
    \end{equation*}
    }
    \begin{equation}\label{eq:quadGrowthEpsBisBisBis}
        -M+\frac{v_j^2}{2}-L\Dt|v_j| \leq \varphi_{j}^{\ep,n}(x-\Dt v_j) \leq L\Dt|v_j|+M+\frac{v_j^2}{2}.
    \end{equation}
    Starting from \eqref{eq:quadGrowthEpsBisBisBis}, the same arguments as before can be used. The upper bound is in particular true at $j=0$ giving $\underset{j\in\Z}{\min}\lbr\varphi_{j}^{\ep,n}(x-\Dt v_j)\rbr\leq M$. Considering the lower bound, there exists $J_2\in\N$ such that for all $|k|>J_2$, 
    \begin{equation*}
        -M+\frac{v_j^2}{2}-L\Dt|v_j| > M+1.
    \end{equation*}
    Concluding in same way as the transport-free minimization problem, we obtain \eqref{eq:LocArgminEps} by setting $\overbar{V}=\Big[-\max\lbr J_1,J_2\rbr,\, \max\lbr J_1,J_2\rbr\Big]$.
\end{proof}

\begin{Remark}\label{rem:convEpsFull}
    The extension of the stability results to the fully discrete scheme \eqref{eq:schemeMuPhiFull} are actually straightforward given our choice of interpolation. Regarding Lemma~\ref{lem:WBeps}, since the property is independent of the position variable, it remains unaffected by the spatial discretization. The maximum principle and the monotonicity of the scheme follow directly from the monotonicity of the linear interpolation \eqref{eq:interpolation}. Finally, the commutative properties of \eqref{eq:schemeMuPhiFull} with respect to constants and translations also carry over, as these properties are preserved by the linear interpolation. In the case of translations, it should be noted that they must correspond to shifts by grid points, i.e., of the form $y=i\Dx$ for some $i \in \Z$. This is a natural assumption, given that the unknowns are localized on the grid.
\end{Remark}

\subsection{Limit \texorpdfstring{$\ep\to0$ of \eqref{eq:schemeMuPhiFull}}{eps to 0 of Seps}}
This section is dedicated to the proof of the asymptotic preserving property of \eqref{eq:schemeMuPhiFull} stated in Theorem~\ref{thm:AP}. It is achieved in two steps. First, the logarithmic terms in \eqref{eq:schemeMuPhiSemiTV} are shown to converge towards $0$. Secondly, relying on the stability properties of Lemmas~\ref{lem:WBeps} and \ref{lem:MonoTransConstEps} and on Corollary~\ref{cor:LipBoundEps}, the limit is proven.

\begin{proof}[Proof of Theorem~\ref{thm:AP}]
    Let us first point out that the numerical cost of solving \eqref{eq:schemeMuPhiFull} is independent of $\ep$. The resolution is an explicit procedure thanks to \eqref{eq:updateMuStable} and the discretization parameters can be chosen independently of $\ep$ ensuring that \eqref{eq:schemeMuPhiFull} is an efficient numerical scheme in practice.
    
    %===================================
    \textbf{Step 1.} Let $x\in\R$, let us fix $\Dt$, $\Dv>0$, $j\in\Z$ and $n\in\ldb1,N_t\rdb$. We recall that the quantities $M^{\ep,n}$ and $m^{\ep,n}$ have been introduced in \eqref{eq:petitmSemiTV} and \eqref{eq:grandMSemiTV} to ensure non-positivity in the exponential terms of \eqref{eq:schemeMuPhiSemiTV}. Then, focusing on the first line of \eqref{eq:schemeMuPhiSemiTV}, we observe that, by definition \eqref{eq:petitmSemiTV}, at least one exponential in the discrete integral is equal to one. More precisely, there exists $j_0\in\Z$ such that $\varphi_{j_0}^{\ep,n}(x-\Dt v_{j_0}) - m^{\ep,n}(x)=0$. The following relation then holds:
    \begin{equation}\label{eq:integralMuAP}
        \lla \e^{-\left[\varphi_{j}^{\ep,n}(x-\Dt v_j) - m^{\ep,n}(x)\right]/\ep} \rra_\Dv = \Dv + \sum_{j\in\Z,\, j\neq j_0} \e^{-\left[\varphi_{j}^{\ep,n}(x-\Dt v_j) - m^{\ep,n}(x)\right]/\ep}\Dv,
    \end{equation}
    where the remaining terms in the exponentials are non-positive. On the one hand, one obtains from \eqref{eq:integralMuAP}
    \begin{equation}\label{eq:integralMuAP1}
        \ep\ln\left(\lla \e^{-\left[\varphi_{j}^{\ep,n}(x-\Dt v_j) - m^{\ep,n}(x)\right]/\ep} \rra_\Dv\right) \geq \ep\ln\left(\Dv\right)\underset{\ep\to0}{\to} 0.
    \end{equation}
    On the other hand, we establish an upper bound for \eqref{eq:integralMuAP} by exploiting the quadratic growth of $\varphi_{j}^{\ep,n}$. From Corollary~\ref{cor:LipBoundEps}, we obtain in particular
    \begin{equation}\label{eq:boundAP1}
    -M-m^{\ep,n}(x)+\frac{v_j^2}{2}-|v_j|
    \leq \varphi_{j}^{\ep,n}(x-\Dt v_j)-m^{\ep,n}(x)-|v_j|
    \leq M-m^{\ep,n}(x)+\frac{v_j^2}{2}-|v_j|,
    \end{equation}
    where the second inequality and \eqref{eq:petitmSemiTV} and with $j=0$ yields
    \begin{equation}\label{eq:boundAP2}
    m^{\ep,n}(x)\leq M.
    \end{equation}
    Combining \eqref{eq:boundAP1} and \eqref{eq:boundAP2} gives
    \begin{equation}
    -2M+\frac{v_j^2}{2}-|v_j|
    \leq \varphi_{j}^{\ep,n}(x-\Dt v_j)-m^{\ep,n}(x)-|v_j|.
    \end{equation}
    Remark now that the set
    \begin{equation}\label{eq:defSetA}
    A \coloneq \lbr j \in \Z \,:\, -2M+\tfrac{v_j^2}{2}-|v_j| < 0 \rbr
    \end{equation}
    is finite, independently of $\ep$. Moreover, since $0<\ep\leq1$, for any $j\in\bar{A}$, the complement set of $A$, one has
    \begin{equation}\label{eq:boundAP3}
    -\frac{1}{\ep}\left(\varphi_{j}^{\ep,n}(x-\Dt v_j)-m^{\ep,n}(x)\right) \details{\leq -\frac{|v_j|}{\ep}} \leq -|v_j|.
    \end{equation}
    Thus, splitting the sum in \eqref{eq:integralMuAP} according to $A$ and $\bar A$ gives
    \begin{equation*}
        \begin{aligned}
        \lla \e^{-\left[\varphi_{j}^{\ep,n}(x-\Dt v_j) - m^{\ep,n}(x)\right]/\ep} \rra_\Dv
        &= \Dv \sum_{j\in A} \e^{-\left[\varphi_{j}^{\ep,n}(x-\Dt v_j)-m^{\ep,n}(x)\right]/\ep} \\
        &\quad+ \Dv \sum_{j\in\bar A} \e^{-\left[\varphi_{j}^{\ep,n}(x-\Dt v_j)-m^{\ep,n}(x)\right]/\ep}.
        \end{aligned}
    \end{equation*}
    The first sum is finite and bounded independently of $\ep$ by $\Dv\# A$. Thanks to \eqref{eq:boundAP3}, the second sum is bounded from above by a convergent series that is also independent of $\ep$. Consequently, the quantity in \eqref{eq:integralMuAP} is bounded by a constant $C>0$ for all $\ep$, and
    \begin{equation}\label{eq:integralMuAP2}
    \ep \ln\left(\lla \e^{-\left[\varphi_{j}^{\ep,n}(x-\Dt v_j)-m^{\ep,n}(x)\right]/\ep} \rra_\Dv\right)
    \leq \ep \ln(C) \underset{\ep\to0}{\longrightarrow} 0.
    \end{equation}
    Finally, combining \eqref{eq:integralMuAP2} with \eqref{eq:integralMuAP1}, we conclude that
    \begin{equation}\label{eq:convExpmu}
    \forall j\in\Z,~ \forall \Dt,\Dv>0, \qquad
    \ep \ln\left( \lla \e^{-\left[\varphi_{j}^{\ep,n}(x-\Dt v_j)-m^{\ep,n}(x)\right]/\ep} \rra_\Dv \right)
    \underset{\ep\to0}{\longrightarrow} 0.
    \end{equation}

    Using a similar argument, we deal with the logarithmic term in the second line of \eqref{eq:schemeMuPhiSemiTV}. By definition \eqref{eq:grandMSemiTV}, at least one of the two exponentials is necessarily equal to one. Distinguishing two cases depending on the value of \eqref{eq:grandMSemiTV}, the argument of the logarithm reduces to
    \begin{equation}
        \label{eq:negExponentialsAP}
        \begin{cases}
        c_\Dv^\ep + \left(1 - \e^{-\Dt/\ep}\right)  
        \e^{-\left[\frac{v_j^2}{2} + \mu^{\ep,n+1}(x) - M_{j}^{\ep,n}(x)\right]/\ep}, 
        & \text{if } \varphi_{j}^{\ep,n}(x-\Dt v_j) + \Dt \leq \frac{v_j^2}{2} + \mu^{\ep,n+1}(x), \\[1ex]
        c_\Dv^\ep \e^{-\left[\varphi_{j}^{\ep,n}(x-\Dt v_j) + \Dt - M_{j}^{\ep,n}(x)\right]/\ep}
        + \left(1 - \e^{-\Dt/\ep}\right), 
        & \text{else.}
        \end{cases}
    \end{equation}
    In both cases, the remaining exponential terms have negative arguments and in the limit $\ep\to0$, the dominating term is at most $c_\Dv^\ep + 1$. Then, recalling that $\Dv$ is fixed, we remark that $\ep\ln(c_\Dv^\ep)$ converges towards $0$ thanks to the relation $\ep\ln(c_\Dv^\ep)=\ep\ln(\sqrt{2\pi\ep}+\mathcal{O}(\Dv))$, where we used the error estimate on first order quadratures \eqref{eq:quadCeps}. Combining \eqref{eq:negExponentialsAP} with this observation, we obtain, for all $j\in\Z$ and $\Dt,\,\Dv>0$, that the logarithmic terms in the second line of \eqref{eq:schemeMuPhiSemiTV} also vanishes in the limit $\ep\to0$:
    \begin{equation}\label{eq:convExpPhi}
        \begin{aligned}
            \ep \ln\bigg(&c_\Dv^\ep \e^{-\big[\varphi_{j}^{\ep,n}(x-\Dt v_j) + \Dt - M_{j}^{\ep,n}(x)\big]/\ep} \\
            &\qquad+ \left(1 - \e^{-\Dt/\ep}\right) \e^{-\left[\frac{v_j^2}{2} + \mu^{\ep,n+1}(x) - M_{j}^{\ep,n}(x)\right]/\ep} \bigg) + \ep \ln(c_\Dv^\ep)\underset{\ep\to0}{\to} 0.
        \end{aligned}
    \end{equation}

    \details{
        Again, if a difference is of size $C\ep$ one has, depending on the value of $M_{j}^{\ep,n}(x),$
        \begin{equation*}
            c_\Dv^\ep + \left(1 - \e^{-\Dt/\ep}\right)  
            \e^{-C} \underset{\ep\to0}{\to} \e^{-C}
        \end{equation*}
        or, 
        \begin{equation*}
            c_\Dv^\ep \e^{-C} + \left(1 - \e^{-\Dt/\ep}\right)\underset{\ep\to0}{\to} 1
        \end{equation*}
        and the logarithm term still converges towards $0$.
    }

    %===================================
    \textbf{Step 2.} Let $\Delta t$, $\Delta v$, and $j \in \Z$ be fixed. The next step in the proof is to pass to the limit in the terms $m^{\ep,n}(x)$ and $M_j^{\ep,n}(x)$. We begin with $m^{\ep,n}(x)$. Let $I\subset\mathbb{R}$ an interval and let $x\in I$.

    First, by applying Corollary~\ref{cor:LipBoundEps}, which provides a Lipschitz bound propagation, one may apply Ascoli's theorem on the family of functions $\left( y \mapsto \varphi_j^{\ep,n}(y) - v_j^2/2 \right)_{\ep > 0}$ for any $j\in\Z$. Therefore, for all $j\in\Z$, this yields convergence, up to the extraction of a subsequence, uniformly on $I$ towards a function $y \mapsto \varphi_j^n(y)- v_j^2/2$.
    
    As a consequence, for all $j\in\Z$, the limit function $\varphi_j^n$ satisfies Assumption~\ref{Ass:LinfBounds}. Using the same reasoning as in the proof of Lemma~\ref{lem:LocMinimaEps}, the minimum $\underset{j \in \Z}{\min} \lbr \varphi_j^{n}(x - \Delta t v_j) \rbr$ is well-defined and attained within a bounded set $\ldb -J, J \rdb$ for some $J \in \N$.

    Secondly, let $A>0$ and define $I = [-A - \Dt v_{-J}, A + \Dt v_J]$ and fix $j \in \ldb-J, J\rdb$. The family $\left(y \mapsto \varphi_j^{\ep,n}(y) - v_j^2/2\right)_{\ep > 0, n \in \ldb1, N_t\rdb}$ converges uniformly on $I$ thanks to the above discussion. In particular, for $x \in [-A, A]$, one has $\varphi_j^{\ep,n}(x - \Dt v_j) \to \varphi_j^n(x - \Dt v_j)$, up to a subsequence. In what follows, the notation $\ep\to0$ will refer to a subsequence for which these convergences are true for all $j\in\ldb -J,J\rdb$.

    Since the minimum of Lipschitz functions is Lipschitz and the index set $\ldb-J, J\rdb$ is finite, the minimum is taken over a set of uniformly bounded and Lipschitz functions. Consequently, there exists a function $x\mapsto m^n(x)$ such that
    \begin{equation*}
        m^{\ep,n}(x)\underset{\ep\to0}{\to}m^{n}(x),
    \end{equation*}
    uniformly on $[-A,A]$. It remains to identify the limit $m^n(x)$. As, for all $j\in\ldb-J,J\rdb$, the convergence of $(\varphi^{\ep,n}(\cdot-\Dt v_j))_\ep$ towards $\varphi^n(\cdot-\Dt v_j)$ is uniform on $[-A,A]$, for any $\eta > 0$, there exists $\ep_0 > 0$ such that for all $\ep < \ep_0$, $j \in \ldb-J, J\rdb$, $n \in \ldb1, N_t\rdb$, and $x \in [-A, A]$,
    \begin{equation*}
        \left|\varphi_j^{\ep,n}(x-\Dt v_j)-\varphi_j^{n}(x-\Dt v_j)\right|\leq\eta.
    \end{equation*}
    Taking the minimum over $j\in\ldb-J,J\rdb$ then gives
    \begin{equation*}
        \left|m^{\ep,n}(x)-\underset{j\in\ldb-J,J\rdb}{\min}\lbr\varphi_j^{n}(x-\Dt v_j)\rbr\right|\leq\eta.
    \end{equation*}
    Therefore, $m^{\ep,n}(x)=\min_{j \in \Dv \Z} \lbr \varphi_j^n(x - \Dt v_j) \rbr$. This convergence, together with \eqref{eq:convExpmu}, implies
    \begin{equation}\label{eq:convPetitmFinal}
        \mu^{\ep,n+1}(x)\underset{\ep\to 0}{\to}\underset{j \in \Dv \Z}{\min} \lbr \varphi_j^n(x - \Dt v_j) \rbr,
    \end{equation}
    locally uniformly towards $\mu^{n+1}(x)$.

    \details{
        We have a minimization over $\ldb-J,J\rdb\subset\Z$ a finite set, and $\varphi_{j}^{\ep, n}(x-\Dt v_{j})\underset{\ep\to0}{\to}\varphi_{j}^{n}(x-\Dt v_{j})$ uniformly on $I$ for all $j\in\ldb-J,J\rdb$. Let us define:
        \begin{equation*}
        l^\ep(x) := \min_{j\in\ldb-J,J\rdb} \varphi_{j}^{\ep, n}(x-\Dt v_{j}), \quad l(x) := \min_{j\in\ldb-J,J\rdb} \varphi_{j}^{n}(x-\Dt v_{j})
        \end{equation*}
        We want to show that $l^\ep(x) \to l(x)$. The uniform convergence on $\overbar{X}$ implies, $\forall\eta>0$, that $\forall j\in\ldb-J,J\rdb$ fixed, $\exists\ep_j $ such that for $ \ep \leq \ep_j $,  
        \begin{equation*}
        |\varphi_{j}^{\ep, n}(x-\Dt v_{j}) - \varphi_{j}^{n}(x-\Dt v_{j})| < \eta.
        \end{equation*}
        Since the minimization set is finite (here, finiteness is mandatory), one can take
        \begin{equation*}
        \ep_0 := \min_{j\in\ldb-J,J\rdb} \ep_j,
        \end{equation*}
        and the uniform convergence yields, for any $\eta > 0$, $\exists \ep_0$, $\ep \leq \ep_0$ and all $j\in\ldb-J,J\rdb$:
        \begin{equation*}
        |\varphi_{j}^{\ep, n}(x-\Dt v_{j}) - \varphi_{j}^{n}(x-\Dt v_{j})| < \eta
        \end{equation*}
        Then, for $\ep \leq \ep_0$,
        \begin{equation*}
        \varphi_{j}^{n}(x-\Dt v_{j}) - \eta < \varphi_{j}^{\ep, n}(x-\Dt v_{j}) < \varphi_{j}^{n}(x-\Dt v_{j}) + \eta \quad \text{for all } j\in\ldb-J,J\rdb
        \end{equation*}
        Taking the minimum over $j$ on both sides gives:
        \begin{equation*}
        \min_{j\in\ldb-J,J\rdb} \varphi_{j}^{n}(x-\Dt v_{j}) - \eta < \min_{j\in\ldb-J,J\rdb} \varphi_{j}^{\ep, n}(x-\Dt v_{j})< \min_{j\in\ldb-J,J\rdb} \varphi_{j}^{n}(x-\Dt v_{j}) + \eta
        \end{equation*}
        so
        \begin{equation*}
        |l^\ep(x) - l(x)| < \eta
        \end{equation*}
        Hence, $l^\ep \to l$. Moreover, this convergence is also locally uniform on $\overbar{X}$. Indeed, local uniform convergence of $\left(\varphi_j^{\ep,n}(x)\right)_{\underset{j\in\Z}{\ep>0}}$ implies that $\forall\eta>0$, $\exists \ep_0>0$, $\forall\ep<\ep_0$, $\forall x\in\overbar{X}$, $\forall j\in\ldb-J,J\rdb$, $\forall n\in\ldb1,N_t\rdb$:
        \begin{equation*}
            \left|\varphi_j^{\ep,n}(x-\Dt v_{j})-\varphi_j^{n}(x-\Dt v_{j})\right|<\eta.
        \end{equation*}
        Therefore, $\forall\eta>0$, $\exists \ep_0>0$, $\forall\ep<\ep_0$, $\forall x\in\overbar{X}$, $\forall j\in\ldb-J,J\rdb$, $\forall n\in\ldb1,N_t\rdb$ one has
        \begin{equation*}
            \begin{aligned}
                \bigg|\min_{j\in\ldb-J,J\rdb}\lbr \varphi_j^{\ep,n}((x-\Dt v_{j}))\rbr&-\min_{j\in\ldb-J,J\rdb}\lbr \varphi_j^{n}((x-\Dt v_{j}))\rbr\bigg| \\ &\leq\max_{j\in\ldb-J,J\rdb}\lbr\left|\varphi_j^{\ep,n}((x-\Dt v_{j}))-\varphi_j^{n}((x-\Dt v_{j}))\right|\rbr<\eta.
            \end{aligned}
        \end{equation*}
    }
    
    We now turn our attention to the limit of $M_j^{\ep,n}(x)$. Using the same argument as before, uniform continuity of $(x,y)\mapsto\min\lbr x,y\rbr$ preserves the uniform convergence on $ x\in[-A,A]$, and we can pass to the limit in \eqref{eq:grandMSemiTV} giving for all $x\in[-A,A]$, $j\in\ldb-J,J\rdb$ and $n\in\ldb1,N_t\rdb$:
    \begin{equation}\label{eq:convGrandmFinal}
        M_j^{\ep,n}(x)\details{=\min\lbr \varphi_{j}^{\ep,n}(x-\Dt v_j) + \Dt ,\, \frac{v_j^2}{2}+\mu^{\ep,n+1}(x)\rbr}\underset{\ep\to0}{\to} \min\lbr \varphi_{j}^{n}(x-\Dt v_j) + \Dt ,\, \frac{v_j^2}{2}+\mu^{n+1}(x)\rbr,
    \end{equation}
    which is the definition of $\varphi^{n+1}_j(x)$. Finally, combining the convergence of \eqref{eq:convExpPhi}, \eqref{eq:convPetitmFinal} and \eqref{eq:convGrandmFinal} we can conclude on the convergence of \eqref{eq:schemeMuPhiSemiTV}, as $\ep\to0$, towards the semi-discrete limit scheme:
    \begin{equation}\label{eq:VariDiscSemiTV}\tag{$\mathcal{S}^0_{\Dt,\Dv}$}
        \lbr\begin{aligned}
            \mu^{n+1}(x)&=\underset{j\in\Z}{\min} \lbr \varphi_{j}^{n}(x-\Dt v_j)\rbr,\\
            \varphi_{j}^{n+1}(x) &= \min\lbr \varphi_{j}^{n}(x-\Dt v_j) + \Dt ,\, \frac{v_j^2}{2}+\mu^{n+1}(x)\rbr.
        \end{aligned}\right.
    \end{equation}
    and this convergence is locally uniform in $x\in\R$. It remains to remove the restriction “up to a subsequence” in the convergence as $\ep \to 0$. Since \eqref{eq:VariDiscSemiTV} is an explicit scheme, and since, for all $j \in \Z$ and $x \in \R$, the initial data $\varphi_{\tin,j}(x)$ is assumed to be independent of $\ep$, this restriction can be lifted. If, instead, the initial data does depend on $\ep$ — that is, if one further assume that $\varphi_{\tin,j}^\ep(x) \to \varphi_{\tin,j}(x)$ locally uniformly in $x$ as $\ep \to 0$ — then all subsequences satisfy the same explicit induction relation. Consequently, the sequences $\varphi_{j}^{\ep,n+1}$ and $\mu_{j}^{\ep,n+1}$ converge locally uniformly when $\ep\to0$ and their limits satisfy \eqref{eq:VariDiscSemiTV}. 
\end{proof}
The proof of Theorem~\ref{thm:AP} can be extended to the fully discrete setting following Remark~\ref{rem:convEpsFull} by using the properties of the linear interpolation \eqref{eq:interpolation}.

\section{Properties of the asymptotic scheme}\label{sec:AsymptProp}
%%%%%%%%%%%%%%%%%%%%%%%%%%%%%%%%%%%%%%%%%%%%%%%%%%%%%%%%%%%%%%%%%%%%%%%%%%%%%%
In this section, we again focus on the semi discrete scheme \eqref{eq:VariDiscSemiTV}. The fully discrete scheme \eqref{eq:VariDiscFull} mainly adds technicalities. We start by showing stability properties satisfied by \eqref{eq:VariDiscSemiTV} in Section~\ref{subsec:Variprop}. Then, we present several equivalent reformulations of \eqref{eq:VariDiscSemiTV}. In particular, following the continuous setting, a discrete representation formula is introduced in Section~\ref{subsec:DerivRep}.

\subsection{Stability properties of the limit scheme}\label{subsec:Variprop}
In this section, we prove stability properties of the limit scheme \eqref{eq:VariDiscSemiTV}, such as the preservation of the equilibrium and maximum principle that are classical in the study of numerical schemes for Hamilton-Jacobi equations \cite{CrandallLions1984}. Note that the same procedures can be adapted to the fully discrete case thanks to the monotonicity property of the first order interpolation \eqref{eq:interpolation}, and we refer back to Remark~\ref{rem:convEpsFull} for more details. The section is concluded with two Lemmas that emphasize the link between \eqref{eq:VariDiscSemiTV} and \eqref{eq:Vari}.

\begin{lemma}\label{lem:WBLim}
    The semi-discrete scheme \eqref{eq:VariDiscSemiTV} preserves the equilibrium~$\left(v_j^2/2\right)_{j\in\Z}$: if~$\varphi_\tin(x,v) = v^2/2$ for all $x\in\R$ and $v\in\R$, then 
    \begin{equation*}
        \forall n\in\ldb1,N_t\rdb,\,\forall j\in\Z,\,\forall x\in\R,\quad \varphi_j^{n}(x) = \frac{v_j^2}{2}.
    \end{equation*}
\end{lemma}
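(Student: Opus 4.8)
The plan is to argue by induction on the time index $n$, exactly mirroring the structure of the proof of Lemma~\ref{lem:WBeps} but with considerably less work, since the logarithmic correction terms that appear in \eqref{eq:schemeMuPhiSemiTV} are absent from the limit scheme \eqref{eq:VariDiscSemiTV}. The base case is immediate: the initial data $\varphi_\tin(x,v)=v^2/2$ gives $\varphi_j^{0}(x)=\varphi_\tin(x,v_j)=v_j^2/2$ for all $j\in\Z$ and $x\in\R$.

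For the inductive step I would assume that $\varphi_j^{n}(x)=v_j^2/2$ holds for all $j\in\Z$ and $x\in\R$. The crucial observation is that this candidate equilibrium is constant in the position variable, so transport leaves it untouched: $\varphi_j^{n}(x-\Dt v_j)=v_j^2/2$ independently of $x$. Plugging this into the first line of \eqref{eq:VariDiscSemiTV} yields
\begin{equation*}
    \mu^{n+1}(x)=\underset{j\in\Z}{\min}\lbr \frac{v_j^2}{2}\rbr.
\end{equation*}
Since the discrete velocity grid $\Dv\Z$ contains $v_0=0$, this minimum is attained at $j=0$ and equals $0$, so $\mu^{n+1}(x)=0$ for every $x$.

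It then remains to substitute into the second line of \eqref{eq:VariDiscSemiTV}, which gives
\begin{equation*}
    \varphi_j^{n+1}(x)=\min\lbr \frac{v_j^2}{2}+\Dt,\ \frac{v_j^2}{2}+0\rbr=\frac{v_j^2}{2},
\end{equation*}
where the last equality uses $\Dt>0$ so that the first argument of the minimum is strictly larger than the second. This closes the induction and proves the claim. I do not anticipate any genuine obstacle here: the only point requiring care is verifying that the minimum defining $\mu^{n+1}$ is realized at the zero velocity, which is precisely why retaining $v_0=0$ in the discrete velocity set $\Dv\Z$ is essential. Everything else is a direct computation, and indeed the argument can be read off as the $\ep\to0$ specialization of Lemma~\ref{lem:WBeps}, where both branches of the minimum in \eqref{eq:mEqui} collapse once the $-\ep\ln(c_\Dv^\ep)$ term vanishes.
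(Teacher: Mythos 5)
Your proof is correct and coincides with the paper's own argument: an induction in which the induction hypothesis (constant in $x$, so transport-invariant) gives $\mu^{n+1}(x)=\min_j v_j^2/2=0$ via $v_0=0$, and then the second line of \eqref{eq:VariDiscSemiTV} collapses to $\min\lbr v_j^2/2+\Dt,\,v_j^2/2\rbr=v_j^2/2$. No gaps; the only point of care you flag (that $0$ belongs to the velocity grid) is exactly the point the paper relies on as well.
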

\details{
    \begin{proof}
        Let $\varphi_{j}^n(x)=\frac{v_j^2}{2}$. Since $\varphi_{j}^n(x)\geq0$ $\forall j\in\Z$ and $v_{0}=0$, $\mu^{n+1}(x)\details{=\underset{j\in\Z}{\min}\lbr\frac{v_j^2}{2}\rbr}= 0$. Plugging this relation into the second equation of \eqref{eq:VariDiscSemiTV} then yields
        \begin{equation*}
            \varphi_{j}^{n+1}(x) = \min\lbr \frac{v_j^2}{2} + \Dt ,\, \frac{v_j^2}{2}\rbr .
        \end{equation*}
        Since $\Dt\geq 0$, we obtain that $\varphi_{j}^{n+1}(x)=\frac{v_j^2}{2}$.
    \end{proof} 
}
\begin{lemma}\label{lem:MonoTransConstLim}
    Let $(n,j)\in\ldb1,N_t\rdb\times\Z$ and $x\in\R$. Let $\left(\mu^{n}(x),\varphi_j^{n}(x)\right)_{n\in \ldb1,N_t\rdb,j\in\Z}$ be a solution to \eqref{eq:VariDiscSemiTV} with initial data \eqref{eq:initSemiTV} and denote $\left(\psi^{n}(x)\right)_{n\in \ldb1,N_t\rdb,j\in\Z}$ another solution to \eqref{eq:VariDiscSemiTV}. For any function $\psi_j^{n}(x)$, we use the notation
    \begin{equation*}%\label{eq:updateLimNu}
            \nu^{n}(x)= \underset{j\in\Z}{\min} \lbr \psi_{j}^{n-1}(x-\Dt v_j)\rbr.
    \end{equation*}
    Then, the following properties hold:
    \begin{enumerate}[label=\roman*)]
        \item \textbf{Monotonicity.} Suppose that $\varphi_{\tin}\leq\psi_{\tin}$. Then, for all $j\in\Z$, $x\in\R$ and for all $n\in\ldb1,N_t\rdb$,
        \begin{align*}
            &\mu^{n}(x)\leq\nu^{n}(x),\\ % \label{eq:monoLimMu}\\
            &\varphi_j^{n}(x)\leq\psi_j^{n}(x).%\label{eq:monoLimPhi}
        \end{align*}
        \item \textbf{Invariance by translation.} Let $y\in\R$ and suppose that $\psi_{\tin}=\varphi_{\tin}(\cdot+y,\cdot)$. Then, for all $j\in\Z$, $x\in\R$ and for all $n\in\ldb1,N_t\rdb$,
        \begin{align*}
            &\nu^{n}(x)=\mu^{n}(x+y),\\ %\label{eq:TransLimMu}\\
            &\psi_j^{n}(x)=\varphi_j^{n}(x+y).%\label{eq:TransLimPhi}
        \end{align*}
        \item \textbf{Commutation with constants.} Let $K\in\R$ and suppose that $\psi_{\tin}=\varphi_{\tin}+K$. Then, for all $j\in\Z$, $x\in\R$ and for all $n\in\ldb1,N_t\rdb$,
        \begin{align*}
            &\nu^{n}(x)=\mu^{n}(x)+K,\\ % \label{eq:ConstLimMu}\\
            &\psi_j^{n}(x)=\varphi_j^{n}(x)+K.%\label{eq:ConstLimPhi}
        \end{align*}
    \end{enumerate}
\end{lemma}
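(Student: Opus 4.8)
The plan is to establish each of the three properties by induction on $n$, exploiting the elementary fact that, for any real constant $K$ and any family $(a_j)_{j\in\Z}$, the minimum is order-preserving and commutes with constant shifts, namely $\underset{j\in\Z}{\min}\lbr a_j+K\rbr=\underset{j\in\Z}{\min}\lbr a_j\rbr + K$ and $\min(a+K,b+K)=\min(a,b)+K$, together with the observation that the free-transport shift $x\mapsto x-\Dt v_j$ merely translates the position argument. Since \eqref{eq:VariDiscSemiTV} is a two-step update in which $\mu^{n+1}$ (resp.\ $\nu^{n+1}$) is computed from $\varphi^{n}$ (resp.\ $\psi^{n}$) and only then fed into the update of $\varphi^{n+1}$ (resp.\ $\psi^{n+1}$), each inductive step is carried out by first propagating the hypothesis to the $\mu$-line and afterwards to the $\varphi$-line.

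For the monotonicity property I would assume $\varphi_j^{n}(x)\leq\psi_j^{n}(x)$ for all $j\in\Z$ and $x\in\R$, which holds at $n=0$ by the assumption $\varphi_\tin\leq\psi_\tin$. Evaluating at the transported points gives $\varphi_j^{n}(x-\Dt v_j)\leq\psi_j^{n}(x-\Dt v_j)$, and taking the minimum over $j$ yields $\mu^{n+1}(x)\leq\nu^{n+1}(x)$. Since both arguments of the minimum defining $\varphi_j^{n+1}$ in the second line of \eqref{eq:VariDiscSemiTV} increase—the first because $\varphi_j^n(x-\Dt v_j)+\Dt\leq\psi_j^n(x-\Dt v_j)+\Dt$, the second because $\frac{v_j^2}{2}+\mu^{n+1}(x)\leq\frac{v_j^2}{2}+\nu^{n+1}(x)$—monotonicity of $\min(\cdot,\cdot)$ gives $\varphi_j^{n+1}(x)\leq\psi_j^{n+1}(x)$, closing the induction.

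The remaining two properties follow the same template. For invariance by translation I would assume $\psi_j^{n}(x)=\varphi_j^{n}(x+y)$, which holds initially by hypothesis; then $\nu^{n+1}(x)=\underset{j\in\Z}{\min}\lbr\psi_j^n(x-\Dt v_j)\rbr=\underset{j\in\Z}{\min}\lbr\varphi_j^n((x+y)-\Dt v_j)\rbr=\mu^{n+1}(x+y)$, and substituting into the $\varphi$-line gives $\psi_j^{n+1}(x)=\varphi_j^{n+1}(x+y)$. For commutation with constants, assuming $\psi_j^n(x)=\varphi_j^n(x)+K$, the identity $\underset{j\in\Z}{\min}\lbr\varphi_j^n(x-\Dt v_j)+K\rbr=\mu^{n+1}(x)+K$ yields $\nu^{n+1}(x)=\mu^{n+1}(x)+K$; feeding this into the second line and pulling the common shift $K$ out of both arguments of the minimum gives $\psi_j^{n+1}(x)=\varphi_j^{n+1}(x)+K$.

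I do not expect any genuine obstacle: this result is the $\ep=0$ counterpart of Lemma~\ref{lem:MonoTransConstEps}, and the disappearance of the logarithmic terms in the limit makes the argument strictly simpler, reducing everything to the algebra of the minimum. The only point requiring mild care is the coupled two-line structure within a single time step—the $\mu$/$\nu$-line must be updated first and then used in the $\varphi$/$\psi$-line—and the fact that, since all the identities above hold for the infimum irrespective of whether it is attained, no appeal to a localization result for the minimizers (as in Lemma~\ref{lem:LocMinimaEps}) is needed here.
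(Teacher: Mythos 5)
Your proposal is correct and follows essentially the same route as the paper: an induction on $n$ in which the $\mu$/$\nu$-line is updated first and then fed into the $\varphi$/$\psi$-line, using only monotonicity of the minimum, its commutation with constants, and the fact that transport is a translation of the argument. This is exactly the (undetailed) argument the paper relies on for this lemma.
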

\details{
    \begin{proof}  
        \begin{enumerate}[label=\roman*)]
            \item We proceed by induction assume that $\psi_{\tin,j}(x)\leq\varphi_{\tin,j}(x)$ for all $j\in\Z$ and $x\in\R$. Let $\psi_{j}^{n}(x)$ be a solution to \eqref{eq:VariDiscSemiTV}. Then, \eqref{eq:monoLimMu} is a direct consequence of the definitions:
            \begin{equation*}
                \nu^{n+1}(x) = \underset{j\in\Z}{\min}\lbr \psi_{j}^{n}(x-\Dt v_j)\rbr \leq\underset{j\in\Z}{\min}\lbr \varphi_{j}^{n}(x-\Dt v_j)\rbr  =\mu^{n+1}(x).
            \end{equation*}
            Inequality \eqref{eq:monoLimPhi} is obtained similarly:
            \begin{align*}
                \psi_{j}^{n+1}(x) &= \min\lbr \psi_{j}^{n}(x-\Dt v_j) + \Dt ,\, \frac{v_j^2}{2}+\nu^{n+1}(x)\rbr \\
                &\leq \min\lbr \varphi_{j}^{n}(x-\Dt v_j) + \Dt ,\, \frac{v_j^2}{2}+\mu^{n+1}(x)\rbr  = \varphi_{j}^{n+1}(x),
            \end{align*}
            which concludes the induction.
            \item Let $y\in\R$ and define $\psi_{j}^{n}(x)=\varphi_{j}^{n}(x+y)$. We look for the relation satisfied by $\psi_{j}^{n+1}(x)$. First, \eqref{eq:TransLimMu} follows from the definition of scheme \eqref{eq:VariDiscSemiTV}:
            \begin{equation}\label{eq:TransLimMuBis}
                \nu^{n+1}(x) = \underset{j\in\Z}{\min}\lbr \psi_{j}^{n}(x-\Dt v_j)\rbr =\underset{j\in\Z}{\min}\lbr \varphi_{j}^{n}(x+y-\Dt v_j)\rbr =\mu^{n+1}(x+y).
            \end{equation}
            Secondly, using \eqref{eq:TransLimMuBis}, the same argument yields \eqref{eq:TransLimPhi}:
            \begin{align*}
                \psi_{j}^{n+1}(x) &= \min\lbr \varphi_{j}^{n}(x+y-\Dt v_j) + \Dt ,\, \frac{v_j^2}{2}+\mu^{n+1}(x+y)\rbr\\
                \psi_{j}^{n+1}(x) &= \min\lbr \psi_{j}^{n}(x-\Dt v_j) + \Dt ,\, \frac{v_j^2}{2}+\nu^{n+1}(x)\rbr .
            \end{align*}
            \item Let $K\in\R$. We set $\psi_{ij}^n=\varphi_{j}^n(x) + K$ Then,
            \begin{equation}\label{eq:ConstLimMuBis}
                \nu^{n+1}(x) = \underset{j\in\Z}{\min}\lbr \psi_{j}^{n}(x-\Dt v_j)\rbr =K+\underset{j\in\Z}{\min}\lbr \varphi_{j}^{n}(x-\Dt v_j)\rbr =K+\mu^{n+1}(x).
            \end{equation}
            Since $K$ is a constant, one can in particular add it to the second equation of \eqref{eq:VariDiscSemiTV}. In addition, using \eqref{eq:ConstLimMuBis} one obtains \eqref{eq:ConstLimPhi}:
            \begin{align*}
                \varphi_{j}^{n+1}(x) + K&= \min\lbr \varphi_{j}^{n}(x-\Dt v_j) + \Dt ,\, \frac{v_j^2}{2}+\mu^{n+1}(x)\rbr  + K\\
                \psi_{j}^{n+1}(x) &= \min\lbr \psi_{j}^{n}(x-\Dt v_j) + \Dt ,\, \frac{v_j^2}{2}+\nu^{n+1}(x)\rbr .
            \end{align*}
        \end{enumerate}
    \end{proof}
}

From Lemma~\ref{lem:MonoTransConstLim}, one can then deduce the exact propagation of Lipschitz bounds by the semi-discrete numerical scheme \eqref{eq:VariDiscSemiTV} (see \cite{CrandallLions1984}). In addition, one has:
\begin{corollary}\label{cor:LipBoundLim}
    For all $n\in\ldb1,N_t\rdb$, the solution to \eqref{eq:VariDiscSemiTV} satisfies Assumption~\ref{Ass:LinfBounds}.
\end{corollary}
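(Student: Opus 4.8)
The plan is to show that the two parts of Assumption~\ref{Ass:LinfBounds} — the $L^\infty$ bounds and the $L$-Lipschitz regularity in $x$ — are propagated by the induction defining \eqref{eq:VariDiscSemiTV}. The argument mirrors the $\ep>0$ analysis of Lemma~\ref{lem:MaxPrincEps} and of Corollary~\ref{cor:LipBoundEps}, but is in fact simpler, since the limit scheme carries no logarithmic terms and the correction $\ep\ln(c_\Dv^\ep)$ appearing in \eqref{eq:boundInitMu}–\eqref{eq:boundMu} vanishes as $\ep\to0$. (Alternatively, the whole statement follows by passing to the limit $\ep\to0$ in Lemma~\ref{lem:MaxPrincEps} and Corollary~\ref{cor:LipBoundEps}, using the locally uniform convergence of Theorem~\ref{thm:AP}; I favour the self-contained induction below, consistent with the direct treatment of the limit scheme in this section.)

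For the $L^\infty$ bound, I would argue by induction on $n$, assuming $|\varphi_j^{n}(x)-v_j^2/2|\leq M$ for all $j\in\Z$ and $x\in\R$. Since this bound is uniform in $x$, the exact transport in \eqref{eq:VariDiscSemiTV} preserves it, so $|\varphi_j^{n}(x-\Dt v_j)-v_j^2/2|\leq M$ as well. Reading off the first line of \eqref{eq:VariDiscSemiTV}, the choice $j=0$ with $v_0=0$ gives the upper bound $\mu^{n+1}(x)\leq\varphi_0^{n}(x)\leq M$, while $\varphi_j^{n}(x-\Dt v_j)\geq v_j^2/2-M\geq -M$ yields $\mu^{n+1}(x)\geq -M$; hence $|\mu^{n+1}(x)|\leq M$, precisely the limit of \eqref{eq:boundMu}. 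Substituting into the second line of \eqref{eq:VariDiscSemiTV} and subtracting $v_j^2/2$, I would write
\begin{equation*}
    \varphi_j^{n+1}(x)-\frac{v_j^2}{2}=\min\lbr\left(\varphi_j^{n}(x-\Dt v_j)-\frac{v_j^2}{2}\right)+\Dt,\; \mu^{n+1}(x)\rbr.
\end{equation*}
The second argument is at most $M$, so the minimum is $\leq M$; both arguments are at least $-M$ (using $\Dt\geq0$ and the bound on $\mu^{n+1}$), so the minimum is $\geq -M$, closing the induction. The base case $n=0$ follows directly from \eqref{eq:initSemiTV}: the same two observations applied to $\varphi_\tin(\cdot-\Dt v_j,v_j)$ give $|\mu^0(x)|\leq M$, and $\varphi_j^0(x)=\varphi_\tin(x,v_j)$ satisfies \eqref{eq:boundInitPhi} by hypothesis.

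For the Lipschitz regularity, I would reproduce the argument of \cite{CrandallLions1984} already invoked for Corollary~\ref{cor:LipBoundEps}. Writing $S$ for the one-step solution operator of \eqref{eq:VariDiscSemiTV} and assuming $x\mapsto\varphi_j^{n}(x)$ is $L$-Lipschitz, the inequality $\varphi_j^{n}(\cdot)\leq\varphi_j^{n}(\cdot+y)+|y|L$ holds for every $y\in\R$. Applying the monotonicity property (i) of Lemma~\ref{lem:MonoTransConstLim}, then its commutation with constants (iii) and invariance by translation (ii), gives $\varphi_j^{n+1}(x)\leq\varphi_j^{n+1}(x+y)+|y|L$; exchanging the roles of $x$ and $x+y$ yields the reverse inequality, whence $|\varphi_j^{n+1}(x)-\varphi_j^{n+1}(x+y)|\leq|y|L$. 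As $v_j^2/2$ is constant in $x$, this is the $L$-Lipschitz regularity of $x\mapsto\varphi_j^{n+1}(x)-v_j^2/2$ demanded by Assumption~\ref{Ass:LinfBounds}.

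I do not expect a genuine obstacle: both parts are routine inductions resting entirely on structural properties established earlier. The only points requiring care are bookkeeping ones — checking that the $\ep\ln(c_\Dv^\ep)$ term of \eqref{eq:boundMu} correctly disappears in the limit, so that the $\mu$-bound becomes $|\mu^{n}(x)|\leq M$, and that the Lipschitz estimate is applied through the composition with the monotone, translation- and constant-commuting operator $S$, exactly as in the proof of Corollary~\ref{cor:LipBoundEps}.
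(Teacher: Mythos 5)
Your proposal is correct and follows essentially the same route as the paper: the $L^\infty$ bounds are propagated by a direct induction on the min-structure of \eqref{eq:VariDiscSemiTV} (the limit analogue of the maximum principle at $\ep>0$, with the $\ep\ln(c_\Dv^\ep)$ correction gone), and the Lipschitz propagation follows from the monotonicity, translation-invariance and constant-commutation properties of Lemma~\ref{lem:MonoTransConstLim} via the Crandall--Lions argument, exactly as the paper indicates.
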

The proofs of Lemmas~\ref{lem:MonoTransConstLim} and~\ref{lem:WBLim} are done by induction, using the definition of \eqref{eq:VariDiscSemiTV} and are therefore not detailed here.

We conclude this section with two lemmas linking \eqref{eq:VariDiscSemiTV} and \eqref{eq:Vari}. First, we prove that \eqref{eq:VariDiscSemiTV} also satisfies a discrete version of the condition $\dt \min_{v\in\R^{d_v}} \lbr\varphi\rbr \leq 0$ from \eqref{eq:Vari}. To show this result, we start by introducing the following property of \eqref{eq:VariDiscSemiTV}.
\begin{lemma}\label{lem:VariLink}
    Let $\Dt,\Dx,\Dv>0$. Let $\left(\varphi_{j}^{n}(x)\right)_{n\in\ldb 1,N_t\rdb, j\in\Z}$ and $\left(\mu^{n}(x)\right)_{n\in\ldb 1,N_t\rdb}$ be solution to \eqref{eq:VariDiscSemiTV}. Then,
    \begin{equation}\label{eq:LinkMu}
        \forall n\in\ldb 1,N_t\rdb,\quad\mu^{n}(x)=\underset{j\in\Z}{\min} \lbr \varphi_{j}^{n}(x)\rbr.
    \end{equation}
\end{lemma}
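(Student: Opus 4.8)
The plan is to establish the identity by a direct computation from the update rules of \eqref{eq:VariDiscSemiTV}, rather than by an induction. The key observation is that the quantity $\min_{j\in\Z}\lbr\varphi_j^{n+1}(x)\rbr$ can be evaluated explicitly by pushing the outer minimum through the two-term minimum that defines $\varphi_j^{n+1}(x)$, and that both resulting quantities turn out to equal $\mu^{n+1}(x)$. Concretely, I would fix $n\in\ldb1,N_t\rdb$ and $x\in\R$ and start from the second line of \eqref{eq:VariDiscSemiTV}, writing
\begin{equation*}
    \min_{j\in\Z}\lbr\varphi_j^{n+1}(x)\rbr = \min_{j\in\Z}\min\lbr \varphi_j^{n}(x-\Dt v_j)+\Dt,\ \tfrac{v_j^2}{2}+\mu^{n+1}(x)\rbr.
\end{equation*}

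Next I would use the elementary identity $\min_j\min\lbr A_j,B_j\rbr=\min\lbr\min_j A_j,\,\min_j B_j\rbr$ to split the right-hand side into two separate minimizations. The first term yields $\min_{j\in\Z}\lbr\varphi_j^{n}(x-\Dt v_j)+\Dt\rbr=\Dt+\mu^{n+1}(x)$ directly from the first line of \eqref{eq:VariDiscSemiTV}. For the second term, since the additive constant $\mu^{n+1}(x)$ factors out of the minimum and since $v_0=0$ gives $\min_{j\in\Z}\lbr v_j^2/2\rbr=0$, one obtains $\min_{j\in\Z}\lbr v_j^2/2+\mu^{n+1}(x)\rbr=\mu^{n+1}(x)$. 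Combining these and using $\Dt>0$ gives
\begin{equation*}
    \min_{j\in\Z}\lbr\varphi_j^{n+1}(x)\rbr = \min\lbr \Dt+\mu^{n+1}(x),\ \mu^{n+1}(x)\rbr = \mu^{n+1}(x),
\end{equation*}
which is exactly \eqref{eq:LinkMu} with the index shifted to $n+1$; letting $n$ range over $\ldb0,N_t-1\rdb$ covers all indices in $\ldb1,N_t\rdb$.

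The only point requiring a word of care is that all the minima over the unbounded index set $\Z$ are genuinely attained, so that the manipulations above are licit and $\mu^{n+1}(x)$ is well-defined. This follows from the fact that, by Corollary~\ref{cor:LipBoundLim}, the limit solution satisfies Assumption~\ref{Ass:LinfBounds}, whence $\varphi_j^n$ grows like $v_j^2/2$ and its minimizers are confined to a bounded set of indices, exactly as in the argument of Lemma~\ref{lem:LocMinimaEps}. Beyond this, there is no genuine obstacle: the statement is purely algebraic and reduces to the distributivity of the minimum together with the placement $v_0=0$ of the equilibrium velocity and the positivity of $\Dt$. I would note that the same computation transfers verbatim to the fully discrete scheme \eqref{eq:VariDiscFull}, since the interpolation \eqref{eq:interpolation} does not affect the velocity minimization.
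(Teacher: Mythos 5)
Your proposal is correct and follows essentially the same route as the paper's proof: push the minimum over $j$ through the two-term minimum defining $\varphi_j^{n+1}(x)$, identify the first branch as $\mu^{n+1}(x)+\Dt$ via the first line of \eqref{eq:VariDiscSemiTV}, use $v_0=0$ for the second branch, and conclude from $\Dt>0$. Your additional remark on the attainment of the minima over the unbounded index set (via Corollary~\ref{cor:LipBoundLim} and the argument of Lemma~\ref{lem:LocMinimaEps}) is a welcome extra precaution, but it does not alter the argument, which coincides with the one in the paper.
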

\begin{proof}
    Starting from the second line of \eqref{eq:VariDiscSemiTV}, the minimum in $j$ is taken, and since $\Dt\geq0$, one obtains
    \begin{equation*}
        \begin{aligned}
            \underset{j\in\Z}{\min} \lbr \varphi_{j}^{n+1}(x)\rbr &=\min\lbr \underset{j\in\Z}{\min} \lbr\varphi_{j}^{n}(x-\Dt v_j)\rbr + \Dt,\,\underset{j\in\Z}{\min}\lbr \frac{v_j^2}{2}\rbr +\mu^{n+1}(x) \rbr \\
            &=\mu^{n+1}(x),
        \end{aligned}
    \end{equation*}
    \details{
        \begin{equation*}
            \begin{aligned}
                \underset{j\in\Z}{\min} \lbr \varphi_{j}^{n+1}(x)\rbr  &= \underset{j\in\Z}{\min} \min\lbr \varphi_{j}^{n}(x-\Dt v_j) + \Dt ,\, \frac{v_j^2}{2}+\mu^{n+1}(x)\rbr .\\
                &=\min\lbr \underset{j\in\Z}{\min} \lbr\varphi_{j}^{n}(x-\Dt v_j)\rbr + \Dt,\,\underset{j\in\Z}{\min}\lbr \frac{v_j^2}{2}\rbr +\mu^{n+1}(x) \rbr \\
                &=\min\lbr \mu^{n+1}(x) + \Dt,\,\mu^{n+1}(x) \rbr \\
                &=\mu^{n+1}(x).
            \end{aligned}
        \end{equation*}
    }
\end{proof}

Using Lemma~\ref{lem:VariLink}, the following property, can now be stated.
\begin{lemma}\label{lem:DiscDTmuneg}
    Let $x\in\R$ and $\left(\mu^{n}(x), \varphi_{j}^{n}(x)\right)_{j\in\Z,n\in\ldb0,N_t\rdb}$ be a solution to \eqref{eq:VariDiscSemiTV}. Then, under condition \eqref{eq:stabJumpCond},
    \begin{equation}\label{eq:DiscDTmuneg}
        \forall x\in\R,\,n\in\ldb0, N_t-1\rdb,\quad \mu^{n+1}(x)-\mu^{n}(x)\leq0.
    \end{equation}
    \details{
        Moreover, assuming that $0\in\underset{j\in\Z}{\argmin}\lbr\varphi_{\tin,j}(x)\rbr$, $j=0$ is the only minimum: for all $\,x\in\R,\,n\in\ldb1,N_t\rdb$, $\underset{j\in\Z}{\argmin}\lbr\varphi_{j}^{n+1}(x)\rbr=\{0\}$.
    }
\end{lemma}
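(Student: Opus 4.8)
The plan is to exploit the special role played by the zero velocity $v_0 = 0$: because the second line of \eqref{eq:VariDiscSemiTV} caps $\varphi_0^{n}$ by $\mu^{n}$, while the first line computes $\mu^{n+1}$ as a minimum over \emph{all} discrete velocities, the index $j=0$ always furnishes a competitor realizing a value no larger than $\mu^{n}(x)$. The whole argument then reduces to making this competitor explicit, treating separately the generic step $n \in \ldb 1, N_t-1\rdb$ and the initial step $n=0$.

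For the generic step $n \geq 1$, I would first evaluate the second line of \eqref{eq:VariDiscSemiTV} at $j=0$. Since $v_0=0$, this gives
\begin{equation*}
\varphi_0^{n}(x) = \min\left\lbrace \varphi_0^{n-1}(x) + \Dt,\ \mu^{n}(x)\right\rbrace \leq \mu^{n}(x).
\end{equation*}
On the other hand, the first line of \eqref{eq:VariDiscSemiTV} is a minimum over $j\in\Z$, so selecting $j=0$ (for which $x-\Dt v_0 = x$) as a competitor yields $\mu^{n+1}(x) \leq \varphi_0^{n}(x)$. Chaining the two inequalities gives $\mu^{n+1}(x) \leq \varphi_0^{n}(x) \leq \mu^{n}(x)$, which is exactly \eqref{eq:DiscDTmuneg}. (Equivalently, Lemma~\ref{lem:VariLink} upgrades the first inequality to the identity $\varphi_0^n(x) = \mu^n(x)$, so that $v=0$ is always a minimizer; but only the inequality is needed for \eqref{eq:DiscDTmuneg}.)

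There remains the initial step $n=0$, where the bound $\varphi_0^0(x)\leq\mu^0(x)$ is unavailable because $\varphi^0$ is the prescribed datum. Here I would argue directly from the initialization \eqref{eq:initSemiTV}: by definition $\mu^0(x) = \min_{j\in\Z}\lbr \varphi_\tin(x-\Dt v_j, v_j)\rbr$, while the first line of \eqref{eq:VariDiscSemiTV} gives $\mu^1(x) = \min_{j\in\Z}\lbr \varphi_j^0(x-\Dt v_j)\rbr = \min_{j\in\Z}\lbr \varphi_\tin(x-\Dt v_j, v_j)\rbr$. Hence $\mu^1(x)=\mu^0(x)$, so \eqref{eq:DiscDTmuneg} holds with equality at $n=0$, which closes the induction.

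I would note that the inequality \eqref{eq:DiscDTmuneg} itself does not genuinely use the condition \eqref{eq:stabJumpCond}: the mechanism is purely the presence of the $j=0$ competitor. The condition becomes essential only for the sharper companion statement that $j=0$ is the \emph{unique} minimizer, i.e. $\argmin_{j}\lbr \varphi_j^{n+1}(x)\rbr = \lbr 0\rbr$. Proving that would require $\varphi_j^{n+1}(x) > \mu^{n+1}(x)$ for every $j\neq0$, hence a lower bound on the transported term $\varphi_j^n(x-\Dt v_j)+\Dt$ by the minimum; this forces the energy gap $v_j^2/2 \geq \Dv^2/2$ to dominate the time increment $\Dt$, which is precisely what \eqref{eq:stabJumpCond} secures. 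I expect this inductive uniqueness argument, rather than the inequality \eqref{eq:DiscDTmuneg} itself, to be the real obstacle.
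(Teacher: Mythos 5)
Your proof is correct and follows essentially the same route as the paper's: the key chain $\mu^{n+1}(x)\leq\varphi_0^{n}(x)\leq\mu^{n}(x)$, obtained by playing the $j=0$ competitor in the first line of \eqref{eq:VariDiscSemiTV} against the cap $\varphi_0^{n}(x)\leq\mu^{n}(x)$ coming from the second line (equivalently, Lemma~\ref{lem:VariLink}). In fact your explicit treatment of the base case $n=0$, where you show $\mu^{1}(x)=\mu^{0}(x)$ directly from the initialization \eqref{eq:initSemiTV}, covers a step the paper's written proof leaves implicit (its chain needs $\varphi_0^{n}(x)\leq\mu^{n}(x)$, which is only available for $n\geq1$), and your observation that \eqref{eq:stabJumpCond} is only needed for uniqueness of the minimizer, not for the inequality itself, matches the paper's structure as well.
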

\details{
    The relation \eqref{eq:DiscDTmuneg} is precisely the discrete equivalent of $\dt \underset{v\in\R^{d_v}}{\min} u \leq 0$. However, a key difference is that, in the discrete setting, there is always only one minimum that is always achieved at $j=0$.
}
\begin{proof}
    Let $x\in\R$. Starting from \eqref{eq:LinkMu}, it gives in particular that,
    \begin{equation*}%\label{eq:linkMuBound}
        \forall j\in\Z\quad\mu^{n+1}(x) \leq \varphi_{j}^{n+1}(x).
    \end{equation*}
    Then, using the second line of \eqref{eq:VariDiscSemiTV}, we deduce that for all $j\in\Z$,
    \begin{equation}\label{eq:linkPhiBound1}
        \varphi_{j}^{n+1}(x) \leq \mu^{n+1}(x) + \frac{v_j^2}{2}.
    \end{equation}
    In particular, combining the first line of \eqref{eq:VariDiscSemiTV} and \eqref{eq:linkPhiBound1} at time $n$, one obtains, for $j=0$
    \begin{equation*}%\label{eq:linkPhiBound2}
        \varphi_{0}^{n+1}(x) \leq \mu^{n+1}(x) \leq \varphi_{0}^{n}(x) \leq \mu^{n}(x),
    \end{equation*}
    which yields \eqref{eq:DiscDTmuneg}.
    
    \details{
        The relation \eqref{eq:dtminDiscDecreases} is precisely the discrete equivalent of $\dt \underset{v\in\R^{d_v}}{\min} u \leq 0$. It remains to show that this discrete derivative is $0$ if the minimum in velocity, for a fixed position $x\in\R$, is only achieved at $v_j=0$. We start by showing that $j=0$ is always the only minimum. Let $j^*\in\underset{j\in\Z}{\argmin}\lbr\varphi^{n+1}\rbr$. The relation \eqref{eq:LinkMu} then rewrites
            \begin{equation}\label{eq:jstarinargmin}
                \mu^{n+1}(x)=\varphi_{j^*}^{n+1}(x)
            \end{equation}
            Subtracting $\mu^{n+1}(x)$ from \eqref{eq:VariPhiSemiTV} evaluated at $j^*$, one obtains 
            \begin{equation}\label{eq:0isminimumtmp}
                \min\lbr \varphi_{j^*}^{n}(x-\Dt v_{j^*})-\mu^{n+1}(x) + \Dt ,\, \frac{v_{j^*}^2}{2}\rbr=0
            \end{equation}
            Since $j^*\in \underset{j\in\Z}{\argmin} \lbr\varphi^{n+1}\rbr$, we can use the first line of \eqref{eq:VariDiscSemiTV}. Together with equation \eqref{eq:0isminimumtmp}, this yields $0 \geq \min\lbr \Dt,\frac{v_{j^*}}{2} \rbr$. As $\Dt > 0$, this inequality can only be satisfied if $v_{j^*} = 0$, giving
            \begin{equation}\label{eq:0istheminimum}
                \forall x\in\R,\forall n\in\ldb0,N_t\rdb,\quad\underset{j\in\Z}{\argmin}\lbr\varphi^n(x)\rbr=\lbr0\rbr.
            \end{equation}    

            Suppose now that $\mu^{n+1}(x) < \mu^n(x)$. Then, at a given position $x$, as $\mu^n(x)= \varphi^n_0(x)$ thanks to \eqref{eq:0istheminimum}, and $\mu^{n+1}(x)=\underset{j\in\Z}{\min}\lbr \varphi^n_j(x-\Dt v_j)\rbr$, there exists $j^*\neq 0$ such that $\mu^{n+1}(x)= \varphi^n_{j^*}(x-\Dt v_{j^*})$. Thanks to the second line of \eqref{eq:VariDiscSemiTV} and \eqref{eq:stabJumpCond} one obtains
            \begin{equation*}
                \varphi^{n+1}_{j ^*}(x)\details{= \min\{ \Dt + \varphi^n_{j^*}(x-\Dt v_{j^*}), v_{j^*}^2/2 + \mu^{n+1}(x) \}} = \mu^{n+1}(x)+\min\lbr\Dt,(j ^*\Dv)^2/2 \rbr = \mu^{n+1}(x)+\Dt.
            \end{equation*}
    }
    
\end{proof}

To conclude this section, we present an equivalent reformulation of \eqref{eq:VariDiscSemiTV} that more closely mirrors the structure of the continuous equation \eqref{eq:Vari}.
\begin{lemma}%\label{lem:VariDiscSemiTVBis}
    Let $\left(\mu^{n}(x),\varphi_{j}^{n}(x)\right)_{j\in\Z, x\in\R, n\in\ldb 0,N_t\rdb}$ be solution to \eqref{eq:VariDiscSemiTV}. Then, it is solution to:
    \begin{equation*}%\label{eq:VariDiscSemiTVBis}
    \lbr \begin{aligned}
        &\mu^{n+1}(x)=\underset{j\in\Z}{\min} \lbr \varphi_{j}^{n}(x-\Dt v_j)\rbr ,\\
        &\max\lbr \frac{\varphi_{j}^{n+1}(x)-\varphi_{j}^{n}(x-\Dt v_j)}{\Dt} + 1 ,\, \varphi_{j}^{n+1}(x)-\frac{v_j^2}{2}-\mu^{n+1}(x)\rbr =0.
    \end{aligned}\right.
\end{equation*}
\end{lemma}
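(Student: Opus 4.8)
The plan is to observe that the first line of the target system coincides verbatim with the first line of \eqref{eq:VariDiscSemiTV}, so nothing needs to be proven there; the entire content is the reformulation of the $\min$ defining $\varphi_{j}^{n+1}(x)$ as an equivalent $\max=0$ condition. To set this up I would name the two competitors appearing in the second line of \eqref{eq:VariDiscSemiTV},
\begin{equation*}
    A \coloneq \varphi_{j}^{n}(x-\Dt v_j)+\Dt, \qquad B \coloneq \frac{v_j^2}{2}+\mu^{n+1}(x),
\end{equation*}
so that $\varphi_{j}^{n+1}(x)=\min\lbr A,B\rbr$.

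The core tool is the elementary equivalence: for real numbers $w,A,B$ one has $w=\min\lbr A,B\rbr$ if and only if $\max\lbr w-A,\,w-B\rbr=0$. Indeed, $w=\min\lbr A,B\rbr$ gives $w-A\leq 0$ and $w-B\leq 0$ with equality in the branch realizing the minimum, so the maximum vanishes; conversely, those same two inequalities recover $w=\min\lbr A,B\rbr$. Applying this with $w=\varphi_{j}^{n+1}(x)$ yields $\max\lbr \varphi_{j}^{n+1}(x)-A,\,\varphi_{j}^{n+1}(x)-B\rbr=0$. The second entry is already exactly $\varphi_{j}^{n+1}(x)-\frac{v_j^2}{2}-\mu^{n+1}(x)$, the desired second argument of the $\max$.

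It then remains to recast the first entry into the stated form. Since $\Dt>0$, I would divide $\varphi_{j}^{n+1}(x)-A$ by $\Dt$ and use
\begin{equation*}
    \frac{\varphi_{j}^{n+1}(x)-A}{\Dt} = \frac{\varphi_{j}^{n+1}(x)-\varphi_{j}^{n}(x-\Dt v_j)}{\Dt}-1,
\end{equation*}
recognizing the discrete analogue of the operator $\dt\varphi+v\cdot\nabla_x\varphi-1$ from \eqref{eq:Vari}. The one point requiring care — and the closest thing here to an obstacle — is that the two arguments of the $\max$ end up normalized differently, the first scaled by $1/\Dt$ and the second left unscaled. This is harmless: multiplying a nonpositive quantity by the positive factor $1/\Dt$ keeps it nonpositive and maps $0$ to $0$, so both the sign pattern ($\leq 0$ in each entry) and the location of the vanishing branch are preserved, and the rescaled maximum still equals $0$. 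Combining this with the coincidence of the first lines finishes the argument, since the reformulation and \eqref{eq:VariDiscSemiTV} then characterize the same pair $(\mu^{n+1},\varphi_{j}^{n+1})$. In essence the statement is a bookkeeping identity and carries no genuine analytical difficulty.
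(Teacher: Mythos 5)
Your proposal is correct and is essentially the paper's own proof: the paper obtains the reformulation by subtracting $\varphi_j^{n+1}(x)$ from the second equation of \eqref{eq:VariDiscSemiTV}, multiplying by $-1$ (so the $\min$ becomes a $\max$ of differences), and using $\Dt>0$ to rescale the first entry — precisely your $\min$/$\max$ equivalence followed by division by $\Dt$, including the (correct) observation that rescaling a single entry of the $\max$ by the positive factor $1/\Dt$ preserves the vanishing of the maximum.

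One point you should have flagged rather than glossed over: your algebra produces the first argument
\begin{equation*}
    \frac{\varphi_{j}^{n+1}(x)-\varphi_{j}^{n}(x-\Dt v_j)}{\Dt}-1,
\end{equation*}
whereas the statement writes $+1$, and these are not the same. The version with $+1$ is in fact false: in the branch where $\varphi_j^{n+1}(x)=\varphi_j^{n}(x-\Dt v_j)+\Dt$ is the minimizer, that argument equals $2$, so the maximum cannot vanish. The $-1$ version you derived is the correct one — it is the discrete analogue of the operator $\dt\varphi+v\cdot\nabla_x\varphi-1$ appearing in \eqref{eq:Vari} — so the ``$+1$'' in the statement is a sign typo, which your writeup silently corrects while asserting you have reached ``the stated form''. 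Apart from this unacknowledged discrepancy, the argument is complete and carries, as you say, no analytical difficulty.
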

\details{
    \begin{proof}
        The formula is obtained from the second equation of \eqref{eq:VariDiscSemiTV} by subtracting $\varphi_j^{n+1}(x)$, multiplying by $-1$ and using that $\Dt>0$.
    \end{proof}  
}

\subsection{Towards a discrete representation formula}
To derive a semi-discrete representation formula for \eqref{eq:Vari}, we begin by introducing the necessary notations and definitions. These are based on the discussion in the introduction concerning the duality between Eulerian and Lagrangian perspectives. We first recall the notion of minimizing curves for \eqref{eq:Vari} defined in \cite{BouinCalvezGrenierNadin2023}, along with their associated continuous action. We then introduce corresponding concepts in the semi-discrete setting and recall the representation formula for \eqref{eq:Vari}.

\begin{definition}[Continuous minimizing curves]\label{def:pathCont}
    Let $\Sigma^{0,T}$ be the space of piecewise constant, c\`adl\`ag functions defined over the time interval $(0,T]$ taking values in $\mathbb{R}$. For any $\sigma \in \Sigma^{0,T}$, denote $s^0=0$ and let $\mathbf{s}=(s^k)_{0\leq k \leq N_{\ast}}$ be the $N_{\ast}\geq1$ times, of discontinuity of $\sigma$ in $(0,T]$, such that
    \begin{equation*}
        \sigma = w^0 \mathbf{1}_{(0,s_1)} + \sum_{k=1}^{N_{\ast}-1} w^k \mathbf{1}_{[s^k,s^{k+1})} + w^{N_{\ast}} \mathbf{1}_{[s^{N_{\ast}},T]}.
    \end{equation*}
    Then, for any starting point $y\in\R$, the piecewise linear curve $\gamma\in\R\times\Sigma^{0,T}$ is defined for all $\tau\in[0,T]$ as
    \begin{equation*}
        \gamma(\tau) = y + \int_0^\tau \sigma(\tau') d\tau'.
    \end{equation*}
    In addition, we denote by $\textbf{w}=(w^0,\dots,w^{N_{\ast}})$ the associated tuple of velocity jumps. It allows to characterize the path $\gamma$ by its starting point and the sequences $\mathbf{s}$ and $\mathbf{w}$ : $\gamma=(y,\mathbf{s},\mathbf{w})\in\R\times\Sigma^{0,T}$.
\end{definition}
From this definition, the action $\Ab_0^T[\gamma]$ of a path $\gamma$ is defined as follows.
\begin{definition}[Action of a piecewise linear curve (continuous case)]\label{def:contAction}
    Let $\gamma=(y,\mathbf{s},\mathbf{w})\in\R\times\Sigma^{0,T}$ be defined as in Definition~\ref{def:pathCont}. The continuous action of $\gamma$ on $(0,T]$ is defined as follows:
    \begin{equation}\label{eq:contAction}
        \Ab_0^T[\gamma] = \frac{1}{2} \sum_{k=1}^{N_{\ast}} |w^k|^2 + \sum_{k=0}^{N_{\ast}} (s^{k+1} - s^k) \mathbf{1}_{w^k \neq 0},
    \end{equation}
    with the convention $s^0 = 0$ and $s^{N_{\ast}+1} = T$.
\end{definition}
The action $\Ab$ describes the cost of a moving individual. The first part corresponds to a change in velocity that is instantaneous and induces a cost $|w|^2/2$, where $w$ is the new velocity. In addition, when the individual is subject to free transport, there is a running cost that depends only on the time spent traveling and not on the velocity at which it occurs.

With these definitions, we recall the following result from \cite[Theorem 1.8]{BouinCalvezGrenierNadin2023}, which introduces a representation formula and establishes the uniqueness of the viscosity solution to \eqref{eq:Vari}.
\begin{theorem}[(1.8 in \cite{BouinCalvezGrenierNadin2023}), Kinetic Hopf-Lax formula]\label{thm:kinHopfLax}
    Let $\varphi_\tin$ be a continuous function such that $\varphi_\tin-v^2/2\in L^\infty(\R\times\R)$. Then, the following representation formula
    \begin{equation}\label{eq:repFormContHL}
        \varphi(t,x,v) = \underset{\underset{\gamb(t) = x, \dot{\gamb}(t) = v}{\bar{\gamma}=(y,\mathbf{s},\mathbf{w})\in\R\times\Sigma^{0,T}}}{\inf} \lbr \varphi_\tin(y,w^0) + \Ab_0^t[\gamb] \rbr 
    \end{equation}
    is the viscosity solution of \eqref{eq:Vari} with initial data $\min\lbr\varphi_\tin, \min_{v\in\R} \lbr\varphi_\tin\rbr + \frac{v^2}{2}\rbr$.
\end{theorem}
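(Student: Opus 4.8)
The plan is to identify the function $V(t,x,v)$ defined by the right-hand side of \eqref{eq:repFormContHL} with the unique viscosity solution of \eqref{eq:Vari}. The strategy follows the classical Eulerian--Lagrangian route: establish regularity and a dynamic programming principle for $V$, deduce the viscosity (in)equalities of \eqref{eq:Vari} from it, and conclude by the comparison principle of \cite{BouinCalvezGrenierNadin2023}.

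First I would record the structural consequences of Definitions~\ref{def:pathCont} and~\ref{def:contAction}. Since $\varphi_\tin - v^2/2$ is bounded and appending a free terminal jump to a prescribed velocity costs only the kinetic term $v^2/2$, one gets that $V - v^2/2$ is bounded and that $V$ inherits the Lipschitz regularity of $\varphi_\tin$ in $x$ (the action $\Ab$ depends only on $\mathbf{s}$ and $\mathbf{w}$, hence is invariant under spatial translation of the path). The decisive algebraic fact is that the cheapest final velocity is always $v=0$: appending a terminal jump yields the obstacle $V(t,x,v)\leq \min_w V(t,x,w)+v^2/2$, and evaluating at $v=0$ forces $\min_w V(t,x,w)=V(t,x,0)$, i.e. $\min_v V=V(\cdot,\cdot,0)$.

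Second I would prove the dynamic programming principle. Splitting an admissible path at the intermediate time $t-h$ and using additivity of $\Ab$ over disjoint time intervals yields, for every $h\in(0,t)$,
\[
V(t,x,v)=\min\lbr V(t-h,x-hv,v)+h\,\mathbf 1_{v\neq 0},\; \min_{w} V(t,x,w)+\tfrac{v^2}{2}\rbr ,
\]
whose two branches correspond to transporting backward without a terminal jump and to a jump occurring at the final time. This relation is exactly the continuous counterpart of \eqref{eq:VariDiscSemiTV}. From it the viscosity inequalities follow by the usual test-function argument: dividing the transport branch by $h$ and letting $h\to0$ gives $\dt V+v\cdot\nabla_x V-1\leq 0$, the jump branch gives the obstacle, and optimality (the min is always attained on one branch) forces the maximum of the two to vanish, recovering the first line of \eqref{eq:Vari}. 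The monotonicity $\dt(\min_v V)\leq 0$ is then immediate, since remaining at rest over $[t-h,t]$ is admissible and free, so $V(t,x,0)\leq V(t-h,x,0)$.

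The main obstacle is the selection condition $\dt(\min_v V)=0$ when $\argmin_v V=\lbr 0\rbr$, which is where the non-standard structure of \eqref{eq:Vari} is felt and which cannot be read off a single branch of the principle. I would argue by contradiction: if the resting value strictly decreased near $(t_0,x_0)$, an optimal path reaching $(x_0,0)$ at the later time would have to leave rest and return, and under the strict-minimum hypothesis the combined kinetic and running cost of any such excursion is strictly positive and exceeds the putative gain, contradicting optimality---hence the resting value is frozen. Finally, the $t\to0^+$ analysis of the two competing short paths (no jump, giving $\varphi_\tin(x,v)$; one terminal jump, giving $\min_w\varphi_\tin(x,w)+v^2/2$) identifies the effective initial datum as $\min\lbr\varphi_\tin,\ \min_v\lbr\varphi_\tin\rbr+\tfrac{v^2}{2}\rbr$, and the comparison principle of \cite{BouinCalvezGrenierNadin2023} then upgrades ``$V$ solves \eqref{eq:Vari}'' to ``$V$ is the unique viscosity solution,'' completing the proof.
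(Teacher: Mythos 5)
The first thing to say is that the paper does not prove this statement: Theorem~\ref{thm:kinHopfLax} is quoted as Theorem~1.8 of \cite{BouinCalvezGrenierNadin2023} and is used in Section~\ref{sec:ConvVisc} purely as an external input to the convergence analysis. There is therefore no internal proof to compare your attempt against; what you have written is a reconstruction of the argument of the cited reference, and it has to be judged on its own merits.

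On those merits, your outline follows the natural Eulerian--Lagrangian route and gets several structural points right (boundedness of $V-v^2/2$, the identity $\min_v V=V(\cdot,\cdot,0)$, the identification of the effective initial datum as $t\to0^+$), but two steps contain genuine gaps. First, the dynamic programming principle you assert as an exact equality,
\[
V(t,x,v)=\min\lbr V(t-h,x-hv,v)+h\,\mathbf 1_{v\neq 0},\ \min_{w} V(t,x,w)+\tfrac{v^2}{2}\rbr,
\]
omits all paths whose last jump to $v$ occurs at an intermediate time $\tau\in(t-h,t)$; such paths mix a jump and transport inside the window and belong to neither branch. This is not cosmetic: the paper's own discrete analysis (Proposition~\ref{prop:repFormDiscSemi} and Lemma~\ref{lem:repFormDisc0}) shows that the two-branch identity \eqref{eq:VariDiscSemiTV} is equivalent to a representation formula only under the condition \eqref{eq:stabJumpCond}, $\Dt\leq\Dv^2/2$, whose whole purpose is to forbid exactly this mixing, and which has no continuous-velocity counterpart (it can never hold as $\Dv\to0$ with $\Dt$ fixed). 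In the continuous setting one must keep the infimum over the intermediate jump time and show, using the spatial modulus of continuity of $V$ and the monotonicity of $t\mapsto V(t,\cdot,0)$, that it contributes only an $O(h)$ correction; stated as an equality, your principle is false in general (take $V(t,\cdot,0)$ with slope exceeding $1/|v|$ along the direction of travel). Note also that this repair uses Lipschitz regularity in $x$, which you assume but which is not part of the theorem's hypotheses ($\varphi_\tin$ is only continuous with $\varphi_\tin-v^2/2$ bounded). Second, invoking ``the usual test-function argument'' and ``the comparison principle of \cite{BouinCalvezGrenierNadin2023}'' conceals the central difficulty: \eqref{eq:Vari} is not a standard Hamilton--Jacobi equation, and the very definitions of sub- and supersolution—in particular the handling of the constraint $\dt\big(\min_v\varphi\big)=0$ on the set where $\argmin_v\lbr\varphi\rbr=\lbr0\rbr$—are bespoke constructions of the cited work, not off-the-shelf tools. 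Your excursion-based contradiction argument for this selection condition is only heuristic: it never quantifies why the kinetic-plus-running cost of leaving rest and returning dominates the putative gain, which is precisely where the nonstandard, nonlocal structure of \eqref{eq:Vari} enters. So the proposal is a reasonable sketch of the expected strategy, but the two points above constitute the substance of the proof and would require the machinery of \cite{BouinCalvezGrenierNadin2023} to be completed.
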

In the discrete setting, analogous definitions can be introduced. A semi-discrete analog of \eqref{eq:repFormContHL} will be derived in the next section. We begin with time-discrete curves, where the velocity jump times are restricted to the grid $\left(n\Dt\right)_{n \in \ldb0, N_t\rdb}$. The following definition can be viewed as a discrete counterpart — or a special case — of Definition~\ref{def:pathCont}. It serves as a technical bridge between fully continuous minimizing curves and their time-velocity semi-discrete counterparts.
\begin{definition}[Time-discrete minimizing curves]\label{def:pathSemiT}
    Let $\Sigma^{0,T}_\Dt$ be the subset of $\Sigma^{0,T}$ such that for any $\sigma \in\Sigma^{0,T}_\Dt$, the $N_{\ast}$ times $\mathbf{s}=(s^k)_{1\le k\le N_{\ast}}$ of discontinuity of $\sigma \in (0,T]$ are located on the time grid: for all $k\in\ldb 1,N_{\ast}\rdb$, $s^k\in\Dt\N$. 
    Then, for any starting point $y\in\R$ and for any tuple $\mathbf{w}=(w^0, \dots,w^{N_{\ast}})$ of velocity jumps, the piecewise linear curve $\gamma=(y,\mathbf{s},\mathbf{w})\in\R\times\Sigma^{0,T}_\Dt$ is defined on the time grid $(t^n)_{0\le n\le N_t}$ as in Definition~\ref{def:pathCont}.
    
    As the discontinuities of the elements of $\Sigma^{0,T}_\Dt$ are located on the grid, the element $\sigma\in\Sigma^{0,T}_\Dt$ can also be described with all the values it takes on time steps $[k \Dt, (k+1)\Dt)$, for all $k\in\ldb0,N_t-1\rdb$, and at time $T=t^{N_t}$. Introducing $\mathtt{w}=(w^0,\dots,w^{N_t}) \in\R^{\ldb0,N_t\rdb}$, 
    $\sigma$ is defined as 
    \begin{equation*}
        \sigma = w^0 \mathds{1}_{(0,\Dt)} + \sum\limits_{k=1}^{N_t-1} w^k \mathds{1}_{[k\Dt,(k+1)\Dt)} + w^{N_t} \mathds{1}_{\{T\}},
    \end{equation*}
    and for any initial point $y\in\R$, the associated piecewise linear curves $\gamma =(y,\mathtt{w})\in\R\times \R^{\ldb1,N_t\rdb}$ is  defined on the time grid for all $n\in\ldb0,N_t\rdb$ by
    \begin{equation*}
        \gamma(t^n) = y +\Dt \sum\limits_{k=1}^n w^{k-1}.
    \end{equation*}
\end{definition}
In the definition above, $\mathtt{w}$ may contain repetitions of velocities on adjacent intervals, while $\mathbf{w}$ does not. The first convention will be used when the focus is on the times of discontinuities of the paths, whereas the second will be used when the focus will be on the sequence of velocities arising in the path. In what follows, the statement $\gamma=(y,\mathbf{s},\mathbf{w})\in\R\times\Sigma^{0,T}_\Dt$ refers to the first convention, while $\gamma=(y,\mathtt{w})\in\R\times\R^{\ldb 0,N_t\rdb}$ refers to the second.

Further discretizing the velocity variable on the grid $\Dv \Z$ leads to a semi-discrete in time and velocity setting. In this framework, one can define a specific class of discrete paths that form another special case of Definition~\ref{def:pathCont}. These paths are particularly suited to build a discrete analog of the representation formula \eqref{eq:repFormContHL}.
\begin{definition}[Time-velocity discrete minimizing curves]\label{def:pathSemiTV}
    Let $\Sigma^{0,T}_{\Dt,\Dv}$ be the subset of $\Sigma^{0,T}_\Dt$ such that for any $\sigma\in\Sigma^{0,T}_{\Dt,\Dv}$, the $N_{\ast}+1$ velocities $\mathbf{w}=(w^k)_{0\le k\le N_{\ast}}$ are located on the velocity grid : for all $k\in\ldb 0,N_{\ast}\rdb$, $w^k\in\Dv\Z$.
    Then for any starting point $y\in\R$, the piecewise linear curve $\gamma=(y,\mathbf{s},\mathbf{w})\in\R\times\Sigma^{0,T}_{\Dt,\Dv}$ is defined on the time grid as in Definition~\ref{def:pathCont}.
\end{definition}

As in Definition~\ref{def:pathSemiT}, the path $\gamma$ can also be defined by its initial point $y\in\R$ and the sequence of velocities $\mathtt{w}=(w^k)_{0\le k\le N_t}\in\Dv\Z^{\ldb 0,N_t\rdb}$. We will denote $\gamma=(y,\mathtt{w})\in\R\times\Dv\Z^{\ldb 0,N_t\rdb}$ when we use this convention. With both conventions, the velocities $w^k$ arising in the time-velocitie discrete minimizing curves of Definition~\ref{def:pathSemiTV} lie on the velocity grid. Therefore, for all $k$, we introduce the index $j(k)\in\Z$, such that $w^k=v_{j(k)}$.

Since Definitions~\ref{def:pathSemiT} and \ref{def:pathSemiTV} are particular cases of Definition~\ref{def:pathCont}, the continuous action $\Ab$ can still be evaluated along discrete paths. However, the goal of the next section is to construct a fully discrete counterpart to the representation formula \eqref{eq:repFormContHL}, which is equivalent to the semi-discrete system \eqref{eq:VariDiscSemiTV}. To this end, we introduce a discrete analog of Definition~\ref{def:contAction}, tailored specifically to time-discrete paths and adapted to the structure of \eqref{eq:VariDiscSemiTV}.
\begin{definition}[Action of a piecewise linear curve (discrete case)]\label{def:discAction}
    Let $\gamma=(y,\mathtt{w})\in\R\times\R^{\ldb0,N_t\rdb}$ be defined as in Definition~\ref{def:pathSemiT}. The discrete action of $\gamma$ on $(0,T]$ is defined as follows:
    \begin{equation*}
        \At_0^T[\gamma] = \sum_{k=0}^{N_t-1}\,A_{w^k}^{w^{k+1}},
    \end{equation*}
    where for all $(v,w)\in\R^2$, $A_{w}^{v}$ denotes the cost of a jump from velocity $w$ to $v$, and is defined as
    \begin{equation}\label{eq:discAction}
        A_{w}^{v} =\lbr \begin{aligned}
        & \Dt \mathds{1}_{v\neq0}\quad&\text{if}\,v=w,\\
        & \frac{|v|^2}{2} \quad&\text{if}\,v\neq w.
        \end{aligned}\right.
    \end{equation}
\end{definition}
This discrete action represents the cost of transitioning from velocity $w$ to velocity $v$ on a discrete time interval of size $\Dt$. The first case indicates that if there is no change in velocity, the cost is simply the time spent in free transport, except when $v=0$. The second case shows that the cost of changing velocity depends only on the new velocity and not on the initial one.

While the continuous action \eqref{eq:contAction} and the discrete one \eqref{eq:discAction} share similarities, such as the cost of a velocity jump and free transport, there is a key difference between the two descriptions. In the continuous setting, a jump occurs instantaneously, whereas in the discrete case, it takes an entire time interval $\Dt$ to get to a new velocity. This discrepancy introduces an error of order $\Dt$ per jump (if it is not at the end of the interval) when comparing the cost of the same path under the continuous and discrete actions. This is the content of the following lemma.
\begin{lemma}\label{lem:discrepancyDt}
    Let $\gamma=(y,\mathbf{s},\mathbf{w})\in\R\times\Sigma^{0,T}_\Dt$, and let $N_{\ast}$ the number of velocity jumps in $\gamma$. Therefore, with the convention 
    $s^0=0$, $\mathbf{s}=(s^0, \dots,s^{N_{\ast}})$ and $\mathbf{w}=(w^0,\dots,w^{N_{\ast}})$ are such that 
    \begin{equation*}
        \forall k\in\ldb 1,N_{\ast}\rdb, \quad s^k-s^{k-1} \ge \Dt, \,\,\text{and}\,\, w^k\neq w^{k-1}.
    \end{equation*}
    The action $\tilde{A}[\gamma]$ of Definition~\ref{def:discAction} can be rewritten 
    \begin{equation}\label{eq:tildeA_Convention1}
        \tilde{A}[\gamma] = \sum\limits_{k=1}^{N_{\ast}} \frac{|w^k|^2}{2}+  \sum\limits_{k=0}^{N_{\ast}-2} (s^{k+1}-s^{k}-\Dt) \mathds{1}_{w^{k+1}\neq 0} + \max\lbr T-s^{N_{\ast}}-\Dt,0\rbr \mathds{1}_{w^{N_{\ast}}\neq 0}.
    \end{equation}
    Moreover, 
    \begin{equation*}
        \left|\bar{A}[\gamma]-\tilde{A}[\gamma]\right|\le N_{\ast}\Dt.
    \end{equation*}
\end{lemma}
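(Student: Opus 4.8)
The plan is to compute the discrete action of Definition~\ref{def:discAction} directly from $\At_0^T[\gamma]=\sum_{k=0}^{N_t-1}A_{w^k}^{w^{k+1}}$, written in the every-time-step convention $\mathtt{w}=(w^0,\dots,w^{N_t})$, and then re-express it through the jump data $(\mathbf{s},\mathbf{w})$. The first step is to translate between the two conventions of Definition~\ref{def:pathSemiT}: since $\sigma$ is constant on each interval $[s^m,s^{m+1})$ and the jump times lie on the grid, I set $n^m=s^m/\Dt$ so that the time-step velocity equals the velocity $w^m$ of segment $m$ exactly for $n^m\le k\le n^{m+1}-1$ (with $n^0=0$, $n^{N_{\ast}+1}=N_t$). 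This partitions the index range $\ldb 0,N_t-1\rdb$ into the $N_{\ast}$ jump transitions, occurring precisely at the indices $k=n^m-1$ where $w^{k+1}\neq w^k$, and the remaining free-transport transitions where $w^{k+1}=w^k$.

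On the jump transitions the summand is $|w^{k+1}|^2/2=|w^m|^2/2$ by the second branch of \eqref{eq:discAction}; summing over the $N_{\ast}$ jumps produces the quadratic term $\tfrac12\sum_{k=1}^{N_{\ast}}|w^k|^2$. On the free-transport transitions the summand is $\Dt\,\mathds{1}_{w^{k+1}\neq0}$ by the first branch, so each velocity segment contributes $\Dt\,\mathds{1}_{\neq0}$ times its number of free-transport transitions. The key counting observation is that the transition \emph{into} a segment's velocity carries the quadratic cost rather than a running cost, so one time step of free transport is consumed by each jump; a segment of duration $s^{m+1}-s^m$ therefore contributes a running cost of the form $(s^{m+1}-s^m-\Dt)\,\mathds{1}_{\neq0}$. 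The terminal segment must be handled separately because it is not followed by a jump and because the value at $t^{N_t}=T$ supplies one extra free-transport transition; the $\max\lbr T-s^{N_{\ast}}-\Dt,0\rbr$ in \eqref{eq:tildeA_Convention1} encodes exactly the degenerate case in which the last jump occurs within the final interval of length $\Dt$. Assembling the jump and running contributions yields \eqref{eq:tildeA_Convention1}.

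For the estimate I would compare \eqref{eq:tildeA_Convention1} with the continuous action \eqref{eq:contAction} term by term. The quadratic jump costs coincide, hence cancel in $\Ab[\gamma]-\At[\gamma]$. The running costs differ only through the $-\Dt$ correction attached to each segment that is entered by a jump: there are at most $N_{\ast}$ such corrections, each contributing a discrepancy bounded by $\Dt\,\mathds{1}_{\neq0}\le\Dt$. Since every discrete running cost is no larger than its continuous counterpart (one fewer free-transport step per jump), the differences are all nonnegative and I obtain $0\le \Ab[\gamma]-\At[\gamma]\le N_{\ast}\Dt$, which is the claimed bound.

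The only genuine obstacle is the index bookkeeping at the three boundaries: the first segment has no incoming jump transition, the value at the terminal time $T$ adds one free-transport transition, and the last segment may span fewer than $\Dt$ units of time. Getting these edge cases right — in particular verifying that the terminal contribution interacts correctly with the step consumed by the final jump, which is what the $\max\lbr\cdot,0\rbr$ term records — is where care is required; once the finite sum is correctly grouped by segments, everything else is a direct rearrangement.
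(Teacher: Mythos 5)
Your strategy is the one the paper follows: pass to the time-grid convention $\mathtt{w}$, split the $N_t$ transitions of $\At_0^T[\gamma]=\sum_k A_{w^k}^{w^{k+1}}$ into the $N_{\ast}$ jump transitions and the free-transport ones, regroup the latter segment by segment, and compare with $\Ab_0^T[\gamma]$ term by term. Your identification of the jump indices $k=n^m-1$ and of the quadratic term $\tfrac12\sum_{k=1}^{N_{\ast}}|w^k|^2$ is correct, and the concluding estimate is sound in spirit, since each jump perturbs the running cost by at most $\Dt$ and in a fixed direction.

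The genuine gap is in the attribution of the consumed time step, which is exactly the delicate bookkeeping the lemma is about. You charge the lost step to the segment being \emph{entered} by a jump, and consequently treat the first segment as keeping its full duration and the terminal segment as the one requiring the $-\Dt$ (max) correction. The transition indices force the opposite attribution: the jump into segment $m$ is the transition $k\to k+1$ with $k=n^m-1$, an index that lies in the block $\ldb n^{m-1},n^m-1\rdb$ of segment $m-1$. Counting the free-transport transitions performed at velocity $w^j$ (each costing $\Dt\,\mathds{1}_{w^j\neq 0}$, by \eqref{eq:discAction}), one finds $(s^{j+1}-s^j)/\Dt-1$ of them for \emph{every} $j\le N_{\ast}-1$, the first segment included, and $(T-s^{N_{\ast}})/\Dt$ of them for the terminal segment, because the value at $T$ supplies one extra transition and no outgoing jump removes one. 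The exact regrouping is therefore
\begin{equation*}
    \At_0^T[\gamma]=\sum_{k=1}^{N_{\ast}}\frac{|w^k|^2}{2}+\sum_{k=0}^{N_{\ast}-1}\left(s^{k+1}-s^k-\Dt\right)\mathds{1}_{w^k\neq0}+\left(T-s^{N_{\ast}}\right)\mathds{1}_{w^{N_{\ast}}\neq0},
\end{equation*}
in which every segment \emph{followed} by a jump loses $\Dt$, weighted by \emph{its own} velocity's indicator, while the terminal segment loses nothing. Your bookkeeping agrees with this only when no $w^k$ vanishes: for a single jump from $w^0\neq0$ to rest ($w^1=0$) at a time $s^1<T$, the discrete action equals $s^1-\Dt$, whereas your accounting gives $s^1$. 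In particular, carried out as described, your argument produces neither the true value of $\At_0^T[\gamma]$ nor the displayed formula: under your own convention the terminal contribution would be $(T-s^{N_{\ast}})\mathds{1}_{w^{N_{\ast}}\neq0}$, so the term $\max\lbr T-s^{N_{\ast}}-\Dt,0\rbr\mathds{1}_{w^{N_{\ast}}\neq0}$ is asserted rather than derived. The final bound survives this misattribution, since from the regrouping above one gets $\Ab_0^T[\gamma]-\At_0^T[\gamma]=\Dt\sum_{k=0}^{N_{\ast}-1}\mathds{1}_{w^k\neq0}\in[0,N_{\ast}\Dt]$ in all cases, but the exact rewriting, which is part of the statement, is not established by your counting.
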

\begin{proof}
    Equation \eqref{eq:tildeA_Convention1} is a consequence of the definition of $\tilde{A}[\gamma]$, obtained by rewriting the velocities $\textbf{w}$ of $\gamma$ as a sequence $\mathtt{w}\in\R^{\ldb 0,N_t\rdb}$ defined on the time grid. The last term is taken apart to distinguish the cases $s^{N_{\ast}}=T$ and $s^{N_{\ast}}\neq T$. The difference between $\tilde{A}$ and $\bar{A}$ follows immediately.
\end{proof}

\subsection{Derivation of the semi-discrete representation formula}\label{subsec:DerivRep}
We now introduce a semi-discrete representation formula for \eqref{eq:VariDiscSemiTV}. In line with the continuous setting, this formulation involves the discrete action \eqref{eq:discAction} along with the initial datum evaluated at the foot of a specific characteristic curve. We have the following proposition:
\begin{proposition}[Semi-discrete representation formula]\label{prop:repFormDiscSemi}
    Assume that the time step $\Dt$ and the velocity step $\Dv$ satisfy the relation \eqref{eq:stabJumpCond}, that is $\Dt \leq \Dv^2/2$, and assume morever that the initial data satisfies Assumption~\ref{Ass:LinfBounds}. Then, the solution $\left(\varphi_{j}^{N_t}(x)\right)_{j\in\Z}$ to \eqref{eq:VariDiscSemiTV}, whose existence is known, is also solution to the following discrete representation formula:
    \begin{equation}\label{eq:repFormDiscSemi}
        \varphi_{j}^{N_t}(x) = \underset{\underset{\gamma(T)=x,\,w^{N_t}=v_j}{\gamma=(y,\mathtt{w})\in\R\times\Dv\Z^{\ldb 0,N_t\rdb}}}{\mathrm{inf}}\lbr  \varphi_{\tin,j(0)}(y)+\At_0^T[\gamma]\rbr,
    \end{equation}
    where $\gamma$ is defined in Definition~\ref{def:pathSemiTV}, the discrete action $\At_0^T$ is given in Definition~\ref{def:contAction}, and we use the notation $v_{j(0)} = w^0$.
\end{proposition}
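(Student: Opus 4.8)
The plan is to prove the formula at all time levels simultaneously by induction on $n\in\ldb 0,N_t\rdb$, since the case $n=N_t$ is exactly \eqref{eq:repFormDiscSemi}. For each $n$, $j\in\Z$ and $x\in\R$, I would introduce the right-hand side at level $n$,
\[
    \Phi_j^n(x)=\inf_{\substack{\gamma=(y,\mathtt{w})\in\R\times\Dv\Z^{\ldb 0,n\rdb}\\ \gamma(t^n)=x,\ w^n=v_j}}\lbr \varphi_{\tin,j(0)}(y)+\At_0^{t^n}[\gamma]\rbr,\qquad \At_0^{t^n}[\gamma]=\sum_{k=0}^{n-1}A_{w^k}^{w^{k+1}},
\]
and show $\Phi_j^n=\varphi_j^n$ for all $n$. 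The base case $n=0$ is immediate: a zero-step curve reduces to its starting point $y$ carrying the single velocity $w^0=v_j$, the action is an empty sum, and $\gamma(t^0)=x$ forces $y=x$, so $\Phi_j^0(x)=\varphi_{\tin}(x,v_j)=\varphi_j^0(x)$.

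For the inductive step I would use a dynamic-programming (Bellman) decomposition according to the last transition of the curve. Any admissible $(n+1)$-step curve with terminal velocity $w^{n+1}=v_j$ is obtained from an $n$-step curve $\gamma'$ with terminal velocity $w^n=v_{j'}$ by transporting over $[t^n,t^{n+1})$ at speed $v_{j'}$ and recording the jump $v_{j'}\to v_j$; consequently $\gamma'(t^n)=x-\Dt v_{j'}$ and, by Definition~\ref{def:discAction}, $\At_0^{t^{n+1}}[\gamma]=\At_0^{t^n}[\gamma']+A_{v_{j'}}^{v_j}$. Since the initial term $\varphi_{\tin,j(0)}(y)$ depends only on $\gamma'$, minimizing first over $\gamma'$ and then over the penultimate velocity $v_{j'}$, and invoking the induction hypothesis $\Phi_{j'}^n=\varphi_{j'}^n$, yields the recursion
\[
    \Phi_j^{n+1}(x)=\min_{j'\in\Z}\lbr \varphi_{j'}^n(x-\Dt v_{j'})+A_{v_{j'}}^{v_j}\rbr.
\]
Here the infimum is attained on a finite set of indices, because the quadratic lower bound on $\varphi^n$ furnished by Corollary~\ref{cor:LipBoundLim} makes the bracket tend to $+\infty$ as $|j'|\to\infty$, exactly as in Lemma~\ref{lem:LocMinimaEps}.

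It then remains to identify this recursion with the second line of \eqref{eq:VariDiscSemiTV}. Writing $a=\varphi_j^n(x-\Dt v_j)$ and $b=\min_{j'\neq j}\varphi_{j'}^n(x-\Dt v_{j'})$, and splitting the minimum over $j'=j$ (cost $\Dt\,\mathds{1}_{v_j\neq 0}$) from $j'\neq j$ (cost $v_j^2/2$), the recursion reads $\Phi_j^{n+1}(x)=\min\lbr a+\Dt\,\mathds{1}_{v_j\neq 0},\ v_j^2/2+b\rbr$, while the scheme gives $\varphi_j^{n+1}(x)=\min\lbr a+\Dt,\ v_j^2/2+\min(a,b)\rbr$ since $\mu^{n+1}(x)=\min(a,b)$ by the first line of \eqref{eq:VariDiscSemiTV}. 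For $j=0$ both sides equal $\min(a,b)$: in $\Phi$ because $v_0=0$ removes both the indicator term and $v_0^2/2$, and in $\varphi$ because $a+\Dt\geq a\geq\min(a,b)$. For $j\neq 0$ the two expressions differ only through the extra entry $v_j^2/2+a$ appearing in $\varphi_j^{n+1}$; this entry is inactive exactly because \eqref{eq:stabJumpCond} gives $v_j^2/2\geq \Dv^2/2\geq \Dt$, hence $v_j^2/2+a\geq a+\Dt$, so removing it leaves the minimum unchanged. This proves $\Phi_j^{n+1}=\varphi_j^{n+1}$ and closes the induction.

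I expect the identification step to be the main obstacle. The discrete action is built so that a genuine change of velocity to $v_j$ costs $v_j^2/2$ while merely continuing at $v_j$ costs $\Dt\,\mathds{1}_{v_j\neq 0}$, whereas the scheme carries a flat running cost $\Dt$ together with a term $v_j^2/2+\mu^{n+1}$ whose minimum ranges over \emph{all} velocities, including $j'=j$. Reconciling the two forces the case distinction above and, crucially, identifies \eqref{eq:stabJumpCond} as the sharp threshold preventing a spurious self-jump $v_j\to v_j$ at cost $v_j^2/2$ from undercutting the free-transport cost $\Dt$; this is precisely the role of condition \eqref{eq:stabJumpCond} announced as being unique to this problem.
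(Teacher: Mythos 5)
Your proof is correct and follows essentially the same route as the paper: the paper proves a one-step representation formula (Lemma~\ref{lem:repFormDisc0}) by comparing the Bellman recursion with the scheme—splitting the cases $w^n=v_j$ versus $w^n\neq v_j$ and $v_j=0$ versus $v_j\neq 0$, and using \eqref{eq:stabJumpCond} to show the ``self-jump'' entry at cost $v_j^2/2$ is redundant—then concludes by induction, exactly as you do with your $a$, $b$ identification. The only difference is organizational: you merge the paper's one-step lemma and the final induction into a single argument.
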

The assumption on the time step can be understood as the requirement that, at the discrete level, free transport and a velocity jump cannot occur simultaneously. This condition is purely discrete and specific to both the model and this reformulation. It highlights a key distinction from the continuous setting, where jumps are instantaneous and independent of transport. 

Moreover, Assumption~\ref{Ass:LinfBounds} on the initial data, together with the definition~\eqref{eq:discAction} of the discrete action, ensure coercivity of the problem. In particular, the infimum in~\eqref{eq:repFormDiscSemi} is well defined.

The proof of Proposition~\ref{prop:repFormDiscSemi} proceeds by induction, using the following lemma.
\begin{lemma}\label{lem:repFormDisc0}
    Assume that the time step $\Dt$ and the velocity step $\Dv$ satisfy the relation \eqref{eq:stabJumpCond}. Assume moreover that the initial data satisfies Assumption~\ref{Ass:LinfBounds}. Then, the solution $\left(\varphi_{j}^{n+1}(x)\right)_{j\in\Z}$ to \eqref{eq:VariDiscSemiTV} is solution to the following discrete representation formula:
    \begin{equation}\label{eq:repFormDisc0}
        \varphi_{j}^{n+1}(x) = \underset{w^n\in\Dv\Z}{\mathrm{inf}}\lbr  \varphi_{j(n)}^n(x-\Dt w^n)+ A_{w^n}^{v_j}\rbr,
    \end{equation}
    where $A_{w^n}^{v_j}$ is defined in \eqref{eq:discAction} and $j(n)$ in Definition~\ref{def:pathSemiTV}.
\end{lemma}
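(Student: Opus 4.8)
The plan is to unfold the infimum in \eqref{eq:repFormDisc0} according to the two branches of the discrete action \eqref{eq:discAction} and to match the resulting expression with the update rule of \eqref{eq:VariDiscSemiTV}, the reconciliation between the two being driven entirely by the stability condition \eqref{eq:stabJumpCond}. Since every admissible velocity $w^n$ lies on the grid, the infimum over $w^n\in\Dv\Z$ is an infimum over an index $k\in\Z$ with $w^n=v_k$ and $v_{j(n)}=w^n$, so the quantity to minimize is $\varphi_{k}^{n}(x-\Dt v_k)+A_{v_k}^{v_j}$. By \eqref{eq:discAction}, this cost equals $\varphi_{j}^{n}(x-\Dt v_j)+\Dt\,\mathds{1}_{v_j\neq0}$ when $k=j$, and $\varphi_{k}^{n}(x-\Dt v_k)+\tfrac{v_j^2}{2}$ when $k\neq j$.

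First I would argue that both infima are attained. By Corollary~\ref{cor:LipBoundLim} the iterates satisfy Assumption~\ref{Ass:LinfBounds}, hence $\varphi_{k}^{n}(x-\Dt v_k)$ grows like $v_k^2/2$ as $|k|\to\infty$; the cost restricted to $k\neq j$ is therefore coercive and its infimum is a minimum. Writing $a=\varphi_{j}^{n}(x-\Dt v_j)$ and $b=\min_{k\neq j}\varphi_{k}^{n}(x-\Dt v_k)$, the right-hand side of \eqref{eq:repFormDisc0} becomes $\min\lbr a+\Dt\,\mathds{1}_{v_j\neq0},\ \tfrac{v_j^2}{2}+b\rbr$, while the first line of \eqref{eq:VariDiscSemiTV} gives $\mu^{n+1}(x)=\min\lbr a,b\rbr$ and the second line gives $\varphi_{j}^{n+1}(x)=\min\lbr a+\Dt,\ \tfrac{v_j^2}{2}+\min\lbr a,b\rbr\rbr$.

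It then remains to show that these two minima coincide, which I would do by distinguishing $j=0$ and $j\neq0$. If $j=0$, then $v_j=0$, so $a+\Dt\geq a\geq\min\lbr a,b\rbr$ forces $\varphi_{0}^{n+1}(x)=\min\lbr a,b\rbr$, which is exactly the representation value since $\mathds{1}_{v_0\neq0}=0$ and $\tfrac{v_0^2}{2}=0$. If $j\neq0$, then $\mathds{1}_{v_j\neq0}=1$ and, crucially, $|v_j|\geq\Dv$ together with \eqref{eq:stabJumpCond} yields $\tfrac{v_j^2}{2}\geq\tfrac{\Dv^2}{2}\geq\Dt$. When $b\leq a$ the two second arguments agree, so the minima are identical. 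When $b>a$, the scheme's second argument is $\tfrac{v_j^2}{2}+a\geq\Dt+a=a+\Dt$, hence $\varphi_{j}^{n+1}(x)=a+\Dt$; on the representation side $\tfrac{v_j^2}{2}+b>\tfrac{v_j^2}{2}+a\geq a+\Dt$, so that min is also $a+\Dt$. In every case the two expressions agree, which proves \eqref{eq:repFormDisc0}.

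The main obstacle is precisely this last reconciliation: the update \eqref{eq:VariDiscSemiTV} couples the quadratic branch to the \emph{full} minimum $\mu^{n+1}(x)=\min\lbr a,b\rbr$, whereas the action \eqref{eq:discAction} separates the self-transition ($k=j$, cost $\Dt$) from the genuine velocity jumps ($k\neq j$, cost $v_j^2/2$), so the quadratic branch only sees $b$. The condition \eqref{eq:stabJumpCond} is exactly what guarantees that whenever the excluded index $k=j$ would have dragged $\mu^{n+1}(x)$ below $b$, the transport branch $a+\Dt$ already dominates the quadratic one, making the discrepancy invisible after taking the minimum. This is where the interpretation of \eqref{eq:stabJumpCond} as ``free transport and a velocity jump cannot occur within a single time step'' enters the argument.
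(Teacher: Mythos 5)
Your proof is correct and follows essentially the same route as the paper: unfolding the infimum into the self-transition branch ($w^n=v_j$, cost $\Dt\mathds{1}_{v_j\neq 0}$) and the jump branch ($w^n\neq v_j$, cost $v_j^2/2$), using Assumption~\ref{Ass:LinfBounds} propagated by Corollary~\ref{cor:LipBoundLim} for attainment of the infimum, and invoking \eqref{eq:stabJumpCond} to show that coupling the quadratic branch to $\min\lbr a,b\rbr$ rather than to $b$ alone never changes the overall minimum. The paper phrases this last step as ``the case $w^n=v_j$ can be added to the infimum since it never attains the minimum,'' while you make it explicit via the sub-cases $b\leq a$ and $b>a$; these are the same observation.
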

\begin{proof}
    The proof consists in showing that under condition \eqref{eq:stabJumpCond}, $\varphi^{n+1}_j(x)$ defined in \eqref{eq:repFormDisc0} coincides with $\varphi^{n+1}_j(x)$ defined with \eqref{eq:VariDiscSemiTV}. Start by considering $\left(U_{j}^n(x)\right)_{j\in\Z, x\in\R}$ satisfying
    \begin{equation}\label{eq:repFormDisc00}
        U_{j}^{n+1}(x) = \underset{w^n\in\Dv\Z}{\mathrm{inf}}\lbr  U_{j(n)}^n(x-\Dt w^n) + A_{w^n}^{v_j}\rbr.
    \end{equation}
    Let us now assume that, for all $j \in \Z$ and $x \in \R$, we have $U_j^n(x) = \varphi_j^n(x)$, and that these functions satisfy Assumption~\ref{Ass:LinfBounds}. Our goal is to show that $U_j^{n+1}(x) = \varphi_j^{n+1}(x)$. Let $x\in\R$ be fixed. By separating the cases $v_j=w^n$ ($j(n)=j$) and $v_j\neq w^n$ ($j(n)\neq j$), equation \eqref{eq:repFormDisc00} can be reformulated as follows:
    \begin{equation}\label{eq:repFormDisc1}
        U_{j}^{n+1}(x) = \min\lbr \varphi^n_{j}(x-\Dt v_j) + \Dt \mathds{1}_{v_j\neq0},\,\underset{w^n\neq v_j}{\mathrm{inf}}\lbr  \varphi_{j(n)}^n(x-\Dt w^n) + \frac{v_j^2}{2}\rbr \rbr .
    \end{equation}
    We now consider separately the cases $v_j=0$ and $v_j\neq0$.
    \begin{itemize}
        \item \textbf{Case $v_j\neq0$.} In that cas, one has:
        \begin{equation}\label{eq:schemeUtmp}
            U_{j}^{n+1}(x) = \min\lbr \varphi_{j}^n(x-\Dt v_j) + \Dt ,\,\underset{w^n\neq v_j}{\mathrm{inf}}\lbr  \varphi^n_{j(n)}(x-\Dt w^n) + \frac{v_j^2}{2}\rbr \rbr .
        \end{equation}
        Using condition \eqref{eq:stabJumpCond} and since $v_j\neq0$, the following inequality holds
        \begin{equation}\label{eq:stabJumpCondBis}
            \Dt \leq \frac{v_j^2}{2}.
        \end{equation}
        Moreover, the case $w^n = v_j$ can be included in the infimum in \eqref{eq:schemeUtmp}, as it does not attain the minimum: the first term in the expression is the minimizer, as ensured by \eqref{eq:stabJumpCondBis}. This yields:
        \begin{equation}\label{eq:schemeUSemiTV}
            U_{j}^{n+1}(x) = \min\lbr \varphi_{j}^n(x-\Dt v_j) + \Dt ,\,\underset{w^n\in\Dv\Z}{\mathrm{inf}}\lbr  \varphi^n_{j(n)}(x-\Dt w^n) \rbr + \frac{v_j^2}{2}\rbr .
        \end{equation}
        Since $\varphi^n$ satisfies Assumption~\ref{Ass:LinfBounds}, the infimum in \eqref{eq:schemeUSemiTV} is in fact a minimum. Consequently, as $v_j \neq 0$, we observe that both $\varphi_j^{n+1}(x)$ and $U_j^{n+1}(x)$ satisfy the same induction relation. It follows that $\varphi_j^{n+1}(x) = U_j^{n+1}(x)$ for all $j \in \Z^*$ and $x \in \R$.

        \item \textbf{Case $v_j=0$ ($j=0$).} On the one hand, using the notations from Definition~\ref{def:pathSemiTV} for the discrete velocites, the first line of \eqref{eq:VariDiscSemiTV} rewrites
        \begin{equation}\label{eq:DerivRepPhi0Bis}
            \mu^{n+1}(x) = \underset{w^n\in\Dv\Z}{\min} \lbr \varphi_{j(n)}^{n}(x-\Dt w^n)\rbr.
        \end{equation}
        On the other hand, evaluating \eqref{eq:repFormDisc1} at $j=0$ gives
        \begin{equation}\label{eq:DerivRepU0}
            U_{0}^{n+1}(x) = \min\lbr U_{0}^n(x) ,\,\underset{w^n\neq 0}{\mathrm{inf}}\lbr  U_{j(n)}^{n}(x-\Dt w^n) \rbr \rbr.
        \end{equation}
        As $U^n$ satisfies Assumption~\ref{Ass:LinfBounds}, the infimum is also attained meaning it is actually a minimum. Then, comparing \eqref{eq:DerivRepPhi0Bis} and \eqref{eq:DerivRepU0}, one concludes that $U_{0}^{n+1}(x)=\varphi^{n+1}_0(x)$ for all $x\in\R$.
    \end{itemize}
\end{proof}

\begin{proof}[Proof of Proposition~\ref{prop:repFormDiscSemi}]
    Thanks to Lemma~\ref{lem:repFormDisc0}, the following relation is obtained by induction:
    \begin{equation*}%\label{eq:repFormDiscAlmostSemi}
        \varphi_{j}^{N_t}(x) = \underset{\textbf{w}\in\Dv\Z^{\ldb 0,N_t\rdb}}{\mathrm{inf}}\lbr  \varphi_{j(0)}^0\left(x-\Dt\sum_{k=0}^{N_t-1} w^k\right)+\sum_{k=0}^{N_t-1}A_{w^{k}}^{w^{k+1}}\rbr ,
    \end{equation*}
    with the notation $w^{N_t}=v_j$. The proof is completed by introducing the piecewise linear curve $\gamma$ as in Definition~\ref{def:pathSemiTV} to recover the reformulation \eqref{eq:repFormDiscSemi}.
\end{proof}

\section{Convergence towards the viscosity solution}\label{sec:ConvVisc}
We conclude our analysis by proving Theorem~\ref{thm:ConvVisc}, namely the convergence of the solution to \eqref{eq:VariDiscFull} towards the viscosity solution of \eqref{eq:Vari} and the quantification of the error. As mentioned at the beginning of this work, we first address the semi-discrete case, using the representation formula \eqref{eq:repFormDiscSemi}, that is equivalent to the semi-discrete scheme \eqref{eq:VariDiscSemiTV} thanks to Proposition~\ref{prop:repFormDiscSemi}. We establish that the limit of \eqref{eq:repFormDiscSemi}, when $\Dt, \Dv \to 0$ is the unique viscosity solution to \eqref{eq:Vari} by directly comparing the continuous and semi-discrete representation formulas, given by Proposition~\ref{prop:repFormDiscSemi} and Theorem~\ref{thm:kinHopfLax}. Finally, we analyze the convergence of the solution to \eqref{eq:VariDiscFull} towards the one of the semi discrete scheme \eqref{eq:VariDiscSemiTV}. The lower part of Figure~\ref{fig:recapConv} summarizes these proofs.

\subsection{Reduction of the discrete action}\label{subsec:reducedActions}

We start by showing that the minimal path $\gamma\in\R\times\R^{\ldb 0,N_t\rdb}$, defined in Proposition~\ref{prop:repFormDiscSemi} admits at most two non-zero velocity jumps. This property is a discrete analog of the following proposition showed in \cite{BouinCalvezGrenierNadin2023}.
\begin{proposition}{(5.1 in \cite{BouinCalvezGrenierNadin2023})}\label{prop:ReducedActionCont}
        Consider the representation formula \eqref{eq:repFormContHL} with the action $\Ab_0^T[\gamma]$ of a minimal path $\gamma\in\R\times\Sigma^{0,T}$, respectively defined in \eqref{eq:contAction} and Definition~\ref{def:pathCont}. In arbitrary dimension, there is at most three states with one intermediate non-trivial one $ (s^1, w^1) $ with non-zero velocity $ w_1 \neq 0 $ and at most one flat state with zero velocity. Hence, the action reduces to the following problem:
    \begin{equation*}
    \lbr 
    \begin{aligned}
        &\text{either } w \equiv w^0 = \frac{x}{T},\,\text{then } \Ab_0^T[\gamma] = T \mathbf{1}_{w^0 \neq 0} \quad (\text{free transport}) \\
        &\text{or: } \Ab_0^T[\gamma] = \frac{|w^2|^2}{2} + \underset{\substack{s^0 w^0 + s^1 w^1 + s^2 w^2 = x \\ s^0, s^1, s^2 \geq 0 \\ s^0 + s^1 + s^2 \leq T}}{\min}
        \lbr \frac{|w^1|^2}{2} + s^0 + s^1 + s^2 \rbr.
    \end{aligned}\right.
    \end{equation*}
\end{proposition}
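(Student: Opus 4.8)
My plan is to prove this by a rearrangement argument: I would show that any admissible path can be replaced, without increasing $\Ab_0^T$, by one having at most two velocity jumps, using the convexity of $w\mapsto|w|^2$ together with the linearity of the running cost in the segment durations. The first thing I would record is the structural asymmetry of the three contributions to $\Ab_0^T[\gamma]=\frac12\sum_{k=1}^{N_\ast}|w^k|^2+\sum_{k=0}^{N_\ast}(s^{k+1}-s^k)\mathbf 1_{w^k\neq 0}$: the initial velocity $w^0$ carries no kinetic cost (the sum starts at $k=1$, since it is priced through $\varphi_\tin(y,w^0)$ in the representation formula), whereas the terminal velocity $w^{N_\ast}=v$ is prescribed by the constraint $\dot{\gamb}(t)=v$. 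These two segments are therefore frozen, and all reductions act on the intermediate ones. I would also note that, since both the kinetic and the running costs depend only on the collection of (velocity, duration) pairs and the displacement $\gamb(T)=y+\sum_k(s^{k+1}-s^k)w^k$ is insensitive to their order, segments may be freely reordered provided $w^0$ stays first and $v$ stays last.

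The core step would be a merging lemma for intermediate non-zero velocities. Given two intermediate segments with non-zero velocities $w^i,w^j$ and durations $\tau_i,\tau_j>0$, I would replace them by a single segment of duration $\tau_i+\tau_j$ at the duration-weighted velocity $\tilde w=(\tau_i w^i+\tau_j w^j)/(\tau_i+\tau_j)$. This preserves both the total displacement and the total time; it leaves the running cost unchanged when $\tilde w\neq0$ and strictly lowers it when $\tilde w=0$; and it changes the kinetic contribution from $\tfrac12(|w^i|^2+|w^j|^2)$ to $\tfrac12|\tilde w|^2$, which is no larger by convexity (Jensen) of the Euclidean square norm, with the further gain of one suppressed jump. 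Convexity holds in every dimension, which is precisely what underlies the ``arbitrary dimension'' clause. Iterating the merge collapses all intermediate non-zero segments into a single non-trivial state $(s^1,w^1)$ with $w^1\neq0$.

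It would then remain to control the rest (zero-velocity) segments. A rest segment contributes neither kinetic nor running cost and produces no displacement, so any two of them can be brought together by the order-insensitivity above and amalgamated into one, yielding the ``at most one flat state'' clause. Assembling the frozen initial state $w^0$, the merged intermediate state $w^1$, the prescribed terminal state $w^2=v$, and the single rest interval encoded as the slack $T-(s^0+s^1+s^2)\geq0$, one lands exactly on the displayed reduced problem: minimize $\tfrac12|w^1|^2+s^0+s^1+s^2$ over $s^0,s^1,s^2\geq0$ subject to $s^0w^0+s^1w^1+s^2w^2=x$ and $s^0+s^1+s^2\leq T$, with the degenerate case $N_\ast=0$ giving the free-transport branch $w\equiv w^0=x/T$ and $\Ab_0^T[\gamma]=T\mathbf 1_{w^0\neq0}$. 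Existence of a minimal path, which the statement presupposes, I would get from the coercivity supplied by the quadratic kinetic cost and the $L^\infty$ control on $\varphi_\tin-|v|^2/2$.

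The step I expect to be most delicate is the bookkeeping around the two privileged segments. The merging lemma must never alter $w^0$ (tied to the initial datum) nor the terminal velocity $v$, so I would have to check that collapsing the intermediate structure together with the rest amalgamation genuinely fits the three-state template and does not secretly call for a fourth moving state; the right way to see this is to treat any leftover time purely as the slack $T-(s^0+s^1+s^2)$ rather than as an additional velocity state, so that the reduced finite-dimensional problem above is reached without ambiguity.
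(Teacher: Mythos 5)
Your proposal is correct and takes essentially the same approach as the paper: this proposition is imported from \cite{BouinCalvezGrenierNadin2023} without a standalone proof here, but the paper's proof of its discrete analog (Lemma~\ref{lem:ActionReducDisc}), which it explicitly says follows the continuous argument, uses exactly your strategy — relocate and amalgamate the zero-velocity segments, merge intermediate segments into one at the duration-weighted average velocity via convexity of $|\cdot|^2$, and keep the initial velocity (priced by $\varphi_\tin$) and the prescribed terminal velocity frozen. No gaps to flag.
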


In the discrete setting, it reads
\begin{lemma}[Reduction of the semi-discrete in time action]\label{lem:ActionReducDisc}
    Suppose that $\varphi_\tin$ satisfies Assumption~\ref{Ass:LinfBounds}. Assume moreover that $\varphi_\tin-v^2/2$ is $L$-Lipschitz in $v$. There exists a minimal path $\gamma\in\R\times\R^{\ldb0,N_t\rdb}$, as defined in Definition~\ref{def:pathSemiT}, to the minimization problem
    \begin{equation*}
        \varphi_j^{N_t}(x) = \underset{\underset{\gamma(T)=x,\,w^{N_t}=v_j}{\gamma=(y,\mathtt{w})\in\R\times\R^{\ldb0,N_t\rdb}}}{\mathrm{inf}}\lbr  \varphi_\tin(y,w^0)+ \At_0^{T}[\gamma]\rbr.
    \end{equation*}
    Moreover, $\gamma$ admits at most one non-trivial velocity state. Denoting $\widetilde{s}^k=\widetilde{N}^k\Dt$ the free transport duration at velocity $\wt^k$, $k=0,1,2$, $\At_0^{T}[\gamma]$ reduces to
    \begin{equation}\label{eq:ReducedDisc}
        \lbr \begin{aligned}
            &\text{either } v_j \equiv w^0 = \frac{x}{T}, \text{then } \At_0^{T}[\gamma] = T \mathds{1}_{w^0 \neq 0} \quad (\text{free transport}) \\[1em]
        &\text{or: }  \At_0^{T}[\gamma] = \frac{|w^2|^2}{2} + \underset{\substack{(\tilde{s}^0, \tilde{s}^1, \tilde{s}^2) \geq 0 \\ \tilde{s}^0 w^0 + \tilde{s}^1 w^1 + \tilde{s}^2 w^2 = x \\ \tilde{s}^0 + \tilde{s}^1 + \tilde{s}^2 \leq T}}{\min} \lbr \frac{|w_1|^2}{2} + \tilde{s}^0 + \tilde{s}^1 + \tilde{s}^2 \rbr.
        \end{aligned}\right.
    \end{equation}
    The initial velocity $w^0$ and the intermediate one $w^1$ are bounded by $v_j$.
\end{lemma}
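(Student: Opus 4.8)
The plan is to establish existence of a minimiser by a coercivity argument and then to reduce it to the announced three-state form by a convexity-based exchange argument, which is the discrete-time analogue of the proof of Proposition~\ref{prop:ReducedActionCont}. Since the starting point $y$ is fixed by the endpoint constraint $\gamma(T)=x$ and the final velocity $w^{N_t}=v_j$ is prescribed, the minimisation effectively runs over the finite-dimensional set $(w^0,\dots,w^{N_t-1})\in\R^{N_t}$. First I would record coercivity: Assumption~\ref{Ass:LinfBounds} gives $\varphi_\tin(y,w^0)\ge |w^0|^2/2-M$, which penalises large $|w^0|$, while every velocity value appearing in $\mathtt{w}$ that differs from its predecessor carries a jump cost $|\cdot|^2/2$ in $\At_0^T[\gamma]$; hence any minimising sequence has its velocities confined to a bounded set depending only on $v_j$ and the data. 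Throughout I would work with the rewriting \eqref{eq:tildeA_Convention1} of Lemma~\ref{lem:discrepancyDt}, which splits the action into a jump part $\sum_k |w^k|^2/2$ depending only on the multiset of non-initial velocities and a running part counting the time spent at non-zero velocities (up to the $\Dt$ corrections).

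The core is an exchange step showing that a single non-zero intermediate velocity suffices. Suppose the path contains two distinct non-zero velocity blocks, $w^a$ held for a duration $s_a$ and $w^b$ held for $s_b$, both durations being multiples of $\Dt$. I replace them by one block with velocity $\bar w=(s_a w^a+s_b w^b)/(s_a+s_b)$ held for the duration $s_a+s_b$. This preserves the net displacement and the total running time, so the running part is unchanged and the merged duration remains a multiple of $\Dt$. Since $\bar w$ is a convex combination of $w^a$ and $w^b$, one has $|\bar w|\le\max\lbr |w^a|,|w^b|\rbr\le\sqrt{|w^a|^2+|w^b|^2}$, whence $|\bar w|^2/2\le |w^a|^2/2+|w^b|^2/2$: the jump part does not increase. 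Iterating collapses all intermediate non-zero velocities into a single $w^1\neq0$; and since a jump to $0$ and free transport at velocity $0$ both cost nothing, every zero-velocity period merges into one flat state. Together with the initial velocity $w^0$, which carries no jump cost, and the prescribed final velocity $w^{N_{\ast}}=v_j=w^2$, this produces exactly the three-state structure. Minimising the remaining running times $\tilde s^0,\tilde s^1,\tilde s^2$ under the displacement and time-budget constraints then yields \eqref{eq:ReducedDisc}. On this reduced, finitely parametrised and coercive problem the infimum is attained, producing the minimal path $\gamma$; in particular $\inf_{\text{all paths}}=\min_{\text{reduced paths}}$, so no existence issue survives at the degenerate configurations of the original action.

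It then remains to bound $|w^0|$ and $|w^1|$ in terms of $|v_j|$. Here the extra $L$-Lipschitz regularity of $\varphi_\tin$ in $v$ enters: comparing the optimal action with that of a simple reference path reaching $x$ with final velocity $v_j$ bounds the total action, and for the initial block the quadratic contribution $|w^0|^2/2$ hidden in $\varphi_\tin(y,w^0)$ grows faster than the Lipschitz variation of $\varphi_\tin(y,\cdot)-|\cdot|^2/2$ can compensate, which forces $|w^0|$ to stay controlled by $|v_j|$; the analogous comparison between the jump cost $|w^1|^2/2$ and the displacement $w^1$ must provide controls $|w^1|$ by $|v_j|$ as well.

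The step I expect to be the main obstacle is the discrete bookkeeping in the exchange argument. Unlike the instantaneous jumps of the continuous setting, a velocity is held for at least one full time step, so merging and reordering blocks must keep every duration on the time grid and must correctly track the per-jump $\Dt$ corrections appearing in \eqref{eq:tildeA_Convention1}, in particular the special treatment of the last block through the term $\max\lbr T-s^{N_{\ast}}-\Dt,0\rbr\mathds{1}_{w^{N_{\ast}}\neq0}$. Guaranteeing that the exchange never increases this \emph{corrected} running cost, rather than the clean continuous running cost, and that the canonical ordering of the reduced path respects the grid, is the delicate point that distinguishes this argument from its continuous counterpart in Proposition~\ref{prop:ReducedActionCont}.
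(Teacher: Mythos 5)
Your overall strategy (coercivity for existence, an averaging/exchange argument to collapse the path to a three-state form, then comparison arguments for the bounds on $w^0$ and $w^1$) is the same as the paper's, but the central claim of your exchange step is false, and it fails at exactly the point you flag at the end as the ``main obstacle.'' When you merge two blocks $(s_a,w^a)$ and $(s_b,w^b)$ into a single block at the average velocity $\bar w$, the running part of the discrete action is \emph{not} unchanged. In Definition~\ref{def:discAction}, the time step during which a jump occurs carries no running cost; merging the two blocks suppresses one jump, hence converts one jump step into a running step, and the running part increases by exactly $\Dt$ (when $\bar w\neq 0$). The correct balance for blocks entered by jumps is
\begin{equation*}
    \At[\gamma_3]-\At[\gamma_2] \;=\; \frac{|\bar w|^2}{2}-\frac{|w^a|^2}{2}-\frac{|w^b|^2}{2}+\Dt \;\le\; -\frac{1}{2}\min\lbr |w^a|^2,\,|w^b|^2\rbr + \Dt,
\end{equation*}
so the exchange is cost-decreasing only when the smaller of the two velocities satisfies $|w|^2\ge 2\Dt$. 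In this lemma the velocities range over all of $\R$, not over the grid $\Dv\Z$, so condition \eqref{eq:stabJumpCond} cannot be invoked to exclude arbitrarily small non-zero velocities, and for such velocities the merge can strictly increase the cost. This is precisely what the paper's proof records through \eqref{eq:reducStep1}--\eqref{eq:reducConvexity}, which yield only the weaker inequality $\At[\gamma_2]\ge\At[\gamma_3]-\Dt$, and why it explicitly splits off the small-velocity case for separate treatment. Since your proposal asserts exact non-increase of the ``clean'' (continuous-style) running cost and then leaves the corrected discrete bookkeeping open, it does not establish the reduction: the conclusion that a \emph{minimal} path has at most one non-trivial velocity state is exactly what is at stake if small-velocity blocks cannot be merged for free.

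The remaining ingredients of your plan do match the paper and are sound as far as they go: the coercivity argument via $\varphi_\tin(y,w^0)\ge |w^0|^2/2-M$ and the jump costs, the observation that zero-velocity steps are entirely free (so all flat pieces can be collected into a single terminal flat state without changing the cost), the remark that averaging is legitimate only because velocities here are real-valued (consistent with Remark~\ref{rem:ActionReduc}), and the comparison argument bounding $w^0$ and $w^1$ by $v_j$ via the Lipschitz hypothesis and the structure of the jump cost. But correctly identifying the per-jump $\Dt$ corrections as ``the delicate point'' is not the same as handling them; as written, the proof of the key reduction rests on an accounting identity that the discrete action does not satisfy, so the argument is incomplete precisely where the discrete problem differs from Proposition~\ref{prop:ReducedActionCont}.
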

\begin{proof}
    The proof follows closely the proof of the continuous case recalled in Proposition~\ref{prop:ReducedActionCont}. The only difference is that one has to deal with time intervals of fixed size $\Dt$ instead of variable ones. 

    Let us now consider a path $\gamma=\left(y,\mathtt{w}=(w^0,\dots,w^{N_t})\right)\in\R\times\R^{\ldb 0,N_t \rdb}$. Without changing the cost of $\gamma$ under the discrete action $\At$, one can merge all $0$ velocity sections ($w^k=0$) and put them at the end of the time interval yielding a path $\gamma_0$. Then, instead of making redundant jumps, segments with the same velocity are merged, as free transport is generally more cost-effective. More precisely, this holds when $w^k$ satisfies $(w^k)^2 \geq 2N^k\Dt$ where $N^k$ is the number of time interval spent in free transport. The resulting path is then denoted $\gamma_1$. The case $(w^k)^2 \leq 2N^k\Dt$, which corresponds to relatively small velocities, is considered at the end of the proof, but it does not affect the final results.
    
    Finally, consider a sub-path $\gamma_2 \subset \gamma_1$ consisting of two consecutive segments, $(N^\alpha\Dt, w^\alpha)$ and $(N^\beta\Dt, w^\beta)$, with $w^\alpha \neq w^\beta$. We denote by $s^\alpha$ and $s^\beta$ the starting times of these two segments. The sub-path $\gamma_2$ is simplified to a single segment $\gamma_3$ by averaging the velocities over the total duration $(N_\alpha + N_\beta)\Dt$ into 
    \begin{equation*}
        \wb^\alpha = \frac{N_\alpha\Dt w^\alpha+(N_\beta-1)\Dt w^\beta}{(N_\alpha+N_\beta-1)\Dt}.
    \end{equation*}
    The action of $\gamma_2$ is given by
    \begin{equation}\label{eq:reducStep1}
        \At_{s^\alpha-\Dt}^{s^\beta+N^\beta\Dt}[\gamma_2] =(N^\alpha+N^\beta-1)\Dt + \frac{(w^\alpha)^2}{2} + \frac{(w^\beta)^2}{2} - \Dt.\\
    \end{equation}
    Note that the starting time is $t = s^\alpha - \Dt$ to account for the fact that $\alpha > 0$: while there is no cost associated with the initial velocity, each subsequent jump does incur a cost. By convexity, one has
    \begin{equation}\label{eq:reducConvexity}
            |\wb^\alpha|^2\leq \frac{N_\alpha\Dt}{(N_\alpha+N_\beta-1)\Dt}|w^\alpha|^2 + \frac{N_\beta\Dt}{(N_\alpha+N_\beta-1)\Dt}|w^\beta|^2\leq \left|w^\alpha\right|^2 + \left|w^\beta\right|^2.
    \end{equation}
    Combining \eqref{eq:reducStep1} and \eqref{eq:reducConvexity} one obtains
    \begin{equation*}
        \At_{s^\alpha-\Dt}^{s^\beta+N^\beta\Dt}[\gamma_2] \geq \details{(N^\alpha+N^\beta-1)\Dt + \frac{(\wb^\alpha)^2}{2} - \Dt=}\At_{s^\alpha-\Dt}^{s^\beta+N^\beta\Dt}[\gamma_3]-\Dt.
    \end{equation*}

    An illustration of this procedure is provided in Figure~\ref{fig:ActionReduc} for clarity. The solid blue path represents an initial trajectory whose cost we aim to reduce. The dotted red line depicts a rearranged path where all segments with zero velocity are shifted to the end, without altering the cost, while the remaining velocity segments are ordered in descending magnitude. This reordering decreases the number of jumps by merging the segments with the same velocities. Finally, the dashed green line shows a further reduction in the number of jumps by introducing a single one at an average velocity, up to introducing an additional $-\Dt$ term in the cost of the action.

    Note that this approach ensures the cost is reduced to \textit{at most} two jumps: an intermediate one and a final one. However, depending on the values of $x$ and $v$, it may be more advantageous to perform a single large jump, relatively to $x$ and $v$, and remain stationary rather than accumulating the running cost of free transport. This could for example occur for small target velocities $v$.

    The existence of a minimal path follows from the reduced form~\eqref{eq:ReducedDisc}, which admits a solution. By Assumption~\ref{Ass:LinfBounds} and the continuity of $\varphi_\tin$, the minimization problem involves a continuous coercive function over a closed domain. In particular, this ensures coercivity with respect to $w^0$, while $x$ and $w^2$ act merely as parameters in~\eqref{eq:ReducedDisc}.

    Furthermore, the inclusion of the initial datum $\varphi_\tin$ allows one to determine the initial velocity $w^0$. The boundedness of both $w^0$ and $w^1$ by $v_j$ then follows from the definition of the cost of a velocity jump in~\eqref{eq:discAction}: overshooting the target velocity and subsequently dropping down to it would incur a strictly higher cost.
\end{proof}

\begin{Remark}\label{rem:ActionReduc}
    \begin{enumerate}
        \item Using the same argument as in the discrete case, the initial and intermediate velocities in Proposition~\ref{prop:ReducedActionCont} are also bounded by the final one.
        \item In the proof of Lemma~\ref{lem:ActionReducDisc}, the velocities $\mathtt{w}$ are defined in $\R^{\ldb 0,N_t\rdb}$. This is crucial, as it ensures that the average of two velocities remains within the admissible velocities. We emphasize that this property does not hold when velocities are restricted to a discrete grid. As a result, the reduction of the cost $\At[\gamma]$ for a path $\gamma \in \R\times\Dv\Z^{\ldb0, N_t\rdb}$ cannot be achieved using this method.
        \item Remark that the cost $v^2/2$ of the final jump is always paid except if $x = y + w^0T$ and $w^0=v_j$.
        \item The minimal path is not necessarily unique. Indeed, one can shift the location of the zero-velocity segment and any possible non-trivial portion without affecting the cost.
    \end{enumerate}
\end{Remark}

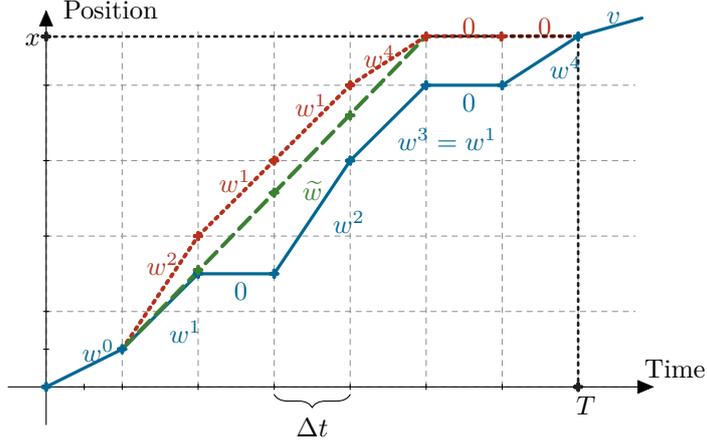
\begin{figure}
    \begin{tikzpicture}[line cap=round,line join=round,>=triangle 45,x=1.0cm,y=1.0cm, scale=0.5]
        \draw [color=Gray,dash pattern=on 2pt off 2pt, xstep=2.0cm,ystep=2.0cm] (0,0) grid (15.5,9.5);
        \draw[->,color=black] (-1,0) -- (16,0);
        \foreach \x in {,2,4,6,8,10,12}
        \draw[shift={(\x,0)},color=black] (0pt,2pt) -- (0pt,-2pt);
        \draw[color=black] (15.5,0) node [anchor=south west] {Time};
        \draw[->,color=black] (0,-1) -- (0,10);
        \foreach \y in {,2,4,6,8}
        \draw[shift={(0,\y)},color=black] (2pt,0pt) -- (-2pt,0pt);
        \draw[color=black] (0.2,10) node [anchor=west] { Position};
        \draw [line width=1.2pt,color=MidnightBlue] (0,0)-- (2,1);
        \draw [line width=1.2pt,color=MidnightBlue] (2,1)-- (4,3);
        \draw [line width=1.2pt,color=MidnightBlue] (4,3)-- (6,3);
        \draw [line width=1.2pt,color=MidnightBlue] (6,3)-- (8,6);
        \draw [line width=1.2pt,color=MidnightBlue] (8,6)-- (10,8);
        \draw [line width=1.2pt,color=MidnightBlue] (10,8)-- (12,8);
        \draw [line width=1.2pt,color=MidnightBlue] (12,8)-- (14,9.3);
        \draw [line width=1.2pt,color=MidnightBlue] (14,9.3)-- (15.68,9.78);
        \draw (-0.8,9.6) node[anchor=north west] {$ x $};
        \draw (13.7,0) node[anchor=north west] {$ T $};
        \draw (0.7,1.5) node[anchor=north west] {\textcolor{MidnightBlue}{$ w^0 $}};
        \draw (4.3,6.) node[anchor=north west] {\textcolor{BrickRed}{$ w^1 $}};
        \draw (4.7,3.) node[anchor=north west] {\textcolor{MidnightBlue}{$0$}};
        \draw (7.3,4.9) node[anchor=north west] {\textcolor{MidnightBlue}{$ w^2 $}};
        \draw (9,7.1) node[anchor=north west] {\textcolor{MidnightBlue}{$ w^3=w^1 $}};
        \draw (10.7,8) node[anchor=north west] {\textcolor{MidnightBlue}{$0$}};
        \draw (13,9.) node[anchor=north west] {\textcolor{MidnightBlue}{$w^4$}};
        \draw (14.5,10.2) node[anchor=north west] {\textcolor{MidnightBlue}{$ v $}};
        \draw (10.7,10) node[anchor=north west] {\textcolor{BrickRed}{$0$}};
        \draw (12.7,10) node[anchor=north west] {\textcolor{BrickRed}{$0$}};
        \draw (6.5,5.7) node[anchor=north west] {\textcolor{OliveGreen}{$ \widetilde{w} $}};
        \draw (3,2) node[anchor=north west] {\textcolor{MidnightBlue}{$w^1$}};
        \draw (2.4,3.8) node[anchor=north west] {\textcolor{BrickRed}{$w^2$}};
        \draw (6.3,8) node[anchor=north west] {\textcolor{BrickRed}{$ w^1 $}};
        \draw (8.1,9.3) node[anchor=north west] {\textcolor{BrickRed}{$ w^4 $}};
        \draw [line width=1.5pt,dash pattern=on 1pt off 2pt,color=BrickRed] (14,9.3)-- (12,9.3);
        \draw [line width=1.5pt,dash pattern=on 1pt off 2pt,color=BrickRed] (12,9.3)-- (10,9.3);
        \draw [line width=1.5pt,dash pattern=on 1pt off 2pt,color=BrickRed] (10,9.3)-- (8,8);
        \draw [line width=1.5pt,dash pattern=on 1pt off 2pt,color=BrickRed] (8,8)-- (6,6);
        \draw [line width=1.5pt,dash pattern=on 1pt off 2pt,color=BrickRed] (6,6)-- (4,4);
        \draw [line width=1.5pt,dash pattern=on 1pt off 2pt,color=BrickRed] (2,1)-- (4,4);
        \draw [line width=1.5pt,dash pattern=on 6pt off 3pt,color=OliveGreen] (2,1)-- (10,9.3);
        \draw [dotted, line width=1pt, color=Black] (0,9.29)-- (14,9.3);
        \draw [dotted, line width=1pt, color=Black] (14,9.29)-- (14,0);
        \draw [decorate, decoration={brace, amplitude=5pt, mirror}] (6,-0.2) -- (8,-0.2) node[midway,below=5pt] {$\Dt$};
        \begin{scriptsize}
        \draw [line width=1.5pt, color=MidnightBlue] (0,0)-- ++(-2.5pt,0 pt) -- ++(5.0pt,0 pt) ++(-2.5pt,-2.5pt) -- ++(0 pt,5.0pt);
        \draw [line width=1.5pt, color=MidnightBlue] (2,1)-- ++(-2.5pt,0 pt) -- ++(5.0pt,0 pt) ++(-2.5pt,-2.5pt) -- ++(0 pt,5.0pt);
        \draw [line width=1.5pt, color=MidnightBlue] (4,3)-- ++(-2.5pt,0 pt) -- ++(5.0pt,0 pt) ++(-2.5pt,-2.5pt) -- ++(0 pt,5.0pt);
        \draw [line width=1.5pt, color=MidnightBlue] (6,3)-- ++(-2.5pt,0 pt) -- ++(5.0pt,0 pt) ++(-2.5pt,-2.5pt) -- ++(0 pt,5.0pt);
        \draw [line width=1.5pt, color=MidnightBlue] (8,6)-- ++(-2.5pt,0 pt) -- ++(5.0pt,0 pt) ++(-2.5pt,-2.5pt) -- ++(0 pt,5.0pt);
        \draw [line width=1.5pt, color=MidnightBlue] (10,8)-- ++(-2.5pt,0 pt) -- ++(5.0pt,0 pt) ++(-2.5pt,-2.5pt) -- ++(0 pt,5.0pt);
        \draw [line width=1.5pt, color=MidnightBlue] (12,8)-- ++(-2.5pt,0 pt) -- ++(5.0pt,0 pt) ++(-2.5pt,-2.5pt) -- ++(0 pt,5.0pt);
        \draw [line width=1.5pt, color=MidnightBlue] (14,9.29)-- ++(-2.5pt,0 pt) -- ++(5.0pt,0 pt) ++(-2.5pt,-2.5pt) -- ++(0 pt,5.0pt);
        \draw [line width=1.5pt, color=Black] (0,9.29)-- ++(-2.5pt,0 pt) -- ++(5.0pt,0 pt) ++(-2.5pt,-2.5pt) -- ++(0 pt,5.0pt);
        \draw [line width=1.5pt, color=Black] (14,0)-- ++(-2.5pt,0 pt) -- ++(5.0pt,0 pt) ++(-2.5pt,-2.5pt) -- ++(0 pt,5.0pt);
        \draw [line width=1.5pt, color=BrickRed] (11.98,9.29)-- ++(-2.5pt,0 pt) -- ++(5.0pt,0 pt) ++(-2.5pt,-2.5pt) -- ++(0 pt,5.0pt);
        \draw [line width=1.5pt, color=BrickRed] (10,9.29)-- ++(-2.5pt,0 pt) -- ++(5.0pt,0 pt) ++(-2.5pt,-2.5pt) -- ++(0 pt,5.0pt);
        \draw [line width=1.5pt, color=BrickRed] (8,8)-- ++(-2.5pt,0 pt) -- ++(5.0pt,0 pt) ++(-2.5pt,-2.5pt) -- ++(0 pt,5.0pt);
        \draw [line width=1.5pt, color=BrickRed] (6,6)-- ++(-2.5pt,0 pt) -- ++(5.0pt,0 pt) ++(-2.5pt,-2.5pt) -- ++(0 pt,5.0pt);
        \draw [line width=1.5pt, color=BrickRed] (4,4)-- ++(-2.5pt,0 pt) -- ++(5.0pt,0 pt) ++(-2.5pt,-2.5pt) -- ++(0 pt,5.0pt);

        \draw [line width=1.5pt, color=OliveGreen] (4,3.1)-- ++(-2.5pt,0 pt) -- ++(5.0pt,0 pt) ++(-2.5pt,-2.5pt) -- ++(0 pt,5.0pt);
        \draw [line width=1.5pt, color=OliveGreen] (6,5.15)-- ++(-2.5pt,0 pt) -- ++(5.0pt,0 pt) ++(-2.5pt,-2.5pt) -- ++(0 pt,5.0pt);
        \draw [line width=1.5pt, color=OliveGreen] (8,7.2)-- ++(-2.5pt,0 pt) -- ++(5.0pt,0 pt) ++(-2.5pt,-2.5pt) -- ++(0 pt,5.0pt);
        \end{scriptsize}
    \end{tikzpicture}
    \caption{Diagram of the reduction of a time discrete path with 4 non-trivial intermediate velocities. Original path (solid blue line), reorganized path (dotted red line), reduced path (dashed green line).}\label{fig:ActionReduc}
\end{figure}

\subsection{Semi-discrete convergence}
As mentionned at the begining of the section, we first address the semi-discrete case and show that, as $\Dt, \Dv \to 0$, the solution given by the representation formula \eqref{eq:repFormDiscSemi} converges towards the viscosity solution of \eqref{eq:Vari}. 

\begin{lemma}\label{lem:ConvViscDtDv}
    Let $x\in\R$ and $(\varphi_{j}^n(x))_{j\in\Z,\,n\in\ldb1,N_t\rdb}$ be given by \eqref{eq:repFormDiscSemi}. Suppose that $\varphi_\tin$ satisfies Assumption~\ref{Ass:LinfBounds} and assume moreover that $\varphi_\tin-v^2/2$ is $L$-Lipschitz in $v$. Let $j\in\Z$ be fixed. Then, in the limit $\Dt,\,\Dv\to0$, $(\varphi_{j}^n(x))_{n\in\ldb1,N_t\rdb}$ converges towards the viscosity solution of \eqref{eq:Vari}. Moreover, there exists $C>0$, depending on $j$, such that
    \begin{equation}\label{eq:errorConvViscDtDv}
        \forall n\in\ldb1,N_t\rdb,\quad \left|\varphi(t^n,x,v_j) - \varphi_{j}^n(x) \right|\leq C(\Dt+\Dv).
    \end{equation}
\end{lemma}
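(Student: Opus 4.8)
The plan is to compare the two representation formulas directly: the continuous Kinetic Hopf--Lax formula \eqref{eq:repFormContHL}, which by Theorem~\ref{thm:kinHopfLax} is the viscosity solution of \eqref{eq:Vari}, and the semi-discrete formula \eqref{eq:repFormDiscSemi}, and to obtain the two-sided estimate \eqref{eq:errorConvViscDtDv} by bounding each infimum in terms of the other. The crucial simplification is that both infima may be restricted to paths with at most one non-trivial intermediate velocity, by Proposition~\ref{prop:ReducedActionCont} in the continuous case and Lemma~\ref{lem:ActionReducDisc} in the discrete case. Both minimization problems then become finite-dimensional, over an initial position $y$, at most the velocities $w^0, w^1, w^2 = v_j$, and the associated durations, with the key additional information (Remark~\ref{rem:ActionReduc}(1) and Lemma~\ref{lem:ActionReducDisc}) that $|w^0|,|w^1| \leq |v_j|$. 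This uniform velocity bound is precisely what makes the constant $C$ depend only on $T$ and $v_j$.

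For the lower bound $\varphi(t^n, x, v_j) \leq \varphi_j^n(x) + C\Dt$, I would take a minimizing discrete path $\gamma$ for \eqref{eq:repFormDiscSemi}. Since $\Sigma^{0,T}_{\Dt,\Dv} \subset \Sigma^{0,T}$, the path $\gamma$ is admissible in the continuous formula with the same foot $y$ and the same initial velocity $w^0 = v_{j(0)} \in \Dv\Z$, so the initial-data terms coincide exactly. The only discrepancy is between $\At_0^{t^n}[\gamma]$ and $\Ab_0^{t^n}[\gamma]$ for this fixed path, which by Lemma~\ref{lem:discrepancyDt} is at most $N_\ast \Dt$; after reduction $N_\ast \leq 2$, so this direction costs only $O(\Dt)$.

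For the upper bound $\varphi_j^n(x) \leq \varphi(t^n, x, v_j) + C(\Dt + \Dv)$, I would start from a continuous minimizer $\bar{\gamma} = (y, w^0, w^1, v_j;\, s^0, s^1, s^2)$ given by Proposition~\ref{prop:ReducedActionCont} and build a competitor for \eqref{eq:repFormDiscSemi} by rounding each velocity $w^0, w^1$ to the nearest grid value $\hat{w}^0, \hat{w}^1 \in \Dv\Z$ (at cost $O(|v_j|\Dv)$ in each quadratic term $|w|^2/2$, thanks to the velocity bound), rounding each positive-velocity duration down to a multiple of $\Dt$ and absorbing the slack into the zero-velocity resting segment (at cost $O(\Dt)$ in the running cost and via Lemma~\ref{lem:discrepancyDt}), and finally choosing the discrete foot $y'$ so that the endpoint constraint $\gamma(t^n) = x$ holds exactly. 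The induced shift satisfies $|y' - y| \leq T\Dv + C\Dt$, and the Lipschitz regularity of $\varphi_\tin$ in both variables (Assumption~\ref{Ass:LinfBounds} together with the extra velocity-Lipschitz hypothesis) then bounds $|\varphi_\tin(y', \hat{w}^0) - \varphi_\tin(y, w^0)|$ by $O(\Dt + \Dv)$.

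The main obstacle is this last construction: one must enforce the endpoint constraint exactly while keeping the velocities on $\Dv\Z$ and the jump times on $\Dt\N$, all under the purely discrete constraint $\Dt \leq \Dv^2/2$ which, by Remark~\ref{rem:ActionReduc}(2), forbids the averaging of grid velocities used in the continuous reduction. The resolution is to absorb every accumulated rounding error into the free initial position $y'$, which is legitimate because $y$ is an optimization variable in both formulas; tracking how this displacement propagates through the Lipschitz initial datum and the quadratic velocity cost is the heart of the estimate. Establishing both inequalities at each terminal time $t^n$ (the representation formula of Proposition~\ref{prop:repFormDiscSemi} applies verbatim with $T$ replaced by $t^n \leq T$, so $C$ may be taken uniform in $n$) yields \eqref{eq:errorConvViscDtDv}, and the identification of the limit as the viscosity solution then follows from Theorem~\ref{thm:kinHopfLax}.
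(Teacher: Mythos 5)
Your overall strategy coincides with the paper's: a two-sided comparison of the semi-discrete representation formula \eqref{eq:repFormDiscSemi} with the continuous Kinetic Hopf--Lax formula \eqref{eq:repFormContHL}, using the path reductions (Proposition~\ref{prop:ReducedActionCont}, Lemma~\ref{lem:ActionReducDisc}), the action-discrepancy bound of Lemma~\ref{lem:discrepancyDt}, and the velocity bounds $|w^0|,|w^1|\le|v_j|$ to make the constant depend only on $T$ and $v_j$. Your upper bound (round the velocities and jump times of a reduced continuous minimizer $\gamb$ onto the grids, absorb the accumulated displacement into the free foot of the path, and control the initial-datum term by the Lipschitz hypotheses) is essentially the paper's Step~2; the one detail you leave unexamined there is the consistency of the final-jump cost, i.e.\ that the indicator ``a last jump of cost $v_j^2/2$ is paid'' is the same for $\gamb$ and its rounded version $\gamt$. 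This holds because rounding fixes velocities that already lie on the grid (in particular $\wb^0=v_j$ forces $\wt^0=v_j$), but without this check the two actions could differ by $v_j^2/2$, which does not vanish with $\Dt,\Dv$.

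There is, however, a genuine gap in your lower bound. You take a minimizing path $\gamma$ of \eqref{eq:repFormDiscSemi}, whose velocities are constrained to $\Dv\Z$, and invoke ``after reduction $N_{\ast}\le 2$'' to bound the discrepancy $N_{\ast}\Dt$ from Lemma~\ref{lem:discrepancyDt}. But Lemma~\ref{lem:ActionReducDisc} applies only to paths in $\R\times\R^{\ldb 0,N_t\rdb}$: its proof merges consecutive segments at an \emph{averaged} velocity, and the average of two grid velocities generally leaves $\Dv\Z$. Remark~\ref{rem:ActionReduc}(2) — which you cite, but only in the context of the upper bound — states precisely that the reduction cannot be carried out for paths in $\R\times\Dv\Z^{\ldb 0,N_t\rdb}$, and nothing guarantees that a grid-velocity minimizer has a bounded number of non-trivial velocity states; hence $N_{\ast}\Dt$ is not under control. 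The paper's Step~1 circumvents this by a relaxation: since $\R\times\Sigma^{0,T}_{\Dt,\Dv}\subset\R\times\Sigma^{0,T}_{\Dt}$, one has
\begin{equation*}
\varphi_j^{N_t}(x)\;\ge\;\underset{\gamma\in\R\times\Sigma^{0,T}_{\Dt}}{\inf}\lbr \varphi_\tin(y,w^0)+\At_0^T[\gamma]\rbr ,
\end{equation*}
and the relaxed infimum (over real velocities, time-discrete jumps) is attained at a path $\gamt$ with at most two non-trivial jumps by Lemma~\ref{lem:ActionReducDisc}; then $\At_0^T[\gamt]\ge\Ab_0^T[\gamt]-2\Dt$ and the admissibility of $\gamt$ in \eqref{eq:repFormContHL} give $\varphi_j^{N_t}(x)\ge\varphi(T,x,v_j)-2\Dt$. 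Note that the enlargement of the admissible set only lowers the infimum, which is harmless in this direction of the inequality. With this relaxation step inserted, your chain of estimates goes through and the rest of your proposal (including the identification of the limit via Theorem~\ref{thm:kinHopfLax}) is sound.
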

\begin{proof}
    The proof of this result is a consequence of the identification of the minimal paths introduced in Section~\ref{subsec:reducedActions}. We start by recalling the following inclusions:
    \begin{equation}\label{eq:infInclusions}
        \R\times\Sigma^{0,T}_{\Dt,\Dv} \subset \R\times\Sigma^{0,T}_{\Dt} \subset\R\times\Sigma^{0,T}.
    \end{equation}
    Inequality \eqref{eq:errorConvViscDtDv} is then proven in two steps.

    \myParagraph{Step 1}
    We start from \eqref{eq:repFormDiscSemi} and use the first inclusion in \eqref{eq:infInclusions} to obtain
    \begin{equation*}
        \varphi_j^{N_t}(x) \details{= \underset{\underset{\gamma(T)=x,\,w^{N_t}=v_j}{\gamma=(y,\textbf{w})\in\R\times\Dv\Z^{\ldb1,N_t\rdb}}}{\mathrm{inf}}\lbr  \varphi_\tin(y,w^0)+\At_0^T[\gamma]\rbr}
        \geq \underset{\underset{\gamma(T)=x,\,w^{N_t}=v_j}{\gamma=(y,\textbf{w})\in\R\times\Sigma^{0,T}_{\Dt}}}{\mathrm{inf}}\lbr  \varphi_\tin(y,w^0)+\At_0^T[\gamma]\rbr,
    \end{equation*}
    which is a consequence of minimizing over a larger set. Using Lemma~\ref{lem:ActionReducDisc}, this infimum is in particular reached at a minimal path $\gamt\in\R\times\Sigma^{0,T}_{\Dt}$ with at most two jumps. We denote by $\wt^0$ the initial velocity and $\wt$ the possible non-trivial velocity. It yields:
    \begin{equation*}%\label{eq:convViscStep1}
        \varphi_j^{N_t}(x) \geq \varphi_\tin(y,\wt^0) + \At_0^{T}[\gamt],
    \end{equation*}
    and we introduce $\Ab$ defined in Definition~\ref{def:contAction} to obtain
    \begin{align*}
        \varphi_j^{N_t}(x) \details{&\geq \varphi_\tin(y,w^0)+ \Ab_0^{T}[\gamt]  + \left(\At_0^{T}[\gamt] - \Ab_0^{T}[\gamt]\right)\notag\\}
        &\geq \underset{\underset{\gamma(T)=x,\,\dot{\gamma}(T)=v_j}{\gamma=(y,\textbf{w})\in\R\times\Sigma^{0,T}}}{\mathrm{inf}}\lbr  \varphi_\tin(y,w^0)+\Ab_0^{T}[\gamma]\rbr  + \left(\At_0^{T}[\gamt] - \Ab_0^{T}[\gamt]\right)\notag\\
        &= \varphi(T,x,v_j) + \At_0^{T}[\gamt] - \Ab_0^{T}[\gamt],%\label{eq:PhiPlusRemainder}
    \end{align*}
    One can then apply Lemma~\ref{lem:discrepancyDt} to the difference $\At_0^{T}[\gamt] - \Ab_0^{T}[\gamt]$. Since $\gamt$ admits at most two velocity jumps, one obtains
    \begin{equation}\label{eq:InequalitiesConvergence1}
        \varphi_j^{N_t}(x)\geq\varphi(T,x,v_j) - 2\Dt.
    \end{equation}

    \myParagraph{Step 2}
    We start from \eqref{eq:repFormContHL} in which we choose a minimal path $\gamb\in\R\times\Sigma^{0,T}$ with $\gamb(0)=\overbar{y}$. Thanks to Proposition~\ref{prop:ReducedActionCont}, it has at most two jumps. In the following, we assume that $\gamb$ admits two jumps but the same analysis can be done for one or no jumps. We denote by $\wb^0$ and $\wb^1$ the associated velocities. One has:
    \begin{equation}\label{eq:convStep2start}
        \varphi(T,x,v_j) \details{= \underset{\underset{\gamma(T)=x,\,\dot{\gamma}(T)=v_j}{(y,\textbf{w})\in\R\times\Sigma^{0,T}}}{\mathrm{inf}}\lbr  \varphi_\tin(y,w^0)+\Ab_0^{T}[\gamma]\rbr} = \varphi_\tin(\overbar{y},\wb^0) + \Ab_0^{T}[\gamb].
    \end{equation}
    In what follows, we define $\gamt\in\R\times\Sigma^{0,T}_{\Dt,\Dv}$ such that it has a similar cost compared to $\gamb\in\R\times\Sigma^{0,T}$ using the action $\Ab$. It is chosen with the same number of velocity jumps as $\gamb$, that is at most 2 thanks to Proposition~\ref{prop:ReducedActionCont}. The velocities in $\gamt$ are the velocities of $\gamb$ shifted towards the nearest neighbor on the grid $\Dv\Z$. Similarly, the jump times in $\gamt$ are the jump times in $\gamb$ shifted to their nearest neighbor on the grid $\lbr n\Dt,\,0\leq n\leq N_t\rbr$. We denote $\wt^0=\dot{\gamt}(0)\in\Dv\Z$ and $\wt^1\in\Dv\Z$ the initial and intermediate velocity of this path. Its starting point is denoted $\widetilde{y}=\gamt(0)\in\R$. Without loss of generality, let us assume that the trivial state with zero velocity is located at the end of the time interval. Then, by construction of $\gamt$, one has the following estimates:
    \begin{equation}\label{eq:decalDt}
        |\widetilde{s}^1-\overbar{s}^1|\leq\Dt,\quad|\widetilde{s}^0-\overbar{s}^0|\leq\Dt,\quad |\wt^1-\wb^1|\leq\Dv,\text{ and}\,|\wt^0-\wb^0|\leq\Dv.
    \end{equation}
    The following observation can then be made: the definition of $\gamt$ introduces an error on $y^0$, $\wb^0$, $\wb^1$ and on the jump times $\overbar{s}^0$ and $\overbar{s}^1$. Figure~\ref{fig:choiceIntermediatePath} illustrates this remark, and the notations. To estimate these errors, one follows the characteristic curves backward at velocities $\wt^1$ and $\wb^1$ for the durations $\widetilde{s}_1=\widetilde{s}^1-\widetilde{s}^0$ and $\overbar{s}_1=\overbar{s}^1-\overbar{s}^0$ respectively. We denote $\widetilde{x}_1 = x - \wt^1 \widetilde{s}_1\quad\text{and}\quad\overbar{x}_1 = x - \wb^1 \overbar{s}_1$ the feet of the intermediate states of $\gamt$ and $\gamb$, so that
    \begin{equation*}%\label{eq:compFoot}
        |\overbar{x}_1-\widetilde{x}_1|\details{=|\overbar{s}_1\wb - \widetilde{s}_1\wt|
        \leq|\wb(\overbar{s}_1-\widetilde{s}_1)| + |\widetilde{s}_1(\wb-\wt)|
        \leq |v_j||\overbar{s}-\overbar{s}^0 -(\widetilde{s}-\widetilde{s}^0)| + T\Dv}
        \leq 2v_j\Dt + T\Dv,
    \end{equation*}
    thanks to \eqref{eq:decalDt} and $\wb^1 \leq v_j$ (Remark~\ref{rem:ActionReduc}). As $\gamt$ and $\gamb$ have at most two jumps, the same reasoning can be applied to the sections with velocities $\wt^0$ and $\wb^0$, so that
    \begin{equation*}
        |\widetilde{y}-\overbar{y}|=|\gamt(0)-\gamb(0)|\leq C(\Dt+\Dv),
    \end{equation*}
    where the constant $C$ depends on $T$ and $j\in\Z$.
    
    \begin{figure}
        \begin{tikzpicture}[line cap=round,line join=round,>=triangle 45,x=1.0cm,y=1.0cm]
            \draw [color=Gray,dash pattern=on 1pt off 1pt, xstep=1.0cm,ystep=1.0cm] (-0.0,-0.0) grid (7.86,4.36);
            \draw[->,color=black] (-0.5,0) -- (7.86,0);
            \foreach \x in {,1,2,3,4,5,6,7}
            \draw[shift={(\x,0)},color=black] (0pt,2pt) -- (0pt,-2pt);
            \draw[color=black] (7.2,0.04) node [anchor=south west] { Time};
            \draw[->,color=black] (0,-0.5) -- (0,4.36);
            \foreach \y in {,1,2,3,4}
            \draw[shift={(0,\y)},color=black] (2pt,0pt) -- (-2pt,0pt);
            \draw[color=black] (0.05,4.2) node [anchor=west] { Position};
            
            % Red
            \draw [line width=1.3pt,color=BrickRed] (0,0)-- (2.3,2.2);
            \draw [line width=1.3pt,color=BrickRed] (2.3,2.2)--(3.3,3);
            \draw [line width=1.3pt,color=BrickRed] (3.3,3)--(6,3);
            \draw [line width=1.3pt,color=BrickRed] (6,3)-- (7,3.7);

            \draw [line width=0.9pt,dotted,color=BrickRed] (0,2.2)-- (2.3,2.2);
            \draw [line width=0.9pt,dotted,color=BrickRed] (3.3,3)-- (3.3,0);
            \draw [line width=0.9pt,dotted,color=BrickRed] (2.3,2.2)-- (2.3,0);
            \draw [line width=0.9pt,dotted,color=BrickRed] (6,3)-- (6,0);
            
            \draw (-0.5,0.1) node[anchor=north west] {\textcolor{BrickRed}{$\overbar{y}$}};
            \draw (-0.6,2.4) node[anchor=north west] {\textcolor{BrickRed}{$\overbar{x}_1 $}};
            \draw (1.1,1.1) node[anchor=north west] {\textcolor{BrickRed}{$\wb^0 $}};
            \draw (2.5,2.5) node[anchor=north west] {\textcolor{BrickRed}{$\wb^1$}};
            \draw (6.4,3.34) node[anchor=north west] {\textcolor{BrickRed}{$v_j $}};

            \draw (2.1,-0.03) node[anchor=north west] {\textcolor{BrickRed}{$\overbar{s}^0$}};
            \draw (3.1,-0.05) node[anchor=north west] {\textcolor{BrickRed}{$\overbar{s}^1$}};

            % Blue
            \draw [line width=1.3pt,dash pattern=on 4pt off 2.5pt,color=MidnightBlue] (0,0.3)-- (2,2.5);
            \draw [line width=1.3pt,dash pattern=on 4pt off 2.5pt,color=MidnightBlue] (2,2.5)-- (3,3);
            \draw [line width=1.3pt,dash pattern=on 4pt off 2.5pt,color=MidnightBlue] (3,3)-- (6,3);

            \draw (-0.6,2.8) node[anchor=north west] {\textcolor{MidnightBlue}{$\widetilde{x}_1 $}};
            \draw (-0.5,0.8) node[anchor=north west] {\textcolor{MidnightBlue}{$\widetilde{y}$}};
            \draw (0.6,2.0) node[anchor=north west] {\textcolor{MidnightBlue}{$\wt^0$}};
            \draw (2.,3.1) node[anchor=north west] {\textcolor{MidnightBlue}{$\wt^1$}};
            
            \draw [line width=0.9pt,dotted,color=MidnightBlue] (0,2.5)-- (2,2.5);
            \draw [line width=0.9pt,dotted,color=MidnightBlue] (2,0)-- (2,2.5);
            \draw [line width=0.9pt,dotted,color=MidnightBlue] (3,0)-- (3,3);

            \draw (2.8,-0.03) node[anchor=north west] {\textcolor{MidnightBlue}{$ \widetilde{s}^1 $}};
            \draw (1.7,-0.03) node[anchor=north west] {\textcolor{MidnightBlue}{$ \widetilde{s}^0 $}};

            \draw [line width=0.9pt,dotted,color=BrickRed] (0,3)-- (6,3);
            \draw (-0.5,3.2) node[anchor=north west] {$ x $};
            \draw (5.8,-0.03) node[anchor=north west] {$ T $};
            \draw [decorate, decoration={brace, amplitude=5pt, mirror}] (4,-0.2) -- (5,-0.2) node[midway,below=5pt] {$\Dt$};
            \begin{scriptsize}
                % Red
                \draw [color=BrickRed, line width=2.0pt] (-2.5pt,0) -- (2.5pt,0) (0,-2.5pt) -- (0,2.5pt);
                \fill [color=BrickRed] (6,3) circle (1.5pt);
                \fill [color=BrickRed] (3.3,3) circle (1.5pt);
                \fill [color=BrickRed] (2.3,2.2) circle (1.5pt); 
                
                \draw [line width=2.0pt, color=BrickRed] (0,0)-- ++(-2.5pt,0 pt) -- ++(5.0pt,0 pt) ++(-2.5pt,-2.5pt) -- ++(0 pt,5.0pt);
                \draw [line width=2.0pt,color=BrickRed] (0,2.2)-- ++(-2.5pt,0 pt) -- ++(5.0pt,0 pt) ++(-2.5pt,-2.5pt) -- ++(0 pt,5.0pt);
                \draw [line width=2.0pt,color=BrickRed] (0,3)-- ++(-2.5pt,0 pt) -- ++(5.0pt,0 pt) ++(-2.5pt,-2.5pt) -- ++(0 pt,5.0pt);
                \draw [line width=2.0pt, color=BrickRed] (2.3,0)-- ++(-2.5pt,0 pt) -- ++(5.0pt,0 pt) ++(-2.5pt,-2.5pt) -- ++(0 pt,5.0pt);
                \draw [line width=2.0pt,color=BrickRed] (3.3,0)-- ++(-2.5pt,0 pt) -- ++(5.0pt,0 pt) ++(-2.5pt,-2.5pt) -- ++(0 pt,5.0pt);
                \draw [line width=2.0pt,color=BrickRed] (6,0)-- ++(-2.5pt,0 pt) -- ++(5.0pt,0 pt) ++(-2.5pt,-2.5pt) -- ++(0 pt,5.0pt);
                % Blue
                \fill [color=MidnightBlue] (1,1.4) circle (1.5pt);
                \fill [color=MidnightBlue] (2,2.5) circle (1.5pt);
                \fill [color=MidnightBlue] (3,3) circle (1.5pt);
                \fill [color=MidnightBlue] (4,3) circle (1.5pt);
                \fill [color=MidnightBlue] (5,3) circle (1.5pt);

                \draw [line width=2.0pt, color=MidnightBlue] (-2.5pt,0.3) -- (2.5pt,0.3) (0,-2.5pt+8.5) -- (0,2.5pt+8.5);
                \draw [line width=2.0pt, color=MidnightBlue] (2,0)-- ++(-2.5pt,0 pt) -- ++(5.0pt,0 pt) ++(-2.5pt,-2.5pt) -- ++(0 pt,5.0pt);
                \draw [line width=2.0pt, color=MidnightBlue] (3,0)-- ++(-2.5pt,0 pt) -- ++(5.0pt,0 pt) ++(-2.5pt,-2.5pt) -- ++(0 pt,5.0pt);

                \draw [line width=2.0pt, color=MidnightBlue] (0,2.5)-- ++(-2.5pt,0 pt) -- ++(5.0pt,0 pt) ++(-2.5pt,-2.5pt) -- ++(0 pt,5.0pt);
            \end{scriptsize}
        \end{tikzpicture}
        \caption{Illustration a minimal path $\gamb$ under the action $\bar{A}$ defined in \eqref{eq:contAction} (solid red line) and its approximation by a path $\gamt$ defined on the time and velocity grids (dashed blue line).}\label{fig:choiceIntermediatePath}
    \end{figure}
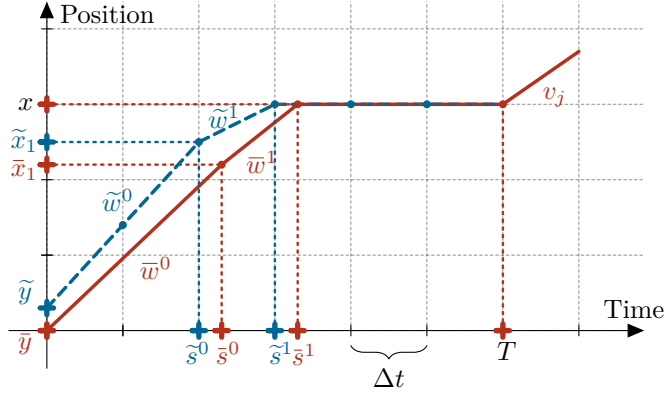

    Introducing $\gamt$ in \eqref{eq:convStep2start} yields
    \begin{align*}
        \varphi(T,x,v_j) = &\,\varphi_\tin(\widetilde{y},\wt^0) + \At_0^{T}[\gamt]\\
        & + \left(\varphi_\tin(\overbar{y},\wb^0) - \varphi_\tin(\widetilde{y}\,\wt^0)\right) + \left(\Ab_0^{T}[\gamb] - \Ab_0^{T}[\gamt]\right) + \left(\Ab_0^{T}[\gamt] - \At_0^{T}[\gamt]\right).
    \end{align*}
    Using the inclusion \eqref{eq:infInclusions}, one obtains
    \begin{align}\label{eq:InequalitiesConvergence2tmp}
        \varphi(T,x,v_j) &\geq \underset{\underset{\gamma(T)=x,\,w^{N_t}=v}{\gamma=(y,\textbf{w})\in\R\times\Sigma^{0,T}_{\Dt,\Dv}}}{\mathrm{inf}}\lbr  \varphi_\tin(y,w^0)+\At_0^T[\gamma]\rbr\notag\\
        &\qquad + \left(\varphi_\tin(y,\wb^0) - \varphi_\tin(\widetilde{y},\wt^0)\right) + \left(\Ab_0^{T}[\gamb] - \Ab_0^{T}[\gamt]\right) + \left(\Ab_0^{T}[\gamt] - \At_0^{T}[\gamt]\right)\notag\\
        &=\varphi_j^{N_t}(x) + \left(\varphi_\tin(\overbar{y},\wb^0) - \varphi_\tin(\widetilde{y},\wt^0)\right) + \left(\Ab_0^{T}[\gamb] - \Ab_0^{T}[\gamt]\right) + \left(\Ab_0^{T}[\gamt] - \At_0^{T}[\gamt]\right).
    \end{align}
    We now estimate the three differences in \eqref{eq:InequalitiesConvergence2tmp}. Firstly, the $L$-Lipschitz continuity of $\varphi_\tin-v^2/2$ and inequalities \eqref{eq:decalDt} yield
    \begin{equation*}
        \begin{aligned}
            |\varphi_\tin(\overbar{y},\wb^0) - \varphi_\tin(\widetilde{y},\wt^0)| \details{&\leq \left|\varphi_\tin(\overbar{y},\wb^0) - \frac{(\wb^0)^2}{2} - \left(\varphi_\tin(\widetilde{y},\wt^0) - \frac{(\wt^0)^2}{2}\right)\right| + \left| \frac{(\wb^0)^2}{2} - \frac{(\wt^0)^2}{2}\right|\\
            &\leq L ( |\overbar{y} - \tilde{y} | + | \wt^0 - \wt^0| ) + \frac{1}{2}| \wt^0 - \wt^0| | \wb^0 + \wt^0| \\}
            &\leq CL (\Dt + \Dv ) + \frac{1}{2}\Dv | \wb^0 + \wt^0|.
        \end{aligned}
    \end{equation*} 
    As both $\wb^0$ and $\wt^0$ are less than the target velocity $v_j$ (Lemma~\ref{lem:ActionReducDisc} and Remark~\ref{rem:ActionReduc}), there exists a constant $C$, that depends on $j$, such that
    \begin{equation}\label{eq:InequalitiesConvergence2LipInit}
        \left|\varphi_\tin(\overbar{y},\wb^0) - \varphi_\tin(\widetilde{y},\wt^0)\right|\leq C(\Dt+\Dv).
    \end{equation}
    Secondly, we deal with the difference of actions $\Ab_0^{T}[\gamb] - \Ab_0^{T}[\gamt]$. By definition, the paths $\gamb$ and $\gamt$ admit two jumps with the final one being shared. The difference reads, 
    \begin{equation*}
        \left|\Ab_0^{T}[\gamb] - \Ab_0^{T}[\gamt]\right| \leq \left|\overbar{s}^0+\overbar{s}^1- \widetilde{s}^0-\widetilde{s}^1\right| + \left|\frac{\left(\wb^1\right)^2}{2}- \frac{\left(\wt^1\right)^2}{2}\right| + \left|\mathds{1}_{\lbr v_j\neq \frac{x}{T},\,\wb^0\neq v_j\rbr }-\mathds{1}_{\lbr v_j\neq \frac{x}{T},\,\tilde{w}^0\neq v_j\rbr }\right|\frac{v_j^2}{2},
    \end{equation*}
    where the indicator functions take the value 1 if the condition is satisfied and 0 otherwise. Using \eqref{eq:decalDt}, the differences in time and velocity are of order $\Dt$ and $\Dv$, respectively. Regarding the difference of indicator functions encoding the last jump, we separate two cases. First, if $v_j\neq\frac{x}{T}$, the difference is zero because a final jump must be paid to reach the target velocity both in $\gamt$ and $\gamb$, and they have the same cost. Secondly, if $v_j=\frac{x}{T}$, the cases $\wb^0\neq v_j$ and $\wt^0\neq v_j$ again both require final a jump. Lastly, assume that $\wb^0=v_j\in\Dv\Z$, then $\wt^0=v_j$. Consequently, the indicator functions both equal zero at the same time and the last jump never introduces an error. It yields
    \begin{equation}\label{eq:InequalitiesConvergence2Cost}
        \left|\Ab_0^{T}[\gamb] - \At_0^{T}[\gamt]\right|\leq 2\Dt + (\wb^1+\wt^1)\frac{\Dv}{2}\leq 2\Dt + 2v_j\Dv.
    \end{equation}
    Thirdly, the difference $\Ab_0^{T}[\gamt] - \At_0^{T}[\gamt]$ is bounded by $2\Dt$ thanks to Lemma~\ref{lem:discrepancyDt}. Combining this bound with \eqref{eq:InequalitiesConvergence2Cost} and \eqref{eq:InequalitiesConvergence2LipInit}, there exists a constant $C$, depending on the final time $T$ and on the target velocity $v_j$, such that
    \begin{equation}\label{eq:InequalitiesConvergence2}
            \varphi(T,x,v_j)\geq\varphi_j^{N_t}(x) - C(\Dt+\Dv).
    \end{equation}

    Finally, by combining \eqref{eq:InequalitiesConvergence1} and \eqref{eq:InequalitiesConvergence2}, and by equivalence of \eqref{eq:repFormDiscSemi} and \eqref{eq:VariDiscSemiTV}, we conclude the proof of Lemma~\ref{lem:ConvViscDtDv}.
\end{proof}

\subsection{Fully discrete convergence}\label{subsec:ConvFull}
In this section, we prove the convergence of the solution to \eqref{eq:VariDiscFull} towards the one of \eqref{eq:VariDiscSemiTV} when $\Dx\to0$ with fixed $\Dt$ and $\Dv$.
\begin{lemma}\label{lem:ConvViscDx}
    Let $(\mu_i^n, \varphi_{ij}^n)_{i\in\Z,\,j\in\Z,\,n\in\ldb1,N_t\rdb}$ be given by \eqref{eq:VariDiscFull}. Suppose that $\varphi_\tin$ satisfies Assumption~\ref{Ass:LinfBounds} and that $\varphi_\tin-v^2/2$ is $L$-Lipschitz in $v$. Then, in the limit $\Dx\to0$, the solution to \eqref{eq:VariDiscFull} converges towards the solution to the semi-discrete scheme \eqref{eq:VariDiscSemiTV}: for all $(i,j)\in\Z^2$, and all $n\in\ldb 1, N_t\rdb$,
    \begin{align}
        \left|\mu^n(x_i) - \mu_{i}^n \right|\leq 2LT\frac{\Dx}{\Dt}, \label{eq:errorConvViscDxMu}\\
        \left|\varphi_j^n(x_i) - \varphi_{ij}^n \right|\leq 2LT\frac{\Dx}{\Dt}. \label{eq:errorConvViscDxPhi}
    \end{align}
\end{lemma}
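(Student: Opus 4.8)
The plan is to prove both bounds simultaneously by induction on the time index $n$, controlling the two error quantities
$$\mathcal{E}^n = \sup_{(i,j)\in\Z^2}\left|\varphi_j^n(x_i) - \varphi_{ij}^n\right|, \qquad \mathcal{D}^n = \sup_{i\in\Z}\left|\mu^n(x_i) - \mu_i^n\right|,$$
where $\left(\mu^n,\varphi_j^n\right)$ solves the semi-discrete scheme \eqref{eq:VariDiscSemiTV} and $\left(\mu_i^n,\varphi_{ij}^n\right)$ solves the fully discrete scheme \eqref{eq:VariDiscFull}. These suprema are finite thanks to the maximum-principle bounds (both $\varphi_j^n$ and $\varphi_{ij}^n$ stay within $M$ of $v_j^2/2$), so the induction is well posed. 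The key observation is that the two schemes share exactly the same algebraic structure (a minimum over $j$ for $\mu$, and a minimum of two terms for $\varphi$), and that their only genuine discrepancy comes from replacing the exact transported value $\varphi_j^n(x_i - \Dt v_j)$ by its spatial interpolation $\overbar{\varphi}_{i_j,j}^n$. Since both the minimum over $j$ and the pointwise minimum of two quantities are $1$-Lipschitz operations, the errors will propagate without amplification, and the whole argument reduces to estimating the interpolation error at each step. The base case is immediate: by \eqref{eq:initFullLimit} both initializations coincide at grid points, so $\mathcal{E}^0 = 0$.

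First I would establish the central one-step estimate
$$\left|\,\overbar{\varphi}_{i_j,j}^n - \varphi_j^n(x_i - \Dt v_j)\,\right| \leq \mathcal{E}^n + L\Dx.$$
This follows by splitting the difference into two contributions. Writing the interpolation \eqref{eq:interpolation} as a convex combination of $\varphi_{i_j,j}^n$ and $\varphi_{i_j-1,j}^n$, the first contribution compares these discrete nodal values with the exact values $\varphi_j^n(x_{i_j})$ and $\varphi_j^n(x_{i_j-1})$; being a convex combination of two quantities each bounded by $\mathcal{E}^n$, it is itself bounded by $\mathcal{E}^n$. The second contribution is the genuine interpolation error of the exact function $x\mapsto\varphi_j^n(x)$ at the foot point $x_i-\Dt v_j = x_{i_j}-\alpha_j\Dx$, which lies in $[x_{i_j-1},x_{i_j}]$. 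Since Corollary~\ref{cor:LipBoundLim} guarantees that $x\mapsto\varphi_j^n(x)$ is $L$-Lipschitz, and both interpolation nodes are within distance $\Dx$ of the foot point, any convex combination of the nodal values differs from $\varphi_j^n(x_i-\Dt v_j)$ by at most $L\Dx$. Note that this bound holds irrespective of the precise interpolation weights, which is all that is needed.

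Then I would propagate these estimates through one step of the scheme. Applying the $1$-Lipschitz property of the minimum over $j$ to the first lines of \eqref{eq:VariDiscSemiTV} and \eqref{eq:VariDiscFull} gives $\mathcal{D}^{n+1}\leq\sup_j\left|\overbar{\varphi}_{i_j,j}^n - \varphi_j^n(x_i-\Dt v_j)\right|\leq\mathcal{E}^n+L\Dx$. Applying the $1$-Lipschitz property of $\min\lbr\cdot,\cdot\rbr$ to the second lines, and using that the $\Dt$ and $v_j^2/2$ terms are identical in both schemes, gives $\mathcal{E}^{n+1}\leq\max\lbr\mathcal{E}^n+L\Dx,\,\mathcal{D}^{n+1}\rbr=\mathcal{E}^n+L\Dx$. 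By induction, $\mathcal{E}^n\leq nL\Dx$ and $\mathcal{D}^n\leq nL\Dx$, and since $n\leq N_t=T/\Dt$ this yields $\mathcal{E}^n,\mathcal{D}^n\leq LT\Dx/\Dt$, which is a fortiori the announced bounds \eqref{eq:errorConvViscDxMu}--\eqref{eq:errorConvViscDxPhi}. The only delicate points are the clean decomposition of the interpolation error (so that the nodal errors enter through a convex combination and therefore do not accumulate within a single step) and the recognition that the single $L\Dx$ contributed per time step accumulates over $N_t$ steps, producing the characteristic $\Dx/\Dt$ scaling rather than an $\mathcal{O}(\Dx)$ bound; the factor $1/\Dt$ is precisely the price paid for the interpolation being applied once per time iteration.
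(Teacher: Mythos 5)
Your proposal is correct and follows essentially the same route as the paper: induction on $n$, splitting the discrepancy $\overbar{\varphi}_{i_j,j}^n - \varphi_j^n(x_i-\Dt v_j)$ into a convex combination of nodal errors plus the interpolation error of the exact $L$-Lipschitz profile (via Corollary~\ref{cor:LipBoundLim}), then propagating through the $1$-Lipschitz min operations so that one $\mathcal{O}(L\Dx)$ term accumulates per step over $N_t = T/\Dt$ steps. The only difference is cosmetic: you bound the per-step interpolation error by $L\Dx$ where the paper uses the more conservative $2L\Dx$, so you even obtain the slightly sharper constant $LT\Dx/\Dt$, which a fortiori gives \eqref{eq:errorConvViscDxMu}--\eqref{eq:errorConvViscDxPhi}.
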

Before beginning the proof, it is worth mentioning that the transport in $x$ is exactly resolved, meaning that no interpolations are needed, if $\Dt\Dv=\Dx$. In this case, the results of Lemma~\ref{lem:ConvViscDx} are true for \eqref{eq:VariDiscFull}. However, condition \eqref{eq:stabJumpCond} then implies a huge computational cost, as $\Dx\leq\Dv^3/2$.

\begin{proof}
    Proceeding by induction, we start by estimating the propagation of interpolation errors over a single time step. Assume that for some $n\geq0$, and for all $(i,j)\in\Z^2$,
    \begin{align}
        e_{\mu,i}^n&\coloneq\mu^n(x_i) - \mu_{i}^n,\quad \left|e_{\mu,i}^n\right|\leq 2Ln\Dx, \label{eq:errorConvViscDxMuInduction}\\
        e_{\varphi,ij}^n&\coloneq\varphi_j^n(x_i) - \varphi_{ij}^n,\quad \left|e_{\varphi,ij}^n\right|\leq 2Ln\Dx, \label{eq:errorConvViscDxPhiInduction}
    \end{align}
    where $(\mu_{i}^n,\varphi_{ij}^n)$ is obtained from \eqref{eq:VariDiscFull} and $(\mu^n(x_i),\varphi_{j}^n(x_i))$ from \eqref{eq:VariDiscSemiTV}. Remark that the interpolation \eqref{eq:interpolation} applied to a function $h=h(x)$ that is $L$-Lipschitz yields an error of size $2L\Dx$. 
    \details{After some straightforward computations one obtains the following error estimate
    \begin{equation*}
        |h(x)-\overbar{h}(x)| \leq 2L\Dx.
    \end{equation*}}
    Applying this result to $\overbar{\varphi}_{i_j,j}^{n}$, given by \eqref{eq:interpolation}, and Corollary~\ref{cor:LipBoundLim} on the propagation of Lipschitz bounds, one obtains:
    \begin{equation}\label{eq:errorInterpPhi}
        | \varphi_j^{n}(x_i-\Dt v_j)-\overbar{\varphi}_{i_j,j}^{n}|\leq 2L\Dx,
    \end{equation}
    where $L$ is the Lipschitz constant of $\varphi_\tin$. In particular, introducing $\varphi_j^n(x_{i_j})$ in the interpolation \eqref{eq:interpolation}, one has
    \begin{equation}\label{eq:errorInterpPhiPropag}
        \overbar{\varphi}_{i_j,j}^{n} = \alpha_j \varphi_j^n(x_{i_j}) + (1-\alpha_j)\varphi_j^n(x_{i_j-1}) + \alpha_j e_{\varphi,i_j,j}^n+ (1-\alpha_j)e_{\varphi,i_j-1,j}^n
    \end{equation}

    Let us now show the propagation of \eqref{eq:errorConvViscDxMuInduction}. Introducing $\varphi_{j}^n(x_i-\Dt v_j)$ and \eqref{eq:errorInterpPhiPropag} in the update relation on $\mu_i^{n+1}$ in \eqref{eq:VariDiscFull} one obtains
    \begin{align*}
        \mu_i^{n+1} &= \underset{j\in\Z}{\min} \Big\{ \left(\alpha_j\varphi_{j}^{n}(x_{i_j}) + (1-\alpha_j)\varphi_{j}^{n}(x_{i_j-1})-\varphi_j^{n}(x_i-\Dt v_j)\right)\\
        &\quad +\alpha_j e_{\varphi,i_j,j}^n+ (1-\alpha_j)e_{\varphi,i_j-1,j}^n +\varphi_j^{n}(x_i-\Dt v_j)\Big\}.
    \end{align*}
    Using the interpolation error estimate \eqref{eq:errorInterpPhi}, it yields
    \begin{equation*}
        \mu^{n+1}(x_i) - 2L\Dx - \underset{(i,j)\in\Z^2}{\mathrm{sup}}\lbr |e_{\varphi,ij}^n| \rbr\leq\mu_i^{n+1}\leq\mu^{n+1}(x_i) + 2L\Dx + \underset{(i,j)\in\Z^2}{\mathrm{sup}}\lbr |e_{\varphi,ij}^n|\rbr,
    \end{equation*}
    from which one obtains \eqref{eq:errorConvViscDxMuInduction} at step $n+1$ using the induction hypothesis \eqref{eq:errorConvViscDxPhiInduction} on $e_{\varphi,ij}^n$. The same ideas are then applied to the equation on $\varphi_{ij}^{n+1}$ in \eqref{eq:VariDiscFull}. Let 
    \begin{equation*}
        \begin{aligned}
            A&\coloneqq \varphi^n_j(x_i-\Dt v_j) + (\bar{\varphi}^n_{i_j,j} - \varphi^n_j(x_i-\Dt v_j)) + \Dt,\\
            B&\coloneqq \frac{v_j^2}{2}+ \mu^{n+1}(x_i)+\left(\mu_i^{n+1}-\mu^{n+1}(x_i)\right),
        \end{aligned}
    \end{equation*}
    such that $\varphi_{ij}^n=\min\lbr A,B\rbr$. Applying estimate \eqref{eq:errorInterpPhi} to $A$ and \eqref{eq:errorConvViscDxMuInduction} at step $n+1$ to $B$, one has in particular
    \begin{align}
        A&\leq \details{\varphi_j^{n}(x_i-\Dt v_j) + \Dt + 2L\Dx \leq} \varphi_j^{n}(x_i-\Dt v_j) + \Dt + 2L\Dx ,\label{eq:estimA_Upper}\\
        A&\geq \details{\varphi_j^{n}(x_i-\Dt v_j) + \Dt - 2L\Dx \geq} \varphi_j^{n}(x_i-\Dt v_j) + \Dt - 2L\Dx ,\label{eq:estimA_Lower}\\
        B&\leq \frac{v_j^2}{2}+ \mu^{n+1}(x_i)+ 2L(n+1)\Dx ,\label{eq:estimB_Upper}\\
        B&\geq \frac{v_j^2}{2}+ \mu^{n+1}(x_i)- 2L(n+1)\Dx .\label{eq:estimB_Lower}
    \end{align}
    Therefore, combining \eqref{eq:estimA_Upper} and \eqref{eq:estimB_Upper}, one obtains the upper bound
    \begin{equation}\label{eq:upperBoundPhi}
        \varphi^{n+1}_{ij} \leq \varphi_j^{n+1}(x_i) + 2L(n+1)\Dx.
    \end{equation}
    Similarly, estimates \eqref{eq:estimA_Lower} and \eqref{eq:estimB_Lower} yield the lower bound
    \begin{equation}\label{eq:lowerBoundPhi}
        \varphi^{n+1}_{ij} \geq \varphi_j^{n+1}(x_i) - 2L(n+1)\Dx.
    \end{equation}
    Finally, \eqref{eq:errorConvViscDxPhiInduction} is obtained at step $n+1$ combining \eqref{eq:upperBoundPhi} and \eqref{eq:lowerBoundPhi}.
\end{proof}

Using Lemmas~\ref{lem:ConvViscDtDv} and \ref{lem:ConvViscDx}, one can conclude the proof of Theorem~\ref{thm:ConvVisc}.

\begin{Remark}
    \textbf{On the generalization to higher dimensions}. The generalization of Theorems~\ref{thm:AP} and~\ref{thm:ConvVisc} to higher dimensions in velocity is straightforward, as all velocity-specific tools naturally generalize to this setting. However, when considering the position variable, the monotonicity of the interpolation \eqref{eq:interpolation} played a crucial role in the analysis of both \eqref{eq:schemeMuPhiFull} and \eqref{eq:VariDiscFull}. Therefore, the multidimensional interpolation must be carefully selected to ensure this property is preserved. On suitably structured meshes, a linear interpolation in each coordinate direction should be sufficient. Additionally, since the unknown is expected to exhibit only Lipschitz regularity, this imposes an extra constraint on the interpolation method. Finally, the commutation of the interpolation \eqref{eq:interpolation} with constants and translations allowed the exact preservation of Lipschitz bounds. Without these last two properties, this preservation may deteriorate over time.
\end{Remark}

\section{Numerical results}\label{sec:NumRes}
%%%%%%%%%%%%%%%%%%%%%%%%%%%%%%%%%%%%%%%%%%%%%%%%%%%%%%%%%%%%%%%%%%%%%%%%%%%%%%
In this section, we present numerical experiments validating the properties of schemes \eqref{eq:schemeMuPhiFull} and \eqref{eq:VariDiscFull}, as established in Theorems~\ref{thm:AP} and~\ref{thm:ConvVisc}. While these results were proven in an unbounded phase space, practical considerations require to restrict the computational domain.

For the spatial discretization, for the sake of simplicity, we impose periodic boundary conditions on the domain $[-x_\star, x_\star]$, with a uniform grid spacing defined as
\begin{equation*} 
    \Dx = \frac{2 x_\star}{N_x}, \quad x_i = -x_\star + \left(i - \frac{1}{2}\right) \Dx, \quad i \in \ldb1, N_x\rdb. 
\end{equation*}
A similar discretization is applied in velocity but no boundary condition needs to be defined as there is no transport in this variable:
\begin{equation*} 
    \Dv = \frac{2 v_\star}{N_v}, \quad v_j = -v_\star + \left(j - \frac{1}{2}\right) \Dv, \quad j \in \ldb1, N_v\rdb. 
\end{equation*}
Unless stated otherwise, we set the discretization parameters as follows:
\begin{equation*} 
    N_x = 64, \quad N_v = 61, \quad x_\star = 10, \quad v_\star = 10. 
\end{equation*}
Note that to ensure that $v=0$ belongs to the grid, we choose an odd number of points in velocity. The time step is chosen to satisfy the stability condition \eqref{eq:stabJumpCond}: $\Dt = 0.9\Dv/2$. In the following, we consider the initial data 
\begin{equation}\label{eq:InitDataNum}
    \varphi_\tin(t,x,v)= \min\lbr 3(v-3)^2+5, 5(v+7)^2+2\rbr+ 0.9\cos\left(\frac{4\pi xv}{x_\star v_\star}\right),
\end{equation}
which enjoys Assumption~\ref{Ass:LinfBounds}, and is such that $\varphi_\tin-v^2/2$ is $L$-Lipschitz in the $v$ variable. To illustrate the asymptotic-preserving property of \eqref{eq:schemeMuPhiFull}, we compare it with a standard explicit finite difference scheme on \eqref{eq:kineticScaled}. Denoting $f_{ij}^n$ an approximation of $f(t^n, x_i, v_j)$, the scheme is given by  
\begin{equation}\label{eq:naiveScheme}
    \frac{f_{ij}^{n+1} - f_{ij}^n}{\Dt} + (v \dx f^n)_{ij} = \frac{1}{\ep} (\rho_i^n \M^\ep_j - f_{ij}^n),
\end{equation}
where  
\begin{equation*}
    (v \dx f^n)_{ij} = v_j^+ \frac{f_{ij}^n - f_{i-1,j}^n}{\Dx} + v_j^- \frac{f_{i+1,j}^n - f_{ij}^n}{\Dx}
\end{equation*}  
is a classical upwind scheme for the transport term, $\rho_i^n = \lla  f_i^n \rra_\Dv$, and $\M^\ep_j = c \M^\ep(v_j)$ with $c = \lla  \M^\ep \rra_\Dv^{-1}$ being a normalization constant. To compare the results of scheme \eqref{eq:naiveScheme} to the ones of \eqref{eq:schemeMuPhiFull}, we introduce the Hopf-Cole transform of the solution to \eqref{eq:naiveScheme}: for all $(n,i,j)\in \ldb0,N_t\rdb\times\ldb1, N_x\rdb\times\ldb1, N_v\rdb $,
\begin{equation*}
    \psi_{ij}^{n} = -\ep \log(f_{ij}^n).
\end{equation*}  
Since the scheme \eqref{eq:naiveScheme} is explicit, its stability is guaranteed only under a CFL condition of the form $\Dt \leq \ep C \Dx$, which becomes excessively restrictive as $\ep \to 0$. Moreover, due to the concentration phenomena in velocity as $\ep \to 0$, this scheme is expected to poorly resolve this variable in that regime. Another challenge arises from the use of the logarithm in the Hopf-Cole transform: for small values of $\ep$, the distribution tails approach the machine epsilon, leading to significant numerical errors. Consequently, the scheme \eqref{eq:naiveScheme} is expected to approach accurately the solution to \eqref{eq:kineticScaled} only in the regime $\ep\sim1$.

\begin{Remark}
    With the discretization parameters presented above, the exact preservation of the equilibrium $v^2/2$ is observed. More precisely, setting $\varphi_\tin=v^2/2$, the quantities $\max_{1\leq n\leq N_t}\lbr \varphi^\ep - v^2/2 \rbr$ and $\max_{1\leq n\leq N_t}\lbr \varphi - v^2/2 \rbr$ are exactly equal to 0.
\end{Remark}

\subsection{AP property}
We start by evaluating the AP property of \eqref{eq:schemeMuPhiFull}. This is done through a comparison with both the limiting scheme \eqref{eq:VariDiscFull} and the explicit (non-AP) scheme \eqref{eq:naiveScheme}.

Figures~\ref{fig:comparisonDynamicX} and~\ref{fig:comparisonDynamicV} illustrate the evolution of $\varphi^\ep$, $\psi$ and $\varphi$ under various values of the scaling parameter $\ep$. In Figure~\ref{fig:comparisonDynamicX}, $\varphi^\ep$, $\psi$ and $\varphi$ are shown as a function of $x$ with fixed velocity $v = 0$. We observe that the explicit scheme (top row) fails to capture the correct limiting behavior as $\ep \to 0$, whereas the AP scheme \eqref{eq:schemeMuPhiFull} (bottom row) is able to align with the asymptotic solution. In addition, it is worth noticing that, as $\ep$ decreases, sharp spatial peaks emerge, and a Lipschitz-type regularity in $x$ becomes apparent in the limiting profile. This behavior is typical of Hamilton-Jacobi equation.

Figure~\ref{fig:comparisonDynamicV} presents the same simulation but now with fixed spatial coordinate $x = 0$, displaying $\varphi^\ep$, $\psi$ and $\varphi$ as a function of $v$. Once again the AP scheme approximates the limit well, while the explicit scheme exhibits notable discrepancies. In particular, the explicit method performs poorly for large velocities. This can be attributed to the procedure it follows—first computing the distribution function and then taking the logarithm to recover $\psi$. At high velocities, this approach suffers from numerical instabilities due to values near machine precision, leading to inaccuracies in the logarithm.

Finally, the convergence of the AP scheme as $\varepsilon \to 0$ is quantified in Figure~\ref{fig:APconvergenceEpsilon}. The pointwise $L^\infty$-error between $\varphi^\ep$ and $\varphi$ converges as $\ep\to0$, as predicted in Theorem~\ref{thm:AP}. While Theorem~\ref{thm:AP} does not provide an explicit rate of convergence with respect to $\ep$, a rate of $1$ can be observed for this test case.

\begin{figure}
    \centering
    \includegraphics[width=.95\linewidth]{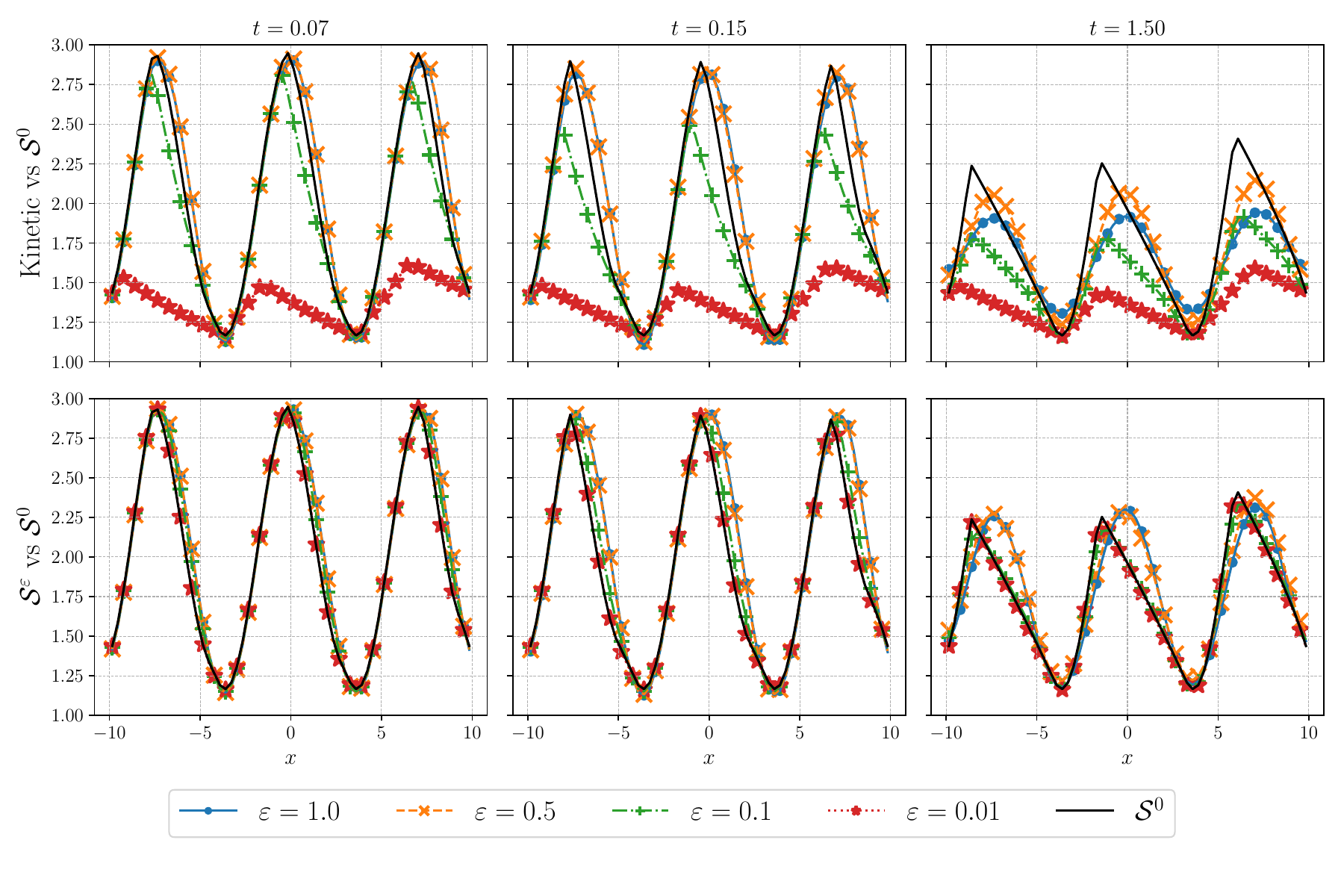}
    \caption{Evolution of the solution to \eqref{eq:naiveScheme} (Kinetic, top) and \eqref{eq:schemeMuPhiFull} (bottom) compared to the solution to \eqref{eq:VariDiscFull} for $v=0$ at $t=0.07$ (left), $0.15$ (center), and $1.5$ (right) for different values of $\ep$.}
    \label{fig:comparisonDynamicX}
\end{figure}

\begin{figure}
    \centering
    \includegraphics[width=.45\linewidth]{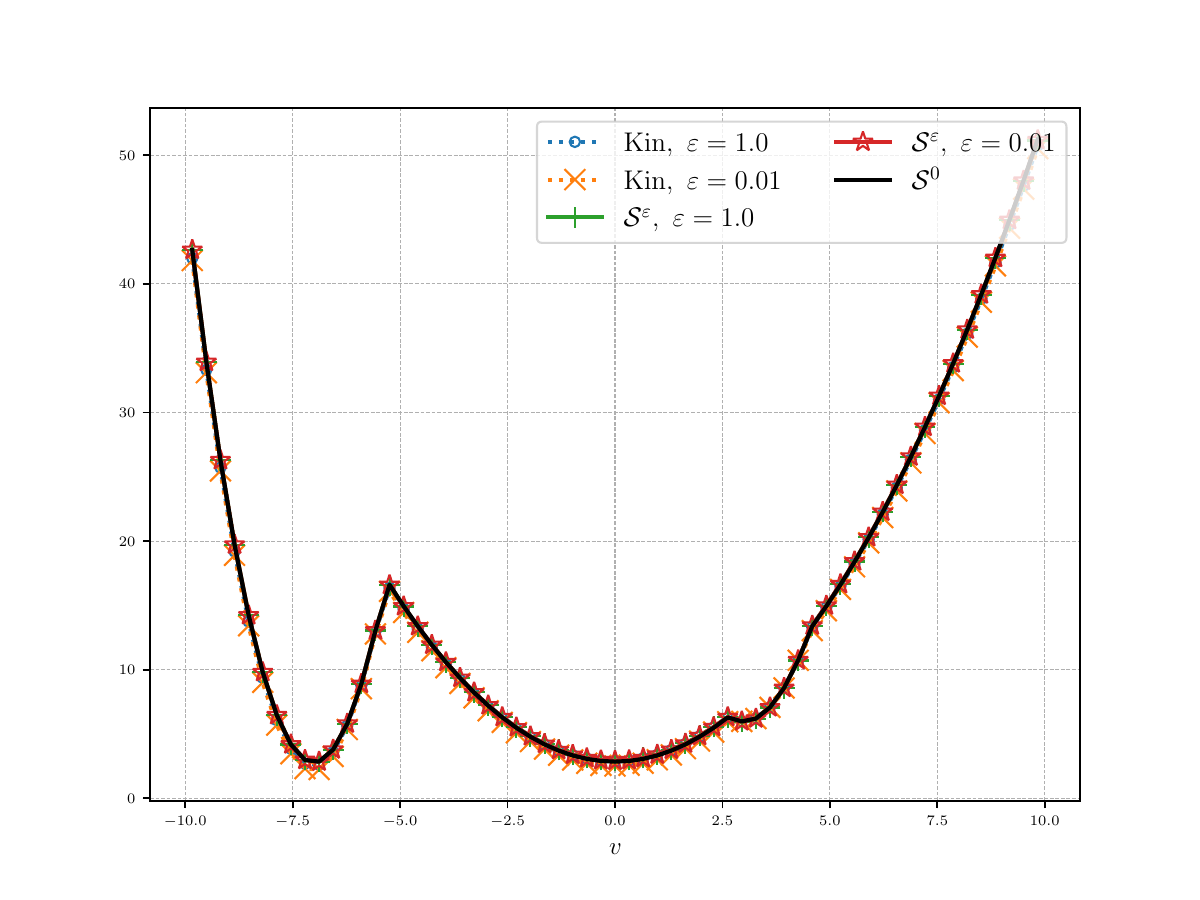}
    \includegraphics[width=.45\linewidth]{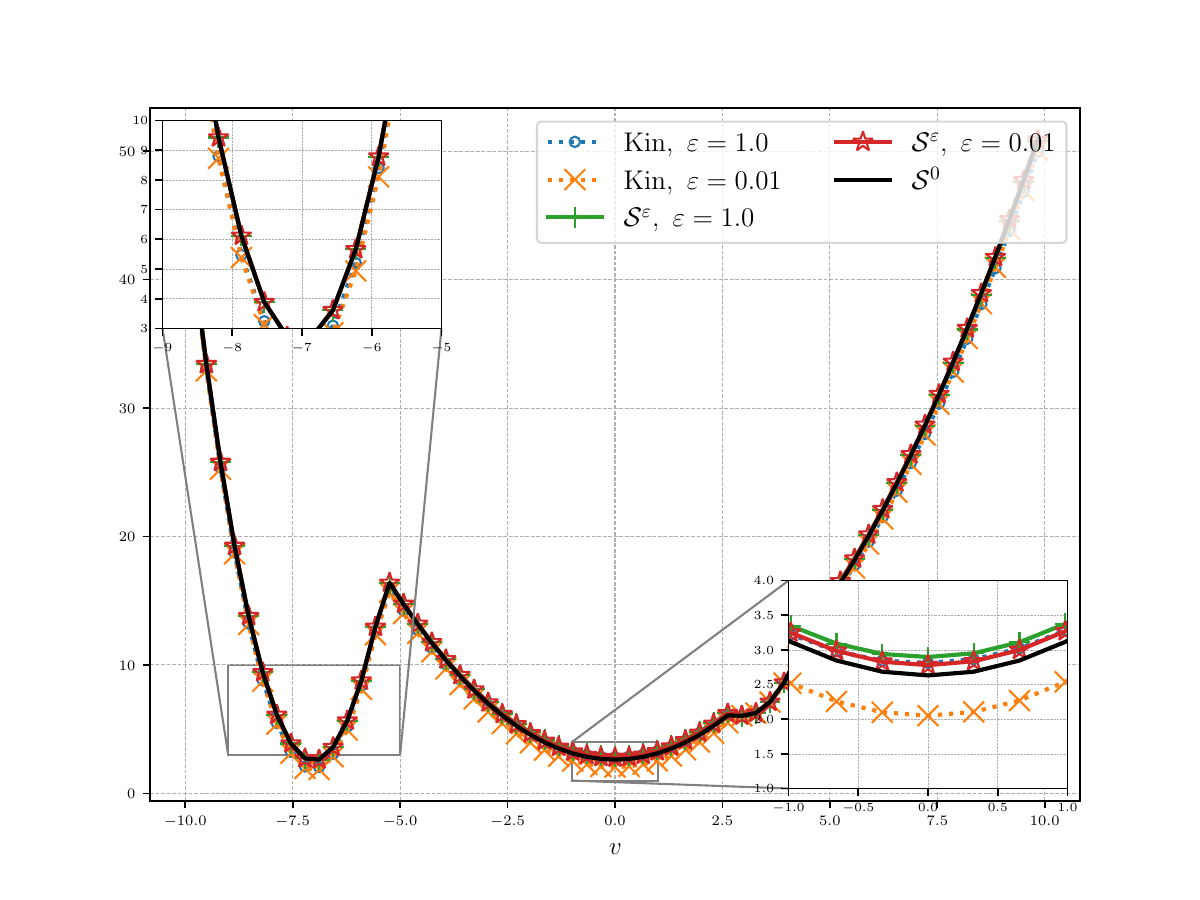}\\
    \includegraphics[width=.45\linewidth]{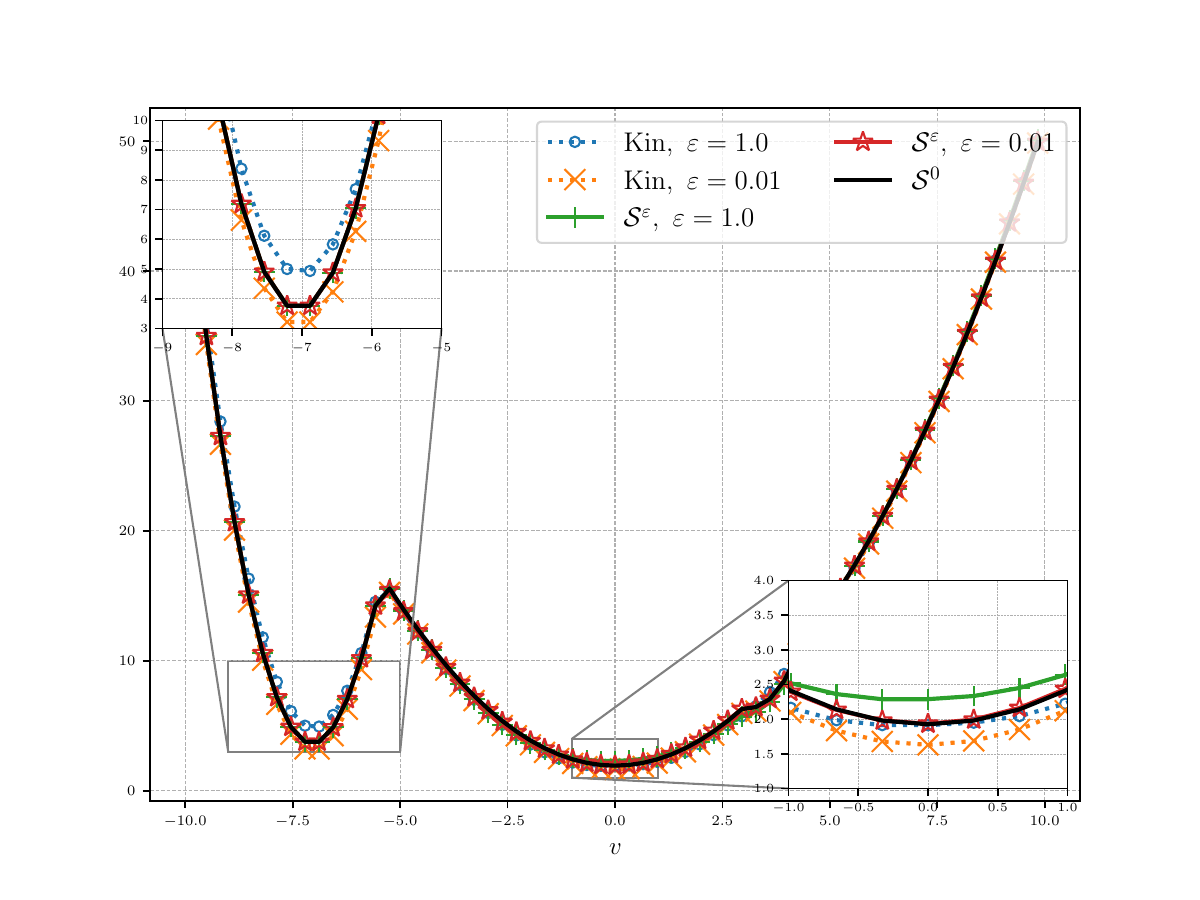}
    \includegraphics[width=.45\linewidth]{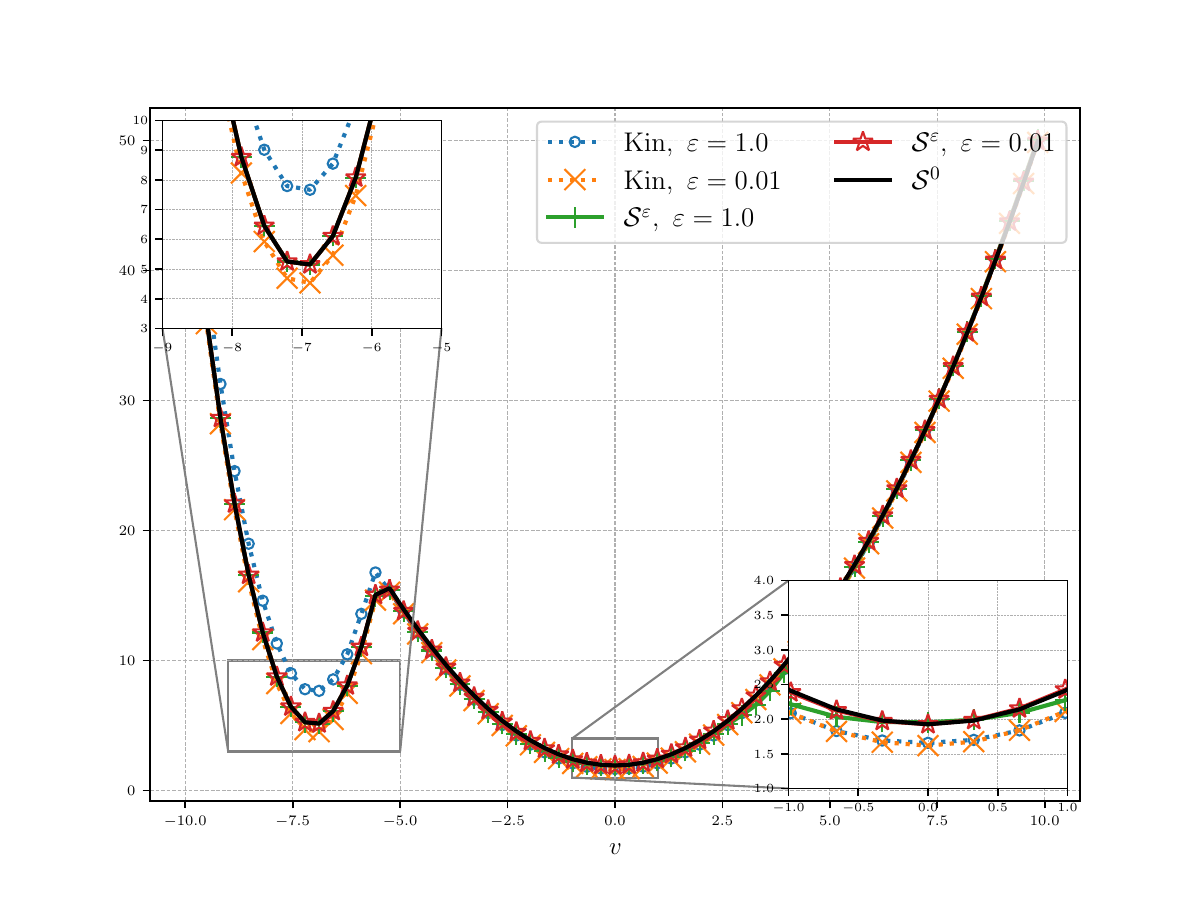}
    \caption{Comparison of the solutions to \eqref{eq:schemeMuPhiFull}, \eqref{eq:VariDiscFull} and \eqref{eq:naiveScheme}, for $x=0$, at $t=0.075$ (top left), $0.15$ (top right), $1.5$ (bottom left) and $3.0$ (bottom right) for different values of $\ep$.}
    \label{fig:comparisonDynamicV}
\end{figure}

\begin{figure}
    \centering
    \includegraphics[width=.5\linewidth]{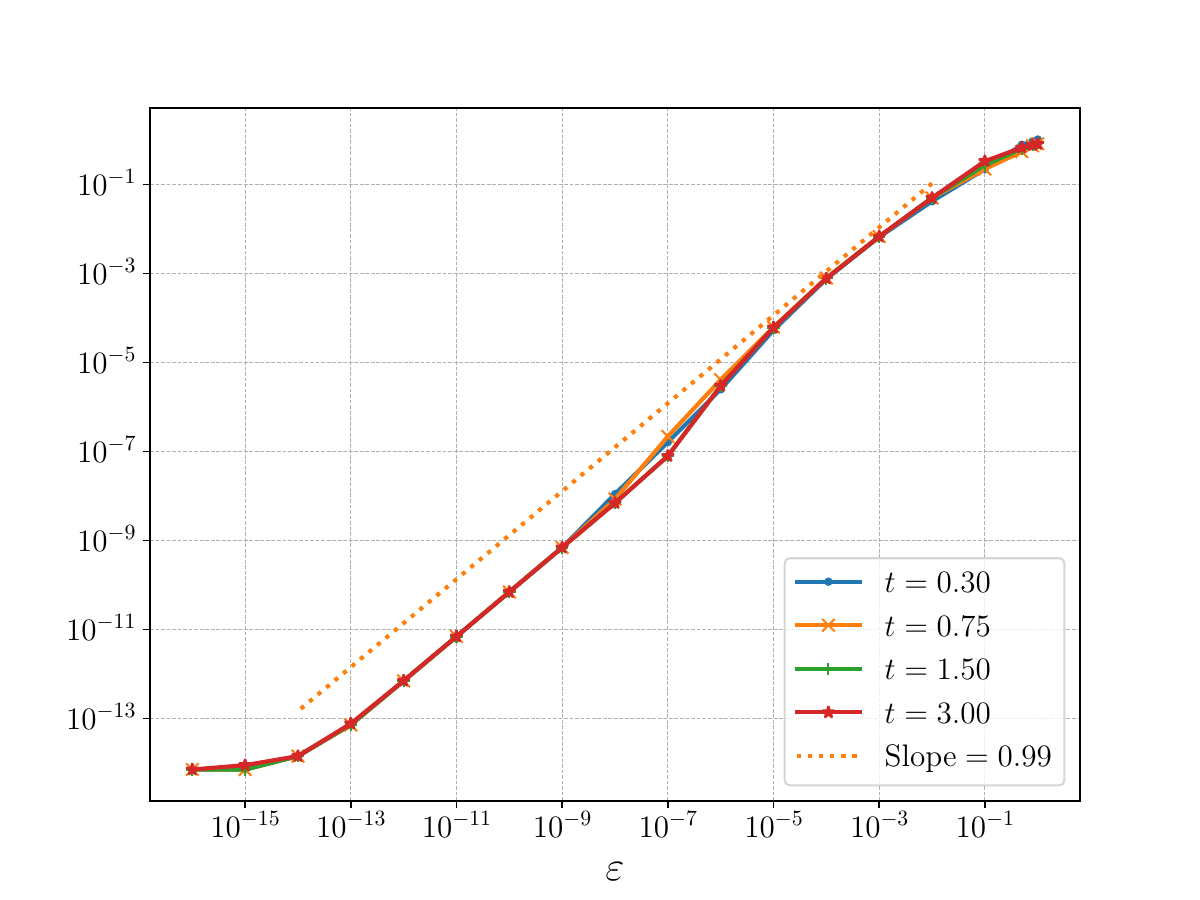}
    \caption{\textbf{Convergence in $\ep$.} $L^\infty$-error between \eqref{eq:schemeMuPhiFull} and \eqref{eq:VariDiscFull}, as a function of $\ep$ for different times}
    \label{fig:APconvergenceEpsilon}
\end{figure}

\subsection{Convergence}
We now illustrate the convergence behavior of schemes \eqref{eq:schemeMuPhiFull} and \eqref{eq:VariDiscFull} with respect to the discretization parameters $\Dt$, $\Dv$, and $\Dx$. The error associated to a given triplet of parameters $\Delta=(\Dt, \Dv, \Dx)$ is defined by
\begin{equation*}
E(\ep,\Delta) = \frac{|\varphi^\ep_{\mathrm{Ref}} - u^\ep_{\Delta}|_\infty}{|\varphi^\ep_{\mathrm{Ref}}|_\infty},
\end{equation*}
where $E(0,\Delta)$ corresponds to the error of the limiting scheme and $u^\ep_{\Delta}=\varphi^\ep$, $\varphi$ or $\psi$ computed with \eqref{eq:schemeMuPhiFull}, \eqref{eq:VariDiscFull} and \eqref{eq:naiveScheme} respectively. The reference solution $\varphi^\ep_{\mathrm{Ref}}$ is computed using a sufficiently fine discretization, while $u^\ep_\Delta$ is obtained using varying values of $\Delta$. All convergence tests use \eqref{eq:InitDataNum} as initial data. The convergence curves associated to the schemes \eqref{eq:schemeMuPhiFull}, \eqref{eq:VariDiscFull}, and the explicit scheme \eqref{eq:naiveScheme} are displayed in Figure~\ref{fig:convergenceOrder}.

To study the convergence with respect to the spatial step $\Dx$, we fix $N_v = 201$, $\Dt = 2.5 \times 10^{-5}$, and $T = 0.01$. The reference solution is computed using $N_x = 2^{15}$. The AP scheme \eqref{eq:schemeMuPhiFull} for $\ep=1$ and limit scheme \eqref{eq:VariDiscFull} both exhibit an observed convergence order of approximately $0.7$. The deterioration of the expected order $1$ of these two schemes can be explained by the Lipschitz (and not $\mathcal{C}^1$) regularity of the approximated solution. Nevertheless, the schemes do converge. For scheme \eqref{eq:VariDiscFull}, the results should be interpreted in light of the error estimate \eqref{eq:errorFull} from Theorem~\ref{thm:ConvVisc}, where the ratio $\Dx/\Dt$ appears. As a result, the influence of $\Dx$ also cannot be fully separated from that of $\Dt$.

The convergence with respect to the velocity step $\Dv$ is investigated by setting $N_x = 256$, $\Dt = 2.5 \times 10^{-5}$, and $T = 0.01$. A reference solution is computed using $N_v = 3^{10}$. In this case, the error curves for all schemes are superimposed, and an order of $1$ is observed, in agreement with the theoretical prediction from \eqref{eq:errorFull}.

To analyze the convergence with respect to the time step $\Dt$, we fix $N_v = 101$ and $T = 0.5$, and compute the reference solution using $N_t = 2^{11}$. As noted in the spatial convergence study, $\Dt$ also appears in the denominator in \eqref{eq:errorFull}, so letting $\Dt \to 0$ requires adapting $\Dx$ accordingly to avoid divergence of the ratio $\Dt/\Dx$. As discussed in Section~\ref{subsec:ConvFull}, if the relation $\Dt\Dv = \Dx$ is satisfied, the transport step defined in \eqref{eq:interpolation} becomes exact, and the term $\Dt/\Dx$ in \eqref{eq:errorFull} vanishes. We adopt this setting and determine the number of grid points from $N_x = \left\lfloor 2x_\star / \Dx \right\rfloor$. With this choice, the convergence in $\Dt$ can be properly assessed, and the predicted first-order behavior in time is observed. Note that the errors of \eqref{eq:schemeMuPhiFull} and \eqref{eq:VariDiscFull} actually coincide. In addition, the naive scheme \eqref{eq:naiveScheme} fails to achieve first-order accuracy in time for this test case, highlighting the need for an appropriate discretization of the evolution equation for $\varphi^\ep$.

\begin{figure}
    \centering
    \includegraphics[width=.45\linewidth]{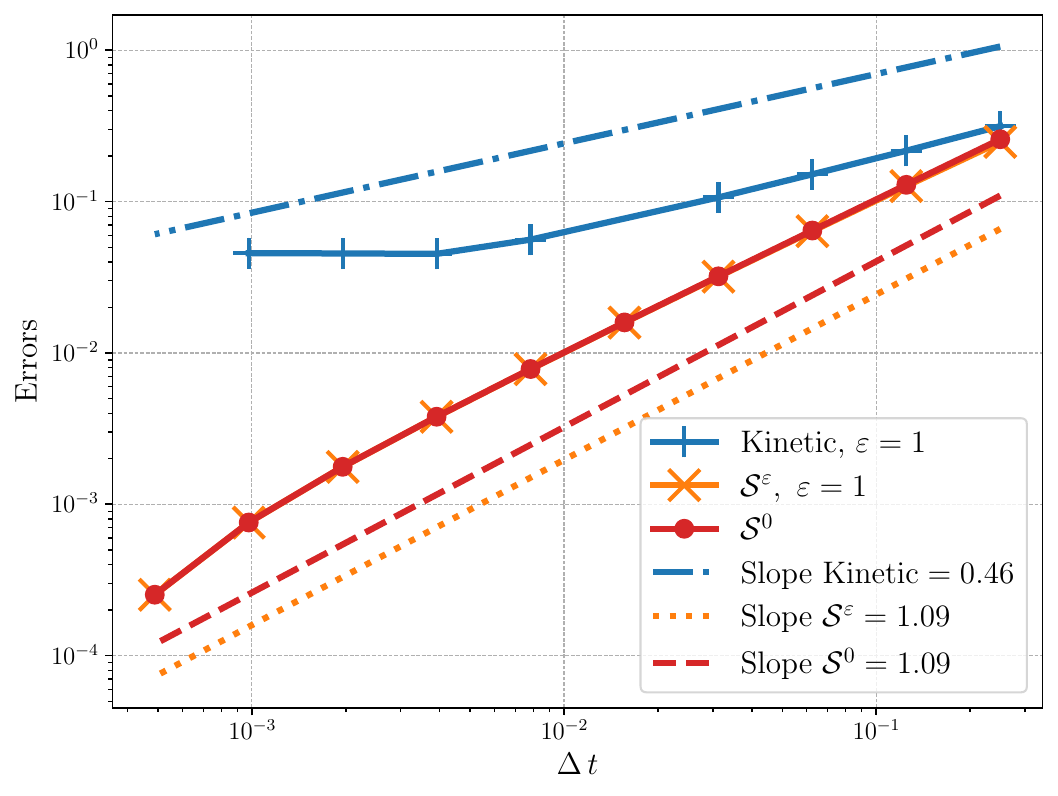}
    \includegraphics[width=.45\linewidth]{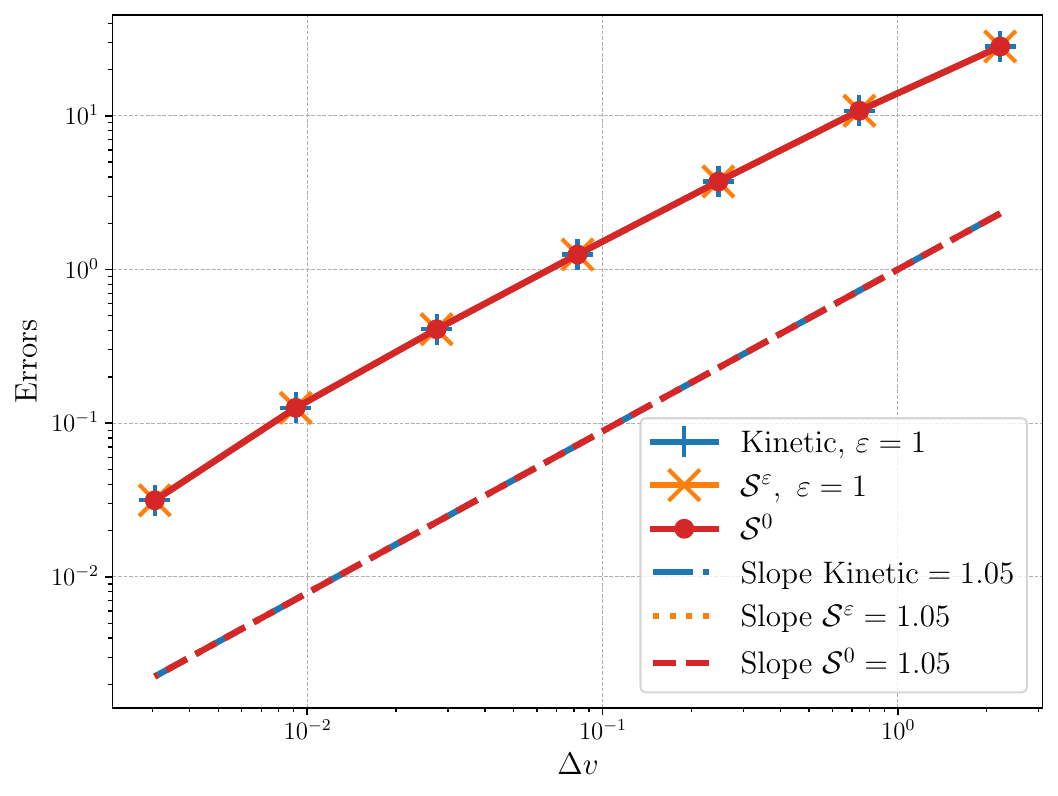}\\
    \includegraphics[width=.45\linewidth]{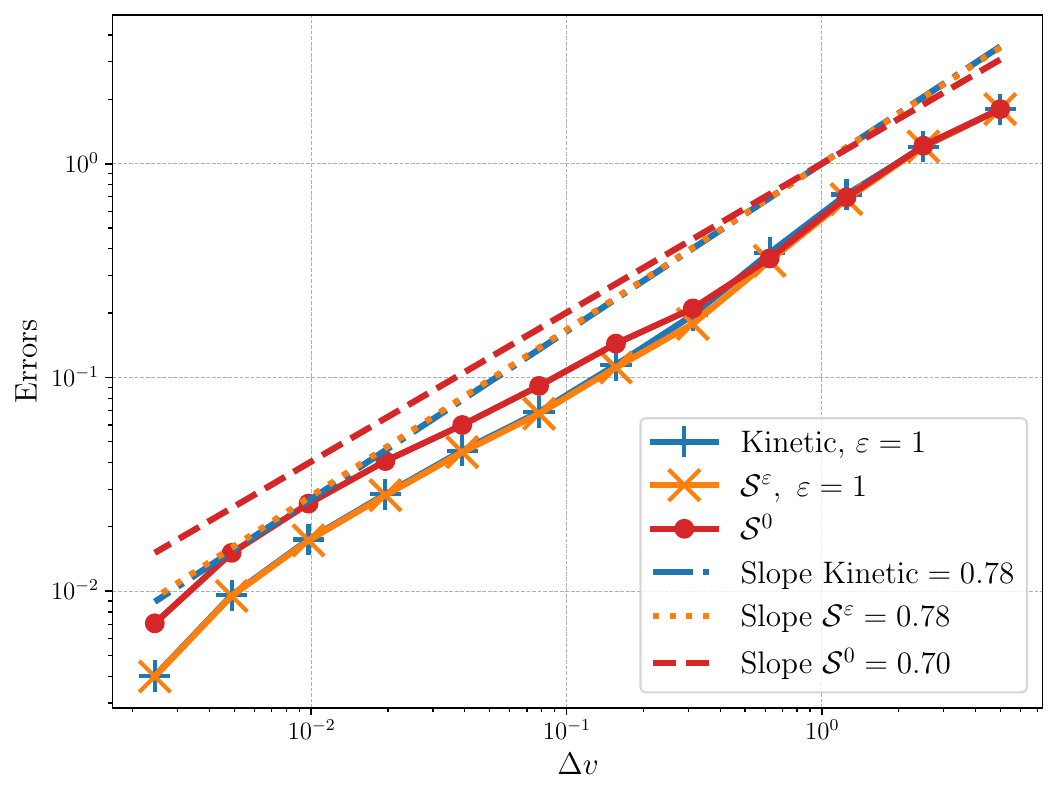}
    \caption{\textbf{Convergence study of the schemes \eqref{eq:schemeMuPhiFull}, \eqref{eq:VariDiscFull} and \eqref{eq:naiveScheme}.} Values of $E(1,\Delta)$ and $E(0,\Delta)$ as a function of $\Dt$ (top left), $E(1,\Delta)$ and $E(0,\Delta)$ as a function of $\Dv$ (top right) and $E(1,\Delta)$ and $E(0,\Delta)$ as a function of $\Dx$ (bottom).}
    \label{fig:convergenceOrder}
\end{figure}

\details{
\subsection{Equilibrium preserving property}
We now validate the equilibrium preserving properties proven in Lemmas~\ref{lem:WBeps} and~\ref{lem:WBLim}. The initial data is set to 
\begin{equation*}
    \varphi_\tin(x,v)=\frac{v^2}{2}.
\end{equation*}
Figure~\ref{fig:equilibriumPreserving} shows that in both cases, all curves coincide and the solution remains stationary. This is further confirmed by computing the $L^\infty$ error with respect to the initial data over time.

\begin{figure}
    \centering
    \includegraphics[width=.48\linewidth]{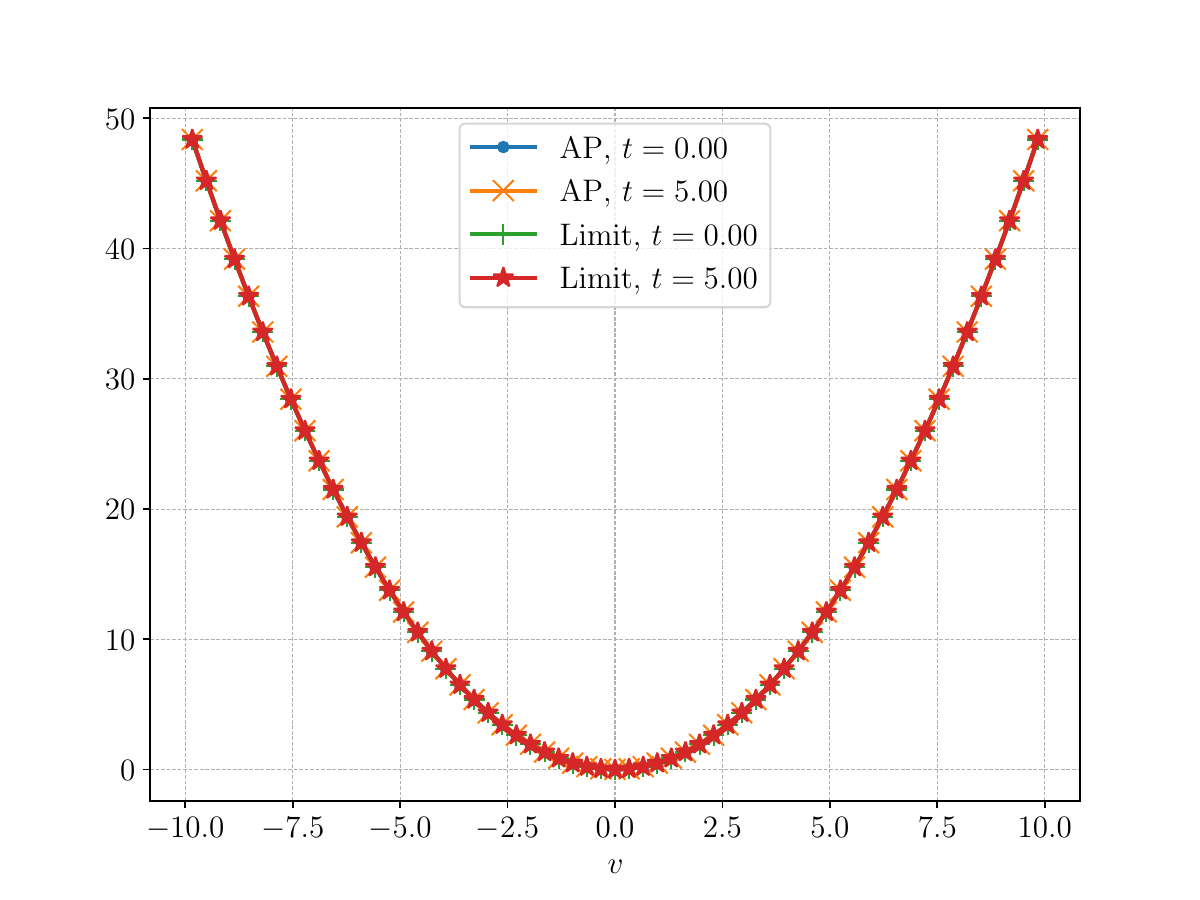}
    \includegraphics[width=.48\linewidth]{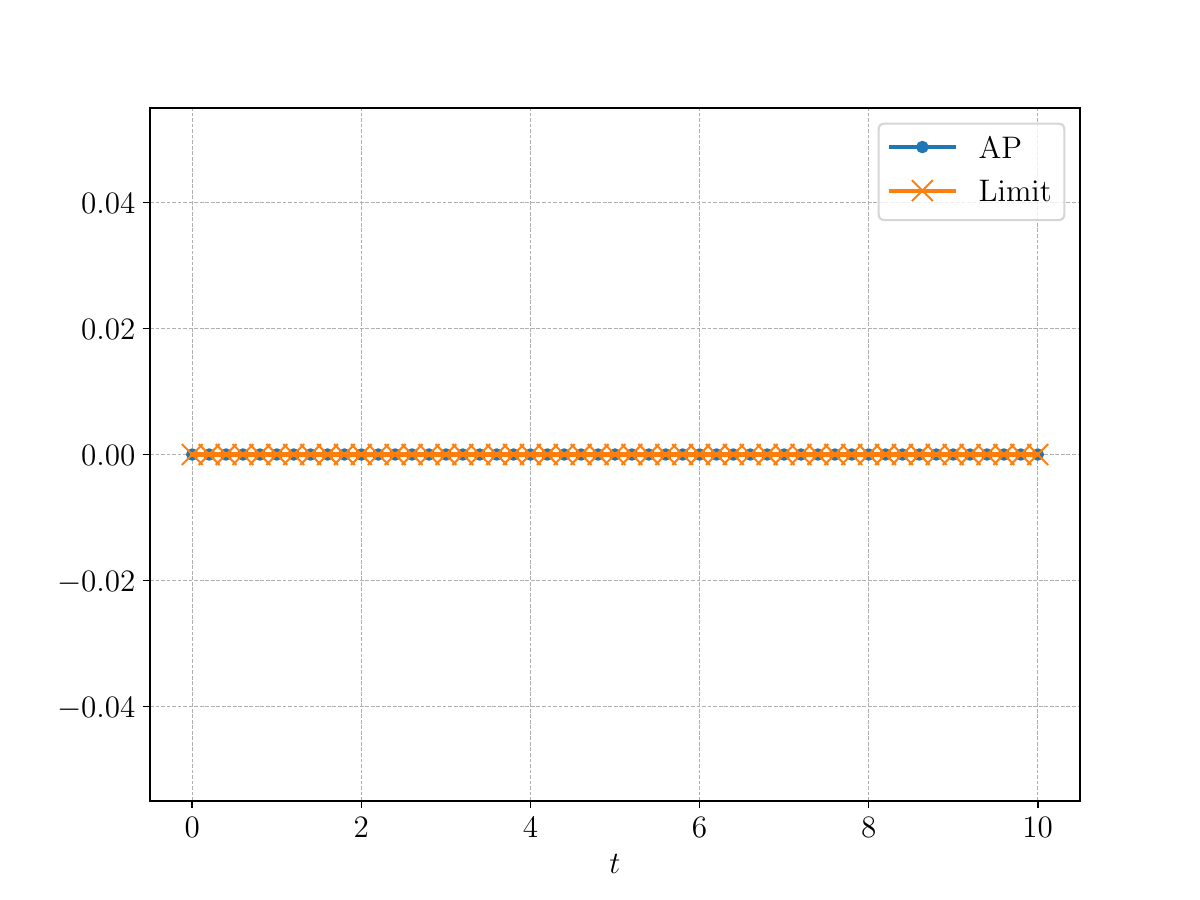}
    \caption{\textbf{Equilibrium preserving property.} Solution (left) and $L^\infty$-error with the initial data as a function of time (right).}
    \label{fig:equilibriumPreserving}
\end{figure}
}

\subsection{Decreasing of the minimum}
We now investigate the decay of the minimum of $\varphi$. Recall that the limit system \eqref{eq:Vari} satisfies the condition $\partial_t \min_v \lbr \varphi \rbr \leq 0$. As shown in Lemma~\ref{lem:DiscDTmuneg}, the discrete system \eqref{eq:VariDiscFull} also preserves a discrete analogue of this property. Figure~\ref{fig:decayMinPhi} displays the evolution of $\min_j \lbr \varphi \rbr$ for the limit scheme. For comparison, the evolution of $\min_j \lbr \varphi^\ep \rbr$ computed with \eqref{eq:schemeMuPhiFull} for $\ep=1$ is also displayed in Figure~\ref{fig:decayMinPhi}. For the limit scheme, we observe a monotonic decrease of the minimum, and the solution appears to reach a non-trivial stationary state. In contrast, the simulation with $\ep = 1$ shows that the minimum can increase, confirming that this monotonicity is indeed a behavior specific to the asymptotic regime.

\begin{figure}
    \centering
    \includegraphics[width=.45\linewidth]{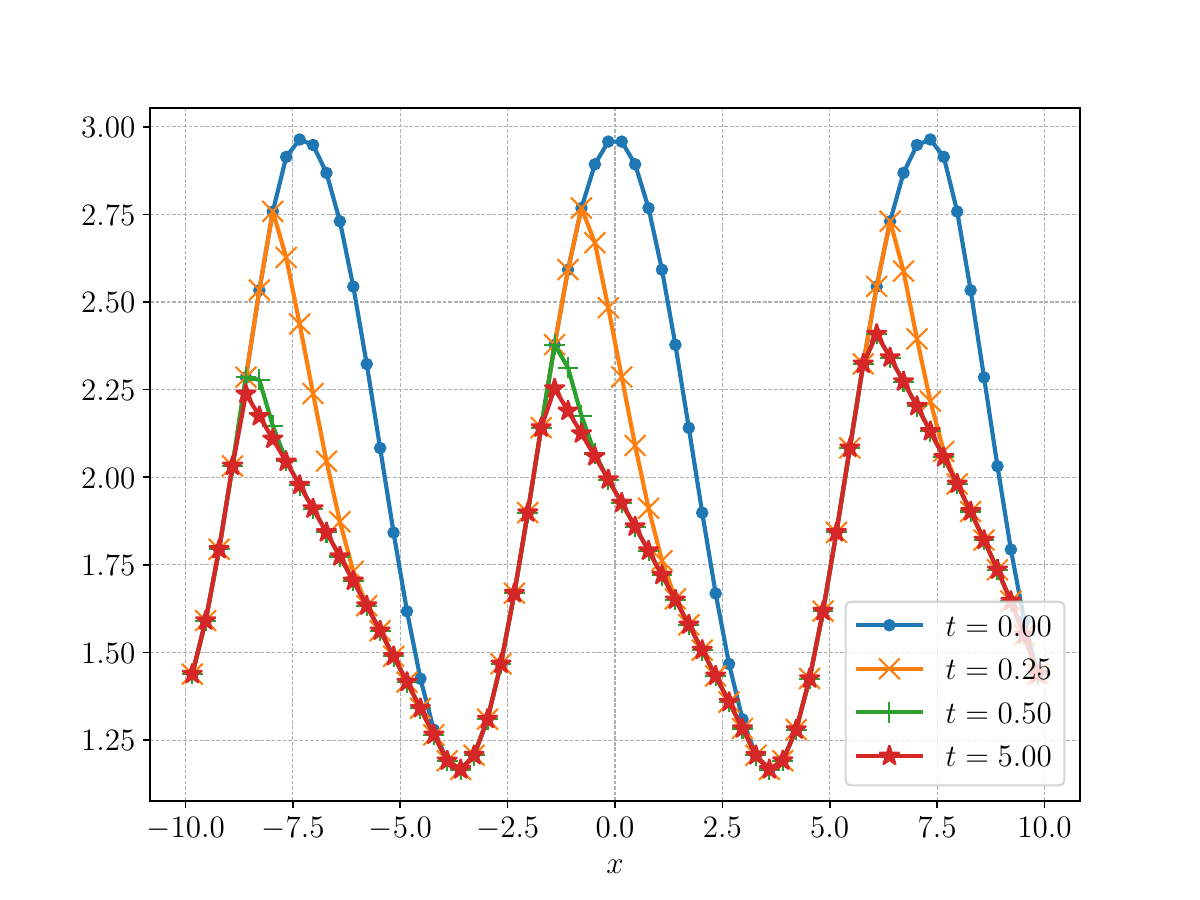}
    \includegraphics[width=.45\linewidth]{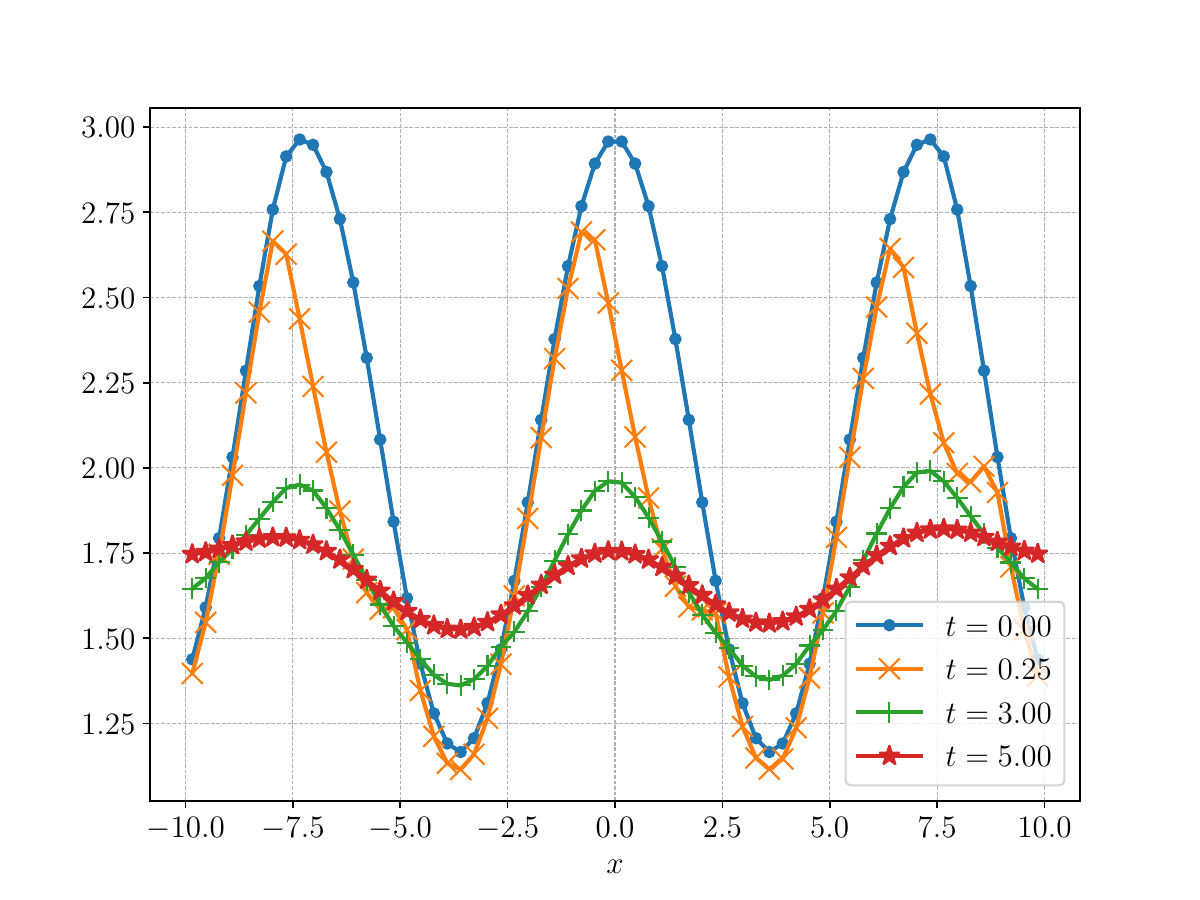}
    \caption{Evolution of $\min_{v} \lbr\varphi\rbr(t,x)$ (left) and $\min_{v} \lbr\varphi^\ep\rbr(t,x)$ for $\ep = 1$ (right), as functions of $x$ at different times.}
    \label{fig:decayMinPhi}
\end{figure}

\subsection{Persistence of the initial data}

In this final numerical experiment, we examine the long-time behavior of the solution. It is well known (see, e.g., \cite{DolbeaultMouhotSchmeiser2015}) that for $\ep = 1$, the solution $f^\ep$ to \eqref{eq:kineticScaled} converges, as $t \to \infty$, towards a stationary state given by 
\begin{equation*} 
    f_\infty = \rho_\infty \mathcal{M}(v), 
\end{equation*} 
where $\rho_\infty$ is a positive constant that depends on the initial mass and the length of the periodic spatial domain $\mathbb{T}$. This convergence is driven by the mixing effect resulting from the interplay between the transport operator $v \cdot \nabla_x$ and the relaxation term $\rho \mathcal{M} - f$. Notably, this mixing mechanism remains active even when $\ep > 0$, i.e., after the rescaling in \eqref{eq:kineticScaled}. However, as already hinted at in Figure~\ref{fig:LTBSchematic}, a striking observation is that in the limit $\ep \to 0$, this behavior no longer holds: the stationary profile in position is no longer uniform but retains features of the initial data. Mathematically, this reflects the non-commutation of the limits $\ep \to 0$ and $t \to \infty$.

To illustrate this phenomenon, we consider the initial data \eqref{eq:InitDataNum}. Figure~\ref{fig:Phi_xLTB} compares, at fixed velocity $v = 0$, the long-time behavior of the solution to \eqref{eq:VariDiscFull} with that of \eqref{eq:schemeMuPhiFull} for $\ep = 1$ and $\ep = 0.1$. As $\ep$ decreases, the mixing effect becomes progressively weaker. Another way to observe and quantify this weakening as $\ep \to 0$ is through the spatial amplitude of the solution. The left panel of Figure~\ref{fig:LTBamplitude} shows the evolution of
\begin{equation}
\max_{x\in\TT} u(t,x,0) - \min_{x\in\TT} u(t,x,0),
\end{equation}
computed from the solutions to \eqref{eq:schemeMuPhiFull}, \eqref{eq:VariDiscFull}, and \eqref{eq:naiveScheme}. The weakening of the mixing effect is evident: as $\ep$ decreases, it takes increasingly longer for the amplitude to decay to a given value. Furthermore, the asymptotic-preserving property of \eqref{eq:schemeMuPhiFull} is clearly visible, as it correctly captures the convergence toward the limiting behavior. In contrast, the classical scheme \eqref{eq:naiveScheme} does converge to a stationary profile with non-zero amplitude, but this profile does not correspond to that of the limiting scheme. On the right panel of Figure~\ref{fig:LTBamplitude}, only \eqref{eq:schemeMuPhiFull} and \eqref{eq:VariDiscFull} are compared. In particular, one can observe that at short times there appears to be a characteristic point of inflexion in the curve (between $t=1$ and $t=1.5$), where the dynamics intersect. This highlights the change in the system’s behavior induced by the scaling \eqref{eq:scaling}.

\begin{figure}
    \includegraphics[width=.45\linewidth]{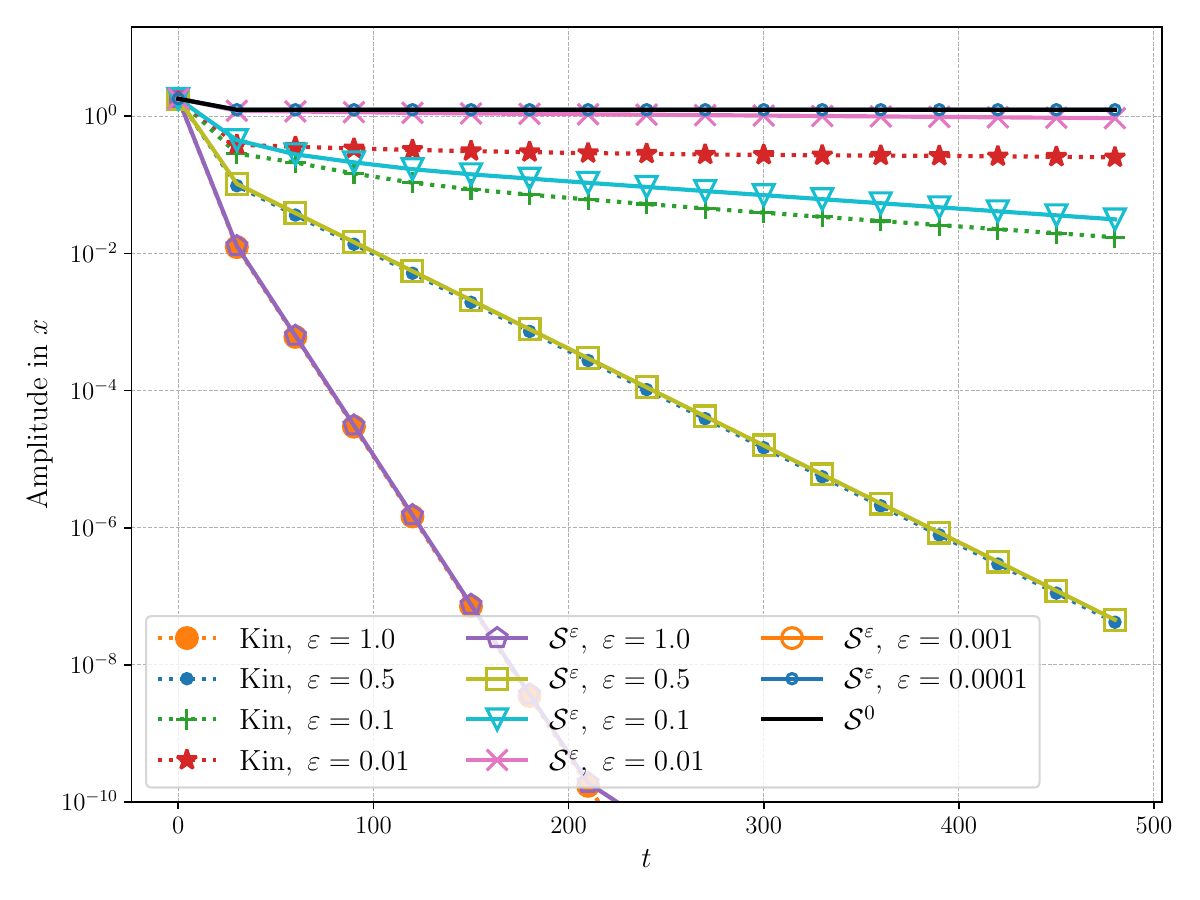}
    \includegraphics[width=.45\linewidth]{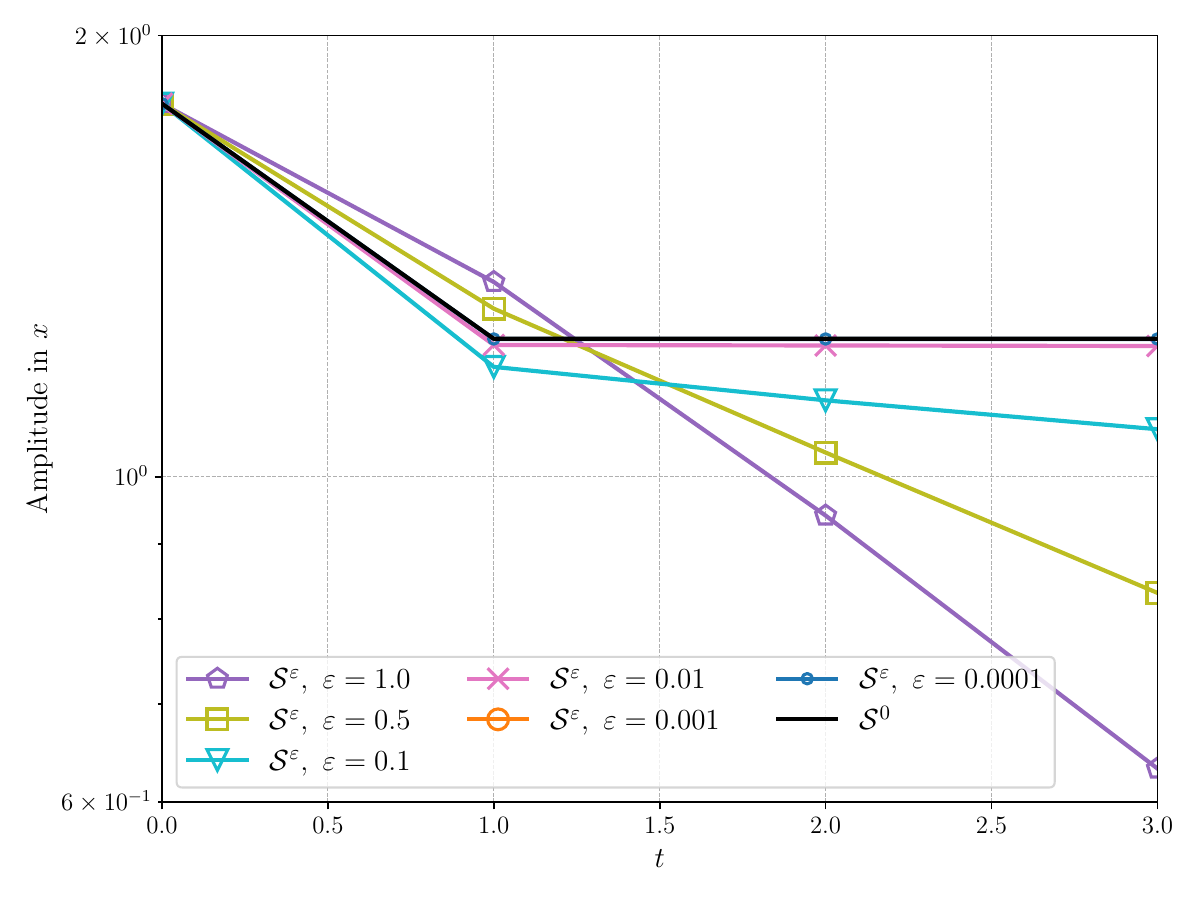}
    \caption{Long-time (left) and short-time (right) spatial amplitude evolution of the solution, evaluated at $v=0$, corresponding to \eqref{eq:schemeMuPhiFull}, \eqref{eq:VariDiscFull} and \eqref{eq:naiveScheme}, for various values of $\ep$.}
    \label{fig:LTBamplitude}
\end{figure}

An even more striking illustration of this property is obtained by looking at the fundamental solution of \eqref{eq:Vari}, starting—at the distributional level—from a Dirac mass in position and a Gaussian in velocity. Numerically, since computing the logarithm of a Dirac mass is not feasible for initializing $\varphi$, the initial data $\varphi_{\text{in}}$ is instead set to zero on a few grid cells centered around $x = 0$, and to $100$ elsewhere.

The asymptotic behavior as $t \to \infty$ was characterized in the one-dimensional case in \cite{BouinCalvezGrenierNadin2023}. The action between a point $x$ and the origin is given by 
\begin{equation}\label{eq:kernel} 
    \Ab_0^t[\gamb] = \lbr\begin{aligned} \frac{3}{2}|x|^{2/3}, & \text{if } |x| \leq t^{3/2}, \\ \frac{|x|^2}{2t^2} + t, & \text{if } |x| \geq t^{3/2}.\end{aligned}\right.
\end{equation}

The evolution of $\varphi$ is shown in Figure~\ref{fig:Phi_xLTB_Dirac}. As $t \to \infty$, the spatial profile converges to the cusp-shaped function 
\begin{equation*} 
    x \mapsto \frac{3}{2}|x|^{2/3},
\end{equation*} 
as predicted by the first case of \eqref{eq:kernel}.

Figure~\ref{fig:Phi_xLTB_Dirac} also presents the evolution from two Dirac masses. In this case, the solution converges to the superposition of two cusps, and a clear imprint of the initial data remains visible in the long-time regime. A similar phenomenon is illustrated in Figure~\ref{fig:Phi_xLTB}: for $\ep = 0$, the oscillatory structure of the initial condition \eqref{eq:InitDataNum} persists as $t \to \infty$, whereas for $\ep > 0$, the solution tends to relax and lose memory of the initial state due to mixing. It is also worth noting that, as $\ep$ decreases, relaxation towards a constant occurs more slowly—larger times are required to observe this behavior, as seen in the top left and top right panels of Figure~\ref{fig:Phi_xLTB}.  This persistent structure in the limiting regime is further illustrated in Figure~\ref{fig:EvolutionXVlim}, which shows the long-time behavior of the solution to \eqref{eq:VariDiscFull} in the $(x,v)$-phase space: the profile converges to the parabolic shape $v^2/2$ in velocity, while remaining non-trivial and structured in position.

\begin{figure}
    \centering
    \includegraphics[width=.45\linewidth]{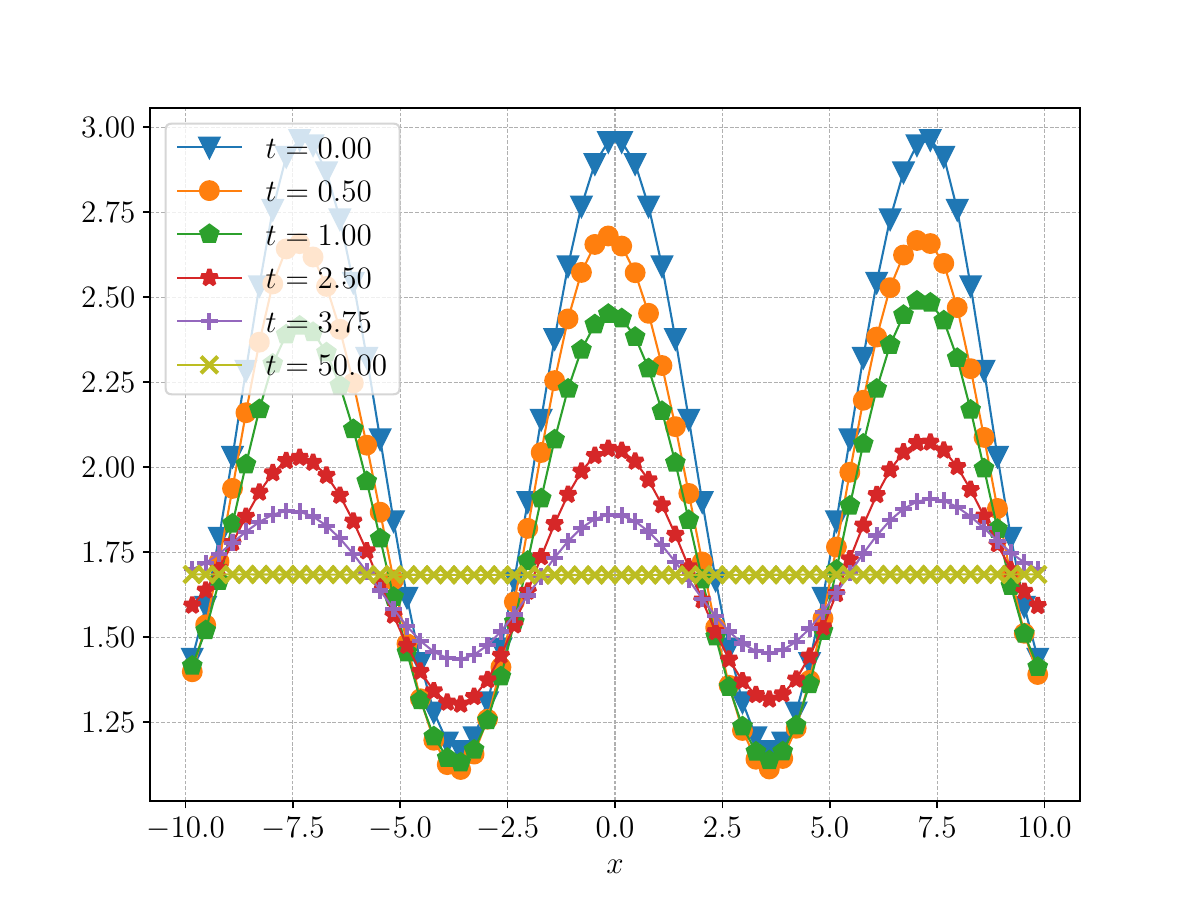}
    \includegraphics[width=.45\linewidth]{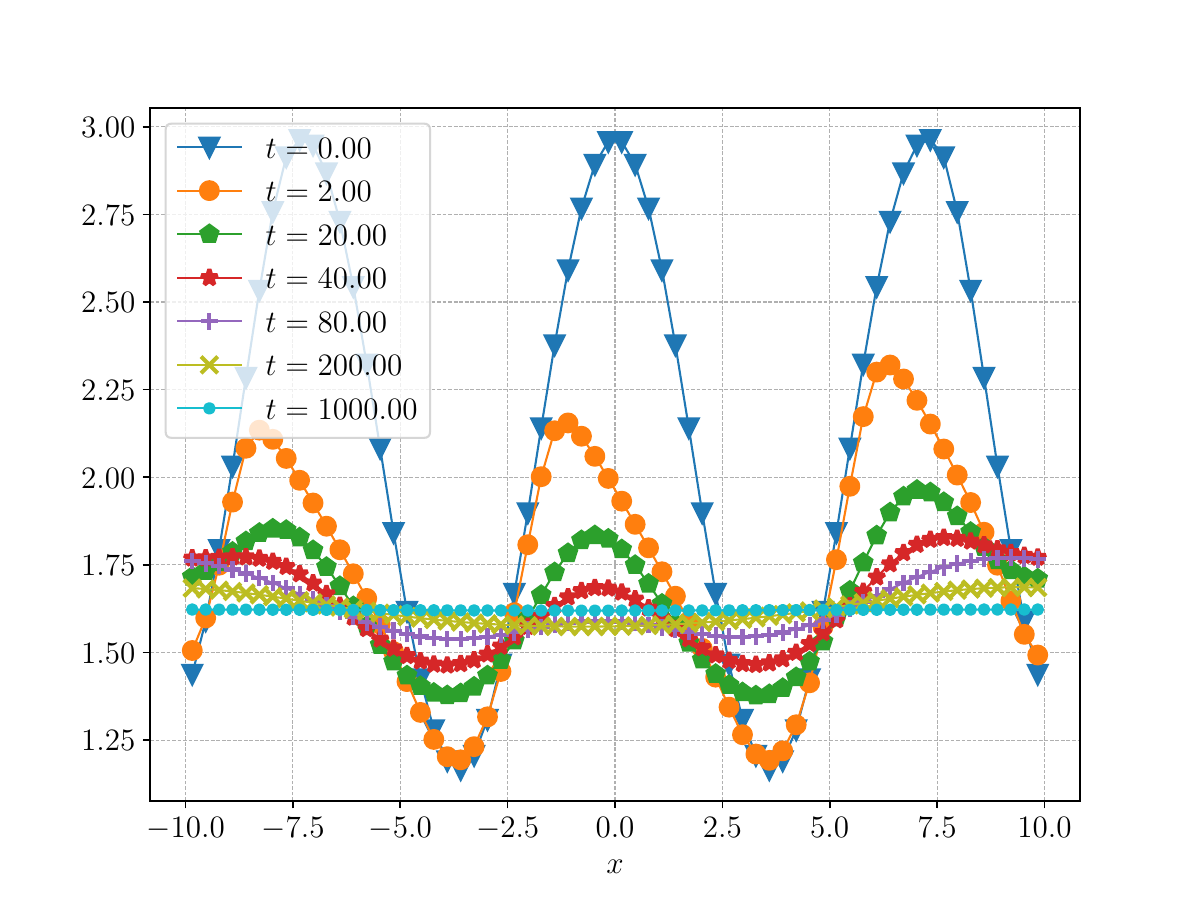}\\
    \includegraphics[width=.45\linewidth]{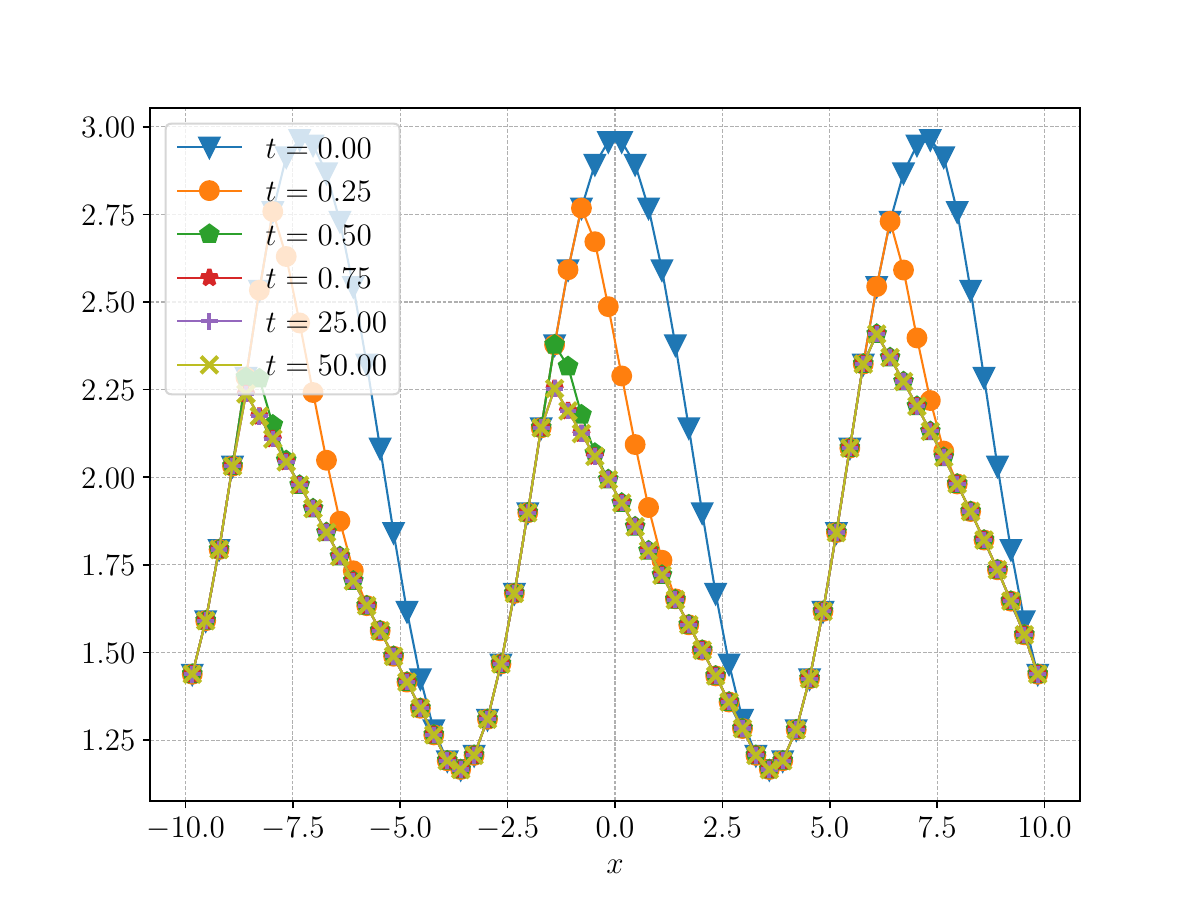}
    \caption{Evolution of $\varphi^\ep(t,x,0)$ for $\ep = 1$ (top left), $\varphi^\ep(t,x,0)$ for $\ep = 0.1$ (top right), and $\varphi(t,x,0)$ (bottom), as functions of $x$ at different times.}
    \label{fig:Phi_xLTB}
\end{figure}

\begin{figure}
    \centering
    \includegraphics[width=.45\linewidth]{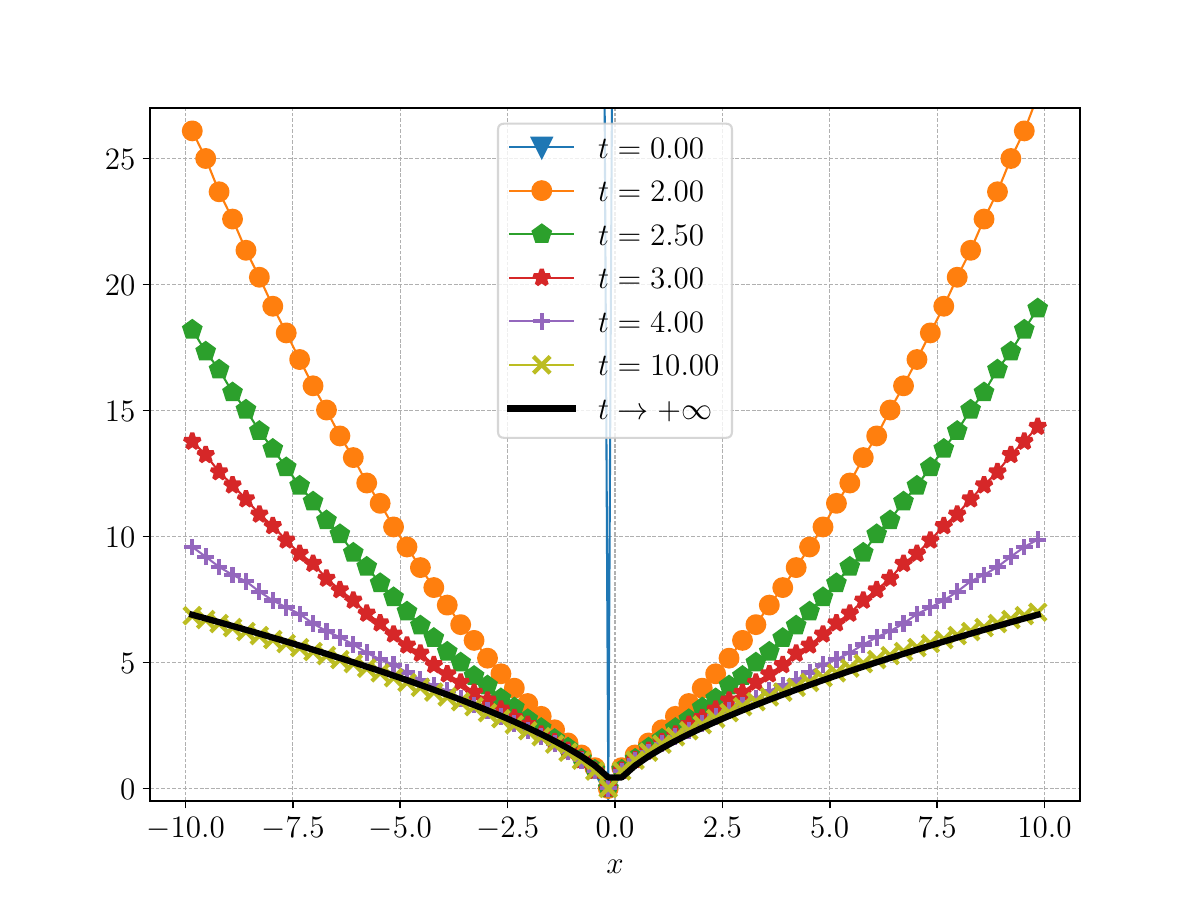}
    \includegraphics[width=.45\linewidth]{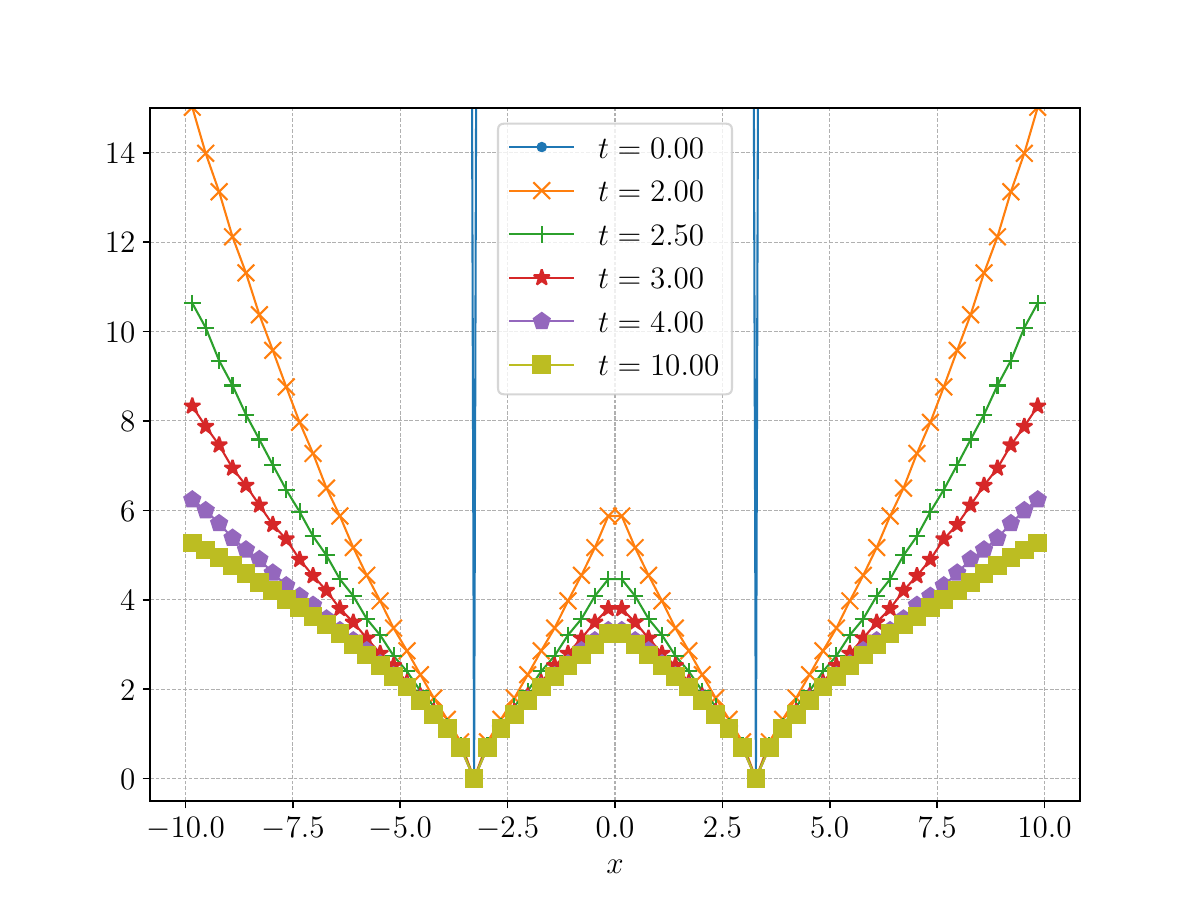}\\
    \caption{Evolution of the logarithm of one (left) and two (right) Dirac masses, as functions of $x$ at different times.}
    \label{fig:Phi_xLTB_Dirac}
\end{figure}

\begin{figure}
    \centering
    \includegraphics[width=.45\linewidth]{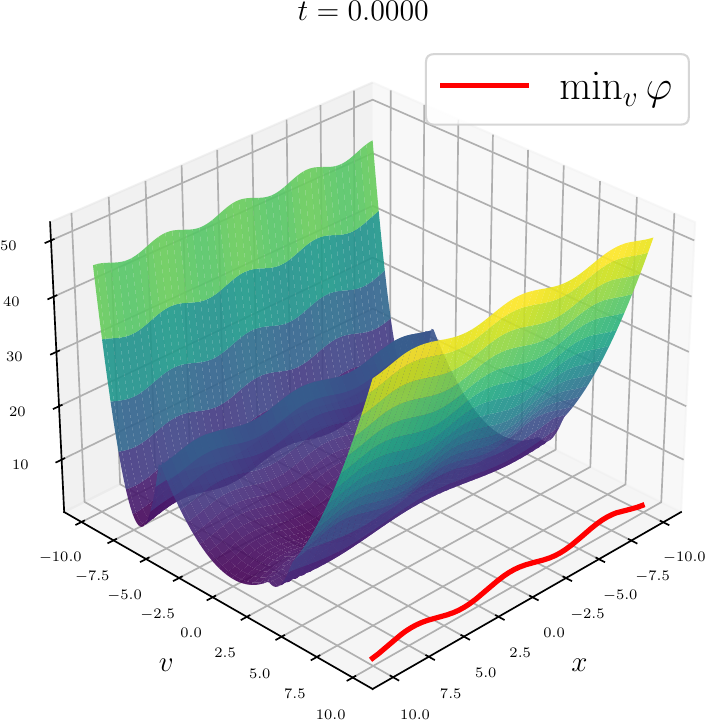}
    \includegraphics[width=.45\linewidth]{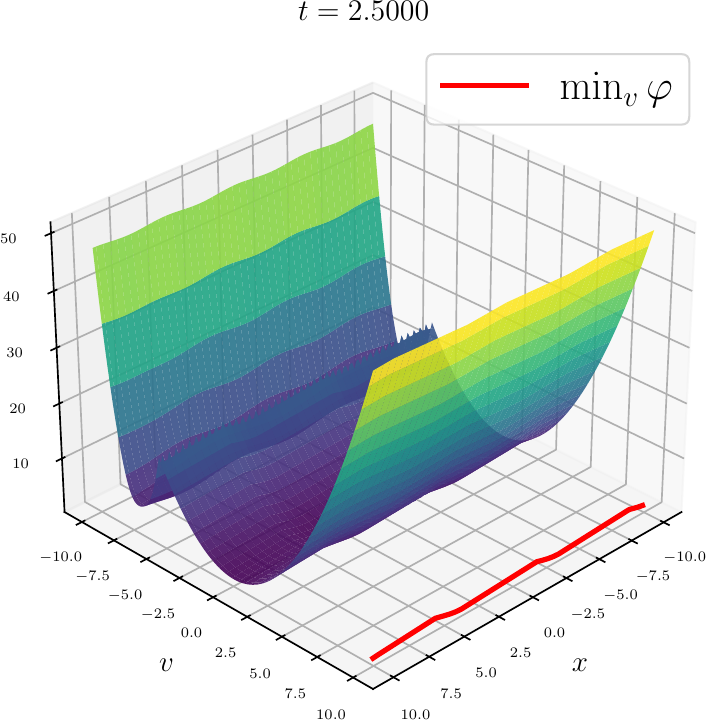}\\\vspace{.5cm}
    \includegraphics[width=.45\linewidth]{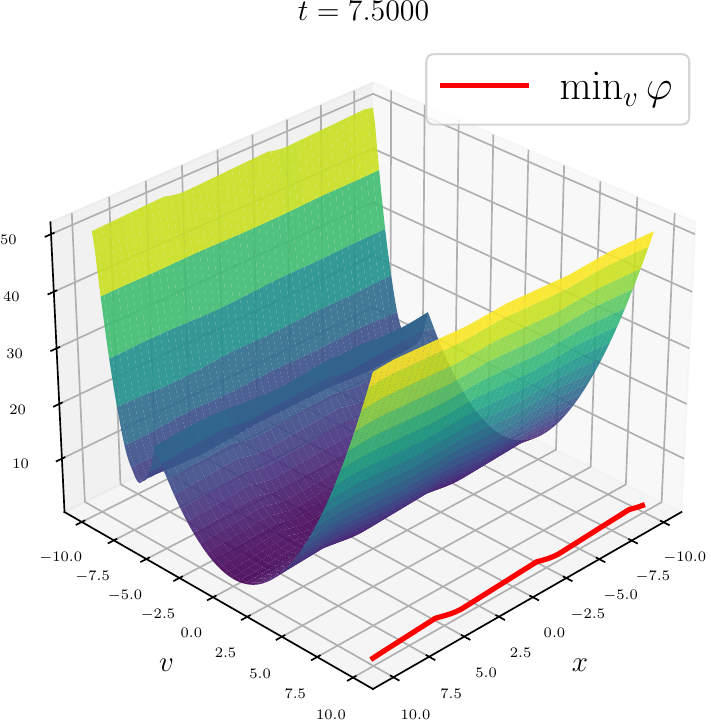}
    \includegraphics[width=.45\linewidth]{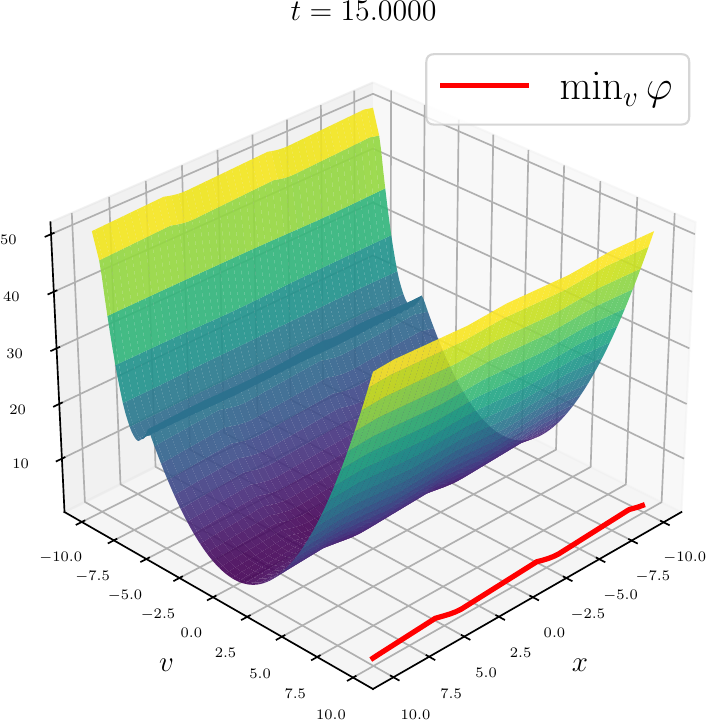}\\\vspace{.5cm}
    \includegraphics[width=.45\linewidth]{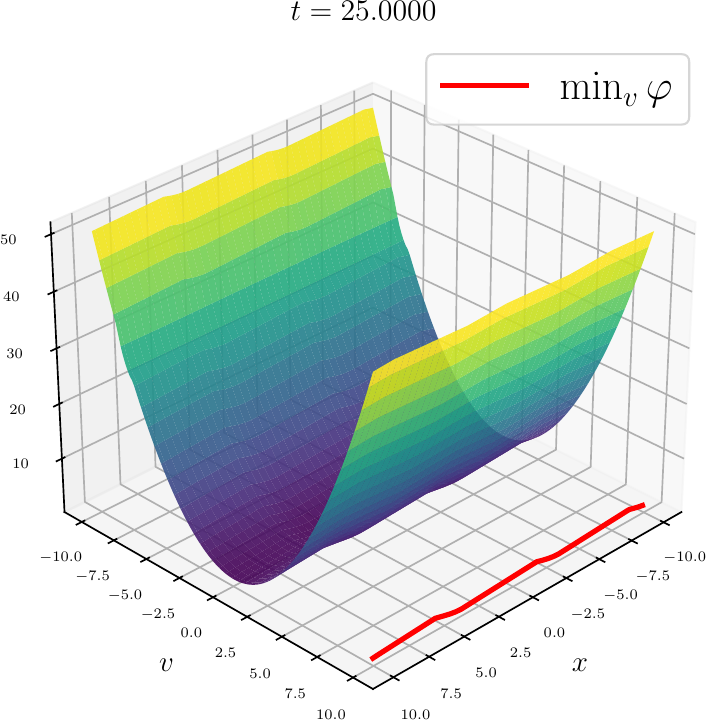}
    \includegraphics[width=.45\linewidth]{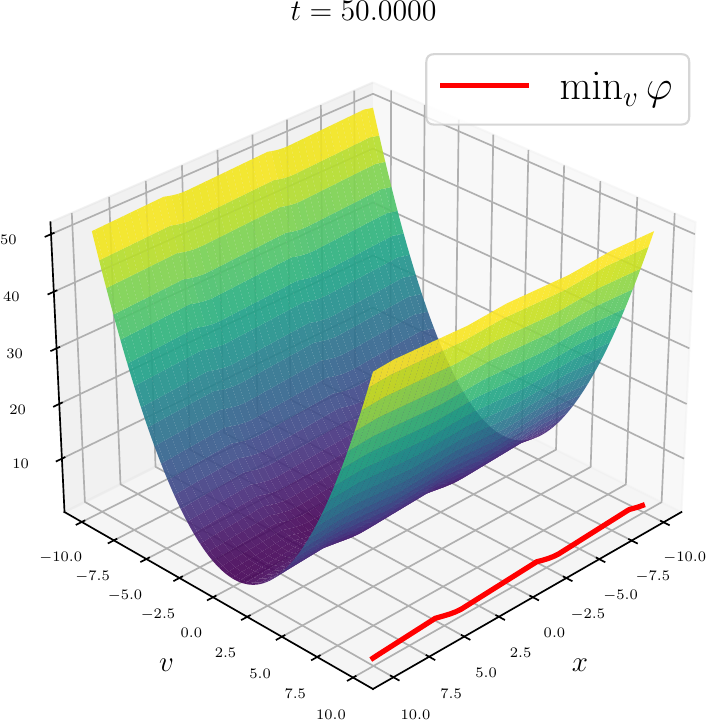}\\
    \caption{Evolution of the solution to \eqref{eq:VariDiscFull} in the $(x,v)$-phase-space.}
    \label{fig:EvolutionXVlim}
\end{figure}

\section*{Acknowledgment}
The authors would like to warmly thank Vincent Calvez for the fruitful discussions and insights on this project.

The authors have received funding from the European Research Council (ERC) under the European Union’s Horizon 2020 research and innovation program (grant agreement No 865711). 

\bibliographystyle{acm}
\bibliography{BiblioHJ}

\begin{thebibliography}{10}

\bibitem{AlmeidaPertRuan2022}
{\sc Almeida, L., Perthame, B., and Ruan, X.}
\newblock An asymptotic preserving scheme for capturing concentrations in
  age-structured models arising in adaptive dynamics.
\newblock {\em J. Comput. Phys. 464\/} (2022), 26.
\newblock Id/No 111335.

\bibitem{BardosGolseLevermore1991}
{\sc Bardos, C.~W., Golse, F., and Levermore, D.~M.}
\newblock Fluid dynamic limits of kinetic equations. i. formal derivations.
\newblock {\em J. Statist. Phys. 63\/} (1991), 323--344.

\bibitem{Barles2013}
{\sc Barles, G.}
\newblock An introduction to the theory of viscosity solutions for first-order
  {Hamilton}-{Jacobi} equations and applications.
\newblock In {\em Hamilton-Jacobi equations: approximations, numerical analysis
  and applications. Based on the lectures of the CIME summer school, Cetraro,
  Italy, August 29--September 3, 2011}. Berlin: Springer; Florence: Fondazione
  CIME, 2013, pp.~49--109.

\bibitem{BarlesEvansSouganidis1990}
{\sc Barles, G., Evans, L.~C., and Sougandis, P.~E.}
\newblock Wavefront propagation for reaction-diffusion systems of {PDE}.
\newblock {\em Duke Math. J. 61}, 3 (1990), 835--858.

\bibitem{BarlesMirrahimiPerthame2009}
{\sc Barles, G., Mirrahimi, S., and Perthame, B.}
\newblock Concentration in {Lotka}-{Volterra} parabolic or integral equations:
  a general convergence result.
\newblock {\em Methods Appl. Anal. 16}, 3 (2009), 321--340.

\bibitem{BarlesPerthame2007}
{\sc Barles, G., and Perthame, B.}
\newblock Concentrations and constrained {Hamilton}-{Jacobi} equations arising
  in adaptive dynamics.
\newblock In {\em Recent developments in nonlinear partial differential
  equations. Proceedings of the second symposium on analysis and PDEs, West
  Lafayette, IN, USA, June 7--10, 2004}. Providence, RI: American Mathematical
  Society (AMS), 2007, pp.~57--68.

\bibitem{BarlesSouganidis1991}
{\sc Barles, G., and Souganidis, P.~E.}
\newblock Convergence of approximation schemes for fully nonlinear second order
  equations.
\newblock {\em Asymptotic Anal. 4}, 3 (1991), 271--283.

\bibitem{BennouneLemouMieussens2008}
{\sc Bennoune, M., Lemou, M., and Mieussens, L.}
\newblock {Uniformly stable numerical schemes for the Boltzmann equation
  preserving the compressible Navier-Stokes asymptotics}.
\newblock {\em J. Comput. Phys. 227\/} (2008).

\bibitem{BensoussanLionsPapanicolaou1979}
{\sc Bensoussan, A., Lions, J.-L., and Papanicolaou, G.~C.}
\newblock {Boundary Layers and Homogenization of Transport Processes}.
\newblock {\em Publ. Res. I. Math. Sci.\/} (1979).

\bibitem{Bouin2015}
{\sc Bouin, E.}
\newblock A {Hamilton}-{Jacobi} approach for front propagation in kinetic
  equations.
\newblock {\em Kinet. Relat. Models 8}, 2 (2015), 255--280.

\bibitem{BouinCalvez2012}
{\sc Bouin, E., and Calvez, V.}
\newblock A kinetic eikonal equation.
\newblock {\em C. R., Math., Acad. Sci. Paris 350}, 5-6 (2012), 243--248.

\bibitem{BouinCalvezGrenierNadin2023}
{\sc Bouin, E., Calvez, V., Grenier, E., and Nadin, G.}
\newblock Large-scale asymptotics of velocity-jump processes and nonlocal
  {Hamilton}-{Jacobi} equations.
\newblock {\em J. Lond. Math. Soc., II. Ser. 108}, 1 (2023), 141--189.

\bibitem{BouinCalvezNadin2015}
{\sc Bouin, E., Calvez, V., and Nadin, G.}
\newblock Propagation in a kinetic reaction-transport equation: travelling
  waves and accelerating fronts.
\newblock {\em Arch. Ration. Mech. Anal. 217}, 2 (2015), 571--617.

\bibitem{Caillerie2017}
{\sc Caillerie, N.}
\newblock Large deviations of a velocity jump process with a
  {Hamilton}-{Jacobi} approach.
\newblock {\em C. R., Math., Acad. Sci. Paris 355}, 2 (2017), 170--175.

\bibitem{CalvezHendersonMirrahimiTuranovaDumont2022}
{\sc Calvez, V., Henderson, C., Mirrahimi, S., Turanova, O., and Dumont, T.}
\newblock Non-local competition slows down front acceleration during dispersal
  evolution.
\newblock {\em Ann. Henri Lebesgue 5\/} (2022), 1--71.

\bibitem{CalvezHivertYoldas2023}
{\sc Calvez, V., Hivert, H., and Yolda{\c{s}}, H.}
\newblock Concentration in {Lotka}-{Volterra} parabolic equations: an
  asymptotic-preserving scheme.
\newblock {\em Numer. Math. 154}, 1-2 (2023), 103--153.

\bibitem{CrandallIshiiLions1992}
{\sc Crandall, M.~G., Ishii, H., and Lions, P.-L.}
\newblock User's guide to viscosity solutions of second order partial
  differential equations.
\newblock {\em Bull. Am. Math. Soc., New Ser. 27}, 1 (1992), 1--67.

\bibitem{CrandallLions1984}
{\sc Crandall, M.~G., and Lions, P.-L.}
\newblock Two approximations of solutions of {Hamilton}-{Jacobi} equations.
\newblock {\em Math. Comput. 43\/} (1984), 1--19.

\bibitem{GoudonPoupaudDegond2000}
{\sc Degond, P., Goudon, T., and Poupaud, F.}
\newblock {Diffusion limit for non homogeneous and non-micro-reversible
  processes}.
\newblock {\em Indiana Univ. Math. J.\/} (2000).

\bibitem{DiekmannJabinMischlerPerthame2005}
{\sc Diekmann, O., Jabin, P.-E., Mischler, S., and Perthame, B.}
\newblock The dynamics of adaptation: {An} illuminating example and a
  {Hamilton}--{Jacobi} approach.
\newblock {\em Theor. Popul. Biol. 67}, 4 (2005), 257--271.

\bibitem{DimarcoPareschi2014ActaNumerica}
{\sc Dimarco, G., and Pareschi, L.}
\newblock {Numerical methods for kinetic equations}.
\newblock {\em Acta Numer. 23\/} (2014), 369 -- 520.

\bibitem{DolbeaultMouhotSchmeiser2015}
{\sc Dolbeault, J., Mouhot, C., and Schmeiser, C.}
\newblock Hypocoercivity for linear kinetic equations conserving mass.
\newblock {\em Trans. Am. Math. Soc. 367}, 6 (2015), 3807--3828.

\bibitem{DuderstadtMartin1979}
{\sc Duderstadt, J.~J., and Martin, W.~R.}
\newblock Transport theory.
\newblock A {Wiley}-{Interscience} {Publication}. {New} {York} etc.: {John}
  {Wiley} \& {Sons}. {X}, 613 p. {{\textsterling}} 24.00 (1979)., 1979.

\bibitem{EvansSouganidis1984}
{\sc Evans, L.~C., and Souganidis, P.~E.}
\newblock Differential games and representation formulas for solutions of
  {Hamilton}-{Jacobi}-{Isaacs} equations.
\newblock {\em Indiana Univ. Math. J. 33\/} (1984), 773--797.

\bibitem{EvansSouganidis1989}
{\sc Evans, L.~C., and Souganidis, P.~E.}
\newblock A {PDE} approach to geometric optics for certain semilinear parabolic
  equations.
\newblock {\em Indiana Univ. Math. J. 38}, 1 (1989), 141--172.

\bibitem{Fathi2012}
{\sc Fathi, A.}
\newblock Weak {KAM} from a {PDE} point of view: viscosity solutions of the
  {Hamilton}-{Jacobi} equation and {Aubry} set.
\newblock {\em Proc. R. Soc. Edinb., Sect. A, Math. 142}, 6 (2012), 1193--1236.

\bibitem{Freidlin1986}
{\sc Freidlin, M.~I.}
\newblock Geometric optics approach to reaction-diffusion equations.
\newblock {\em SIAM J. Appl. Math. 46\/} (1986), 222--232.

\bibitem{GaudeulHivert2024}
{\sc Gaudeul, B., and Hivert, H.}
\newblock Numerical approximation of a class of constrained {Hamilton}-{Jacobi}
  equations.
\newblock Preprint, {arXiv}:2403.12557 (2024), 2024.

\bibitem{Hivert2018}
{\sc Hivert, H.}
\newblock A first-order asymptotic preserving scheme for front propagation in a
  one-dimensional kinetic reaction–transport equation.
\newblock {\em J. Comput. Phys. 367\/} (2018), 253--278.

\bibitem{Jin1999}
{\sc Jin, S.}
\newblock Efficient asymptotic-preserving ({AP}) schemes for some multiscale
  kinetic equations.
\newblock {\em SIAM J. Sci. Comput. 21}, 2 (1999), 441--454.

\bibitem{JinReview2022}
{\sc Jin, S.}
\newblock Asymptotic-preserving schemes for multiscale physical problems.
\newblock {\em Acta Numer. 31\/} (2022), 415--489.

\bibitem{Klar1999}
{\sc Klar, A.}
\newblock A numerical method for kinetic semiconductor equations in the
  drift-diffusion limit.
\newblock {\em SIAM J. Sci. Comput. 20}, 5 (1999), 1696--1712.

\bibitem{LemouMieussens2008}
{\sc Lemou, M., and Mieussens, L.}
\newblock {A new asymptotic preserving scheme based on micro-macro formulation
  for linear kinetic equations in the diffusion limit}.
\newblock {\em SIAM J. Sci. Comput. 31}, 1 (2008), 334--368.

\bibitem{LiuMieusens2010}
{\sc Liu, J.-G., and Mieussens, L.}
\newblock {Analysis of an asymptotic preserving scheme for linear kinetic
  equations in the diffusion limit}.
\newblock {\em SIAM J. Numer. Anal. 48}, 4 (2010), 1474--1491.

\bibitem{LorzMirrahimiPerthame2011}
{\sc Lorz, A., Mirrahimi, S., and Perthame, B.}
\newblock Dirac mass dynamics in multidimensional nonlocal parabolic equations.
\newblock {\em Commun. Partial Differ. Equations 36}, 4-6 (2011), 1071--1098.

\bibitem{SaintRaymond2009}
{\sc Saint-Raymond, L.}
\newblock {\em Hydrodynamic limits of the {Boltzmann} equation}, vol.~1971 of
  {\em Lect. Notes Math.}
\newblock Berlin: Springer, 2009.

\bibitem{Souganidis1985}
{\sc Souganidis, P.~E.}
\newblock Approximation schemes for viscosity solutions of {Hamilton}-{Jacobi}
  equations.
\newblock {\em J. Differ. Equations 59\/} (1985), 1--43.

\end{thebibliography}

\end{document}